\def\sppan{\operatorname{span}}
\def\Bbb{\mathbb}
\def\Cal{\mathcal}
\def\Dt{\partial_t}
\def\eb{\varepsilon}
\def\R {\mathbb{R}}
\def\<{\left<}
\def\>{\right>}
\def\Ree{\operatorname{Re}}
\def\Dx{\Delta_x}
\def\({\left(}
\def\){\right)}
\def\sgn{\operatorname{sgn}}
\def\R{\Bbb R}
\def\Dx{\Delta_x}
\def\Dt{\partial_t}
\def\eb{\varepsilon}
\def\csname r@tocindent4\endcsname{0pt}
\newtheorem{proposition}{Proposition}[section]
\newtheorem{theorem}[proposition]{Theorem}
\newtheorem{corollary}[proposition]{Corollary}
\newtheorem{lemma}[proposition]{Lemma}
\theoremstyle{definition}
\newtheorem{definition}[proposition]{Definition}
\newtheorem{conjecture}[proposition]{Conjecture}
\newtheorem{remark}[proposition]{Remark}
\newtheorem{example}[proposition]{Example}
\numberwithin{equation}{section}
\def \no#1#2#3 {{\bf #1} (#3), #2.}
\def \eds#1#2#3 {#1, #2, #3.}
\begin{document}
\title[] {Inertial manifolds and finite-dimensional reduction for dissipative PDEs}

\author[] {Sergey Zelik}

\subjclass[2000]{35B40, 35B45}
\keywords{dissipative system, inertial manifold, global attractor, Man\'e projection theorem, fractal dimension, finite-dimensional reduction}
\thanks{The author is  grateful to V. Chepyzhov, A. Eden, A. Ilyin, V. Kalantarov, J. Robinson, A. Romanov and D. Turaev for stimulating discussions.}

\address{University of Surrey, Department of Mathematics, \newline
Guildford, GU2 7XH, United Kingdom.}
\email{s.zelik@surrey.ac.uk}

\begin{abstract} These notes are devoted to the problem of finite-dimensional reduction for parabolic PDEs.
We give a detailed exposition of the classical theory of inertial manifolds as well as various attempts to generalize it based on the so-called  Man\'e projection theorems. The recent counterexamples which show that the underlying dynamics may be in a sense infinite-dimensional if the spectral gap condition is violated as well as the discussion on the most important open problems are also included.
\end{abstract}

 \maketitle
\tableofcontents
\section{Introduction}\label{s0}
This manuscript is an extended version of  the lecture notes  taught by the author as a part of the crash course in the Analysis of Nonlinear PDEs at Maxwell Center for Analysis and Nonlinear PDEs (Edinburgh, November, 8-9, 2012). It is devoted to one of the central problems of the modern theory of dissipative systems generated by partial differential equations (PDEs), namely, whether or not the underlying dynamics is effectively finite-dimensional and can be described by a system of ordinary differential equations (ODEs).
\par
As hinted from the denomination, dissipative systems consume energy (in contrast to the so-called conservative systems where the total energy is usually preserved), so, in order to have the non-trivial dynamics, the energy income should be taken into the account (in other words, the considered system must be open and should interact with the external world). On the physical level of rigor,  the rich and complicated dynamical structures  (often referred as {\it dissipative structures} following I. Prigogine, see \cite{pri}) arise as the result of interaction of the following three mechanisms:
\par
1) Energy decay, usually more essential in higher "Fourier modes";
\par
2) Energy income, usually through the lower "Fourier modes";
\par
3) Energy flow from lower to higher "modes" provided by the non-linearities.
\par
Moreover, it is typical at least for the dissipative PDEs in {\it bounded} domains that the number of lower "modes" where the the energy income is possible is {\it finite}. So it is natural to expect that these modes will be in a sense dominating (the so-called order parameters in Prigogine's terminology) and the higher modes are slaved by the lower ones. This supports the hypothesis that the effective dynamics in such systems is finite-dimensional up to some transient behavior (can be completely  described by the finitely many order parameters) and somehow explains why the ideas and techniques of the classical finite-dimensional theory of dynamical systems (DS) are also effective for describing the dissipative dynamics in such PDEs, see \cite{tem,BV,CV,sell-book} and references therein.
\par
However, the above arguments are very non-rigorous from the mathematical point of view and it is extremely difficult/impossible to make them meaningful. It is even unclear  what are the "modes" in the above statements. Indeed, they  are rigorously defined as Fourier/spectral modes in the {\it linear} theory and a priori have sense only for systems somehow close to the linear ones. Moreover, as we will see in Paragraph \ref{s.Floquet}, such modes are natural for the linearization near  {\it equilibria} only and may not exist at all when the linear equations with time {\it periodic} coefficients are considered. Thus, despite the common usage of modes and related lengthscales for highly non-linear systems in physical literature (e.g., in  turbulence, see \cite{firsh} and references therein), the precise meaning of them is usually unclear.
\par
That is the reason why the mathematical theory of dissipative PDEs is  based on related but different concepts which at least can be rigorously defined. Namely, let a dissipative system be given by a semigroup $S(t)$, $t\ge0$, acting in a Hilbert or Banach space $H$. Usually, in applications, $H$ is some infinite-dimensional functional space where the initial data lives (say, $H=L^2(\Omega)$) and $S(t)$ is a solution operator of the PDE considered. Then, the dissipativity is often understood as the validity of the following {\it dissipative} estimate:
\begin{equation}\label{0.dis}
\|S(t)u_0\|_H\le Q(\|u_0\|_H)e^{-\alpha t}+C_*
\end{equation}
for the appropriate positive constants $C_*$ and $\alpha$ and monotone increasing function $Q$ which are independent of $u_0$ and $t$. Roughly speaking, this estimate shows that there is a balance between the energy income and dissipation, so the energy cannot grow to infinity. Moreover, for very high energy levels, the dissipation is dominating and the energy  decays.
\par
Next central concept of the theory is the so-called global {\it attractor} $\Cal A$ which is a {\it compact} subset of the phase space $H$ which is invariant with respect to the semigroup $S(t)$ and attracts the images (under the map $S(t)$) of all bounded sets when time tends to infinity. Thus, on the one hand, the attractor $\Cal A$ consists of the full trajectories of the DS considered and contains all its non-trivial dynamics. On the other hand, it is essentially smaller than the initial phase space. In particular, the compactness assumption shows that the higher modes are indeed suppressed, so the existence of a global attractor already somehow supports the hypothesis on the finite-dimensionality.
\par
The theory of attractors for dissipative PDEs has been intensively developing during the last 30 years and the existence of a global attractor is known now-a-days for many various classes of equations, including the Navier-Stokes system, reaction-diffusion equations, damped hyperbolic equations, etc., see \cite{tem,BV,CV,MirZe} and references therein. So, the assumption that the dissipative system considered possesses a global attractor does not look as a great restriction and this assumption is implicitly or explicitly made throughout of the present notes.
\par
We also restrict ourselves to consider only the dissipative systems generated by {\it parabolic} problems, namely, we assume that our dissipative system is governed by the following abstract parabolic equation in a Hilbert space $H$:
\begin{equation}\label{0.eq}
\Dt u+Au=F(u), \ u\big|_{t=0}=u_0,
\end{equation}
where $A$ is a linear self-adjoint (unbounded) operator (which is often the Laplacian or some elliptic operator in applications) and $F:H\to H$ is a non-linearity. It will be also assumed that $F$ is globally bounded and globally Lipschitz continuous in $H$ with the Lipschitz constant~$L$. Note that the global boundedness is not a restriction if the existence of a global attractor is already established since one can cut off the non-linearity outside of a large ball making it globally bounded, see Paragraph \ref{s.app} for the examples of physically relevant equations which can be presented in that form and Paragraph \ref{s.gen} for more general non-linearities. The finite-dimensional reduction for non-parabolic, say, damped hyperbolic equations is more difficult (since, e.g., the inertial manifolds exist only in exceptional cases there) and is out of scope of these notes.
\par
A perfect mathematical definition of the finite-dimensionality of dissipative dynamics is given by the concept of an {\it inertial manifold}. By definition, it is a finite-dimensional smooth submanifold $\Cal M$ of the phase space which is invariant with respect to the semigroup $S(t)$, contains the global attractor and possess the so-called exponential tracking property, namely, any trajectory which starts outside of the manifold approaches exponentially fast some "trace" trajectory belonging to the manifold. If that object exists then restricting equation \eqref{0.eq} to the invariant manifold, we get a system of ODEs which captures the dynamics on the attractor.
\par
The classical theory of inertial manifolds which is considered in details in Section \ref{s1} claims that the $N$-dimensional inertial manifold indeed exists for problem \eqref{0.eq} if the following {\it spectral gap} assumption is satisfied:
$$
\lambda_{N+1}-\lambda_N>2L,
$$
where $L$ is a Lipschitz constant of the non-linearity $F$. The sharpness of that condition and corresponding counterexamples are discussed in Section \ref{s3}. Note that even in that ideal case, the finite-dimensional reduction decreases drastically the smoothness of the considered system since the smoothness of the inertial manifold is usually not better than $C^{1+\eb}$ for some small $\eb>0$, see Paragraph \ref{s.smo}.
\par
An alternative way to justify the finite-dimensionality which is based on the so-called Man\'e projection theorem is considered in Section \ref{s.bsg}. Indeed, one of the main result of the theory of attractors is the fact that the fractal dimension of a global attractor is {\it finite} at least in the case of "good" equations in bounded domains (the study of degenerate/singular equations or equations in unbounded domains where this dimension can naturally be infinite, see \cite{MirZe} and references therein, is also out of scope of these notes):
$$
\dim_F(\Cal A,H)=d<\infty.
$$
 Then, the Man\'e projection theorem claims that a generic orthoprojector to the finite-dimensional plane of dimension $N>2d+1$ is one-to-one on the attractor, see Section \ref{s.bsg} for details. This allows us to reduce the dynamics on the attractor to a finite-dimensional system of ODEs {\it without} the spectral gap assumption. However, this reduction is far from being perfect since the reduced equations are only H\"older continuous which is not enough even for the uniqueness and all attempts to increase the smoothness of that equations are in fact failed. So, the drastic and  in a sense unacceptable loss of smoothness under that reduction looks unavoidable. Moreover, as recent counterexamples of \cite{EKZ} show even the log-Lipschitz regularity  may be lost.
\par
The counterexamples which show the sharpness of the spectral gap assumption as well as the fact that  we cannot expect more  than  the H\"older continuity of the inverse Man\'e projections are considered in Section \ref{s3}. These examples are mainly based on the counterexamples to the Floquet theory for the infinite-dimensional linear parabolic equations with time-periodic coefficients which in fact were known for a long time, see e.g. \cite{kuch}, but nevertheless have been somehow overseen by the experts in dissipative dynamics. Surprisingly, in contrast to the finite-dimensional case, one may construct a linear equation of the form \eqref{0.eq} with time-periodic coefficients such that the associated period map will be the Volterra type operator in $H$, so no Floquet exponents exist and any solution approaches zero super-exponentially in time. This can be hardly accepted as finite-dimensional phenomenon and even rises up a question about the actual {\it infinite-dimensionality} of the underlying dissipative dynamics.
\par
Finally, the concluding remarks as well as some discussion of the open problems is given in Section \ref{s4}.

\section{Spectral gaps  and inertial manifolds}\label{s1}

This section presents the classical theory of the inertial manifolds. The related notations and the statement of the main result is given in Paragraph \ref{s1.1}. The key estimates for the solutions of linear non-homogeneous equation in a saddle point are given in Paragraph \ref{s1.2}.
These estimates are used in Paragraph \ref{s1.3} for the detailed study of the non-linear problem near the saddle point, in particular, the stable and unstable manifolds are constructed here and the exponential tracking property for them is verified. In Paragraph \ref{s1.4}, we reduce the problem of finding an inertial manifold to the analogous problem for the unstable manifold near the saddle and using the results of the previous paragraph, we complete the proof of the inertial manifold theorem.
\par
The inertial manifold theorem for more general non-linearities $F$ which can contain the spatial derivatives and decrease the smoothness is considered in Paragraph \ref{s.gen} and the smoothness of the inertial manifolds is studied in Paragraph \ref{s.smo}.
\par
The alternative way of constructing the inertial manifolds based on the invariant cones is presented in Paragraph \ref{s.cone}. This method is applied in Paragraph \ref{s1.8} for constructing the inertial manifold using the so-called spatial averaging principle (which  from our point of view is the most beautiful and non-trivial result in the theory of inertial manifolds). Finally, the applications of the proved theorems to concrete equations of mathematical physics are discussed in Paragraph \ref{s.app}.

\subsection{Preliminaries and  main result}\label{s1.1}
We study the following abstract semilinear parabolic equation in a separable Hilbert space $H$:
\begin{equation}\label{1.eqmain}
\Dt u+Au=F(u),\ \ u\big|_{t=0}=u_0,
\end{equation}
where $A: D(A)\to H$ is a linear positive self-adjoint operator with compact inverse (as usual, $D(A)$ stands for the domain of the unbounded operator $A$), $u_0\in H$ is a given initial data and $F:H\to H$ is a given non-linear map which is {\it globally} Lipschitz continuous with the Lipschitz constant $L$, i.e.,
\begin{equation}\label{1.lip}
\|F(u)-F(v)\|_H\le L\|u-v\|_H,\ \ u,v\in H.
\end{equation}
It is well-known, see \cite{tem, hen}, that under the above assumptions, for every $u_0\in H$, problem \eqref{1.eqmain} possess a unique  solution $u(t)$, $t\in\R_+$
which belongs to the space $C(0,T;H)\cap L^2(0,T;D(A^{1/2}))$ for all $T>0$ and satisfy the equation \eqref{1.eqmain} in the sense of the equality in $L^2(0,T;D(A^{-1/2}))$. Equivalently, this solution can be defined using the variation of constants formula
\begin{equation}\label{1.vc}
u(t)=e^{-At}u_0+\int_0^te^{-A(t-s)}F(u(s))\,ds,
\end{equation}
where $e^{-At}$ stands for the analytic semigroup generated by the operator $A$ in $H$, see \cite{hen}.
\par
Thus, problem \eqref{1.eqmain} is globally well-posed and generates a non-linear semigroup $S(t)$ in $H$ via
\begin{equation}\label{1.sem}
S(t)u_0:=u(t),\ u(t) \ {\rm solves}\ \  \eqref{1.eqmain} ,\ \ S(t+h)=S(t)\circ S(h),\ \ t,h\ge0, \ S(0)=Id.
\end{equation}
We  note that, due to the Hilbert-Schmidt theorem, the operator $A$ possesses the complete orthonormal in $H$ system of eigenvectors $\{e_n\}_{n=1}^\infty$ which correspond to the eigenvalues $\lambda_n>0$ numerated in the non-decreasing way:
\begin{equation}\label{1.ei}
Ae_n=\lambda_n e_n,\ \ 0<\lambda_1\le\lambda_2\le\lambda_3\le\cdots
\end{equation}
and, due to the compactness of $A^{-1}$, we also know that $\lambda_n\to\infty$ as $n\to\infty$. For instance, in applications, $A$ is usually the Laplacian (or, more general, the uniformly elliptic operator of order $2l$) in a bounded domain $\Omega$ of $\R^d$. Then, by the classical Weyl formula,
\begin{equation}\label{1.weyl}
\lambda_n\sim C n^{2l/d}
\end{equation}
for some constant $C$ depending on $\Omega$ and on the elliptic operator $A$, see \cite{tri}.
\par
Thus, every $u\in H$ can be presented in the form
\begin{equation}\label{1.four}
u=\sum_{n=1}^\infty u_n e_n,\ \ u_n:=(u,e_n)
\end{equation}
and, due to the Parseval equality,
\begin{equation}\label{1.parseval}
\|u\|^2_H=\sum_{n=1}^\infty u_n^2.
\end{equation}
Moreover, the norm in spaces $H^s:=D(A^{s/2})$, $s\in\R$, is given by
\begin{equation}\label{1.hs}
\|u\|_{H^s}^2:=\sum_{n=1}^\infty\lambda_n^su_n^2.
\end{equation}
Recall that, for $s>0$, $H^s$ is a dense subspace of $H$ and, for $s<0$, it is defined as a completion of $H$ with respect to this norm.
We also introduce the orthoprojector $P_N$ to the first $N$ Fourier modes:
\begin{equation}\label{1.pn}
P_N u:=\sum_{n=1}^N(u,e_n)e_n
\end{equation}
and denote by $Q_N:=Id-P_N$, $H_+:=P_NH$ and $H_-:=Q_NH$. Then, for every fixed $N$, equation \eqref{1.eqmain} can be presented as a coupled system with respect to functions $u_+(t):=P_Nu(t)$ and $u_-(t):=Q_Nu(t)$:
\begin{equation}\label{1.coupled}
\Dt u_++Au_+=F_+(u_++u_-),\ \ \Dt u_-+Au_-=F_-(u_++u_-),
\end{equation}
where $F_+(u):=P_NF(u)$ and $F_-(u):=Q_NF(u)$. Moreover, by definition of the spaces $H_\pm$, they are invariant with respect to the operator $A$ and we have
\begin{equation}\label{1.pregap}
\begin{cases}
(Au,u)\le\lambda_N \|u\|_H^2,\ \ u\in H_+,\\
(Au,u)\ge\lambda_{N+1}\|u\|^2_H,\ \ u\in D(A)\cap H_-.
\end{cases}
\end{equation}
As will be shown below, if the spectral gap $\lambda_{N+1}-\lambda_N$ is large enough in comparison with the Lipschitz constant $L$ of the non-liearity $F$, the nonlinearity $F$ in system \eqref{1.coupled} can be considered as a "small" perturbation of the decoupled equations which correspond to the case $F=0$ and the invariant manifold $H_+$ of the unperturbed equations persists in the perturbed system as well.
Namely, we say that the submanifold $\Cal M$ in $H$ is an {\it inertial} manifold for problem \eqref{1.eqmain} if the following conditions are satisfied:
\par
1. The manifold $\Cal M$ is invariant with respect to the solution semigroup \eqref{1.sem}: $S(t)\Cal M=\Cal M$;
\par
2. It can be presented as a graph of a Lipschitz continuous function $\Phi:H_+\to H_-$:
\begin{equation}\label{1.graph}
\Cal M:=\{u_++\Phi(u_+), \ u_+\in H_+\};
\end{equation}
\par
3. The exponential tracking property holds, i.e., there are positive constants $C$ and $\alpha$ such that, for every $u_0\in H$ there is $v_0\in \Cal M$ such that
\begin{equation}\label{1.phase}
\|S(t)u_0-S(t)v_0\|_H\le Ce^{-\alpha t}\|u_0-v_0\|_H.
\end{equation}
In other words, for any trajectory $u(t)$ of \eqref{1.eqmain} there is a trajectory $v(t)$ on the inertial manifold $\Cal M$ which is exponentially close to $u(t)$ as $t\to\infty$.
\par
Thus, if an inertial manifold exists, we have $u_-(t)=\Phi(u_+(t))$ for every trajectory of \eqref{1.coupled} belonging to the manifold and, therefore, this trajectory is determined by the $N$-dimensional system of ODEs:
\begin{equation}\label{1.inertial}
\Dt u_++Au_+=F_+(u_++\Phi(u_+))
\end{equation}
and any other trajectory attracts exponentially fast to one of such trajectories. In that sense the long-time dynamics of the initial infinite-dimensional system \eqref{1.eqmain} is described by the finite-dimensional system of ODEs \eqref{1.inertial} which is often called the {\it inertial form} of \eqref{1.eqmain}.
\par
The following classical theorem gives the sufficient conditions for the existence of the above defined inertial manifold.

\begin{theorem}\label{Th1.main} Let the above assumptions on the operator $A$ and the non-linearity $F$ hold and let, in addition, for some $N\in\mathbb N$, the following spectral gap condition hold:
\begin{equation}\label{1.gap}
\lambda_{N+1}-\lambda_N>2L,
\end{equation}
where $L$ is a Lipschitz constant of the non-linearity $F$. Then, there exists an $N$-dimensional inertial manifold $\Cal M$ which can be presented as a graph of a Lipschitz continuous function $\Phi:H_+\to H_-$ ($H_+:=P_NH$) and the exponential tracking \eqref{1.phase} holds with $\alpha=\lambda_N$.
\end{theorem}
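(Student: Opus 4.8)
The plan is to construct $\M$ by the Lyapunov--Perron method, realizing it as the unstable manifold at the origin of the hyperbolic equation obtained from \eqref{1.eqmain} after an exponential rescaling of time. Fix $\gamma:=\tfrac12(\lambda_N+\lambda_{N+1})$; then \eqref{1.gap} is precisely the assertion
$$\gamma-\lambda_N=\lambda_{N+1}-\gamma=\tfrac12(\lambda_{N+1}-\lambda_N)>L,$$
and after the substitution $w(t)=e^{\gamma t}u(t)$ equation \eqref{1.eqmain} becomes $\Dt w+(A-\gamma)w=e^{\gamma t}F(e^{-\gamma t}w)$, where $A-\gamma$ has spectrum $\le\lambda_N-\gamma<0$ on $H_+$ and $\ge\lambda_{N+1}-\gamma>0$ on $H_-$ (a saddle) and the nonlinearity is still globally Lipschitz in $w$ with constant $L$. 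A solution of \eqref{1.eqmain} lies on the inertial manifold exactly when the corresponding $w$ remains bounded as $t\to-\infty$, and such solutions are parametrized by $p:=P_Nu(0)\in H_+$ through the fixed point problem $u=\T_p u$ on $(-\infty,0]$, where, with $u=u_++u_-$ and $F_\pm=P_NF,\,Q_NF$ as in \eqref{1.coupled},
\begin{equation}\label{1.lperron}
(\T_p u)(t):=e^{-At}p+\int_0^t e^{-A(t-s)}F_+(u(s))\,ds+\int_{-\infty}^t e^{-A(t-s)}F_-(u(s))\,ds,\qquad t\le 0.
\end{equation}

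The crucial step is a sharp bound for the linear operator $\LL h:=z$, $z(t):=\int_0^t e^{-A(t-s)}P_Nh(s)\,ds+\int_{-\infty}^t e^{-A(t-s)}Q_Nh(s)\,ds$, on the weighted space $L^2_\gamma:=L^2((-\infty,0];H)$ with weight $e^{2\gamma t}\,dt$, namely
$$\|\LL\|_{L^2_\gamma\to L^2_\gamma}\le\max\!\Big(\tfrac1{\gamma-\lambda_N},\tfrac1{\lambda_{N+1}-\gamma}\Big)=\tfrac2{\lambda_{N+1}-\lambda_N}.$$
This is where the sharp constant $2L$ (rather than $4L$) comes from: after the rescaling, each of $z_-$ and $z_+$ is a convolution (respectively causal and anticausal) of the rescaled datum with an $L^1$ kernel of norm $(\lambda_{N+1}-\gamma)^{-1}$, resp.\ $(\gamma-\lambda_N)^{-1}$, so by Young's inequality $\|z_\pm\|_{L^2_\gamma}\le(\lambda_{N+1}-\gamma)^{-1}\|Q_Nh\|_{L^2_\gamma}$, resp.\ $(\gamma-\lambda_N)^{-1}\|P_Nh\|_{L^2_\gamma}$; since the ranges $H_\pm$ of $z_\pm$ are orthogonal, $\|z\|_{L^2_\gamma}^2=\|z_+\|_{L^2_\gamma}^2+\|z_-\|_{L^2_\gamma}^2$ and $\|P_Nh\|_{L^2_\gamma}^2+\|Q_Nh\|_{L^2_\gamma}^2=\|h\|_{L^2_\gamma}^2$, which is exactly what turns the sum of the two norms (the bound the crude $\sup_t$-in-time estimate would give) into their maximum. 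Granting this and using \eqref{1.lip}, $\|\T_p u-\T_p v\|_{L^2_\gamma}=\|\LL(F(u)-F(v))\|_{L^2_\gamma}\le\theta\|u-v\|_{L^2_\gamma}$ with $\theta:=2L/(\lambda_{N+1}-\lambda_N)<1$ precisely under \eqref{1.gap}; since moreover $t\mapsto e^{-At}p$ lies in $L^2_\gamma$ (as $\gamma>\lambda_N$) and $u\mapsto F(u(\cdot))$ maps $L^2_\gamma$ into itself (as $\gamma>0$), $\T_p$ is an affine contraction of $L^2_\gamma$ and has a unique fixed point $u^p$. A bootstrap in \eqref{1.lperron} shows $u^p\in C((-\infty,0];H)$ with $\sup_{t\le 0}e^{\gamma t}\|u^p(t)\|_H<\infty$, so we may set $\Phi(p):=Q_Nu^p(0)=\int_{-\infty}^0 e^{As}F_-(u^p(s))\,ds\in H_-$, and one checks $u^p(0)=p+\Phi(p)$ and that $u^p$ is a genuine solution of \eqref{1.eqmain} on $(-\infty,0]$.

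Because $\T_p$ depends on $p$ only through the affine term $t\mapsto e^{-At}p$, which is Lipschitz from $H_+$ into $L^2_\gamma$, the standard perturbation estimate for fixed points of a uniform contraction gives that $p\mapsto u^p$ is Lipschitz into $L^2_\gamma$, and composing with the Lipschitz map $u\mapsto Q_Nu(0)$ (read off from the last integral in \eqref{1.lperron}) shows $\Phi:H_+\to H_-$ is Lipschitz; thus $\M:=\{p+\Phi(p):p\in H_+\}$ is a Lipschitz graph, as in \eqref{1.graph}. Invariance $S(t)\M=\M$ follows from the equivalent description $\M=\{u(0):u\text{ a complete solution of \eqref{1.eqmain} with }\sup_{t\le0}e^{\gamma t}\|u(t)\|_H<\infty\}$, which in turn is a consequence of the uniqueness of the fixed point of $\T_p$: this set is positively invariant by time translation and negatively invariant by gluing a backward-bounded trajectory through $S(\tau)v_0$. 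If \eqref{1.eqmain} has a global attractor $\A$ (e.g.\ when $F$ is in addition globally bounded), then $\A$, consisting of bounded complete trajectories which a fortiori lie in this weighted class since $\gamma>0$, is contained in $\M$.

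Finally, for the exponential tracking \eqref{1.phase} one runs a dual Lyapunov--Perron argument on $[0,\infty)$. Given $u_0\in H$ with forward trajectory $u(t)=S(t)u_0$, one seeks $v_0=p^*+\Phi(p^*)\in\M$ and the corresponding trajectory $\bar u(t)=S(t)v_0$ so that, for $t\ge0$,
$$Q_N(u-\bar u)(t)=e^{-At}Q_N(u_0-v_0)+\!\int_0^t\! e^{-A(t-s)}\big(F_-(u(s))-F_-(\bar u(s))\big)ds,\quad P_N(u-\bar u)(t)=-\!\int_t^\infty\! e^{-A(t-s)}\big(F_+(u(s))-F_+(\bar u(s))\big)ds,$$
the backward integral in the $H_+$-component enforcing that $P_N(u-\bar u)$ decay rather than merely stay bounded, and $p^*$ then being recovered from the first identity at $t=0$ together with $Q_Nv_0=\Phi(p^*)$. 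This is again a contraction in a weighted space $\sup_{t\ge0}e^{\alpha t}\|\cdot\|_H$ (or its $L^2$ variant) as soon as \eqref{1.gap} holds, and the admissible rates $\alpha$ form an interval around $\gamma$ which under \eqref{1.gap} contains $\lambda_N$; its fixed point yields $v_0$ and the estimate $\|S(t)u_0-S(t)v_0\|_H\le Ce^{-\lambda_N t}\|u_0-v_0\|_H$, i.e.\ \eqref{1.phase} with $\alpha=\lambda_N$. The principal obstacle in the whole scheme is the sharp linear estimate of the second paragraph: it is exactly the passage from the naive $\sup$-in-time norm (which only yields the contraction factor $4L/(\lambda_{N+1}-\lambda_N)$, hence the condition $\lambda_{N+1}-\lambda_N>4L$) to the $L^2$-in-time norm, exploiting the orthogonality of $P_N$ and $Q_N$, that makes the optimal gap \eqref{1.gap} suffice.
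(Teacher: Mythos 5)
Your construction of the manifold itself is correct and is essentially the paper's own argument in different clothing: you shift to the midpoint $\gamma=\tfrac12(\lambda_N+\lambda_{N+1})$ and run a Lyapunov--Perron fixed point in an exponentially weighted $L^2$-in-time space on $(-\infty,0]$, obtaining the sharp bound for the linear solution operator from Young's inequality on each component together with the orthogonality $\|h\|_H^2=\|P_Nh\|_H^2+\|Q_Nh\|_H^2$; the paper performs the same shift via the rescaling $\tilde u=e^{\alpha t}u$, works in unweighted $L^2(\R,H)$ after extending the problem to the whole line, and proves the same bound $1/\theta$, $\theta=\tfrac12(\lambda_{N+1}-\lambda_N)$, by energy estimates (Lemma \ref{Lem1.norm}). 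These differences are cosmetic, and your identification of the $L^2$-in-time norm as the source of the sharp constant is exactly the paper's point. Two small slips do not affect the outcome: the interval of admissible tracking rates is $(\lambda_N+L,\lambda_{N+1}-L)$, which lies strictly to the right of $\lambda_N$ rather than containing it, so \eqref{1.phase} with $\alpha=\lambda_N$ follows a fortiori; and the equivalence between your weighted-sup class of backward solutions and the $L^2_\gamma$ class used in the fixed point needs a small shift of the exponent (a bounded weighted sup is not weighted square integrable on a half-line), exactly as handled in Remark \ref{Rem1.manifold}.

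The genuine gap is in the exponential tracking step. In your dual fixed-point problem the trajectory $\bar u$ must start on the manifold, and you enforce this through the constraint $v_0=p^*+\Phi(p^*)$, with $p^*$ determined by the backward integral; hence $\Phi$ enters the fixed-point map through the term $e^{-At}Q_N(u_0-v_0)$ with $Q_Nv_0=\Phi\bigl(P_Nu_0+\int_0^\infty e^{As}P_N(F(u(s))-F(\bar u(s)))\,ds\bigr)$. The contraction constant is then no longer $L/\theta$: the $\Phi$-term contributes an extra piece of the same order, proportional to $\operatorname{Lip}(\Phi)$, and the straightforward estimate does not close under the sharp condition \eqref{1.gap} (it yields a threshold of the form $\theta>cL$ with $c>1$ depending on $\operatorname{Lip}(\Phi)$, which from your own $L^2$ construction is not even bounded by $1$ as $L\uparrow\theta$). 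So the assertion that this ``is again a contraction as soon as \eqref{1.gap} holds'' is unjustified as stated. The paper avoids precisely this difficulty in Theorem \ref{Th1.track}: the tracking trajectory is sought in the form $v=\psi u+w$ with a cut-off $\psi$ vanishing for $t\le\tau$, and $w$ solves a fixed-point equation on the whole line in $L^2(\R,H)$ and its weighted versions; membership $v(t)\in\Cal M_+(t)$ is then automatic from the square integrability of $w$ on $(-\infty,\tau]$, the function $\Phi$ never appears in the map, and the same sharp bound of Lemma \ref{Lem1.norm} gives the contraction factor $L/\theta<1$. To repair your argument at the sharp threshold you essentially need to remove $\Phi$ from the fixed-point map in this way (or by some equivalent device), rather than impose $v_0\in\Cal M$ as an explicit constraint at $t=0$.
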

To the best of our knowledge, the existence of an inertial manifold for equation \eqref{1.eqmain} has been firstly proved in \cite{FST} with the non-optimal constant $C$ in the right-hand side of assumption \eqref{1.gap}. The result with the sharp value $C=2$ of this constant has been obtained independently in \cite{mik} and \cite{rom-man}.
\par  
The proof of this key theorem will be given in the next subsections. As we will also see below, the spectral gap condition \eqref{1.gap} is sharp and the manifold may not exist if it is violated.

\subsection{Linear saddles and dichotomies}\label{s1.2} In  this and next paragraph, we study the dynamics near the multi-dimensional (infinite-dimensional) saddles. As we will see below, the proof of Theorem \ref{Th1.main} is reduced in a straightforward way to verifying the existence of stable/unstable manifolds near such saddle point. In this paragraph, we will study the linear non-homogeneous equation of the form
\eqref{1.eqmain} and prepare some technical tools to study its non-linear perturbations.
\par
We assume here that our Hilbert space $H$ is split into the orthogonal sum of two subspaces $H_+$ and $H_-$:
\begin{equation}\label{1.ort}
H=H_+\oplus H_-
\end{equation}
and the corresponding orthogonal projectors are denoted by $P_+$ and $P_-$ respectively. The self-adjoint operator $\tilde A:D(A)\to H$ is assumed to have the form
\begin{equation}\label{1.diag}
\tilde A=\operatorname{diag}(\tilde A_+,\tilde A_-)
\end{equation}
where the operators $\tilde A_+$ and $\tilde A_-$ satisfy
\begin{equation}\label{1.ssadle}
\begin{cases}
(\tilde A_+u,u)\le -\theta\|u\|^2_H,\ \ u\in H_+,\\ (\tilde A_-u,u)\ge\theta\|u\|^2_H,\ \ u\in H_-,
\end{cases}
\end{equation}
for some fixed positive $\theta$. We emphasize that the operator $\tilde A$ in this subsection {\it differs} from the operator $A$ of the previous section. In particular, it is  assumed neither that $\tilde A$ is positive/bounded from below nor that $\tilde A^{-1}$ is compact, so we are in more general situation here. In the proof of Theorem \ref{Th1.main}, we will use
\begin{equation}\label{1.shift}
\tilde A:=A-\frac{\lambda_N+\lambda_{N+1}}2.
\end{equation}
Then, the validity of \eqref{1.ssadle} with $\theta=\frac{\lambda_{N+1}-\lambda_N}2$ is guaranteed by \eqref{1.pregap}.
\par
We now consider the linear non-homogeneous equation associated with the operator $\tilde A$:
\begin{equation}\label{1.lin}
\Dt u+\tilde Au=h(t),
\end{equation}
where $h(t)$ is a given external force. Equation \eqref{1.lin} is equivalent to
\begin{equation}\label{1.llin}
\Dt u_++\tilde A_+u_+=h_+(t),\ \ \Dt u_-+\tilde A_-u_-=h_-(t)
\end{equation}
with $h_{\pm}(t):=P_{\pm}h(t)$ and $u_{\pm}(t):=P_{\pm}u(t)$.
\par
Let us start with the homogeneous case $h\equiv 0$. Then, equations \eqref{1.llin} can be solved as follows
$$
u_+(t)=e^{-\tilde A_+t}u_+(0),\ \ u_-(t)=e^{-\tilde A_- t}u_-(0)
$$
and, due to  assumptions \eqref{1.ssadle},
\begin{equation}\label{1.dich}
\begin{cases}
\|e^{-\tilde A_+t}\|_{L(H,H)}\le e^{\theta t},\ \ t\le 0,\\
\|e^{-\tilde A_- t}\|_{L(H,H)}\le e^{-\theta t},\ \ t\ge0.
\end{cases}
\end{equation}
Thus, the solutions of \eqref{1.lin} starting from the {\it unstable} space $H_+$ tend exponentially
to zero as $t\to-\infty$ and the solutions starting from the {\it stable} space $H_-$ tend exponentially to zero as $t\to+\infty$, so the dynamics of \eqref{1.lin} indeed looks as  a multi-dimensional saddle. It worth to point out however that, in contrast to the finite dimensional saddles, the first equation of \eqref{1.llin} (unstable component) is well-posed only backward in time and maybe ill-posed forward in time. Analogously, the stable component (the second equation of \eqref{1.llin}) may be ill-posed backward in time. Note also that the property \eqref{1.dich} is often referred as  {\it exponential dichotomy}, see \cite{hart} for more details.
\par
We now turn to the non-homogeneous  case $h\ne0$.
The following simple lemma is however the key technical tool for the theory.

\begin{lemma}\label{Lem1.norm} Let the above assumptions hold and let $h\in L^2(\R,H)$. Then, there exists a unique solution $u\in L^2(\R,H)$ of problem \eqref{1.lin} and the following estimate holds
\begin{equation}\label{1.l2main}
\|u\|_{L^2(\R,H)}\le\frac1\theta\|h\|_{L^2(\R,H)},
\end{equation}
where the constant $\theta>0$ is the same as in \eqref{1.ssadle}. Thus, the solution operator $\mathcal T: L^2(\R,H)\to L^2(\R,H)$ is well defined and its norm does not exceed $\frac1\theta$.
\end{lemma}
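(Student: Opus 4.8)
The plan is to work with the Fourier-type decomposition $u = u_+ + u_-$, $h = h_+ + h_-$ from \eqref{1.llin}, and to treat the stable and unstable components separately, since on each the operator $\tilde A_\pm$ has a sign. For the stable component, I would solve $\Dt u_- + \tilde A_- u_- = h_-$ forward in time with zero initial data at $t=-\infty$, i.e.\ set
\[
u_-(t) := \int_{-\infty}^t e^{-\tilde A_-(t-s)} h_-(s)\,ds,
\]
which converges because of the decay estimate in \eqref{1.dich}. For the unstable component, the equation is well posed only backward in time, so I would instead set
\[
u_+(t) := -\int_t^{+\infty} e^{-\tilde A_+(t-s)} h_+(s)\,ds,
\]
again convergent by \eqref{1.dich} (now the exponent $e^{\theta(t-s)}$ with $t-s\le 0$). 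One checks directly that $u_\pm$ so defined solve \eqref{1.llin}, so $u := u_+ + u_-$ solves \eqref{1.lin}.

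The core of the argument is the $L^2$ bound \eqref{1.l2main}. The cleanest route is an energy estimate rather than manipulation of the convolution formulas: multiply the first equation of \eqref{1.llin} by $u_+$ and the second by $u_-$, integrate over $H$, and use \eqref{1.ssadle} to get
\[
\tfrac12\tfrac{d}{dt}\|u_+\|_H^2 \ge \theta\|u_+\|_H^2 - (h_+,u_+),
\qquad
\tfrac12\tfrac{d}{dt}\|u_-\|_H^2 \le -\theta\|u_-\|_H^2 + (h_-,u_-).
\]
Integrating the $u_-$ inequality over $(-\infty,t]$ and the $u_+$ inequality over $[t,+\infty)$ (the boundary terms vanish because $u\in L^2(\R,H)$ forces $\liminf$ of the norms to be zero along suitable sequences, and monotonicity of the exponentially weighted quantities upgrades this to genuine limits), one obtains pointwise-in-$t$ control; alternatively, and more robustly, integrate once more in $t$ over all of $\R$. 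That yields
\[
\theta\|u_\pm\|_{L^2(\R,H)}^2 \le \int_\R |(h_\pm(t),u_\pm(t))|\,dt \le \|h_\pm\|_{L^2(\R,H)}\|u_\pm\|_{L^2(\R,H)},
\]
so $\|u_\pm\|_{L^2} \le \tfrac1\theta\|h_\pm\|_{L^2}$, and adding the squares (using orthogonality of $H_+$ and $H_-$, so that $\|u\|^2 = \|u_+\|^2 + \|u_-\|^2$ and likewise for $h$) gives \eqref{1.l2main}. Uniqueness follows because the difference $w$ of two $L^2$ solutions solves the homogeneous equation, and the same energy estimate with $h=0$ forces $\|w_\pm\|_{L^2}=0$.

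The step I expect to require the most care is the justification of the boundary terms in the integration by parts — i.e.\ that an $L^2(\R,H)$ solution of the unstable equation really does decay as $t\to+\infty$ and the stable one as $t\to-\infty$, so that no boundary contribution is dropped illegitimately. The honest way to handle this is to note that $\frac{d}{dt}\big(e^{-2\theta t}\|u_+(t)\|_H^2\big) = e^{-2\theta t}\big(\frac{d}{dt}\|u_+\|_H^2 - 2\theta\|u_+\|_H^2\big) \ge -2 e^{-2\theta t}(h_+,u_+)$, whence $e^{-2\theta t}\|u_+(t)\|_H^2$ differs from a monotone function by something integrable, and combined with $u_+\in L^2$ this pins down the limit; symmetrically for $u_-$ with weight $e^{2\theta t}$. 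One should also remark, for the existence half, that $h\in L^2(\R,H)$ together with analyticity of the semigroups $e^{-\tilde A_\pm t}$ makes the convolution integrals define functions in $C(\R,H)\cap L^2_{\mathrm{loc}}(\R, D(A^{1/2}))$ solving \eqref{1.lin} in the weak sense of \eqref{1.eqmain}, so the regularity class is the expected one. Everything else is routine.
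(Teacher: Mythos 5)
Your proof is correct and takes essentially the same route as the paper: you split into the $u_\pm$ components, multiply each equation by $u_\pm$, use \eqref{1.ssadle}, Cauchy--Schwarz and the orthogonality $\|u\|^2_{L^2(\R,H)}=\|u_+\|^2_{L^2(\R,H)}+\|u_-\|^2_{L^2(\R,H)}$ to get the sharp constant $\frac1\theta$, with existence via the variation-of-constants formulas (exactly the paper's \eqref{1.var}) and uniqueness read off from the estimate. The only difference is organizational: the paper multiplies by the weights $e^{\mp2\theta t}$, integrates over half-lines to get a pointwise bound and then integrates in $t$ using Fubini, whereas you integrate the unweighted energy inequality directly over $\R$; your explicit justification of the vanishing boundary terms (via the monotonicity of the weighted quantity plus $u\in L^2$) is a point the paper passes over silently, so no gap there.
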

\begin{proof} We give below only the derivation of estimate \eqref{1.l2main} which automatically implies the uniqueness and the existence of such solution can be verified in a standard way, see e.g. \cite{hen}.
\par
Indeed, multiplying the first equation of \eqref{1.llin} by $u_+$ and using \eqref{1.ssadle}, we get
$$
\frac12\frac d{dt}\|u_+\|^2-\theta\|u_+\|^2= (h_+,u_+)-[(\tilde A_+u_+,u_+)+\theta\|u_+\|^2]\ge (h_+,u_+).
$$
Multiplying it by $2e^{-\theta t}$, we arrive at
\begin{equation}\label{1.proint}
\frac d{dt}\(e^{-2\theta t}\|u_+(t)\|^2\)\ge -2e^{-\theta t}(h_+(t),u_+(t)).
\end{equation}
Integrating this inequality over the time interval $[t,\infty)$, we have
$$
\|u_+(t)\|^2\le 2\int_t^\infty e^{2\theta (t-s)}(h_+(s),-u_+(s))\,ds
$$
and integrating the last inequality over $t\in\R$ and using the Fubini theorem, we finally arrive at
\begin{multline*}
\|u_+\|_{L^2(\R,H)}^2\le 2\int_{\R}\int_t^\infty e^{2\theta(t-s)}(h_+(s),-u_+(s))\,ds\,dt=\\=2\int_{\R}\(\int_{-\infty}^s e^{2\theta(t-s)}\,dt\) (h_+(s),-u_+(s))\,ds=\\=\frac1\theta\int_R(h_+(s),-u_+(s))\,ds\le \frac1{2\theta^2}\|h_+\|^2_{L^2(\R,H)}+\frac12\|u_+\|^2_{L^2(\R,H)}
\end{multline*}
and, thus,
\begin{equation}\label{1.pos}
\|u_+\|^2_{L^2(\R,H)}\le\frac1{\theta^2}\|h_+\|^2_{L^2(\R,H)}.
\end{equation}
Analogously, multiplying the second equation of \eqref{1.llin} by $2e^{\theta t}u_-(t)$, using \eqref{1.ssadle} and integrating over $(-\infty,t]$, we have
$$
\|u_-(t)\|^2\le 2\int_{-\infty}^t e^{-2\theta(t-s)}(h_-(s),u_-(s))\,ds
$$
and integrating this inequality over $t\in\R$ and using the Fubini theorem, analogously to \eqref{1.pos}, we arrive at
\begin{equation}\label{1.neg}
\|u_-\|^2_{L^2(\R,H)}\le \frac1{\theta^2}\|h_-\|^2_{L^2(\R,H)}.
\end{equation}
Taking the sum of \eqref{1.pos} and \eqref{1.neg} and using the obvious fact that
\begin{equation}\label{1.key}
\|v\|^2_{L^2(\R,H)}=\|v_+\|^2_{L^2(\R,H)}+\|v_-\|^2_{L^2(\R,H)},
\end{equation}
we end up with \eqref{1.l2main} and finish the proof of the lemma.
\end{proof}
\begin{remark}\label{Rem1.linf} As we can see from the proof of the lemma, the desired solution $u(t)$ is defined by the following version of the variation of constants formula:
\begin{equation}\label{1.var}
u_+(t)=-\int_t^\infty e^{-\tilde A_+(t-s)}h_+(s)\,ds,\ \ u_-(t)=\int_{-\infty}^t e^{-\tilde A_-(t-s)}h_-(s)\,ds
\end{equation}
and, using \eqref{1.dich}, one can easily show that if $h\in C_b(\R,H)$ (i.e., bounded and continuous with values in $H$), then the solution $u(t)$ defined by \eqref{1.var} also belongs to $C_b(\R,H)$ and
\begin{equation}\label{1.linf}
\|u\|_{C_b(\R,H)}\le \frac C{\theta}\|h\|_{C_b(\R,H)}
\end{equation}
for some positive constant $C$ which is independent of $\theta$. However, we do not have the analogue of \eqref{1.key} for that spaces and cannot take $C=1$ in \eqref{1.linf}. By this reason, the usage of "more natural" space $C_b(\R,H)$ instead of $L^2(\R,H)$ in the proof of the inertial manifold existence (see below) leads to the artificial and non-sharp constant $2\sqrt 2$ (instead of $2$) in the spectral gap condition \eqref{1.gap}. To the best of our knowledge, the advantage of using the $L^2$-norms has been firstly observed in \cite{mik}.
\end{remark}
To conclude this subsection, we state two corollaries of the proved lemma: the first is a kind of smoothing property  and estimates the value $u(t)$
 in a fixed point $t$ through the $L^2$-norms and the second one gives the weighted analogue of \eqref{1.l2main}.

\begin{corollary}\label{Cor1.sm} Under the assumptions of Lemma \ref{Lem1.norm} the solution $u(t)$ belongs to $C_b(\R,H)$ and
\begin{equation}\label{1.smo}
\|u\|_{C_b(\R,H)}\le C\|h\|_{L^2(\R,H)},
\end{equation}
where the constant $C$ is independent of  $h$.
\end{corollary}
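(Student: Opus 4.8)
The plan is to read the pointwise bound directly off the representation formula \eqref{1.var} from Remark \ref{Rem1.linf}; this is cheaper than invoking parabolic smoothing and, importantly, uses nothing about $\tilde A$ beyond the dichotomy \eqref{1.dich}. By Remark \ref{Rem1.linf} the solution $u$ furnished by Lemma \ref{Lem1.norm} satisfies
\[
u_+(t)=-\int_t^\infty e^{-\tilde A_+(t-s)}h_+(s)\,ds,\qquad u_-(t)=\int_{-\infty}^t e^{-\tilde A_-(t-s)}h_-(s)\,ds .
\]

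First I would bound $u_-(t)$ in $H$. Using $\|e^{-\tilde A_-\tau}\|_{L(H,H)}\le e^{-\theta\tau}$ for $\tau\ge0$ from \eqref{1.dich} together with the Cauchy--Schwarz inequality in $s$,
\[
\|u_-(t)\|_H\le\int_{-\infty}^t e^{-\theta(t-s)}\|h_-(s)\|_H\,ds\le\left(\int_{-\infty}^t e^{-2\theta(t-s)}\,ds\right)^{1/2}\|h_-\|_{L^2(\R,H)}=\frac1{\sqrt{2\theta}}\,\|h_-\|_{L^2(\R,H)} .
\]
The identical computation for $u_+$, now with $\|e^{-\tilde A_+(t-s)}\|_{L(H,H)}\le e^{\theta(t-s)}=e^{-\theta(s-t)}$ valid for $s\ge t$, gives $\|u_+(t)\|_H\le\frac1{\sqrt{2\theta}}\|h_+\|_{L^2(\R,H)}$. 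Since $H=H_+\oplus H_-$ is an orthogonal splitting, squaring and adding these two inequalities yields $\|u(t)\|_H^2=\|u_+(t)\|_H^2+\|u_-(t)\|_H^2\le\frac1{2\theta}\|h\|_{L^2(\R,H)}^2$ for every $t\in\R$, which is \eqref{1.smo} with the explicit constant $C=(2\theta)^{-1/2}$.

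It remains to verify that $u\in C_b(\R,H)$. After the substitution $s=t-\tau$ one has $u_-(t)=\int_0^\infty e^{-\tilde A_-\tau}h_-(t-\tau)\,d\tau$, so for $t,t'\in\R$
\[
\|u_-(t)-u_-(t')\|_H\le\int_0^\infty e^{-\theta\tau}\,\|h_-(t-\tau)-h_-(t'-\tau)\|_H\,d\tau\le\frac1{\sqrt{2\theta}}\,\|h_-(t-\cdot)-h_-(t'-\cdot)\|_{L^2(\R_+,H)},
\]
and the right-hand side tends to zero as $t'\to t$ because translations act continuously on $L^2(\R,H)$; the same argument applies to $u_+$, so $u=u_++u_-$ is continuous, and it is bounded by the estimate just established.

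I do not expect any real obstacle. The only step needing a little care is the continuity claim, since the unboundedness of $\tilde A$ prevents one from simply reading it off the differentiated equation $\Dt u=h-\tilde A u$; this is bypassed by working with the mild formula \eqref{1.var} and the elementary continuity of shifts in $L^2$.
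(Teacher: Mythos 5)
Your proof is correct, but it follows a different route than the paper. The paper never touches the representation formula: it multiplies the differential inequality \eqref{1.proint} by the weight $(T+1-t)$, integrates over the unit interval $[T,T+1]$, and then absorbs the resulting terms using the already proved $L^2$ bound \eqref{1.l2main} — a mean-value/energy trick that works directly from the scalar-product computation in Lemma \ref{Lem1.norm} and produces an unspecified constant. You instead read the pointwise bound off the variation-of-constants formula \eqref{1.var} via the dichotomy \eqref{1.dich} and Cauchy--Schwarz, which buys you the explicit constant $C=(2\theta)^{-1/2}$ (so one sees how the bound improves as the gap grows) and, unlike the paper's proof, an explicit verification of continuity through the continuity of translations in $L^2$. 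The one point to flag is that your argument leans on the identification of the $L^2$ solution of Lemma \ref{Lem1.norm} with the formula \eqref{1.var}, which the paper only asserts in Remark \ref{Rem1.linf} ("as we can see from the proof of the lemma") rather than proves; to make your route fully self-contained you would note that \eqref{1.var} does define an element of $L^2(\R,H)$ solving \eqref{1.lin} (Young's inequality for the convolution of the $L^1$ kernel $e^{-\theta|\cdot|}$ with $\|h_\pm(\cdot)\|_H\in L^2$), so that it must coincide with the unique $L^2$ solution of the lemma. With that sentence added, your argument is complete and arguably more informative than the original.
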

\begin{proof}Indeed, for any fixed $T\in\R$, multiplying inequality \eqref{1.proint} by $(T+1-t)$ and integrating over $[T,T+1]$, we have
$$
e^{-2\theta T}\|u_+(T)\|^2\le \int_{T}^{T+1}e^{-2\theta t}(T+1-t)[-2(h_+(t),u_+(t))+\|u_+(t)\|^2]\,dt
$$
and, therefore, due to \eqref{1.l2main}
$$
\|u_+(T)\|^2\le C\int_T^{T+1}\|u_+(t)\|^2+\|h_+(t)\|^2\,dt\le C_1\|h\|^2_{L^2(\R,H)}.
$$
Thus, the desired estimate for the $u_+$ component is proved. The $u_-$ component can be estimated analogously and the corollary is proved.
\end{proof}
\begin{corollary}\label{Cor1.weight} Let $\eb\in\R$ be such that $|\eb|<\theta$, $\tau\in\R$ is arbitrary and $\varphi_{\eb,\tau}(t):=e^{-\eb|t-\tau|}$ be the corresponding weight function. Define the weighted space $L^2_{\varphi_{\eb,\tau}}(\R,H)$ by the following norm:
\begin{equation}\label{1.wnorm}
\|u\|_{L^2_{\varphi_{\eb,\tau}}(\R,H)}^2:=\int_\R\varphi^2_{\eb,\tau}(t)\|u(t)\|^2_H\,dt.
\end{equation}
Then, the solution operator $\mathcal T$ defined in Lemma \ref{Lem1.norm} is bounded in the space $L^2_{\varphi_{\eb,\tau}}(\R,H)$ and
\begin{equation}\label{1.expnorm}
\|\mathcal Th\|_{L^2_{\varphi_{\eb,\tau}}(\R,H)}\le \frac1{\theta-|\eb|}\|h\|_{L^2_{\varphi_{\eb,\tau}}(\R,H)}.
\end{equation}
\end{corollary}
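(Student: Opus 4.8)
The plan is to reduce the assertion to the (proof of the) unweighted Lemma~\ref{Lem1.norm} by the simple substitution $v(t):=\varphi_{\eb,\tau}(t)u(t)$, where $u:=\mathcal Th$ is given by the variation of constants formula \eqref{1.var}. Note first that this formula still makes sense for $h\in L^2_{\varphi_{\eb,\tau}}(\R,H)$: since $|\eb|<\theta$, the exponential decay \eqref{1.dich} dominates the at most exponential growth of $\varphi_{\eb,\tau}^{-1}$, so $\mathcal T$ extends to the weighted space and only the claimed bound needs to be established. After rewriting $\|v\|_{L^2(\R,H)}=\|u\|_{L^2_{\varphi_{\eb,\tau}}(\R,H)}$ and $\|\varphi_{\eb,\tau}h\|_{L^2(\R,H)}=\|h\|_{L^2_{\varphi_{\eb,\tau}}(\R,H)}$, the inequality \eqref{1.expnorm} becomes $\|v\|_{L^2(\R,H)}\le(\theta-|\eb|)^{-1}\|\varphi_{\eb,\tau}h\|_{L^2(\R,H)}$.

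First I would write down the equation for $v$. Using $\varphi_{\eb,\tau}'(t)=-\eb\,\sgn(t-\tau)\varphi_{\eb,\tau}(t)$ for $t\ne\tau$ and the continuity of $v$ across $t=\tau$, the components $v_\pm:=P_\pm v$ solve
\begin{equation}\label{1.wveq}
\Dt v_\pm+\big(\tilde A_\pm+\eb\,\sgn(t-\tau)\big)v_\pm=\varphi_{\eb,\tau}(t)h_\pm(t),\qquad t\ne\tau .
\end{equation}
The point is that the (now piecewise constant in time) operators appearing here still satisfy the saddle inequalities \eqref{1.ssadle} with $\theta$ replaced by $\theta-|\eb|>0$, simply because $|\eb\,\sgn(t-\tau)|=|\eb|$: for every $t\ne\tau$ one has $((\tilde A_++\eb\,\sgn(t-\tau))v,v)\le-(\theta-|\eb|)\|v\|_H^2$ for $v\in H_+$ and $((\tilde A_-+\eb\,\sgn(t-\tau))v,v)\ge(\theta-|\eb|)\|v\|_H^2$ for $v\in H_-$.

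Having done this, I would re-run the energy argument from the proof of Lemma~\ref{Lem1.norm} line by line on \eqref{1.wveq}, with $\theta$ replaced throughout by $\theta-|\eb|$: multiply the $v_+$ equation by $v_+$, use the inequality above to get $\tfrac12\tfrac{d}{dt}\|v_+\|^2-(\theta-|\eb|)\|v_+\|^2\ge(\varphi_{\eb,\tau}h_+,v_+)$ for a.e.\ $t$, then multiply by $2e^{-2(\theta-|\eb|)t}$, integrate over $[t,\infty)$ and over $t\in\R$ with the Fubini theorem exactly as in the derivation of \eqref{1.pos}, obtaining $\|v_+\|_{L^2(\R,H)}^2\le(\theta-|\eb|)^{-2}\|\varphi_{\eb,\tau}h_+\|_{L^2(\R,H)}^2$, and symmetrically for $v_-$; adding the two and using \eqref{1.key} gives the target bound, i.e.\ \eqref{1.expnorm}.

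The step I expect to require the most care is dealing with the non-smoothness of the weight at $t=\tau$, which is precisely why the operator in \eqref{1.wveq} is only piecewise constant in $t$. I do not expect this to be a genuine obstacle: it is the coefficient and not $v$ itself that jumps at $\tau$, so $t\mapsto\|v_\pm(t)\|_H^2$ remains absolutely continuous and the integrations above are insensitive to the null set $\{\tau\}$; the argument of Lemma~\ref{Lem1.norm} uses nothing beyond the dissipativity inequalities, the integrability of $h$, and the decay of $e^{\mp2(\theta-|\eb|)t}\|v_\pm(t)\|_H^2$ at $\pm\infty$. If one wants to be fully careful about this decay (and about the meaning of \eqref{1.wveq} for a general $h\in L^2_{\varphi_{\eb,\tau}}$), it suffices to prove the estimate first for $h$ in a dense subclass, say compactly supported, and then pass to the limit using the bound itself.
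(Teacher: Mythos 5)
Your proof is correct, and it rests on exactly the same device as the paper's proof: conjugating by the weight, i.e.\ setting $v(t):=\varphi_{\eb,\tau}(t)u(t)$ and noting that the weighted norms of $u$ and $h$ become unweighted norms of $v$ and $\varphi_{\eb,\tau}h$. The only divergence is in how the extra term coming from $\varphi_{\eb,\tau}'$ is handled. The paper keeps the operator $\tilde A$ fixed, writes the equation for $v$ with the right-hand side $\varphi_{\eb,\tau}h+\varphi_{\eb,\tau}'\varphi_{\eb,\tau}^{-1}v$, applies estimate \eqref{1.l2main} of Lemma \ref{Lem1.norm} as a black box, and then absorbs the resulting term $\frac{|\eb|}{\theta}\|v\|_{L^2(\R,H)}$ into the left-hand side; this absorption tacitly uses that $\|v\|_{L^2(\R,H)}$ is finite a priori. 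You instead fold the term into the operator, observing that $\tilde A_\pm+\eb\,\sgn(t-\tau)$ still satisfies the saddle inequalities \eqref{1.ssadle} with $\theta$ replaced by $\theta-|\eb|$, and re-run the energy argument of Lemma \ref{Lem1.norm} verbatim with this smaller constant. That costs you a repetition of the lemma's proof (plus the harmless remark about the jump of the coefficient at $t=\tau$), but it produces the constant $\frac1{\theta-|\eb|}$ directly without any absorption step, and your closing comment about first proving the bound for compactly supported $h$ and then extending $\Cal T$ by density is in fact more careful on this point than the paper, which passes over it in silence. Both routes give the same estimate \eqref{1.expnorm}.
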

\begin{proof}Indeed, let $v(t):=\varphi_{\eb,\tau}(t) u(t)$. Then, the $L^2(\R,H)$-norm of $v$ is the same as the $L^2_{\varphi_{\eb,\tau}}(\R,H)$-norm of $u$, so we only need to estimate the non-weighted norm of $v$. The function $v$ obviously satisfies the equation
\begin{equation}\label{1.wv}
\Dt v+\tilde Av=\varphi_{\eb,\tau}(t)h(t)+\varphi_{\eb,\tau}'(t)(\varphi_{\eb,\tau}(t))^{-1}v
\end{equation}
Applying estimate \eqref{1.l2main} to this equation and using that
$$
|\varphi'_{\eb,\tau}(t)|=|\eb|\varphi_{\eb,\tau}(t),
$$
we have
$$
\|v\|_{L^2(R,H)}\le\frac1\theta\|h\|_{L^2_{\varphi_{\eb,\tau}}(\R,H)}+\frac{|\eb|}{\theta}\|v\|_{L^2(\R,H)}.
$$
Thus,
$$
\|v\|_{L^2(\R,H)}\le\frac1{\theta-|\eb|}\|h\|_{L^2_{\varphi_{\eb,\tau}}(\R,H)}
$$
and the corollary is proved.
\end{proof}

\subsection{Nonlinear saddles: stable and unstable manifolds}\label{s1.3} In that paragraph, we consider the perturbed version of equation \eqref{1.lin}
\begin{equation}\label{1.seq}
\Dt u+\tilde A u=\tilde F(t,u),
\end{equation}
where the operator $\tilde A$ satisfies the assumptions of the previous subsection and the non-linearity $\tilde F$ is globally Lipschitz continuous with the Lipschitz constant $L$:
\begin{equation}\label{1.tl}
\|\tilde F(t,u_1)-\tilde F(t,u_2)\|_H\le L\|u_1-u_2\|_H
\end{equation}
uniformly with respect to $t\in\R$ and satisfies also the condition
\begin{equation}\label{1.nul}
\tilde F(t,0)\equiv 0.
\end{equation}
The aim of this paragraph is to show that the saddle structure generated by the linear equation \eqref{1.seq} (with $\tilde F=0$) persists if the Lipschitz constant $L$ is not large enough. To this end, we first need to define the stable and unstable sets of equation \eqref{1.seq}. Note also that since the equation considered is {\it non-autonomous}, these sets will also depend on time.

\begin{definition}\label{Def1.stunst} Let $\tau\in\R$ be fixed. The unstable set $\Cal M_+(\tau)\subset H$ consists of all $u_\tau\in H$ such that there exists a {\it backward} trajectory $u(t)$, $t\le\tau$, such that
\begin{equation}\label{1.uns}
u(\tau)=u_\tau,\ \ \|u\|_{L^2((-\infty,\tau],H)}<\infty.
\end{equation}
Analogously, the stable set $\Cal M_-(\tau)\subset H$ consists of all $u_\tau\in H$ such that there exists a {\it forward} trajectory $u(t)$, $t\ge\tau$, such that
\begin{equation}\label{1.stab}
u(\tau)=u_\tau,\ \ \|u\|_{L^2([\tau,+\infty),H)}<\infty.
\end{equation}
\end{definition}
The following theorem can be considered as the main result of this paragraph.

\begin{theorem}\label{Th1.manex} Let the above assumptions hold and let, in addition, the following spectral gap condition holds:
\begin{equation}\label{1.s-gap}
\theta>L,
\end{equation}
where $L$ is a Lipschitz constant of the nonlinearity $\tilde F$. Then, for every $\tau\in\R$, the set $\Cal M_+(\tau)$ is a Lipschitz manifold
over $H_+$ which is a graph of the uniformly in $\tau$ Lipschitz continuous function $M_+(\tau,\cdot):H_+\to H_-$, i.e.
\begin{equation}\label{1.uns-man}
\Cal M_+(\tau)=\{u_++M_+(\tau,u_+),\ \ u_+\in H_+\},\ \ \|M_+(\tau,v_1)-M_+(\tau,v_2)\|_{H_-}\le K\|v_1-v_2\|_{H_+}
\end{equation}
for some positive constant $K$.
Analogously, the stable set $\Cal M_-(\tau)$ is a graph of the uniformly in $\tau$ Lipschitz continuous function $M_-(\tau,\cdot):H_-\to H_+$.
\end{theorem}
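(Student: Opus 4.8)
The plan is, for each fixed $\tau\in\R$, to realize $\Cal M_+(\tau)$ as the graph of the map $\xi\mapsto P_-u[\tau,\xi](\tau)$, where $u[\tau,\xi]$ is the unique backward $L^2$-trajectory of \eqref{1.seq} on $(-\infty,\tau]$ with $P_+u[\tau,\xi](\tau)=\xi$. Such a trajectory will be manufactured by a contraction argument built on half-line versions of the estimates of Paragraph \ref{s1.2}; the uniform Lipschitz bound then comes from running differences of initial data through the same estimates, and $\Cal M_-(\tau)$ needs no separate treatment because the time reversal $t\mapsto2\tau-t$ turns \eqref{1.seq} into an equation of the same type with the roles of $H_+$ and $H_-$ exchanged and carries $\Cal M_-(\tau)$ onto the corresponding unstable set.

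First I would record the half-line analogue of Lemma \ref{Lem1.norm}. Repeating the computation behind \eqref{1.pos}--\eqref{1.neg}, but integrating the $H_+$-inequality over $[t,\tau]$ (where the unstable component is required to vanish at $t=\tau$) and the $H_-$-inequality over $(-\infty,t]$, shows that
\[
(\T_\tau g)(t):=-\int_t^\tau e^{-\tilde A_+(t-s)}g_+(s)\,ds+\int_{-\infty}^t e^{-\tilde A_-(t-s)}g_-(s)\,ds ,\qquad t\le\tau,
\]
defines a bounded operator on $L^2((-\infty,\tau],H)$ with $\|\T_\tau\|\le1/\theta$ — the two summands being orthogonal, the bound $1/\theta$ survives the sum exactly as in \eqref{1.key} — and, arguing as in Corollary \ref{Cor1.sm}, $\T_\tau$ maps into $C_b((-\infty,\tau],H)$. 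With $\ell_\xi(t):=e^{-\tilde A_+(t-\tau)}\xi$ one has $\|\ell_\xi\|_{L^2((-\infty,\tau],H)}^2\le\tfrac1{2\theta}\|\xi\|_H^2$ and $\ell_\xi\in C_b$ by \eqref{1.dich}. The map $v\mapsto\ell_\xi+\T_\tau\bigl[\tilde F(\cdot,v(\cdot))\bigr]$ on $L^2((-\infty,\tau],H)$ is then, by \eqref{1.tl} and $\|\T_\tau\|\le1/\theta$, a contraction with constant $L/\theta<1$ — this is the only place the spectral gap assumption \eqref{1.s-gap} is used — so it has a unique fixed point $u[\tau,\xi]$, which a routine converse-to-variation-of-constants check (using \eqref{1.nul}) confirms to be a genuine $L^2$-solution of \eqref{1.seq} on $(-\infty,\tau]$ with $P_+u[\tau,\xi](\tau)=\xi$.

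The crucial structural point is that these are the \emph{only} backward $L^2$-trajectories. If $u$ is any such trajectory on $(-\infty,\tau]$, its $H_+$-component satisfies the variation of constants formula on $[t,\tau]$ outright; for the $H_-$-component I would pass to the limit in $u_-(t)=e^{-\tilde A_-(t-t_0)}u_-(t_0)+\int_{t_0}^t e^{-\tilde A_-(t-s)}\tilde F_-(s,u(s))\,ds$ along a sequence $t_0\to-\infty$ with $\|u_-(t_0)\|\to0$ (available since $u_-\in L^2$), the boundary term being annihilated by \eqref{1.dich}. Hence $u=\ell_{P_+u(\tau)}+\T_\tau[\tilde F(\cdot,u)]$, so $u=u[\tau,P_+u(\tau)]$ by uniqueness, and therefore $\Cal M_+(\tau)=\{\xi+M_+(\tau,\xi):\xi\in H_+\}$ with $M_+(\tau,\xi):=P_-u[\tau,\xi](\tau)\in H_-$. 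For the Lipschitz estimate, set $u_i:=u[\tau,\xi_i]$; subtracting the fixed-point identities and using $\|\T_\tau\|\le1/\theta$ with \eqref{1.tl} gives $\|u_1-u_2\|_{L^2((-\infty,\tau],H)}\le\tfrac{\theta}{\theta-L}\|\ell_{\xi_1-\xi_2}\|_{L^2}\le\tfrac1{\sqrt{2\theta}}\cdot\tfrac{\theta}{\theta-L}\|\xi_1-\xi_2\|_H$, and then writing $M_+(\tau,\xi_1)-M_+(\tau,\xi_2)=\int_{-\infty}^\tau e^{-\tilde A_-(\tau-s)}\bigl(\tilde F_-(s,u_1(s))-\tilde F_-(s,u_2(s))\bigr)\,ds$ and estimating by \eqref{1.dich}, Cauchy--Schwarz and \eqref{1.tl} yields $\|M_+(\tau,\xi_1)-M_+(\tau,\xi_2)\|_H\le\tfrac{L}{2(\theta-L)}\|\xi_1-\xi_2\|_H$, a constant $K$ independent of $\tau$. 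The stable set is then obtained from the time-reversal symmetry described above.

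I expect the only genuinely delicate step to be the characterization of backward $L^2$-trajectories as fixed points of the map above — specifically the limiting argument that forces the $H_-$-component of an arbitrary such trajectory to be given by the integral $\int_{-\infty}^{\,t}$, hence to lie on the graph. Everything else is the Banach fixed point theorem driven by the sharp bound $\|\T_\tau\|\le1/\theta$; it is exactly this constant (rather than a larger one — cf. Remark \ref{Rem1.linf}) that makes the threshold appear as $\theta>L$ in \eqref{1.s-gap}.
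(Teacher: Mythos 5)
Your proposal is correct, and it is at heart the same Lyapunov--Perron scheme as the paper's: a Banach contraction in an $L^2$ space whose contraction factor $L/\theta<1$ comes from the sharp $1/\theta$ bound on the linear solution operator, exactly as dictated by \eqref{1.s-gap}. The differences are organizational. The paper subtracts $v(t)=e^{-\tilde A_+(t-\tau)}u_\tau$, extends the resulting problem by zero to the whole line (the truncated nonlinearity $\bar F$ in \eqref{1.unss3}), and then reuses Lemma \ref{Lem1.norm} verbatim; the boundary condition $P_+w(\tau)=0$ and the identification of $\Cal M_+(\tau)$ with the set of fixed points of \eqref{1.fixed} come for free from the observation that only the zero forward solution of the homogeneous unstable equation is square integrable. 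You instead stay on the half-line $(-\infty,\tau]$, which forces you to (i) re-derive the $1/\theta$ bound for your operator $\mathcal T_\tau$ (your computation is right: the $u_+$ estimate integrates over $[t,\tau]$ using $u_+(\tau)=0$, the $u_-$ estimate is unchanged, and orthogonality as in \eqref{1.key} preserves the constant), and (ii) prove separately that \emph{every} backward $L^2$ trajectory satisfies the integral identity $u=\ell_{P_+u(\tau)}+\mathcal T_\tau[\tilde F(\cdot,u)]$ — your limiting argument $t_0\to-\infty$ along a sequence with $\|u_-(t_0)\|\to0$, combined with the backward variation-of-constants formula for the $u_+$ component (which only uses the bounded directions of $e^{-\tilde A_\pm t}$ from \eqref{1.dich}), does exactly the job that the paper's half-line/whole-line equivalence does. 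What each version buys: the paper's extension trick minimizes new estimates and localizes all functional analysis in Lemma \ref{Lem1.norm}; your version is self-contained on the half-line, produces the explicit uniform Lipschitz constant $K=L/(2(\theta-L))$ for $M_+(\tau,\cdot)$ instead of appealing to Corollary \ref{Cor1.sm}, and disposes of the stable set by the time-reversal symmetry $(t,\tilde A,H_+,H_-)\mapsto(2\tau-t,-\tilde A,H_-,H_+)$, which is legitimate here since the hypotheses \eqref{1.diag}--\eqref{1.ssadle}, \eqref{1.tl}, \eqref{1.nul} are symmetric under this exchange, whereas the paper simply repeats the argument "analogously".
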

\begin{proof} We will consider below only the unstable set $\Cal M_+(\tau)$. The proof for the stable set is analogous. To verify \eqref{1.uns-man}, it is sufficient to show that for every $u_\tau\in H_+$ there is a unique solution $u\in L^2((-\infty,\tau])$ of the problem
\begin{equation}\label{1.unss}
\Dt u+\tilde Au=\tilde F(t,u),\ \ P_+u\big|_{t=\tau}=u_\tau
\end{equation}
and that this solution depends on $u_\tau$ in a Lipschitz continuous way. In that case, the desired map $M_+(\tau,u_\tau)$ is defined via
\begin{equation}\label{1.man}
M_+(\tau,u_\tau):=P_-u(\tau).
\end{equation}
To solve \eqref{1.unss}, we introduce a function $v(t):=e^{-\tilde A_+(t-\tau)}u_\tau$, $t\le\tau$ and $w(t):=u(t)-v(t)$. Then, the function $w$ solves
\begin{equation}\label{1.unss1}
\Dt w+\tilde A w=\tilde F(t,w+v(t)),\ \ P_+w\big|_{t=\tau}=0
\end{equation}
and, due to \eqref{1.dich},
\begin{equation}\label{1.uns1}
\|v\|_{L^2((-\infty,\tau],H)}\le \frac1\theta\|u_\tau\|.
\end{equation}
As the next step, we transform equation \eqref{1.unss1} to the equivalent equation on the whole line $t\in\R$. To this end, we introduce the function
\begin{equation}\label{1.unss2}
\bar F(t,u_\tau,w):=\begin{cases} \tilde F(t,w+v(t)),\ \ t<\tau,\\ 0,\,\ t\ge\tau. \end{cases}
\end{equation}
We claim that \eqref{1.unss1} is equivalent to the following problem: find $w\in L^2(\R,H)$ such that
\begin{equation}\label{1.unss3}
\Dt w+\tilde Aw=\bar F(t,u_\tau,w).
\end{equation}
Indeed, any solution $w\in L^2((-\infty,\tau],H)$ of problem \eqref{1.unss1} can be extended to the solution of \eqref{1.unss3} by setting
$$
w_+(t)\equiv0,\ \ w_-(t)=e^{-\tilde A_-(t-\tau)}w_-(\tau)
$$
for $t\ge\tau$ and \eqref{1.dich} guarantees that $w\in L^2(\R,H)$. Vise versa, if $w\in L^2(\R,H)$ solves \eqref{1.unss3} then, for $t\ge\tau$
$$
\Dt w_++\tilde A_+w_+=0.
$$
Since, due to \eqref{1.dich}, only zero forward solution can be square integrable, we conclude that $w_+(\tau)=0$ and $w$ satisfies \eqref{1.unss1}.
\par
Finally, using the solution operator $\Cal T$ defined in Lemma \ref{Lem1.norm}, we transform \eqref{1.unss3} to the equivalent fixed point equation
\begin{equation}\label{1.fixed}
w=\Cal T\circ\bar F(\cdot,u_\tau,w).
\end{equation}
We claim that the right-hand side of \eqref{1.fixed} is a {\it contraction} on the Banach space $L^2(\R,H)$ (for every fixed $u_\tau$). Indeed, according to \eqref{1.tl} and \eqref{1.uns1}, we have
\begin{multline}\label{1.contr}
\|\bar F(\cdot,u_\tau^1,w^1)-\bar F(u_\tau^2,w^2)\|_{L^2(\R,H)}\le L\|w^1+u_\tau^1-w^2-u_\tau^2\|_{L^2((-\infty,\tau],H)}\le\\\le L\(\|w^1-w^2\|_{L^2(\R,H)}+\frac1\theta\|u_\tau^1-u_\tau^2\|_H\)
\end{multline}
and due to \eqref{1.nul}, $\bar F(\cdot,0,0)=0$, therefore, taking $w^2=u_\tau^2=0$ in \eqref{1.contr}, we see also that $\bar F(\cdot,u_\tau,w)\in L^2(\R,H)$ if $u_\tau\in H_+$ and $w\in L^2(\R,H)$.
\par
Thus, due to Lemma \ref{Lem1.norm}, estimate \eqref{1.contr} and the spectral gap condition \eqref{1.s-gap}, the right-hand side of \eqref{1.fixed} is indeed a contraction on $L^2(\R,H)$ with the contraction factor $\frac L\theta<1$ for every fixed $u_\tau\in H_+$. Therefore, by the Banach contraction theorem, there is a {\it unique} solution
$w=W(u_\tau)\in L^2(\R,H)$ of \eqref{1.fixed}. Moreover, since the right-hand side of \eqref{1.fixed} is Lipschitz also in $u_\tau$, the function
$u_\tau\to W(u_\tau)$ is Lipschitz continuous as well. Since $P_-v(t)\equiv0$, the desired function $M_+(\tau,u_\tau)$ is now defined via
$$
M_+(\tau,u_\tau):=W(u_\tau)\big|_{t=\tau}
$$
and the Lipschitz continuity of that function is guaranteed by Corollary \ref{Cor1.sm}. So, the theorem is proved.
\end{proof}
\begin{remark}\label{Rem1.manifold} The Definition \eqref{Def1.stunst} of stable and unstable sets looks slightly different from the traditional one since not only the convergence of the corresponding trajectories to zero as $t\to\pm\infty$, but also their square integrability is required. However, this difference is not essential at least under the assumptions of Theorem \ref{Th1.manex}. Indeed, as not difficult to see using Corollary \ref{Cor1.weight}, the right-hand side of equation \eqref{1.fixed} will be a contraction not only in the space $L^2(\R,H)$, but also in all weighted space $L^2_{\varphi_{\eb,\tau}}(\R,H)$ if $|\eb|<\theta-L$. Using this observation and setting $\eb$ being small {\it positive}, we see, in particular, that any solution satisfying \eqref{1.uns} decays {\it exponentially} as $t\to-\infty$ and
\begin{equation}\label{1.expconv}
\|u(t)\|_H\le Ce^{\eb(t-\tau)}\|u_\tau\|_H,\ \ u_\tau\in\Cal M(\tau),\ \ t\le\tau.
\end{equation}
On the other hand, fixing $\eb$ being {\it negative}, we see that any solution $u(t)$ which is a priori only {\it bounded} as $t\to-\infty$ (or even a priori exponentially growing with sufficiently small exponent) is a posteriori converges exponentially to zero as $t\to-\infty$ and is generated by the initial data belonging to the unstable manifold $\Cal M_+(\tau)$. The analogous is true for the stable manifolds $\Cal M_-(\tau)$ as well.
\end{remark}
We conclude this paragraph by proving the exponential tracking for the stable/unstable manifolds constructed.

\begin{theorem}\label{Th1.track} Let the assumptions of Theorem \ref{Th1.manex} hold, $\tau\in\R$ be fixed and $u(t)$, $t\ge\tau$ be the forward solution of \eqref{1.seq} belonging to $C_{loc}([\tau,\infty),H)$. Then, there exists a solution $v(t)$, $t\in\R$, of problem \eqref{1.seq} such that
\begin{equation}\label{1.trr}
1. \ \ v(t)\in\Cal M_+(t),\ \ t\in\R,\ \ \ 2. \ \ \|u(t)-v(t)\|_H\le C\|u\|_{L^2([\tau,\tau+1],H)}e^{-\eb (t-\tau)}, \ \ t\ge\tau
\end{equation}
for some positive $C$ and $\eb$ which are independent of $t$, $\tau$ and $u$.
\end{theorem}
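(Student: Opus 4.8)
The plan is to obtain the tracking trajectory $v$ directly as a \emph{full} solution of \eqref{1.seq} on $\R$ lying on all of the unstable manifolds $\Cal M_+(t)$, produced by a single contraction argument on the whole line. The natural unknown is the ``error''
\[
w(t):=\begin{cases}v(t),& t<\tau,\\[0.5mm] v(t)-u(t),& t\ge\tau,\end{cases}
\]
which, if $v$ solves \eqref{1.seq} and is to lie on $\Cal M_+(t)$, must (a) solve $\Dt w+\tilde Aw=\Cal F(t,w)$ on $\R\setminus\{\tau\}$, where $\Cal F(t,z):=\tilde F(t,z)$ for $t<\tau$ and $\Cal F(t,z):=\tilde F(t,u(t)+z)-\tilde F(t,u(t))$ for $t>\tau$ — by \eqref{1.tl} and \eqref{1.nul} this $\Cal F$ is globally $L$-Lipschitz in $z$ and satisfies $\Cal F(t,0)\equiv0$; (b) have the prescribed jump $w(\tau^+)-w(\tau^-)=-u(\tau)$ at $t=\tau$; and (c) decay exponentially as $t\to\pm\infty$ (forward this is the tracking estimate; backward, combined with the fact that $v$ is a solution, it forces $v(t)\in\Cal M_+(t)$ via Definition \ref{Def1.stunst}). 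Conversely, any such $w$ reconstructs a valid $v$ by setting $v:=w$ on $(-\infty,\tau)$ and $v:=u+w$ on $[\tau,\infty)$, the jump relation making $v$ continuous across $\tau$. Throughout I fix $\eb$ with $0<\eb<\theta-L$ and work in the weighted space $L^2_{\varphi_{-\eb,\tau}}(\R,H)$ (weight $e^{\eb|t-\tau|}$, which encodes the decay in (c)).

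The key step is to recast (a)--(b) so that Corollary \ref{Cor1.weight} applies. Propagating the jump $-u(\tau)$ backward along the unstable directions and forward along the stable directions, and using the variation of constants formula \eqref{1.var} for the remaining part, the problem (for $w$ in the decaying space) becomes
\[
w=\Cal T\bigl[\Cal F(\cdot,w)\bigr]+\chi,\qquad
\chi(t):=e^{-\tilde A_+(t-\tau)}u_+(\tau)\,\mathbf{1}_{\{t<\tau\}}-e^{-\tilde A_-(t-\tau)}u_-(\tau)\,\mathbf{1}_{\{t>\tau\}},
\]
with $\Cal T$ the solution operator of Lemma \ref{Lem1.norm}. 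The inhomogeneity $\chi$ does not contain $w$ and, by the dichotomy \eqref{1.dich}, obeys $\|\chi(t)\|_H\le e^{-\theta|t-\tau|}\|u(\tau)\|_H$, so $\chi\in L^2_{\varphi_{-\eb,\tau}}(\R,H)$ with norm $\le C\|u(\tau)\|_H$; and since $\Cal F(t,0)\equiv0$, the map $w\mapsto\Cal F(\cdot,w)$ sends $L^2_{\varphi_{-\eb,\tau}}(\R,H)$ into itself with Lipschitz constant $L$. By Corollary \ref{Cor1.weight} (weight $\varphi_{-\eb,\tau}$, with $|-\eb|=\eb<\theta$) the operator $\Cal T$ has norm $\le 1/(\theta-\eb)$ on this space, so the right-hand side is a contraction with factor $L/(\theta-\eb)<1$ — and this is the only place the hypothesis \eqref{1.s-gap} enters. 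Hence there is a unique $w\in L^2_{\varphi_{-\eb,\tau}}(\R,H)$, with $\|w\|_{L^2_{\varphi_{-\eb,\tau}}(\R,H)}\le C\|u(\tau)\|_H$.

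It then remains to read off the conclusions. Reconstruct $v$ from $w$ as above; on each side of $\tau$ it solves \eqref{1.seq} and it is continuous at $\tau$, so $v$ is a full solution on $\R$. Since $w\in L^2_{\varphi_{-\eb,\tau}}(\R,H)$ and $\varphi_{-\eb,\tau}\ge1$, we have $v\in L^2((-\infty,t],H)$ for every $t$ (the finite piece $[\tau,t]$ being harmless), whence $v(t)\in\Cal M_+(t)$ by Definition \ref{Def1.stunst}: this is claim 1 of \eqref{1.trr}. For $t\ge\tau$ one has $u(t)-v(t)=w(t)$, and applying the localized smoothing estimate behind Corollary \ref{Cor1.sm} to $\Dt w+\tilde Aw=\Cal F(t,w)$ on $[\tau,\infty)$ (where $\|\Cal F(\cdot,w)\|_H\le L\|w\|_H$) upgrades the weighted $L^2$ bound to the pointwise decay $\|w(t)\|_H\le Ce^{-\eb(t-\tau)}\|u(\tau)\|_H$. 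Finally, to replace $\|u(\tau)\|_H$ by $\|u\|_{L^2([\tau,\tau+1],H)}$ I would test the two components of \eqref{1.seq} for $u$ by $u_\pm$, use \eqref{1.ssadle} together with $\|\tilde F(t,u)\|_H\le L\|u\|_H$ (from \eqref{1.tl}, \eqref{1.nul}), integrate the resulting differential inequalities over $[\tau,\tau+1]$ and average over the free endpoint — exactly the mechanism of Corollary \ref{Cor1.sm} — to get $\|u(\tau)\|_H\le C\|u\|_{L^2([\tau,\tau+1],H)}$. Combining gives claim 2 of \eqref{1.trr}, with $C,\eb$ depending only on $\theta,L$.

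The one genuinely delicate point is conceptual rather than computational: since $u(\tau)$ need not lie on $\Cal M_+(\tau)$, the trajectory $u$ has no exponentially decaying backward extension, so the naive idea of subtracting a full extension of $u$ fails. The jump reformulation circumvents exactly this, and, crucially, it keeps the fixed point driven purely by $\Cal T\circ\Cal F$; this is what lets the estimate close under the sharp condition $\theta>L$, rather than under a smallness of $L$ relative to the Lipschitz constant of the manifold $M_+(\tau,\cdot)$ from Theorem \ref{Th1.manex} — which is what a more pedestrian construction on $[\tau,\infty)$ (enforcing $v(\tau)\in\Cal M_+(\tau)$ through the graph function) would require.
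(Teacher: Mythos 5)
Your construction itself (sharp jump at $t=\tau$, explicit dichotomy inhomogeneity $\chi$, contraction for $w\mapsto\Cal T[\Cal F(\cdot,w)]+\chi$ in the weighted space, then reading off $v(t)\in\Cal M_+(t)$ from square-integrability on half-lines and the forward decay from the weighted bound plus local smoothing) is sound, and it is a genuinely different route from the paper, which instead sets $v(t):=\psi(t)u(t)+w(t)$ with a smooth cut-off $\psi$ vanishing for $t\le\tau$ and equal to one for $t\ge\tau+1$. But your argument only yields $\|u(t)-v(t)\|_H\le Ce^{-\eb(t-\tau)}\|u(\tau)\|_H$, and the final conversion step is where the proof breaks: the inequality $\|u(\tau)\|_H\le C\|u\|_{L^2([\tau,\tau+1],H)}$ with $C$ independent of $u$ is \emph{false}. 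The mechanism of Corollary \ref{Cor1.sm} controls the value of the \emph{unstable} component by an $L^2$-norm over a future interval because the differential inequality \eqref{1.proint} has the favourable sign; for the stable component the inequality runs the other way, and a forward solution's $H_-$-part may decay arbitrarily fast, so its value at $\tau$ is not dominated by its $L^2$-norm on $[\tau,\tau+1]$. Concretely, take $\tilde F\equiv0$ (admissible: it is $L$-Lipschitz and satisfies \eqref{1.nul}) and $u(t):=e^{-\tilde A_-(t-\tau)}e$ with $e\in H_-$ a unit eigenvector of $\tilde A_-$ of eigenvalue $\mu$; then $\|u(\tau)\|_H=1$ while $\|u\|_{L^2([\tau,\tau+1],H)}\sim\mu^{-1/2}$, so no uniform constant exists, and your bound is strictly weaker than the stated one precisely when the stable part of $u(\tau)$ is large but rapidly decaying.

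This is exactly the point the paper's cut-off is designed to handle: with $v=\psi u+w$ the inhomogeneous part of the fixed-point equation is $-\psi'(t)u(t)$, supported on $(\tau,\tau+1)$, whose weighted $L^2$-norm is bounded by $C\|u\|_{L^2([\tau,\tau+1],H)}$, so the $L^2([\tau,\tau+1])$-norm of $u$ enters the final estimate directly, whereas your sharp jump unavoidably produces an inhomogeneity of size $\|u(\tau)\|_H$. If you want the constant as stated, either adopt the cut-off, or splice at a time $s\in[\tau,\tau+1]$ chosen so that $\|u(s)\|_H\le\|u\|_{L^2([\tau,\tau+1],H)}$ (possible since the mean of $\|u\|_H^2$ over the interval is attained); note that with either repair the clean inequality is really obtained for $t\ge\tau+1$, while on $[\tau,\tau+1]$ one retains an extra term involving $\|u(t)\|_H$ (this is implicit in the paper's proof as well, where $u-v=(1-\psi)u-w$ on that interval), and this weaker form is all that the application to exponential tracking in Paragraph \ref{s1.4} requires. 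Apart from this last step, your argument is correct and closes under the sharp condition $\theta>L$, just as the paper's does.
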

\begin{proof} Let $\psi(t)$ be the smooth cut-off function such that $\psi(t)\equiv0$ for $t\le\tau$ and $\psi(t)\equiv1$ for $t\ge\tau+1$. We seek for the desired solution $v(t)$ of problem \eqref{1.seq} in the form
\begin{equation}\label{1.good}
v(t):=\psi(t)u(t)+w(t).
\end{equation}
Then, the function $w$ solves
\begin{equation}\label{1.goodeq}
\Dt w+\tilde A w=\tilde F(t,w+\psi(t)u(t))-\psi(t)\tilde F(t,u(t))-\psi'(t)u(t):=\hat F(t,w).
\end{equation}
Due to the \eqref{1.tl},  function $\hat F$ is globally Lipschitz continuous with the Lipschitz constant $L$:
\begin{equation}\label{1.llip}
\|\hat F(t,w_1)-\hat F(t,w_2)\|_H=\|\tilde F(t,w_1+\psi u)-\tilde F(t,w_2+\psi u)\|_H\le L\|w_1-w_2\|_H.
\end{equation}
Moreover,  $\hat F(t,w)=\tilde F(t,u(t)+w)-\tilde F(t,u(t))$ for $t\ge\tau+1$ and $\hat F(t,w)=\tilde F(t,w)$ for $t\le\tau$ and, therefore,
due to \eqref{1.nul}, $\hat F(t,0)=0$ if $t\notin(\tau,\tau+1)$. Thus, due to \eqref{1.llip} and global Lipschitz continuity of $\tilde F$, we have
\begin{equation}\label{1.llip1}
\|\hat F(t,w)\|_H\le L\|w\|_H+C\chi_{(\tau,\tau+1)}(t)\|u(t)\|_H,
\end{equation}
where $C$ is independent of $\tau$ and $u$ and $\chi_{(\tau,\tau+1)}(t)$ is a characteristic function of the interval $(\tau,\tau+1)$.
\par
On the other hand, thanks to \eqref{1.trr} and \eqref{1.good}, we should have $w\in L^2(\R,H)$, so applying the solution operator $\Cal T$ defined in Lemma \ref{Lem1.norm} to equation \eqref{1.goodeq}, we transform it to the fixed point equation
\begin{equation}\label{1.fixed1}
w=\Cal T\circ\hat F(\cdot,w).
\end{equation}
Thanks to \eqref{1.llip} and \eqref{1.llip1}, the spectral gap condition \eqref{1.s-gap} and Lemma \ref{Lem1.norm}, the right-hand side of \eqref{1.fixed1} is a contraction in the space $L^2(\R,H)$. Moreover, due to Corollary \ref{Cor1.weight}, it is also the contraction in the weighted space $L^2_{\phi_{\eb,\tau}}(\R,H)$ if $|\eb|<\theta-L$. Thus, due to the Banach contraction theorem, there is a solution $w$ of equation \eqref{1.fixed1} which satisfies
$$
\|w\|_{L^2_{\varphi_{\eb,\tau}}(\R,H)}\le C\|u\|_{L^2([\tau,\tau+1],H)}.
$$
This estimate together with the smoothing property of Corollary \ref{Cor1.sm} and formula \eqref{1.good} gives the desired properties \eqref{1.expconv} and finish the proof of the theorem.
\end{proof}
\begin{remark}\label{Rem1.stable} The analogous exponential tracking property for {\it backward} solutions also holds and can be verified exactly as in Theorem \ref{Th1.track}.
\end{remark}
\begin{remark}\label{Rem1.invar} Recall that we assume the well-posedness of the initial value problem for equation \eqref{1.seq}
 neither backward nor forward in time. However, if we assume in addition that this equation is well-posed {\it forward} in time then the two-parametric family of solution operators $U(t,\tau):H\to H$, $t\ge\tau$, is well-defined by
 $$
 U(t,\tau)u_\tau:=u(t).
 $$
 In that case, it follows immediately from the definition that the unstable sets $\Cal M_+(\tau)$ are invariant with respect to these solution operators:
 \begin{equation}\label{1.inv-uns}
 U(t,\tau)\Cal M_+(\tau)=\Cal M_+(t),\ \ t\ge\tau.
 \end{equation}
 In contrast to that, the stable manifolds are only semi-invariant:
 $$
 U(t,\tau)\Cal M_-(\tau)\subset\Cal M_-(t).
 $$
 The absence of the strict invariance is related with the fact that, in general, not all forward trajectories can be extended backward in time.
 \end{remark}

 \subsection{Existence of an inertial manifold}\label{s1.4} We are now ready to prove the key Theorem \ref{Th1.main}. The idea of the proof is, following to \cite{gor-chep}, to present equation \eqref{1.eqmain} in the form of \eqref{1.seq} by the proper change of the dependent variable $u$ and then to obtain the inertial manifold of \eqref{1.eqmain} as an unstable manifold of \eqref{1.seq}. For simplicity, we give the proof under the extra assumption:
 \begin{equation}\label{1.bad}
 F(0)=0.
 \end{equation}
Then, setting
\begin{equation}\label{1.change}
\tilde u(t):=e^{\alpha t}u(t),\ \ \alpha:=\frac{\lambda_{N+1}+\lambda_N}2,
\end{equation}
we transform \eqref{1.eqmain} to
\begin{equation}\label{1.seq1}
\Dt\tilde u+\tilde A\tilde u=\tilde F(t,\tilde u),\ \ \tilde F(t,\tilde u):=e^{\alpha t}F(e^{-\alpha t}\tilde u),
\end{equation}
where the operator $\tilde A$ is defined by \eqref{1.shift}. Then, as not difficult to see, the function $\tilde F(t,\tilde u)$ is globally Lipschitz continuous with the same Lipschitz constant $L$. Moreover, assumption \eqref{1.dich} follows from \eqref{1.pregap} and the spectral gap condition
\eqref{1.s-gap} is guaranteed by \eqref{1.gap}. Finally, \eqref{1.nul} follows from our extra assumption \eqref{1.bad}. Thus, all of the assumptions of Theorem \ref{Th1.manex} are satisfied and, therefore, there exist the unstable manifolds $\Cal M_+(t)$, $t\in\R$, for problem \eqref{1.seq1}. Moreover, by the uniqueness part of Banach contraction theorem, we also know that
$$
M_+(t,u_+)=e^{\alpha t}M_+(0,e^{-\alpha t}u_+).
$$
Thus, $\Cal M_+(0)$ is invariant with respect to the solution semigroup $S(t)$ of problem \eqref{1.eqmain}. Finally, the exponential tracking property \eqref{1.phase} is an immediate corollary of Theorem \ref{Th1.track}. So, $\Cal M_+(0)$ is indeed the inertial manifold for problem \eqref{1.eqmain} and Theorem \ref{Th1.main} is proved.

\begin{remark}\label{Rem1.0} The extra assumption \eqref{1.bad} is purely technical and can be removed. Indeed, if the abstract elliptic equation
\begin{equation}\label{1.ell}
Av=F(v)
\end{equation}
possess a solution in $H$, the change of the dependent variable $\bar u(t):=u(t)-v$ reduces the problem to the case when \eqref{1.bad} is satisfied.
Although the solvability of \eqref{1.ell} does not follow from the assumptions of Theorem \ref{Th1.main}, it holds in the most part of applications.
\par
Alternatively, analyzing the proof of Theorems \ref{Th1.manex} and \ref{Th1.track}, one can see that assumption \eqref{1.nul} can be replaced by  the weaker assumption that $\tilde F(\cdot,0)\in L^2_{\varphi_{\eb,\tau}}((-\infty,\tau),H)$ and that assumption does not require the extra assumption \eqref{1.bad} to be satisfied.
\end{remark}
\begin{remark}\label{Rem1.dis} Note that assumptions of Theorem \ref{Th1.main} allow the solutions of \eqref{1.eqmain} to grow as $t\to\infty$. In that case, the {\it global} Lipschitz continuity of the non-linearity $F$ is indeed necessary. However, usually this theorem is applied to {\it dissipative} equations where the following estimate holds:
\begin{equation}\label{1.dis}
\|S(t)u_0\|_H\le Q(\|u_0\|_H)e^{-\beta t}+C_*,\ \ t\ge0
\end{equation}
for some positive $\beta$ and $C_*$ and a monotone increasing function $u$. In that case, every forward in time trajectory of \eqref{1.eqmain} enters the {\it absorbing} ball $B:=\{u\in H, \ \|u\|_H\le 2C_*\}$ after some time and remains inside for all larger times, see \cite{tem,BV,CV} and references therein. By this reason, since we are mainly interested in the behavior of solutions belonging to the absorbing ball, we may cut-off the nonlinearity outside of this ball without increasing the Lipschitz constant. Thus, instead of the global Lipschitz constant, we may use the {\it local} Lipschitz constant of $F$ on the absorbing ball $B$ only, see e.g., \cite{tem}.
\par
Note also that the inertial manifold $\Cal M=\Cal M_N$ is {\it unique} if $N\in\Bbb N$ satisfying the spectral gap condition is fixed and if $F$ is globally Lipschitz continuous (due to the uniqueness part of the Banach contraction theorem) and is generated by all solutions $u(t)$, $t\in\R$ which grow not faster than $e^{-\alpha t}$, $\alpha:=\frac{\lambda_{N+1}+\lambda_N}2$ as $t\to-\infty$. However, in the case when $F$ is not {\it globally} Lipschitz and the cut-off procedure is needed, like the usual center manifolds, the inertial manifold depends on the way how we cut-off the non-linearity and becomes not unique.
\end{remark}
\begin{remark}\label{Rem1.gen} As we can see from the proof, the assumptions of the main Theorem \ref{Th1.main} can be essentially relaxed. For instance, the extension to case when the non-linearity $F$ depends explicitly on time is immediate (the key Theorem \ref{Th1.manex} is already given for the non-autonomous case). Moreover, neither the finite-dimensionality of $H_+$ nor the fact that $A$ is self-adjoint are essentially used. Thus, the above described scheme can be applied for the non-selfadjoint case, e.g., when $A$ is sectorial or  when it corresponds to the hyperbolic or even elliptic equations, see \cite{tem,mik,gor-chep,chep-hyp,bab,kok1,foseti} for the details. However, the dichotomy \eqref{1.dich} and the spectral gap condition \eqref{1.gap} are {\it crucial} and the inertial manifold may not exist if they are violated, see counterexamples below.
\par
Nevertheless, one should be very careful in applications of the abstract theory involving the non-selfadjoint operators. In a fact, this abstract theory contain some "natural" assumptions (similar to \eqref{1.dich}) which are immediate for the selfadjoint case, but can be surprisingly violated even for the simplest model non-selfadjoint examples. Indeed, let us consider the case of the phase space $H\times H$ with the operator
$$
\Bbb A:=\(\begin{matrix}1&1\\0&1\end{matrix}\)A,
$$
where $A$ is a self-adjoint operator in $H$ satisfying the assumptions of Theorem \ref{Th1.main}. Then, after the transform \eqref{1.change}, the corresponding non-homogeneous linear equation reads:
\begin{equation}\label{1.strange}
\Dt u+(A-\alpha)u+Av=h^1(t),\ \ \Dt v+(A-\alpha)v=h^2(t),
\end{equation}
where $(u,v)\in H\times H$ and, in particular, for the $N$th component, we have the following equations
$$
\Dt u_N-\theta u_N+\lambda_N v_N=h_N^1(t),\ \ \Dt v_N-\theta v_N=h^2_N(t)
$$
which contain the huge (in comparison with $\theta\sim\lambda_{N+1}-\lambda_N$) non-diagonal term $\lambda_N v_N$. By this reason, the solution operator of problem \eqref{1.strange} cannot satisfy estimate \eqref{1.l2main} with the constant $\frac1\theta$, but we may expect this estimate to be valid only with much larger constant $\frac1\theta\(1+\frac{\lambda_N}\theta\)\sim\frac{\lambda_N}{\theta^2}$ and, therefore, essentially stronger spectral gap condition than \eqref{1.gap} is required. Exactly this fact has been overseen in the famous "proof" of the inertial manifold existence for the 2D Navier-Stokes equations given by Kwak (see \cite{kwak,tem-wrong}, see also \cite{rom-wrong} for the analogous mistake in the case of reaction-diffusion equations).
\end{remark}

\subsection{More general class of non-linearities}\label{s.gen} The aim of this paragraph is to discuss briefly the more general case where
 the nonlinearity $F$ decreases the regularity which corresponds in applications to the case
 where the nonlinear terms contain spatial derivatives. Namely, we assume now that the nonlinearity $F$ is globally Lipschitz not as a map in $H$, but as a map from $H^\gamma$ to $H^\beta$ for some $\beta<\gamma$:
 \begin{equation}\label{1.dlip}
 \|F(u_1)-F(u_2)\|_{H^\beta}\le L\|u_1-u_2\|_{H^\gamma},\ \ u_1,u_2\in H^\gamma,
 \end{equation}
 where $H^s:=D(A^{s/2})$. Obviously, without loss of generality, we may assume that $\gamma=0$ (otherwise, we just take $H^\gamma$ as the phase space instead of $H$). Then, since $A^{-1}$ maps $H^s$ to $H^{s+2}$ for all $s$, the non-linearity $F$ will be subordinated to the linear terms of equation \eqref{1.eqmain} if $\beta>-2$. In that case, the standard arguments give the existence and uniqueness of global solutions of \eqref{1.eqmain} and, therefore, the solution semigroup $S(t)$ associated with equation \eqref{1.eqmain} is well-defined, see \cite{hen}.
  Moreover, we may speak about the inertial manifolds for that problem which can be defined exactly as in Paragraph \ref{s1.1}. The following theorem extends Theorem \ref{Th1.main} to that case.

 \begin{theorem}\label{Th1.dmain} Let the operator $A$ is the same as in Theorem \ref{Th1.main}, $F$ satisfies \eqref{1.dlip} with $\gamma=0$ and
 $-2<\beta\le0$. Let also $N\in\Bbb N$ be such that the following spectral gap condition is satisfied:
 \begin{equation}\label{1.g-gap}
 \frac{\lambda_{N+1}-\lambda_N}{\lambda_{N+1}^{-\beta/2}+\lambda_N^{-\beta/2}}>L.
 \end{equation}
 Then, problem \eqref{1.eqmain} possesses the $N$ dimensional inertial manifold with exponential tracking property which is a graph of a Lipshitz function $\Phi:H_+\to H_-$ with $H_+:=P_NH$.
 \end{theorem}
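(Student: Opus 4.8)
The plan is to run the proof of Theorem~\ref{Th1.main} essentially verbatim; the only genuinely new ingredient is a version of Lemma~\ref{Lem1.norm} in which the right-hand side is measured in $H^\beta$ and the solution in $H$, combined with an \emph{asymmetric} choice of the spectral shift. As in Remark~\ref{Rem1.0} it suffices to treat the case $F(0)=0$, and, passing to $\tilde u(t):=e^{\alpha t}u(t)$, equation \eqref{1.eqmain} becomes $\Dt\tilde u+(A-\alpha)\tilde u=\tilde F(t,\tilde u)$ with $\tilde F(t,v):=e^{\alpha t}F(e^{-\alpha t}v)$ still Lipschitz from $H=H^0$ to $H^\beta$ with constant $L$ and satisfying $\tilde F(t,0)\equiv0$. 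The point is that $\alpha$ must \emph{not} be the arithmetic midpoint of $\lambda_N$ and $\lambda_{N+1}$ but the one weighted by the regularity factors: put
\[
\theta_+:=\frac{\lambda_N^{-\beta/2}(\lambda_{N+1}-\lambda_N)}{\lambda_N^{-\beta/2}+\lambda_{N+1}^{-\beta/2}},\qquad
\theta_-:=\frac{\lambda_{N+1}^{-\beta/2}(\lambda_{N+1}-\lambda_N)}{\lambda_N^{-\beta/2}+\lambda_{N+1}^{-\beta/2}},\qquad
\alpha:=\lambda_N+\theta_+,
\]
so that $\lambda_N<\alpha<\lambda_{N+1}$, $\theta_++\theta_-=\lambda_{N+1}-\lambda_N$, and $\lambda_N^{-\beta/2}/\theta_+=\lambda_{N+1}^{-\beta/2}/\theta_-=:C_0=(\lambda_N^{-\beta/2}+\lambda_{N+1}^{-\beta/2})/(\lambda_{N+1}-\lambda_N)$. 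With $\tilde A:=A-\alpha$ one has the asymmetric dichotomy $(\tilde A_+v,v)\le-\theta_+\|v\|_H^2$ on $H_+:=P_NH$ and $(\tilde A_-v,v)\ge\theta_-\|v\|_H^2$ on $H_-:=Q_NH$, hence $\|e^{-\tilde A_+t}\|_{L(H,H)}\le e^{\theta_+t}$ for $t\le0$ and $\|e^{-\tilde A_-t}\|_{L(H,H)}\le e^{-\theta_-t}$ for $t\ge0$.

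The heart of the argument is the following analogue of Lemma~\ref{Lem1.norm}: for every $h\in L^2(\R,H^\beta)$ the equation $\Dt u+\tilde Au=h$ has a unique solution $u\in L^2(\R,H)$, given by \eqref{1.var}, and $\|u\|_{L^2(\R,H)}\le C_0\|h\|_{L^2(\R,H^\beta)}$. I would prove it mode by mode. For $n\le N$ the formula \eqref{1.var} gives $u_n(t)=-\int_t^\infty e^{(\alpha-\lambda_n)(t-s)}h_n(s)\,ds$, so Young's inequality for the convolution with $e^{(\alpha-\lambda_n)\cdot}\chi_{(-\infty,0)}$ yields $\|u_n\|_{L^2(\R)}\le(\alpha-\lambda_n)^{-1}\|h_n\|_{L^2(\R)}$; since $-2<\beta\le0$, the function $\mu\mapsto\mu^{-\beta/2}/(\alpha-\mu)$ is increasing on $(0,\alpha)$, whence $(\alpha-\lambda_n)^{-1}\le C_0\lambda_n^{\beta/2}$, and summing over $n\le N$ gives $\|P_Nu\|_{L^2(\R,H)}\le C_0\|P_Nh\|_{L^2(\R,H^\beta)}$. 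For $n\ge N+1$ one similarly gets $\|u_n\|_{L^2(\R)}\le(\lambda_n-\alpha)^{-1}\|h_n\|_{L^2(\R)}$, and now $\mu\mapsto\mu^{-\beta/2}/(\mu-\alpha)$ is decreasing on $(\alpha,\infty)$ --- its boundedness at infinity is exactly what the assumption $\beta>-2$ (i.e.\ the subordination of $F$ to $A$) guarantees --- so $(\lambda_n-\alpha)^{-1}\le C_0\lambda_n^{\beta/2}$ and $\|Q_Nu\|_{L^2(\R,H)}\le C_0\|Q_Nh\|_{L^2(\R,H^\beta)}$. Since both the $H$-norm and the $H^\beta$-norm split orthogonally along $H=H_+\oplus H_-$, adding the squares finishes the estimate. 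One also needs the two corollaries: the smoothing bound $\|u\|_{C_b(\R,H)}\le C\|h\|_{L^2(\R,H^\beta)}$, which in addition uses the parabolic regularization $\|e^{-Ar}\|_{L(H^\beta,H)}\le Cr^{\beta/2}$ (locally integrable in $r$ precisely because $\beta>-2$), and the weighted bound $\|\Cal Th\|_{L^2_{\varphi_{\eb,\tau}}(\R,H)}\le(C_0+o(1))\|h\|_{L^2_{\varphi_{\eb,\tau}}(\R,H^\beta)}$ as $\eb\to0$, obtained from the substitution $v:=\varphi_{\eb,\tau}u$ exactly as in Corollary~\ref{Cor1.weight}.

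Granting these estimates, the rest is mechanical: one repeats the proofs of Theorems~\ref{Th1.manex} and \ref{Th1.track} word for word, with the solution operator $\Cal T:L^2(\R,H^\beta)\to L^2(\R,H)$ in place of the old $\Cal T:L^2(\R,H)\to L^2(\R,H)$. The map $w\mapsto\Cal T\circ\bar F(\cdot,u_\tau,w)$ (with $\bar F$ as in \eqref{1.unss2}) is then a contraction on $L^2(\R,H)$ with factor $LC_0=L(\lambda_N^{-\beta/2}+\lambda_{N+1}^{-\beta/2})/(\lambda_{N+1}-\lambda_N)$, which is $<1$ precisely under the spectral gap condition \eqref{1.g-gap}; hence the unstable sets $\Cal M_+(\tau)$ are uniform Lipschitz graphs over $H_+$, the weighted estimate gives a contraction in $L^2_{\varphi_{\eb,\tau}}$ for $|\eb|$ small and hence the exponential tracking, and, undoing the change of variables as in Paragraph~\ref{s1.4}, $\Cal M_+(0)$ is the required $N$-dimensional inertial manifold. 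The main obstacle --- and the only substantively new step --- is the sharp linear estimate above: one must notice that the constant $C_0$ matching \eqref{1.g-gap} forces the asymmetric shift (the symmetric choice $\alpha=(\lambda_N+\lambda_{N+1})/2$ only produces the strictly weaker condition $\lambda_{N+1}-\lambda_N>2L\lambda_{N+1}^{-\beta/2}$), and verify the two monotonicity facts that make the mode-by-mode suprema be attained at $n=N$ and $n=N+1$ respectively.
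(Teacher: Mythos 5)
Your proposal is correct and follows essentially the same route as the paper: the paper proves Theorem \ref{Th1.dmain} by replacing Lemma \ref{Lem1.norm} with exactly your generalized estimate $\|\Cal T h\|_{L^2(\R,H)}\le \frac{\lambda_{N+1}^{-\beta/2}+\lambda_N^{-\beta/2}}{\lambda_{N+1}-\lambda_N}\|h\|_{L^2(\R,H^\beta)}$, using the same asymmetric shift $\alpha$ (your $\lambda_N+\theta_+$ coincides with the $\alpha$ of \eqref{1.shift1}) and the same two monotonicity facts, and then repeating the fixed-point argument of Theorem \ref{Th1.main} verbatim. Your only deviations are cosmetic: you derive the mode-wise bound via Young's convolution inequality rather than the energy/Fubini argument, and you spell out the smoothing and weighted corollaries that the paper leaves implicit.
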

The proof of this theorem is analogous to the proof of Theorem \ref{Th1.main}. The only difference is that instead of Lemma \ref{Lem1.norm}, we need to use the following natural generalization of it.

\begin{lemma}\label{Lem1.d-norm} Let the operator $A$ satisfy the assumptions of Theorem \ref{Th1.dmain},
and $-2<\beta\le0$ and let
\begin{equation}\label{1.shift1}
\tilde A:=A-\alpha,\ \ \alpha:=\lambda_{N+1}\frac{\lambda_N^{-\beta/2}}{\lambda_N^{-\beta/2}+\lambda_{N+1}^{-\beta/2}}+
\lambda_{N}\frac{\lambda_{N+1}^{-\beta/2}}{\lambda_N^{-\beta/2}+\lambda_{N+1}^{-\beta/2}}\in(\lambda_N,\lambda_{N+1}).
\end{equation}
Then, for every $h\in L^2(\R,H^{\beta})$, equation \eqref{1.lin} possesses a unique solution $u\in L^2(\R,H)$ and the solution operator $\Cal T$ satisfies the following estimate:
\begin{equation}\label{1.dest}
\|\Cal T h\|_{L^2(\R,H)}\le \frac{\lambda_{N+1}^{-\beta/2}+\lambda_N^{-\beta/2}}{\lambda_{N+1}-\lambda_N}\|h\|_{L^2(\R,H^{\beta})}.
\end{equation}
\end{lemma}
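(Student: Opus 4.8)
The plan is to diagonalize. Since $A$ is self-adjoint with the orthonormal eigenbasis $\{e_n\}$ and $\tilde A=A-\alpha$ (with $\alpha\in(\lambda_N,\lambda_{N+1})$ by construction) is again diagonal, equation \eqref{1.lin} decouples, for each $n$, into the scalar equation $\Dt u_n+(\lambda_n-\alpha)u_n=h_n(t)$, where $u_n:=(u,e_n)$ and $h_n:=(h,e_n)$. For $n\le N$ one has $\lambda_n-\alpha\le\lambda_N-\alpha<0$ (the unstable modes) and for $n\ge N+1$ one has $\lambda_n-\alpha\ge\lambda_{N+1}-\alpha>0$ (the stable modes). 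On each mode I would run the scalar version of the energy argument of Lemma \ref{Lem1.norm}: for $n\le N$, multiply by $u_n$ and by the integrating factor $2e^{-2(\alpha-\lambda_n)t}$ (this is the scalar form of \eqref{1.proint}), integrate over $[t,\infty)$ and then over $\R$, and apply Fubini; for $n\ge N+1$, multiply instead by $u_n$ and by $2e^{2(\lambda_n-\alpha)t}$, integrate over $(-\infty,t]$ and then over $\R$. In both cases this yields $\|u_n\|_{L^2(\R)}\le|\lambda_n-\alpha|^{-1}\|h_n\|_{L^2(\R)}$. The existence of the (unique) solution $u_n$ is given by the one-sided variation of constants formula, backward for the unstable modes and forward for the stable ones, exactly as in Remark \ref{Rem1.linf}, and convergence of $\sum_n u_n e_n$ in $L^2(\R,H)$ will follow from the estimate just derived.

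Next I would convert the mode estimate into the $H^\beta$-norm. Writing $\|h_n\|_{L^2(\R)}^2=\lambda_n^{-\beta}\bigl(\lambda_n^\beta\|h_n\|_{L^2(\R)}^2\bigr)$, the mode estimate becomes $\|u_n\|_{L^2(\R)}^2\le f(\lambda_n)\,\lambda_n^\beta\|h_n\|_{L^2(\R)}^2$, where $f(x):=x^{-\beta}(x-\alpha)^{-2}$. Summing over $n$ and using Parseval (cf. \eqref{1.parseval}, \eqref{1.hs}), so that $\|u\|_{L^2(\R,H)}^2=\sum_n\|u_n\|_{L^2(\R)}^2$ and $\|h\|_{L^2(\R,H^\beta)}^2=\sum_n\lambda_n^\beta\|h_n\|_{L^2(\R)}^2$, the claim \eqref{1.dest} reduces to the elementary inequality $\sup_n f(\lambda_n)\le C^2$, where $C:=(\lambda_{N+1}^{-\beta/2}+\lambda_N^{-\beta/2})/(\lambda_{N+1}-\lambda_N)$.

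The heart of the matter, and the place where the restriction $-2<\beta\le 0$ is actually used, is the optimization of $f$ over the two sets $\{\lambda_n:n\le N\}\subset(0,\alpha)$ and $\{\lambda_n:n\ge N+1\}\subset(\alpha,\infty)$. On $(0,\alpha)$ one computes $(\log f)'(x)=-\beta/x+2/(\alpha-x)>0$ since $\beta\le 0$, so $f$ is increasing there and $\sup_{n\le N}f(\lambda_n)=f(\lambda_N)$. On $(\alpha,\infty)$ one computes $(\log f)'(x)=-\beta/x-2/(x-\alpha)=\bigl(-(\beta+2)x+\beta\alpha\bigr)/\bigl(x(x-\alpha)\bigr)$, whose numerator is strictly negative for all $x>\alpha$ precisely because $\beta+2>0$ and $\beta\alpha\le 0$; hence $f$ is decreasing on $(\alpha,\infty)$ and $\sup_{n\ge N+1}f(\lambda_n)=f(\lambda_{N+1})$. (Heuristically, $\beta>-2$ is exactly what makes $f(\lambda_n)\to 0$ as $n\to\infty$, so the high modes cannot spoil the bound even though $\lambda_n^{-\beta}\to\infty$.) It then remains to check, by a direct computation from the definition of $\alpha$ in \eqref{1.shift1}, that $\alpha-\lambda_N=(\lambda_{N+1}-\lambda_N)\lambda_N^{-\beta/2}/(\lambda_N^{-\beta/2}+\lambda_{N+1}^{-\beta/2})$ and $\lambda_{N+1}-\alpha=(\lambda_{N+1}-\lambda_N)\lambda_{N+1}^{-\beta/2}/(\lambda_N^{-\beta/2}+\lambda_{N+1}^{-\beta/2})$; substituting these gives $f(\lambda_N)=f(\lambda_{N+1})=C^2$. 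In other words, the value of $\alpha$ in \eqref{1.shift1} is chosen precisely to balance the unstable and stable one-sided bounds, and any other split point would only increase the constant. Combining the two suprema yields $\sup_n f(\lambda_n)=C^2$ and hence \eqref{1.dest}.

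I expect the main obstacle to be the high-mode block $n\ge N+1$. Unlike in Lemma \ref{Lem1.norm}, a single uniform coercivity bound $(\tilde A_-u_-,u_-)\ge\theta\|u_-\|^2$ no longer suffices: in passing from the $H$-norm to the $H^\beta$-norm one picks up the factor $\lambda_n^{-\beta}$, which is unbounded, so one genuinely has to keep the $n$-dependence, prove the monotonicity of $f$ on $(\alpha,\infty)$, and realize that $\beta>-2$ is the exact borderline making the argument close. By contrast, the low-mode block is easy — it can even be treated without the mode-by-mode bookkeeping, as in Lemma \ref{Lem1.norm}, using $(\tilde A_+u_+,u_+)\le(\lambda_N-\alpha)\|u_+\|^2$ together with $\|h_+\|_H\le\lambda_N^{-\beta/2}\|h_+\|_{H^\beta}$ — and the existence and uniqueness statements are routine modifications of Lemma \ref{Lem1.norm}.
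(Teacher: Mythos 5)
Your proposal is correct and follows essentially the same route as the paper's proof: diagonalize into scalar modes, obtain $\|u_n\|_{L^2(\R)}\le|\lambda_n-\alpha|^{-1}\|h_n\|_{L^2(\R)}$, and then optimize $f(x)=x^{-\beta}(x-\alpha)^{-2}$ separately on $(0,\alpha)$ and $(\alpha,\infty)$, with the monotonicity on each side using exactly $-2<\beta\le0$ and with $\alpha$ from \eqref{1.shift1} chosen so that $f(\lambda_N)=f(\lambda_{N+1})$ equals the square of the constant in \eqref{1.dest}. Your extra remarks (the explicit one-sided variation of constants for existence and the simpler uniform treatment of the low-mode block) are correct but inessential variations of the same argument.
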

\begin{proof} The $n$th component $u_n(t):=(u(t),e_n)$ solves
\begin{equation}\label{1.n}
\frac d{dt}u_n(t)+(\lambda_n-\alpha)u_n(t)=h_n(t).
\end{equation}
Multiplying \eqref{1.n} by $u_n$ and arguing as in the proof of Lemma \ref{Lem1.norm}, we have
\begin{equation}\label{1.ggood}
\|u_n\|^2_{L^2(\R)}\le \frac1{(\lambda_n-\alpha)^2}\|h_n\|^2_{L^2(\R)}.
\end{equation}
Taking a sum for all $n\le N$, we get
\begin{multline}\label{1.positive}
\|u_+\|^2_{L^2(\R,H)}=\sum_{n=1}^N\frac{\lambda_n^{-\beta}}{(\lambda_n-\alpha)^2}\cdot\lambda_n^{\beta}\|h_n\|^2_{L^2(\R,H)}\le\\\le
\sup_{n\le N}\bigg\{\frac{\lambda_n^{-\beta}}{(\lambda_n-\alpha)^2}\bigg\}\|h_+\|^2_{L^2(\R,H^{\beta})}=\frac{\lambda_N^{-\beta}}{(\lambda_N-\alpha)^2}\|h_+\|^2_{L^2(\R,H^\beta)},
\end{multline}
where we have implicitly used that the function $f(x):=\frac{x^{-\beta}}{(x-\alpha)^2}$ is monotone increasing for $x<\alpha$ since
$$
f'(x)=x^{-\beta-1}\frac{(2+\beta)x-\beta\alpha}{(\alpha-x)^3}
$$
and $f'(x)>0$ for $x<\alpha$.
Analogously, taking a sum $n\ge N$, we get
\begin{multline}\label{1.negative}
\|u_-\|^2_{L^2(\R,H)}=\sum_{n=N+1}^\infty\frac{\lambda_n^{-\beta}}{(\lambda_n-\alpha)^2}\cdot\lambda_n^{\beta}\|h_n\|^2_{L^2(\R,H)}\le\\\le
\sup_{n\ge N+1}\bigg\{\frac{\lambda_n^{-\beta}}{(\lambda_n-\alpha)^2}\bigg\}
\|h_-\|^2_{L^2(\R,H^{\beta})}=\frac{\lambda_{N+1}^{-\beta}}{(\lambda_{N+1}-\alpha)^2}\|h_-\|^2_{L^2(\R,H^\beta)},
\end{multline}
where we used that $f(x)$ is decreasing for $x>\alpha$. It remains to note that the exponent $\alpha$ is chosen in such way that
$$
\frac{\lambda_N^{-\beta}}{(\lambda_N-\alpha)^2}=\frac{\lambda_{N+1}^{-\beta}}{(\lambda_{N+1}-\alpha)^2}=
\(\frac{\lambda_{N+1}^{-\beta/2}+\lambda_N^{-\beta/2}}{\lambda_{N+1}-\lambda_N}\)^2.
$$
Therefore, taking a sum of \eqref{1.positive} and \eqref{1.negative}, we end up with \eqref{1.dest} and finish the proof of the lemma.
\end{proof}
The rest of the proof of Theorem \ref{Th1.dmain} repeats word by word what is done in Theorem \ref{Th1.main} and by this reason is omitted.

\begin{remark}\label{Rem1.beta} It is  known that the spectral gap condition \eqref{1.g-gap} is also sharp, see, e.g., \cite{rom-man}. In addition, although we state Theorem \ref{Th1.dmain} for $\beta\in(-2,0]$ only, it can be used for positive $\beta$ as well which corresponds to the case of smoothing nonlinearities $F$. The only difference here is that the function $f(x)$ is not monotone increasing on $x\in(0,\alpha)$, but has a minimum at $x=\frac\beta{2+\beta}\alpha$. Thus, together with \eqref{1.g-gap}, we need to assume, in addition, that
$$
\frac{\alpha-\lambda_1}{\lambda_1^{-\beta/2}}>L.
$$
\end{remark}

\subsection{Smoothness of  inertial manifolds}\label{s.smo} In this paragraph, we obtain the extra smoothness of the function $\Phi: H_+\to H_-$ determining the inertial manifold. To this end, we assume  in addition that the nonlinearity $F$ in \eqref{1.eqmain} belongs to $C^{1+\eb}(H,H)$
 for some  $\eb\in(0,1)$, i.e.,
 \begin{equation}\label{1.holder}
 \|F(u_1)-F(u_2)-F'(u_1)(u_1-u_2)\|_H\le C\|u_1-u_2\|_H^{1+\eb},\ \ u_1,u_2\in H,
 \end{equation}
where $F'(u)\in\Cal L(H,H)$ is the Frechet derivative of $F(u)$ in $H$. Note also that, due to the global Lipschitz continuity assumption on $F$, we also have
\begin{equation}\label{1.derL}
\|F'(u)\|_{\Cal L(H,H)}\le L, \ u\in H.
\end{equation}
The main result of this section is the following theorem.

\begin{theorem}\label{Th1.holder} Let the assumptions of Theorem \ref{Th1.main} hold and let also \eqref{1.holder} be valid for some $\eb>0$ such that $\alpha\eb<\theta-L$.
Then, the inertial manifold $\Cal M$ constructed in Theorem \ref{Th1.main} is $C^{1+\eb}$-smooth.
\end{theorem}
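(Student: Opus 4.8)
The plan is to upgrade the fixed‑point construction of the inertial manifold from the Lipschitz category to $C^{1+\eb}$ by differentiating the fixed‑point equation \eqref{1.fixed} (in the form used in Paragraph~\ref{s1.4}) with respect to the parameter $u_\tau\in H_+$ and showing that the formal derivative is genuinely the Fréchet derivative and is H\"older continuous. First I would recall that, after the change of variable \eqref{1.change}, the manifold function $M_+(\tau,\cdot)$ is obtained as $M_+(\tau,u_\tau)=W(u_\tau)\big|_{t=\tau}$ where $w=W(u_\tau)\in L^2(\R,H)$ solves $w=\Cal T\circ\bar F(\cdot,u_\tau,w)$, and $\Cal T$ has operator norm $\le 1/\theta$ while $\bar F$ is Lipschitz in $(u_\tau,w)$ with constant $L$ (on the relevant half‑line), so the contraction factor is $L/\theta<1$. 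The candidate for $\partial_{u_\tau}W(u_\tau)$, call it $\Psi=\Psi(u_\tau)\in\Cal L(H_+,L^2(\R,H))$, must solve the linearized fixed‑point equation
\begin{equation}\label{1.linfix}
\Psi=\Cal T\circ\Big(\bar F'_{u_\tau}(\cdot,u_\tau,W(u_\tau))+\bar F'_{w}(\cdot,u_\tau,W(u_\tau))\circ\Psi\Big),
\end{equation}
and since $\|\bar F'_w\|\le L$ (by \eqref{1.derL}, which transfers to $\tilde F$ and hence to $\bar F$), the map on the right of \eqref{1.linfix} is again a contraction on $\Cal L(H_+,L^2(\R,H))$ with the same factor $L/\theta$; this yields existence and uniqueness of $\Psi(u_\tau)$ and the uniform bound $\|\Psi(u_\tau)\|\le \frac{1/\theta}{1-L/\theta}\,\|\bar F'_{u_\tau}\|$. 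Then I would show $\Psi(u_\tau)$ is indeed $DW(u_\tau)$ by the standard difference estimate: writing $r:=W(u_\tau+h)-W(u_\tau)-\Psi(u_\tau)h$, subtracting the fixed‑point equations and using \eqref{1.holder} (which, together with the already‑known Lipschitz bound $\|W(u_\tau+h)-W(u_\tau)\|_{L^2}\le C\|h\|$, gives a quadratic‑in‑$h$, i.e.\ $O(\|h\|^{1+\eb})$, remainder from the nonlinear terms), one obtains $\|r\|_{L^2}\le \frac{L}{\theta}\|r\|_{L^2}+o(\|h\|)$, hence $\|r\|_{L^2}=o(\|h\|)$. Composition with the bounded evaluation‑type map furnished by Corollary~\ref{Cor1.sm} then shows $M_+(\tau,\cdot)$ is $C^1$ with derivative depending boundedly on $\tau$.

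The H\"older continuity of $u_\tau\mapsto\Psi(u_\tau)$ is where the hypothesis $\alpha\eb<\theta-L$ enters, and I expect this to be the main obstacle. Subtracting \eqref{1.linfix} for two base points $u_\tau^1,u_\tau^2$ gives
\begin{equation}\label{1.psidiff}
\Psi^1-\Psi^2=\Cal T\circ\Big(\big[\bar F'_{u_\tau}(u_\tau^1)-\bar F'_{u_\tau}(u_\tau^2)\big]+\big[\bar F'_w(u_\tau^1)-\bar F'_w(u_\tau^2)\big]\circ\Psi^1+\bar F'_w(u_\tau^2)\circ(\Psi^1-\Psi^2)\Big),
\end{equation}
so after absorbing the last term (contraction factor $L/\theta$ again) one needs the two bracketed differences of derivatives to be $O(\|u_\tau^1-u_\tau^2\|^{\eb})$ in the appropriate operator norm. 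By \eqref{1.holder} the Fréchet derivative $F'$ is $\eb$‑H\"older on $H$; the subtlety is that the arguments at which we evaluate $\tilde F'$, namely $e^{-\alpha t}(W(u_\tau)(t)+v(t))$ with $v(t)=e^{-\tilde A_+(t-\tau)}u_\tau$, carry the factor $e^{-\alpha t}$, and measuring everything in the plain $L^2(\R,H)$ norm would cost a nonintegrable weight. The fix is to run the entire argument in the exponentially weighted space $L^2_{\varphi_{\eb,\tau}}(\R,H)$ of Corollary~\ref{Cor1.weight}: there $\Cal T$ has norm $\le 1/(\theta-|\eb|)$, so \eqref{1.linfix}–\eqref{1.psidiff} are still contractions provided $L<\theta-\eb$, i.e.\ $\eb<\theta-L$; and the $\eb$‑H\"older estimate for the composition operator produces a factor $\|u_\tau^1-u_\tau^2\|^{\eb}$ together with a weight $e^{-\alpha\eb|t-\tau|}$‑type correction coming from the $(e^{-\alpha t})^{\eb}$ in \eqref{1.holder}, which is exactly controlled when $\alpha\eb<\theta-L$ so that the resulting weight still lies below the dichotomy exponent. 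Combining these, \eqref{1.psidiff} gives $\|\Psi^1-\Psi^2\|\le C\|u_\tau^1-u_\tau^2\|^{\eb}$, and composing once more with the smoothing map of Corollary~\ref{Cor1.sm} yields that $D_{u_+}M_+(\tau,\cdot)$ is $\eb$‑H\"older, uniformly in $\tau$.

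Finally I would translate back: taking $\tau=0$ and using $M_+(t,u_+)=e^{\alpha t}M_+(0,e^{-\alpha t}u_+)$ together with $\Phi=M_+(0,\cdot)$, the manifold $\Cal M=\Cal M_+(0)$ is the graph of $\Phi\in C^{1+\eb}(H_+,H_-)$, which is the assertion. The only genuinely delicate bookkeeping is matching the three exponential rates — the dichotomy gap $\theta$, the spectral‑shift exponent $\alpha=\tfrac12(\lambda_{N+1}+\lambda_N)$, and the H\"older exponent $\eb$ — so that every composition of $\Cal T$ with a H\"older‑perturbed Nemytskii operator remains a contraction in a single weighted space; the condition $\alpha\eb<\theta-L$ is precisely what makes this possible, and verifying it carefully at each of the two linearization steps is the heart of the proof. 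Everything else is the routine parameter‑dependence theory for the Banach fixed‑point theorem in scales of Banach spaces.
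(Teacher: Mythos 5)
Your proposal is correct and follows essentially the same route as the paper: differentiate the fixed-point problem for $W(u_\tau)$, solve the linearized equation by the same contraction argument (using $\|F'\|\le L$ and the gap), and control the second-order Taylor remainder coming from \eqref{1.holder} in the exponentially weighted spaces of Corollary \ref{Cor1.weight}, where the hypothesis $\alpha\eb<\theta-L$ makes the solution operator norm $\frac{L}{\theta-\alpha\eb}<1$ — exactly the paper's estimates \eqref{1.dif1}, \eqref{1.dereq}, \eqref{1.hest}, \eqref{1.derfin}. The only bookkeeping point to fix is that the weight exponent must be $\alpha\eb$ (not $\eb$) throughout, since \eqref{1.wholder} produces the factor $e^{-\eb\alpha t}$; with that choice the remainder estimate already fails in plain $L^2(\R,H)$, so the weighted space is needed from the differentiability step onward, as you indeed indicate when you conclude with the condition $\alpha\eb<\theta-L$.
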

\begin{proof}
To verify the extra regularity of the map $\Phi$, we need to differentiate equation \eqref{1.unss}, say, with $\tau=0$, with respect to the initial data $u_0$ and this, in turn, requires us to differentiate the function $\tilde F(t,u)$ defined via \eqref{1.seq1}. According to \eqref{1.holder}, we get
\begin{equation}\label{1.wholder}
\|\tilde F(t,u_1)-\tilde F(t,u_2)-\tilde F'(t,u_1)(u_1-u_2)\|_H\le Ce^{-\eb\alpha t}\|u_1-u_2\|^{1+\eb}_H,
\end{equation}
where $\tilde F'(t,u):=F'(e^{-\alpha t}u)$.
\par
Let us fix now $u_0^1\in H_+$ and let $v(t):=e^{-\tilde A_+t}u_0^1$ denote also by $w_1:=W(u_0^1)$ the solution of \eqref{1.unss3} which corresponds to the initial data $u_0^1$.
\par
 Therefore, differentiating (at this stage formally) equation \eqref{1.unss3} with respect to $u_0\in H_+$, we see that the derivative $\bar w:=W'(u_0^1)\xi$ in the direction $\xi\in H_+$ should satisfy the following equation:
\begin{equation}\label{1.dif1}
\Dt \bar w+\tilde A\bar w=\chi_{t\le0}(t)\(\tilde F'(t,w_1+v_1(t))(\bar w+\xi(t))\),\ \ \xi(t):=e^{-\tilde A_+t}\xi.
\end{equation}
Since the spectral gap condition is satisfied and the norm of the derivative $F'(u)$ is bounded by $L$, applying the Banach contraction theorem to \eqref{1.dif1} analogously to the proof of Theorem \ref{Th1.manex}, we see that \eqref{1.dif1} is solvable for every $\xi\in H_+$ and the following estimate holds:
\begin{equation}\label{1.derest}
\|W'(u_0^1)\xi\|_{C(\R,H)}\le C\|\xi\|_H,
\end{equation}
where we have also implicitly used Corollary \ref{Cor1.sm} in order to obtain the $C(\R,H)$-norm from the control of the $L^2(\R,H)$-norm.
\par
To complete the proof, it only remains to show that the map $W'(u_0^1)$ thus defined is indeed the Frechet derivative. To this end, we need to take another point $u_0^2\in H_+$ and estimate the difference $z:=W(u_0^1)-W(u_0^2)-W'(u_0^1)(u_0^1-u_0^2)$ which solves the following equation:
\begin{equation}\label{1.dereq}
\Dt z+\tilde A z=\chi_{t\le0}(t)\tilde F'(t,w_1+v_1(t))z+H(t),
\end{equation}
where
$$
H(t):=\chi_{t\le0}(t)\(\tilde F(t,w_1+v_1)-\tilde F(t,w_2+v_2)-\tilde F'(t,w_1+v_1)(w_1-w_2+v_1-v_2)\).
$$
Using estimate \eqref{1.holder} together with the analogue of \eqref{1.derest} for the difference $w_1-w_2$, we have
\begin{equation}\label{1.hest}
\|H\|_{L^2_{\varphi_{\alpha\eb,0}}(\R,H)}\le C\(\|w_1-w_2\|_{C(\R,H)}^{1+\eb}+\|v_1-v_2\|^{1+\eb}_{C(\R,H)}\)\le C_1\|u^1_0-u^2_0\|^{1+\eb}_H.
\end{equation}
Applying Corollary \ref{Cor1.sm} to equation \eqref{1.dereq} and using \eqref{1.hest}, we end up with
$$
\|z\|_{L^2_{\varphi_{\alpha\eb,0}}(\R,H)}\le \frac L{\theta-\alpha\eb}\|z\|_{L^2_{\varphi_{\alpha\eb,0}}(\R,H)}+C\|u_0^1-u_0^2\|_{H}^{1+\eb}
$$
which due to the assumption on $\eb$ gives
\begin{equation}\label{1.derfin}
\|W(u_0^1)-W(u_0^2)-W'(u_0^1)(u_0^1-u_0^2)\|_{L^2_{\varphi_{\alpha\eb,0}}(\R,H)}\le C\|u_0^1-u_0^2\|_H^{1+\eb}.
\end{equation}
Using that $\Phi(u_0):=W(u_0)\big|_{t=0}$ together with Corollary \eqref{Cor1.sm}, we finally arrive at
$$
\|\Phi(u_0^1)-\Phi(u_0^2)-\Phi'(u_0^1)(u_0^1-u_0^2)\|_H\le C\|u_0^1-u_0^2\|_H^{1+\eb},
$$
where $\Phi'(u_0)\xi:=W'(u_0)\xi\big|_{t=0}$. This estimate shows that $\Phi\in C^{1+\eb}(H_+,H_-)$ and finishes the proof of the theorem.
\end{proof}

\begin{remark}\label{Rem1.smooth} Note that $\tilde F$ is {\it not differentiable} as the map from, say, $C_b(\R,H)$ to itself even if $F\in C^\infty(H,H)$ and the differentiability holds only in the weighted spaces. For instance, analogously to \eqref{1.wholder}, $\tilde F$ will be $n+\eb$ times continuously differentiable only as the map from $C(\R,H)$ to $C_{\varphi_{(n-1)\alpha+\eb,0}}(\R,H)$. By this reason, higher regularity of the manifold requires larger  spectral gaps (to guarantee that equation \eqref{1.dereq} is uniquely solvable in the weighted space $L^2_{\varphi_{(n-1)\alpha+\eb,0}}(\R,H)$) and, as a rule, the center/inertial manifold has only {\it finite} smoothness even in the case when the initial system is $C^\infty$-smooth, see e.g. \cite{hart}.
\par
In particular, as shown in \cite{kok}, under assumptions of Theorem \ref{Th1.dmain}, the inertial manifold is $C^k$-smooth if
\begin{equation}\label{1.nopt}
\lambda_{N+1}-k\lambda_N>\sqrt2 L(\lambda_{N+1}^{-\beta/2}+k\lambda_N^{-\beta/2}).
\end{equation}
It seems that the factor $\sqrt2$ is artificial here and can be removed using the spaces $L^2(\R,H)$ instead of $C(\R,H)$, but we did not check the details and failed to find the proper reference.
\end{remark}
\begin{remark}\label{Rem1.bad} In particular, for the $C^2$-regularity of the inertial manifold, we need (in the simplest case $\beta=0$)
\begin{equation}\label{1.huge}
\lambda_{N+1}-2\lambda_N>2L
\end{equation}
which is hardly compatible with applications where $A$ is an elliptic differential operator and, to the best of our knowledge, there are no examples of such operators where that spectral gap exists for all $L$. Thus, usually the inertial manifolds are only $C^{1+\eb}$-smooth for some {\it small} exponent $\eb$ and the higher regularity ($C^2$ and more) holds only in the exceptional cases (e.g., under the bifurcation analysis where the dimension  $N$ and the Lipschitz constant $L$ are both small).
\par
We also mention that the differentiability assumption \eqref{1.holder} can be weakened as follows:
 \begin{equation}\label{1.holder1}
 \|F(u_1)-F(u_2)-F'(u_1)(u_1-u_2)\|_H\le C\|u_1-u_2\|_{H^1}^{1+\eb},\ \ u_1,u_2\in H^1
 \end{equation}
 which is easier to verify in applications. Indeed, the proof of Theorem \ref{Th1.holder} remains almost unchanged only instead of Corollary \ref{Cor1.sm}, we need to use its analogue with the $C(\R,H^1)$-norm which also can be easily verified using the parabolic smoothing property.
\end{remark}

\subsection{Invariant cones and inertial manifolds}\label{s.cone} In this paragraph, we briefly discuss an alternative a bit more general approach to inertial manifolds based on the invariant cones and squeezing property which is very popular in the theory, see \cite{fen,bates,rom-man,tem,sell-book} and references therein. The advantage of this approach is that the necessary conditions are formulated not in terms of  the nonlinearity $F$, but directly in terms of the trajectories of the associated dynamical system which sometimes allows us to use the specific properties of the considered system and the specific "cancelations" appeared under its integration (the most important example here is the so-called spatial averaging principle which will be discussed below).
\par
Let us introduce the following indefinite quadratic form in the phase space $H$:
\begin{equation}
V(\xi):=\|Q_N\xi\|^2_H-\|P_N\xi\|^2_H,\ \ \xi\in H.
\end{equation}
The next simple Lemma is of fundamental significance for what follows.

\begin{lemma}\label{Lem1.cone} Under the assumptions of Theorem \ref{Th1.main}, the following estimate holds for every two solutions $u_1(t)$ and $u_2(t)$:
\begin{equation}\label{1.best}
\frac12\frac d{dt}V(u_1(t)-u_2(t))+\alpha V(u_1(t)-u_2(t))\le -\mu\|u_1(t)-u_2(t)\|^2_H,
\end{equation}
where $\alpha:=\frac{\lambda_{N+1}+\lambda_N}2,\ \mu:=\frac{\lambda_{N+1}-\lambda_N}2-L>0$.
\end{lemma}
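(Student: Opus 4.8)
The plan is to write $w(t):=u_1(t)-u_2(t)$, which satisfies the linear (with variable coefficient) equation $\Dt w+Aw=F(u_1)-F(u_2)$, split it into $w_+:=P_Nw\in H_+$ and $w_-:=Q_Nw\in H_-$, and compute $\frac{d}{dt}V(w)=\frac{d}{dt}\|w_-\|_H^2-\frac{d}{dt}\|w_+\|_H^2$ by testing the two components of the equation against $w_-$ and $w_+$ respectively. For the $w_-$ component I would use the lower bound $(Aw_-,w_-)\ge\lambda_{N+1}\|w_-\|_H^2$ from \eqref{1.pregap}; for the $w_+$ component the upper bound $(Aw_+,w_+)\le\lambda_N\|w_+\|_H^2$. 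This produces $\frac12\frac{d}{dt}V(w)\le -\lambda_{N+1}\|w_-\|_H^2+\lambda_N\|w_+\|_H^2+(F(u_1)-F(u_2),w_--w_+)$.

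Next I would add $\alpha V(w)=\alpha\|w_-\|_H^2-\alpha\|w_+\|_H^2$ to both sides and use $\alpha=\frac{\lambda_{N+1}+\lambda_N}{2}$, so that $-\lambda_{N+1}+\alpha=-\frac{\lambda_{N+1}-\lambda_N}{2}$ and $\lambda_N-\alpha=-\frac{\lambda_{N+1}-\lambda_N}{2}$; hence the linear contribution becomes exactly $-\frac{\lambda_{N+1}-\lambda_N}{2}\bigl(\|w_-\|_H^2+\|w_+\|_H^2\bigr)=-\frac{\lambda_{N+1}-\lambda_N}{2}\|w\|_H^2$. For the nonlinear term I would estimate $|(F(u_1)-F(u_2),w_--w_+)|\le L\|w\|_H\,\|w_--w_+\|_H$ using the global Lipschitz bound \eqref{1.lip}, and then observe that $\|w_--w_+\|_H^2=\|w_-\|_H^2+\|w_+\|_H^2=\|w\|_H^2$ since $w_+\perp w_-$, so this term is bounded by $L\|w\|_H^2$. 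Collecting everything gives $\frac12\frac{d}{dt}V(w)+\alpha V(w)\le -\bigl(\frac{\lambda_{N+1}-\lambda_N}{2}-L\bigr)\|w\|_H^2=-\mu\|w\|_H^2$, which is \eqref{1.best}.

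The only points needing a little care — none of them a serious obstacle — are the regularity/justification of the termwise differentiation: strictly speaking $w$ need not be smooth, so the energy identities should be read in the usual weak sense for parabolic equations (valid since $w\in C(0,T;H)\cap L^2(0,T;D(A^{1/2}))$ and the equation holds in $L^2(0,T;D(A^{-1/2}))$, exactly as recalled after \eqref{1.eqmain}), and the pairing $(Aw_\pm,w_\pm)$ is the $D(A^{1/2})$ duality pairing; alternatively one expands everything in the Fourier basis $\{e_n\}$ and works mode by mode, where each scalar identity is elementary. The sharpness of the constants $\alpha$ and $\mu$ is automatic once the cancellation $\pm(\lambda_N-\alpha)=\mp(\lambda_{N+1}-\alpha)$ is used, which is the whole point of the particular choice of $\alpha$; the positivity $\mu>0$ is precisely the spectral gap hypothesis \eqref{1.gap}.
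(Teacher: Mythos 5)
Your proposal is correct and follows essentially the same route as the paper: the paper also writes $v=u_1-u_2$, computes $\frac12\frac d{dt}V(v)+\alpha V(v)=-((A_--\alpha)v_-,v_-)-((\alpha-A_+)v_+,v_+)+(F(u_1)-F(u_2),v_--v_+)$, and bounds the quadratic terms by $-\frac{\lambda_{N+1}-\lambda_N}2\|v\|^2_H$ via \eqref{1.pregap} and the nonlinear term by $L\|v\|^2_H$ using orthogonality of $v_\pm$. Your additional remark about justifying the energy identities in the weak sense is a harmless refinement the paper leaves implicit.
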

\begin{proof} Indeed, for any two solutions $u_1(t)$ and $u_2(t)$ of equation \eqref{1.eqmain}, the function $v(t):=u_1(t)-u_2(t)$ satisfies
\begin{multline}\label{1.conproof}
\frac12\frac d{dt}V(v(t))+\alpha V(v(t))=-((A_--\alpha)v_-,v_-)-((\alpha-A_+),v_+,v_+)+\\+(F(u_1)-F(u_2),v_--v_+)\le-\frac{\lambda_{N+1}-\lambda_N}2\|v\|^2_H+L\|v\|^2_H=-\mu\|v\|^2_H
\end{multline}
and the lemma is proved.
\end{proof}
Vice versa, as we will se below, the validity of \eqref{1.best} implies the existence of an inertial manifold. To state the main result of this paragraph, we need to define the following  cones:
\begin{equation}\label{1.defcon}
K^+:=\{\xi\in H, \ \ V(\xi):=\|Q_N\xi\|^2_H-\|P_N\xi\|^2_H\le 0\}.
\end{equation}
\begin{corollary}\label{Cor1.consq} Let the nonlinearity $F$ be globally Lipschitz and let \eqref{1.best} be satisfied for all solutions $u_1$ and $u_2$ of problem \eqref{1.eqmain}. Then, the following properties hold:
\par
1. Cone property: the cone $K^+$ is invariant in the following sense:
\begin{equation}\label{1.con-inv}
\xi_1,\xi_2\in K^+\ \  \Rightarrow \ \ S(t)\xi_1-S(t)\xi_2\in K^+,\ \ \text{ for all $t\ge0$},
\end{equation}
where $\xi_1,\xi_2\in H$ and $S(t)$ is a solution semigroup associated with \eqref{1.eqmain}.
\par
2. Squeezing property: there exists positive $\gamma$ and $C$ such that
\begin{equation}\label{1.squee}
S(T)\xi_1-S(T)\xi_2\notin K^+\ \ \Rightarrow\ \ \|S(t)\xi_1-S(t)\xi_2\|_H\le Ce^{-\gamma t}\|\xi_1-\xi_2\|_H,\ \ t\in[0,T].
\end{equation}
\end{corollary}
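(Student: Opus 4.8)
The plan is to derive both properties directly from the differential inequality \eqref{1.best}. Fix two solutions $u_1(t)$ and $u_2(t)$ and write $v(t):=u_1(t)-u_2(t)$; then \eqref{1.best} reads
$$
\tfrac12\tfrac d{dt}V(v(t))+\alpha V(v(t))\le -\mu\|v(t)\|_H^2,\qquad \mu>0.
$$
Since $\mu\|v\|_H^2\ge0$, this in particular gives $\tfrac d{dt}V(v(t))\le -2\alpha V(v(t))$, whence by Gronwall's lemma $V(v(t))\le e^{-2\alpha t}V(v(0))$ for $t\ge0$.

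For the \textbf{cone property}: if $\xi_1,\xi_2\in K^+$ then $V(\xi_1-\xi_2)\le0$, so $V(v(t))\le e^{-2\alpha t}V(v(0))\le0$ for all $t\ge0$, i.e.\ $S(t)\xi_1-S(t)\xi_2\in K^+$. This is \eqref{1.con-inv}.

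For the \textbf{squeezing property}: suppose $v(T)=S(T)\xi_1-S(T)\xi_2\notin K^+$, i.e.\ $V(v(T))>0$. The key observation is that $V$ cannot change sign from positive to negative along the flow without the function having been nonnegative earlier: more precisely, I claim $V(v(t))\ge0$ for all $t\in[0,T]$. Indeed, if $V(v(t_0))<0$ for some $t_0\in[0,T]$, then by the cone property applied starting at time $t_0$ (the set $\{V\le0\}$ being forward invariant) we would get $V(v(T))\le e^{-2\alpha(T-t_0)}V(v(t_0))<0$, contradicting $V(v(T))>0$. Hence $V(v(t))\ge0$ on $[0,T]$, which means $\|P_Nv(t)\|_H^2\le\|Q_Nv(t)\|_H^2$ and therefore
$$
\|v(t)\|_H^2=\|P_Nv(t)\|_H^2+\|Q_Nv(t)\|_H^2\le 2\|Q_Nv(t)\|_H^2\le 2\|v(t)\|_H^2.
$$
In particular $\|v(t)\|_H^2\le 2\|Q_Nv(t)\|_H^2$; combined with $V(v(t))=\|Q_Nv(t)\|_H^2-\|P_Nv(t)\|_H^2\le\|Q_Nv(t)\|_H^2$ this yields $\tfrac12\|v(t)\|_H^2\le V(v(t))$ wherever $V(v(t))\ge0$. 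Feeding this back into \eqref{1.best} gives
$$
\tfrac12\tfrac d{dt}V(v(t))\le -\alpha V(v(t))-\mu\|v(t)\|_H^2\le -\alpha V(v(t))-2\mu V(v(t))=-(\alpha+2\mu)V(v(t)),
$$
so $V(v(t))\le e^{-2(\alpha+2\mu)t}V(v(0))$ on $[0,T]$, and then
$$
\|v(t)\|_H^2\le 2V(v(t))\le 2e^{-2(\alpha+2\mu)t}V(v(0))\le 2e^{-2(\alpha+2\mu)t}\|v(0)\|_H^2,
$$
using $V(v(0))\le\|v(0)\|_H^2$. This is \eqref{1.squee} with $\gamma:=\alpha+2\mu$ and $C:=\sqrt2$.

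The main obstacle — really the only subtle point — is justifying the sign-propagation claim, namely that $V(v(T))>0$ forces $V(v(t))\ge0$ throughout $[0,T]$. This is exactly where the forward invariance of the closed cone $\{V\le0\}$ (the already-established cone property, applied on subintervals $[t_0,T]$ rather than $[0,T]$) does the work; one just has to make sure the Gronwall argument is applied on the right time interval and that no regularity issue with $t\mapsto V(v(t))$ intervenes, which is fine since $v\in C([0,T];H)$ and the differential inequality \eqref{1.best} holds in the appropriate weak sense as in Lemma \ref{Lem1.norm}. Everything else is a one-line Gronwall estimate.
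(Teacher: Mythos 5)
Your cone-property argument, and the contrapositive step showing that $v(T)=S(T)\xi_1-S(T)\xi_2\notin K^+$ forces $V(v(t))>0$ for all $t\in[0,T]$, are correct and coincide with the paper's proof. The squeezing part, however, has a genuine gap: the inequality $\tfrac12\|v(t)\|_H^2\le V(v(t))$ on the set where $V(v(t))\ge0$ is false, and the derivation you give for it is invalid — from $\tfrac12\|v\|_H^2\le\|Q_Nv\|_H^2$ and $V(v)\le\|Q_Nv\|_H^2$ you cannot conclude $\tfrac12\|v\|_H^2\le V(v)$; both quantities are merely bounded above by the same thing. In fact no estimate of the form $c\|v\|_H^2\le V(v)$ with $c>0$ can hold on $\{V\ge0\}$: on the boundary of the cone, where $\|P_Nv\|_H=\|Q_Nv\|_H$, one has $V(v)=0$ while $\|v\|_H$ is arbitrary, and even under the strict inequality $V(v)>0$ the ratio $V(v)/\|v\|_H^2$ can be arbitrarily small. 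Consequently the final chain $\|v(t)\|_H^2\le 2V(v(t))\le\cdots$ collapses and \eqref{1.squee} is not established. (Your intermediate step $-\mu\|v\|_H^2\le-2\mu V(v)$ also uses the comparison in the direction opposite to the one you claimed to have proved, and that direction fails as well whenever $\|Q_Nv\|_H^2>3\|P_Nv\|_H^2$; only the harmless bound $V(v)\le\|v\|_H^2$ is available, which gives decay of $V$ but not of $\|v\|_H$.)

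This is exactly the point where the paper's proof proceeds differently: since $V$ alone cannot control $\|v\|_H^2$ near the cone boundary, it introduces the regularized functional $V_\eb(v):=\eb\|v\|_H^2+V(v)$, which trivially satisfies $V_\eb(v)\ge\eb\|v\|_H^2$ on $\{V\ge0\}$, and checks — using the global Lipschitz continuity of $F$, which yields $\tfrac12\tfrac d{dt}\|v\|_H^2\le L\|v\|_H^2$ — that $V_\eb$ still obeys \eqref{1.best} with $\mu$ replaced by $0$ provided $\eb(L+\alpha)\le\mu$. Gronwall then gives, on $[0,T]$,
$$
\eb\|v(t)\|_H^2\le V_\eb(v(t))\le e^{-2\alpha t}V_\eb(v(0))\le(1+\eb)e^{-2\alpha t}\|v(0)\|_H^2,
$$
which is \eqref{1.squee}. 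Note that your argument never used the global Lipschitz continuity of $F$, although it is a hypothesis of the corollary — a warning sign that something was missing. To repair your proof, replace the false pointwise comparison between $\|v\|_H^2$ and $V(v)$ by this (or an equivalent) regularization of $V$.
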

\begin{proof}
  Indeed, the invariance of the cone $K^+$ is an immediate corollary of \eqref{1.best}. Moreover, since $F$ is globally Lipschitz,
  $$
  \frac12\frac d{dt}\|u_1(t)-u_2(t)\|^2_H\le L\|u_1(t)-u_2(t)\|^2_H,
  $$
  Multiplying this inequality on $\eb>0$ and taking a sum with \eqref{1.best}, we see that
   $$
   V_\eb(u_1(t)-u_2(t)):=\eb\|u_1(t)-u_2(t)\|^2_H+V(u_1(t)-u_2(t))
   $$
    also satisfies \eqref{1.best}
  (with the new constant  $\mu=0$)  if $\eb(L+\alpha)\le\mu$. Thus, if $u_1$ and $u_2$ are such that $u_(T)-u_2(T)\notin K^+$, then
   due to the cone property, $u_1(t)-u_2(t)\notin K_+$ for all $t\in[0,T]$ and, therefore,
 \begin{multline*}
  \eb\|u_1(t)-u_2(t)\|^2_H\le \eb\|u_1(t)-u_2(t)\|^2_H+V(u_1(t)-u_2(t))\le\\\le V_\eb(0)e^{-2\alpha t}\le(1+\eb)\|u_1(0)-u_2(0)\|^2_He^{-2\alpha t}
  \end{multline*}
  and the corollary is proved.
  \end{proof}
  The following classical theorem is the main result of this paragraph.
\begin{theorem}\label{Th1.maincon} Let the nonlinearity $F$ be globally Lipschitz and globally bounded, i.e,
\begin{equation}\label{1.fbound}
\|F(u)\|_{H}\le C,\ \ u\in H.
\end{equation}
Assume also that the solution semigroup $S(t)$ associated with equation \eqref{1.eqmain} satisfies the cone and squeezing properties \eqref{1.con-inv} and \eqref{1.squee} (with some positive constants $\gamma$ and $C$). Then, equation \eqref{1.eqmain} possesses an $N$-dimensional inertial manifold which is a graph of a Lipschitz function $\Phi: P_N H\to Q_N H$ and the exponential tracking also holds.
\end{theorem}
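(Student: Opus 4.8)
The plan is to construct the inertial manifold directly as a graph over $H_+=P_NH$ using the cone invariance, following the classical Foias--Sell--Titi-type argument but with the abstract cone/squeezing hypotheses replacing the spectral gap. First I would fix $T>0$ large (to be chosen) and, for each $p\in H_+$, look at the backward-in-time dynamics: since $S(t)$ is only a semigroup, one works instead with the family of graphs obtained by pushing forward the "vertical" planes. Concretely, for each $t\ge 0$ let $G_t:=S(t)\{p+q:\,p\in H_+,\ q\in Q_NH,\ \|q\|_H\le R\}$ for a suitably large absorbing radius $R$ (using global boundedness \eqref{1.fbound} to get an absorbing ball and hence forward invariance of a large ball after cut-off); the cone property \eqref{1.con-inv} forces each $G_t$ to be contained in a cone over any of its points, hence $P_N$ restricted to $G_t$ is injective with a uniformly Lipschitz inverse, so $G_t$ is a Lipschitz graph $\{p+\Phi_t(p)\}$ with $\|\Phi_t(p_1)-\Phi_t(p_2)\|\le\|p_1-p_2\|$. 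The squeezing property \eqref{1.squee} then shows that the graphs $\Phi_t$ form a Cauchy sequence (in the sup norm on bounded sets of $H_+$) as $t\to\infty$: two points on $G_{t}$ and $G_{t+s}$ lying over the same $p$ either stay in the cone forever backward — impossible if they were distinct, by contractivity in $V$ from \eqref{1.best} — or fall out of the cone and are exponentially squeezed. Passing to the limit gives a Lipschitz function $\Phi:H_+\to Q_NH$ whose graph $\Cal M$ is positively invariant, and a short argument using backward solutions on $\Cal M$ (which exist since $\Cal M$ is a limit of forward images) upgrades this to full invariance $S(t)\Cal M=\Cal M$.

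The second half is the exponential tracking \eqref{1.phase}. Here I would argue as follows: given $u_0\in H$, I must produce $v_0\in\Cal M$ with $\|S(t)u_0-S(t)v_0\|_H\le Ce^{-\alpha t}\|u_0-v_0\|_H$. The natural candidate is to choose $v_0$ so that $P_N v_0$ is adjusted to make $S(t)u_0-S(t)v_0$ eventually leave the cone $K^+$; once outside, the squeezing property \eqref{1.squee} gives exponential decay directly. To find such a $v_0$ one uses a continuity/topological-degree argument on the finite-dimensional parameter $P_Nv_0$: for $P_Nv_0$ far from $P_Nu_0$ the difference starts outside $K^+$ and stays there, while a standard connectedness argument produces an intermediate choice for which the trajectory difference is squeezed. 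Alternatively, and perhaps more cleanly, one shows that the restriction of $\Phi$ is exactly the map whose graph is $\bigcap_{t\ge0}G_t$, and then for arbitrary $u_0$ the forward image $S(t)u_0$ approaches $G_t$, hence approaches $\Cal M$; combined with \eqref{1.squee} along the way this yields \eqref{1.phase} with some $\alpha>0$ (here $\alpha$ need not be $\lambda_N$; it will be the squeezing rate $\gamma$).

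The main obstacle I expect is the construction of the limiting graph and, in particular, establishing that $P_N$ is a bi-Lipschitz homeomorphism from each $G_t$ onto all of $H_+$ (surjectivity, not just injectivity): injectivity and the Lipschitz bound are immediate from the cone property, but surjectivity requires that the "vertical" fibers do not collapse under $S(t)$, which is where one must use the saddle structure — that the $H_+$-component of the linearized flow is expanding relative to $H_-$ — encoded implicitly in \eqref{1.best}. A clean way around this is to invoke the already-proven machinery: after the exponential change of variable and cut-off, hypothesis \eqref{1.best} is essentially equivalent to the dichotomy used in Theorem \ref{Th1.manex}, so one can in fact reduce Theorem \ref{Th1.maincon} to Theorem \ref{Th1.manex} by checking that the cone inequality implies the required $L^2$ estimate of Lemma \ref{Lem1.norm} for the linearized equation along any pair of trajectories; I would mention this reduction as the slick route but carry out the cone-based construction for self-containedness. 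A secondary technical point is handling the cut-off: global boundedness \eqref{1.fbound} guarantees an absorbing ball, so all relevant dynamics occurs in a fixed ball and the graphs $\Phi_t$ are uniformly bounded, which is what makes the Cauchy argument go through.
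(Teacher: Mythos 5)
Your overall strategy --- transport horizontal data forward, use the cone property to view the images as $1$-Lipschitz graphs over $H_+$, use squeezing to pass to a limiting graph, and then match $P_N$-components for tracking --- is essentially the route the paper takes (it transports the plane $Q_Nu=0$ from time $-T$ and lets $T\to\infty$). However, there are concrete gaps. First, your sets $G_t=S(t)\{p+q:\ \|q\|_H\le R\}$ are images of a slab, not of a graph: two slab points with the same $p$ and different $q$ have a purely vertical difference, which is \emph{not} in $K^+$, so the cone property \eqref{1.con-inv} says nothing about them and $G_t$ need not be a graph over $H_+$; you must transport a fixed Lipschitz graph (e.g.\ the plane $H_+$ itself), whose point-differences do lie in $K^+$. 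Second, and more seriously, the surjectivity problem that you yourself identify as the main obstacle is not resolved by your proposed appeal to \eqref{1.best} or by reducing to Theorem \ref{Th1.manex}: the hypotheses of Theorem \ref{Th1.maincon} are only the cone and squeezing properties \eqref{1.con-inv} and \eqref{1.squee}, which are \emph{consequences} of \eqref{1.best} (Corollary \ref{Cor1.consq}) but are not assumed to come from it, so neither \eqref{1.best} nor the dichotomy/$L^2$ estimate of Lemma \ref{Lem1.norm} is available, and such a reduction would defeat the purpose of the cone formulation (spatial averaging being the intended application). The paper settles surjectivity inside the cone framework: for $v_1,v_2\in H_+$ the difference $w(t)$ of the trajectories stays in $K^+$, so $\|w_-\|_H\le\|w_+\|_H$, and the equation together with the global Lipschitz continuity of $F$ gives $\frac d{dt}\|w_+\|^2_H\ge-2(\lambda_N+2L)\|w_+\|^2_H$; hence $G_T(v):=P_NS(T)v$ is injective with Lipschitz inverse, and since $H_+\cong\R^N$ an invariance-of-domain type argument makes $G_T$ a homeomorphism onto all of $H_+$, i.e.\ the boundary value problem \eqref{1.boundary} is uniquely solvable.

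Third, your tracking argument is incomplete: the degree/connectedness sketch is not carried out, and the alternative (``$S(t)u_0$ approaches $G_t$, hence $\Cal M$'') would at best give attraction to the manifold, not the existence of a single trajectory $v(t)$ on $\Cal M$ with $\|S(t)u_0-v(t)\|_H\le Ce^{-\gamma t}$. The missing ingredient is the paper's Step 4: for each $T>0$ choose $u_T$ on the manifold with $P_Nu_T(T)=P_Nu(T)$; then $u(t)-u_T(t)\notin K^+$ on $[0,T]$ (contrapositive of cone invariance), squeezing \eqref{1.squee} gives the decay there, the initial points $u_T(0)$ are uniformly bounded (the $Q_N$-components are bounded thanks to the global boundedness \eqref{1.fbound}, estimate \eqref{1.qbound}, and the cone inequality controls the $P_N$-components), and compactness of bounded sets in the finite-dimensional $H_+$ produces a limit trajectory on $\Cal M$ that tracks $u(t)$ for all $t\ge0$. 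You are right that the tracking rate is the squeezing exponent $\gamma$, not $\lambda_N$.
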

\begin{proof} We split the proof in 4 steps.
\par
{\it Step 1.} We claim that the boundary value problem
\begin{equation}\label{1.boundary}
\Dt u+Au=F(u),\ P_Nu\big|_{t=0}=u_0^+,\ \ Q_Nu\big|_{t=-T}=0
\end{equation}
has a unique solution for any $T<0$ and any $u_0^+\in H_+$. Indeed, consider the map $G_T:H_+\to H_+$ given by
$$
G_T(v):=P_N S(T)v, \ \ v\in H_+,
$$
where $S(t)$ is a solution operator of problem \eqref{1.eqmain}. Then, obviously, $G_T$ is continuous and,  due to the cone condition, for any
two points $v_1,v_2\in H_+$ and associated trajectories $u_i(t)=S(t)v_i$, we have
$$
\|w_-(t)\|_H\le\|w_+(t)\|_H,\ \ t\in(0,T),\ \ w(t):=u_1(t)-u_2(t)
$$
and, therefore,
\begin{multline*}
\frac12\frac d{dt}\|w_+(t)\|^2_H\ge -(A_+w_+,w_+)-(F(u_1)-F(u_2),w_-)\ge\\\ge -\lambda_N\|w_+\|^2_H-L(\|w_+\|_H+\|w_-\|_H)\|w_+\|_H\ge -(\lambda_N+2L)\|w_+\|^2_H.
\end{multline*}
Integrating this inequality, we see that
$$
\|v_1-v_2\|_H\le e^{(\lambda_N+2L)T}\|G_T(v_1)-G_T(v_2)\|_H.
$$
Thus, the map $G_T: H_+\to H_+$ is injective and the inverse is Lipschitz on its domain. Since $H_+\sim\R^N$, by the topological arguments (e.g., one can easily check that $G_T(H_+)$ has empty boundary), we conclude that $G_T$ is a homeomorphism on $H_+$ and, therefore, $G_T(v)=u_0^+$ is uniquelly solvable for all $u_0^+\in H_+$. It remains to note that $u(t)=S(t)G_T^{-1}(u_0^+)$ solves \eqref{1.boundary}.
\par
{\it Step 2.} Let $u_{T,u_0^+}(t)$ be the solution of the boundary value problem \eqref{1.boundary}. We claim that the limit
\begin{equation}\label{1.liminf}
u_{u_0^+}(t):=\lim_{T\to-\infty}u_{T,u_0^+}(t)
\end{equation}
exists for all $t\in(-\infty,0]$ and solves the the problem \eqref{1.boundary} with $T=-\infty$. Indeed, let $u_i(t):=u_{T_i,u_0^+}(t)$ and $w(t):=u_1(t)-u_2(t)$. Then, since $w_+(0)=0$, we know that $w(0)\notin K^+$ and, due to the cone property, $w(t)\notin K^+$ for all $t\le T:=\max\{T_1,T_2\}$ and, due to the squeezing property,
\begin{equation}\label{1.sq-good}
\|u_1(t)-u_2(t)\|_H\le Ce^{-\gamma(t-T)}\|u_1(T)-u_2(T)\|_H\le2C e^{-\gamma(t-T)}\|w_-(-T)\|_H.
\end{equation}
We now use that $F$ is globally bounded. Then, for any solution $u(t)$ of \eqref{1.eqmain}, we have
$$
\frac12\frac d {dt} \|u_-\|^2_H\le -\lambda_{N+1}\|u_-\|^2_H+(F(u),u_-)\le \frac12\lambda_{N+1}\|u_-\|^2_H+C_*
$$
and
\begin{equation}\label{1.qbound}
\|u_-(t)\|^2_H\le e^{-\lambda_{N+1}t}\|u_-(0)\|^2_H+\tilde C,\ \ t\ge0
\end{equation}
for some $\tilde C$ which is independent of $u$. In particular, since the solution of \eqref{1.boundary} starts from $u_-(T)=0$, we conclude that
$$
\|Q_N u_{T,u_0^+}(t)\|^2_H\le \tilde C
$$
for all $T\le0$ and $u_0^+\in H_+$. Then, estimate \eqref{1.sq-good} guarantees that $u_{T,u_0^+}$ is a Cauchy sequence, say, in $C_{loc}((-\infty,0),H)$. Thus, the limit \eqref{1.liminf} exists and $u_{u_0^+}(t)$ is a backward solution of \eqref{1.eqmain}.
\par
{\it Step 3.} Define a set $\Cal S\subset C_{loc}(\R,H)$ as the set of all solutions of \eqref{1.eqmain} obtained as a limit \eqref{1.liminf}. Then, by the construction, this set is strictly invariant
$$
T(h)\Cal S=\Cal S,\ \ (T(h)u)(t):=u(t+h),\ h\in\R
$$
and for any two trajectories $u_1,u_2\in\Cal S$, we have
\begin{equation}\label{1.conS}
u_1(t)-u_2(t)\in K^+,\ \ t\in\R.
\end{equation}
The last property can be easily shown using the approximations $u_{T,u_0^1}$ and $u_{T,u_0^2}$ for which we have $Q_Nu_{T,u_0^1}(T)=Q_Nu_{T,u_0^2}(T)=0$ and passing to the limit $T\to-\infty$.
\par
From \eqref{1.conS} we conclude that the map $\Phi:H_+\to H_-$
$$
\Phi(u_0):=Q_N u(0),\ \ u\in\Cal S,\ P_Nu(0)=u_0
$$
is well-defined and Lipschitz continuous with Lipschitz constant one and the invariance of $\Cal S$ implies the invariance of the Lipschitz manifold
$$
\Cal M:=\{u_0+\Phi(u_0),\ \ u_0\in H_+\}.
$$
Thus, the desired invariant manifold is constructed.
\par
{\it Step 4.} To complete the proof, we need to check the exponential tracking. Indeed, let $u(t)$, $t\ge0$ be a forward trajectory of \eqref{1.eqmain}. Let $T>0$ and $u_T\in\Cal S$ be the solution of \eqref{1.eqmain} belonging to the inertial manifold such that
\begin{equation}\label{1.good-1}
P_N u(T)=P_Nu_T(T).
\end{equation}
Then, obviously, $u(T)-u_T(T)\notin K^+$ and, consequently, $u(t)-u_T(t)\notin K^+$ for all $t\in[0,T]$. We also know that $Q_Nu_T(0)$ is uniformly bounded with respect to $T$. Moreover, since
$$
\|P_N(u(0)-u_T(0)\|_H\le \|Q_N(u(0)-u_T(0)\|_H,
$$
we see that the sequence $u_T(0)$ is uniformly bounded as $T\to\infty$. On the other hand, due to squeezing property,
\begin{equation}\label{1.trrack}
\|u(t)-u_T(t)\|_H\le Ce^{-\gamma t}\|u(0)-u_T(0)\|_H,\ \ t\in[0,T].
\end{equation}
Since $H_+$ is finite-dimensional and $u_T(0)$ is bounded, we may assume without loss of generality that $u_T(0)\to\bar u(0)$ and then the corresponding trajectory $\bar u(t)\in\Cal S$ and satisfies \eqref{1.trrack} for all $t\ge0$. Thus, the theorem is proved.
\end{proof}
\begin{remark}\label{Rem1.var} There are a lot of slightly different versions of the assumptions of Theorem \ref{Th1.maincon}, see \cite{kok,rom-man,sell-book,fen,bates} and references therein. In particular, if the non-linearity $F$ is smooth, then it is more natural to state the cone condition in terms of equation of variations associated with the problem \eqref{1.eqmain} and if the slightly stronger {\it uniform} cone condition is satisfied, the manifold $\Cal M$ will be $C^{1+\eb}$-smooth, and normally hyperbolic, see e.g. \cite{sell}. More interesting generalizations are related with the situation when the cones $K^+=K^+(u)$ depend on the point $u\in H$, see \cite{fen,bates,Hirsh} and the literature cited there. To the best of our knowledge, such construction has been never used so far for establishing the existence of inertial manifolds for dissipative PDEs although there are natural candidates for that, see Example \ref{Ex2.grad} below.
\end{remark}
\begin{remark}\label{Rem1. nonuniform} Arguing as in the proof of Corollary \ref{Cor1.consq}, one can easily check that the cone and squeezing properties are satisfied if for every two trajectories $u_1$ and $u_2$, there exists an exponent $\alpha(t):=\alpha_{u_1,u_2}(t)$ such that
\begin{equation}\label{1.alphabounds}
0<\alpha_-\le\alpha(t)\le\alpha_+<\infty
\end{equation}
(where $\alpha_\pm$ are independent of $u_1$ and $u_2$) such that
\begin{equation}\label{1.bestcone}
\frac d{dt}V(u_1(t)-u_2(t))+\alpha(t) V(u_1(t)-u_2(t))\le-\mu\|u_1(t)-u_2(t)\|^2.
\end{equation}
We will check exactly this condition in order to verify the cone and squeezing property in the next section.
\end{remark}
\subsection{Spatial averaging: an abstract scheme}\label{s1.8} Let us assume that our nonlinearity is differentiable with the derivative $F'(u)\in\Cal L(H,H)$. Then, the standard spectral gap condition is stated in terms of the norm $\|F'(u)\|\le L$. However, this assumption can be refined using the fact that only the entries $(F'(u)e_i,e_j)$ of the "matrix" $F'(u)$ with numbers $i,j$ "close" to $N$ are really essential. In particular, if this truncated finite-dimensional matrix is close to the scalar one, the inertial manifold will exist even if the spectral gap condition is violated. To state the precise result, we need the following projectors: let $N\in\Bbb N$ and $k>0$ be such that $k<\lambda_N$, then
\begin{multline}\label{1.proj}
\bar P_ku:=\sum_{\lambda_n<\lambda_N-k}(u,e_n)e_n, \\  \bar P_{N,k}u:=\sum_{\lambda_{N}-k\le \lambda_n\le\lambda_{N+1}+k}(u,e_n)e_n,\ \bar Q_ku:=\sum_{\lambda_n>\lambda_{N+1}+k}(u,e_n)e_n.
\end{multline}
Then, analogously to \cite{sell}, the following result holds.
\begin{theorem}\label{Th1.sp} Let the function $F$ be globally Lipschitz with the Lipschitz constant $L$, globally bounded and  differentiable and let the numbers $N$ and $k>4L$ be such that
\begin{equation}\label{1.sp}
\|\bar P_{N,k}\circ F'(u)\circ \bar P_{N,k}v-a(u)\bar P_{N,k}v\|_H\le\delta\|v\|^2_H,\ \ u,v\in H,
\end{equation}
where $a(u)\in\R$ is a scalar depending continuously on $u\in H$ and $\delta<L$. Assume also that
\begin{equation}\label{1.spcond}
\frac{2L^2}{k-4L}+\delta<\frac\theta2,\ \ \alpha-2L>0,
\end{equation}
where as before $\alpha:=\frac{\lambda_{N+1}+\lambda_N}2$ and $\theta:=\frac{\lambda_{N+1}-\lambda_N}2$. Then, inequality \eqref{1.bestcone} holds and, therefore, there exists a Lipschitz  $N$-dimensional inertial manifold  with exponential tracking.
\end{theorem}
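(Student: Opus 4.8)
The plan is to reduce the whole statement to verifying the differential inequality \eqref{1.bestcone}: once we know that for every pair of solutions $u_1,u_2$ of \eqref{1.eqmain} the function $t\mapsto V(u_1(t)-u_2(t))$ satisfies \eqref{1.bestcone} with an exponent $\alpha(t)$ obeying the two-sided bound \eqref{1.alphabounds}, Remark~\ref{Rem1. nonuniform} yields the cone and squeezing properties, and then Theorem~\ref{Th1.maincon}—whose remaining hypothesis, global boundedness of $F$, is assumed here—produces the $N$-dimensional Lipschitz inertial manifold with exponential tracking. So I fix two solutions $u_1,u_2$ of \eqref{1.eqmain}, set $v:=u_1-u_2$, and, using that $F\in C^1(H,H)$ with $\|F'(u)\|_{\mathcal L(H,H)}\le L$ (see \eqref{1.derL}), write the difference equation in the form
\[
\Dt v+Av=\ell(t)v,\qquad \ell(t):=\int_0^1 F'\big(\tau u_1(t)+(1-\tau)u_2(t)\big)\,d\tau,\qquad \|\ell(t)\|_{\mathcal L(H,H)}\le L .
\]

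First I would differentiate $V(v)=\|Q_Nv\|_H^2-\|P_Nv\|_H^2$ along trajectories exactly as in the proof of Lemma~\ref{Lem1.cone}, getting
\[
\tfrac12\tfrac{d}{dt}V(v)=-(AQ_Nv,Q_Nv)+(AP_Nv,P_Nv)+\big(\ell(t)v,\,Q_Nv-P_Nv\big).
\]
The new ingredient is to peel off the almost-scalar part of $\ell(t)$ on the critical band. Put $\bar a(t):=\int_0^1 a\big(\tau u_1(t)+(1-\tau)u_2(t)\big)\,d\tau$; integrating \eqref{1.sp} along the segment from $u_2(t)$ to $u_1(t)$ gives $\|\bar P_{N,k}\ell(t)\bar P_{N,k}v-\bar a(t)\bar P_{N,k}v\|_H\le\delta\|v\|_H$, while \eqref{1.sp} applied to a unit vector of the band shows $|\bar a(t)|\le L+\delta<2L$. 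Since $\big(\bar a(t)v,\,Q_Nv-P_Nv\big)=\bar a(t)V(v)$, moving this term to the left and setting $\alpha(t):=2(\alpha-\bar a(t))$, $\alpha:=\frac{\lambda_{N+1}+\lambda_N}2$, turns the identity into
\[
\tfrac12\tfrac{d}{dt}V(v)+\tfrac{\alpha(t)}2V(v)=-(AQ_Nv,Q_Nv)+(AP_Nv,P_Nv)+\alpha V(v)+\big((\ell(t)-\bar a(t))v,\,Q_Nv-P_Nv\big),
\]
and the bound $|\bar a(t)|<2L$ together with $\alpha-2L>0$ from \eqref{1.spcond} gives $0<2(\alpha-2L)\le\alpha(t)\le2(\alpha+2L)<\infty$, i.e. \eqref{1.alphabounds}. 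It remains to bound the right-hand side by $-\mu\|v\|_H^2$.

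Here I split $v=\bar P_kv+\bar P_{N,k}v+\bar Q_kv$ and write $m^2:=\|\bar P_{N,k}v\|_H^2$, $r^2:=\|\bar P_kv\|_H^2+\|\bar Q_kv\|_H^2$, so $\|v\|_H^2=m^2+r^2$; since $Q_N-P_N$ is diagonal in the eigenbasis it acts as a sign-flip, so $\|\bar P_{N,k}(Q_Nv-P_Nv)\|_H=m$ and $\|(I-\bar P_{N,k})(Q_Nv-P_Nv)\|_H=r$. Because $\bar P_kH\subset H_+$, $\bar Q_kH\subset H_-$ and the band $\bar P_{N,k}$ straddles the gap, spectral separation gives
\[
-(AQ_Nv,Q_Nv)+(AP_Nv,P_Nv)+\alpha V(v)\le-\theta m^2-(\theta+k)r^2,\qquad \theta:=\tfrac{\lambda_{N+1}-\lambda_N}2,
\]
the extra $k$ on the off-band part coming from $\lambda_n<\lambda_N-k$ on $\bar P_kH$ and $\lambda_n>\lambda_{N+1}+k$ on $\bar Q_kH$. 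For the nonlinear term I decompose $I=\bar P_{N,k}+(I-\bar P_{N,k})$ in both slots: the band-to-band part is $\big(\bar P_{N,k}\ell\bar P_{N,k}v-\bar a\bar P_{N,k}v,\,Q_Nv-P_Nv\big)\le\delta\|v\|_H^2$ by \eqref{1.sp}; the two cross terms are orthogonal to the scalar correction, hence reduce to $\big(\ell\,\bar P_{N,k}v,(I-\bar P_{N,k})(Q_Nv-P_Nv)\big)$ and $\big(\ell(I-\bar P_{N,k})v,\bar P_{N,k}(Q_Nv-P_Nv)\big)$, each $\le L\,m\,r$; and the off-band-to-off-band term is $\le(L+|\bar a|)r^2\le(2L+\delta)r^2$. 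Collecting,
\[
\tfrac12\tfrac{d}{dt}V(v)+\tfrac{\alpha(t)}2V(v)\le-(\theta-\delta)m^2-(\theta+k-2L-2\delta)r^2+2L\,m\,r,
\]
and Young's inequality $2L\,m\,r\le\eta r^2+\frac{L^2}{\eta}m^2$ with $\eta:=k-2L-2\delta>0$ (positivity from $k>4L$, $\delta<L$) absorbs the $r^2$-terms completely and leaves $-(\theta-\delta-\frac{L^2}{k-2L-2\delta})m^2-\theta r^2$; since $k>4\delta$ one has $\frac{L^2}{k-2L-2\delta}\le\frac{2L^2}{k-4L}$, so \eqref{1.spcond} forces $\delta+\frac{L^2}{k-2L-2\delta}<\frac\theta2<\theta$ and the $m^2$-coefficient is $>\frac\theta2$. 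Thus \eqref{1.bestcone} holds with $\mu:=\frac\theta2$, and the proof is finished by Remark~\ref{Rem1. nonuniform} and Theorem~\ref{Th1.maincon}.

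I expect the delicate point to be precisely this bookkeeping: one must organize the term $\big((\ell-\bar aI)v,\,Q_Nv-P_Nv\big)$ so that \emph{every} summand other than the genuinely small band-to-band one carries a factor $(I-\bar P_{N,k})v$ (equivalently, a factor $r$) and can therefore be swallowed by the $k$-enhanced dissipation $(\theta+k)r^2$, with the $\delta$-small summand the only one competing against $\theta m^2$; keeping the Young parameter in line with \eqref{1.spcond} is exactly where $k>4L$ and $\frac{2L^2}{k-4L}+\delta<\frac\theta2$ are spent. By contrast, the supporting analytic facts—the representation $F(u_1)-F(u_2)=\ell(t)v$ and the admissibility of differentiating $t\mapsto V(v(t))$ and $t\mapsto\|Q_Nv(t)\|_H^2$ along solutions—are routine given the global Lipschitz continuity and $C^1$-smoothness of $F$, just as in Lemma~\ref{Lem1.cone}.
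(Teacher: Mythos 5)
Your argument is correct and follows essentially the same route as the paper: you verify the non-uniform cone inequality \eqref{1.bestcone} by isolating the band $\bar P_{N,k}$, using the spatial averaging hypothesis \eqref{1.sp} for the band-to-band part (producing the $a(t)V$ and $\delta\|v\|^2_H$ contributions), bounding the remaining pieces by $L$ times factors carrying the off-band component, and absorbing them into the $k$-enhanced dissipation via Young's inequality, after which Remark \ref{Rem1. nonuniform} and Theorem \ref{Th1.maincon} finish the proof exactly as in the paper. The only differences are bookkeeping: you subtract $\bar a(t)I$ before decomposing and track $m=\|\bar P_{N,k}v\|_H$ and $r$ separately, so your Young parameter is $k-2L-2\delta$ and you reach \eqref{1.spcond} through the elementary comparison $\frac{L^2}{k-2L-2\delta}\le\frac{2L^2}{k-4L}$ (valid since $k>4L>4\delta$), whereas the paper keeps $\|v\|_H$ in the cross terms and applies \eqref{1.spcond} directly; both yield \eqref{1.bestcone} with admissible $\mu$ and $\alpha(t)$.
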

\begin{proof} Indeed, let $u_1$ and $u_2$ be two solutions of \eqref{1.eqmain} and let
$$
l(t):=\int_0^1F'(su_1(t)+(1-s)u_2(t))\,ds.
$$
Then, obviously, $l(t)$ satisfies \eqref{1.sp} with the same $\eb$ and $a(t):=\int_0^1a(su_1(t)+(1-s)u_2(t))\,ds$.
Let also $v(t):=u_1(t)-u_2(t)$. Then, analogously to \eqref{1.conproof}, but using also that
$$
\|\bar P_kv\|^2_H\le (\alpha-\lambda_N+k)^{-1}((\alpha-A_+)v_+,v_+)\le k^{-1}((\alpha-A_+)v_+,v_+)
$$
and , similarly,
$$
\|\bar Q_kv\|^2_H\le k^{-1}((A_--\alpha)v_-,v_-),
$$
we have
\begin{multline}\label{1.breddd}
\frac12\frac d{dt}V(v(t))+\alpha V(v(t))=-[(\alpha-A_+)v_+,v_+)+((A_--\alpha)v_-,v_-)]+\\+(l(t)v,v_--v_+)\le-\frac\theta2\|v\|^2_H-
\frac12[(\alpha-A_+)v_+,v_+)+((A_--\alpha)v_-,v_-)]+(l(t)v,v_--v_+)\le\\\le
-\frac\theta2\|v\|^2_H-\frac k2(\|\bar P_kv\|^2_H+\|\bar Q_kv\|^2_H)+(l(t)v,v_--v_+).
\end{multline}
Furthermore,
\begin{multline}
(l(t)v,v_--v_+)=(\bar P_{N,k}l(t)v,v_--v_+)+(l(t)v,\bar Q_k v-\bar P_k v)=\\=(\bar P_{N,k}\circ l(t)\circ \bar P_{N,k}v,v_--v_+)+(l(t)v,\bar Q_kv-\bar P_k v)+(\bar P_kv+\bar Q_k v,l^*(t)\circ \bar P_{N,k}(v_--v_+))\le\\\le(\bar P_{N,k}\circ l(t)\circ\bar P_{N,k}v,v_--v_+)+ 2L\|v\|_H(\|\bar P_kv\|^2_H+\|\bar Q_k v\|^2_H)^{1/2}\le\\\le (\bar P_{N,k}\circ l(t)\circ\bar P_{N,k}v,v_--v_+)+\frac{k-4L}2(\|\bar P_kv\|^2_H+\|\bar Q_kv\|^2_H)+\frac{2L^2}{k-4L}\|v\|^2_H.
\end{multline}
Using now assumption \eqref{1.sp} and the obvious fact that $|a(u)|\le L+\delta<2L$, we obtain
\begin{multline}
(\bar P_{N,k}\circ l(t)\circ \bar P_{N,k}v, v_--v_+)\le a(t) V(v(t))+\delta\|v(t)\|^2_H+\\+|a(t)|(\|\bar P_k v\|^2_H+\|\bar Q_kv\|^2_H)\le a(t)V(v(t))+\delta\|v(t)\|^2+2L(\|\bar P_k v\|^2_H+\|\bar Q_k v\|^2_H).
\end{multline}
Inserting the obtained estimates into the right-hand side of \eqref{1.breddd} and using \eqref{1.spcond}, wee see that \eqref{1.bestcone} is indeed satisfied with
$$
\mu:=\frac\theta2 -\frac{2L^2}{k-4L}-\delta>0,\ \ \alpha(t):=\alpha-a(t)>\alpha-2L>0
$$
and the theorem is proved.
\end{proof}
\begin{remark}\label{Rem1.smo} As not difficult to check inequality \eqref{1.bestcone} implies also the so-called uniform cone condition, so the obtained inertial manifold is not only Lipschitz, but also $C^{1+\kappa}$-smooth (for some small $\kappa>0$) if the non-linearity $F$ is smooth enough. Mention also that the name "spatial averaging" for that method comes from the fact that in the key application to the 3D reaction-diffusion equation, see Example \ref{Ex1.3DRDE},
 $F'(u)$ is a multiplication operator on the function $f'(u(t,x))$ and the scalar factor $a(u)$ is the averaging of $f'(u(t,x))$ with respect to the spatial variable $x$.
 \end{remark}
The rest of this paragraph is devoted to adapting Theorem  \ref{Th1.sp} to the more realistic case where the condition \eqref{1.sp} is not uniform with respect to all $u\in H$. Then, we need the following definition.

\begin{definition}\label{Def1.sp-av} We say that an operator $A$ and a bounded, globally Lipschitz and differentiable non-linearity $F:H\to H$ satisfies the spatial averaging condition if there exist a positive exponent $\kappa$  and a positive constant $\rho$ such that, for every $\delta>0$, $R>0$ and $k>0$ there exists {\it infinitely many}
values of $N\in\Bbb N$ satisfying
\begin{equation}\label{1.smallgap}
\lambda_{N+1}-\lambda_N\ge\rho
\end{equation}
and
\begin{equation}\label{1.sp-avcon}
\sup_{\|u\|_{H^{2-\kappa}}\le R}\biggl\{\|\bar P_{N,k}\circ F'(u)\circ \bar P_{N,k}v-a(u)\bar P_{N,k}v\|_H\biggr\}\le\delta\|v\|^2_H,
\end{equation}
for some scalar multiplier $a(u)=a_{N,k,\delta}(u)\in\R$.
\end{definition}
The main difficulty here is that assumption \eqref{1.sp-avcon} involves higher norms of $u$ and we cannot use the straightforward cut-off procedure (say, multiplication of $F$ on $\varphi(\|u\|_{H^{2-\kappa}})$) in order to reduce the situation to Theorem \ref{Th1.sp} since in that case the modified map will no more act from $H$ to $H$, but only from $H^{2-\kappa}$ to $H$ (and, as we already know, much stronger spectral gap assumptions are required for such maps). Thus, the cut-off procedure should be much more delicate. To describe this procedure, we need to remind some elementary
properties of equation~\eqref{1.eqmain}, see also Paragraph \ref{s2.1} below.

\begin{proposition}\label{Prop1.obvious} Let the non-linearity $F$ be globally bounded and operator $A$ satisfies the above assumptions. Then, the following properties hold for any solution $u(t)$ of problem \eqref{1.eqmain}:
\par
1) Dissipativity in $H^s$ and $0\le s\le2$:
\begin{equation}\label{1.diskappa}
\|u(t)\|_{H^s}\le C e^{-\lambda_1 t}\|u(0)\|_{H^s}+R_*,
\end{equation}
for some positive constants $R_*$ and $C$;
\par
2) Smoothing property:
\begin{equation}\label{1.smosmo}
\|u(t)\|_{H^{2}}\le Ct^{-1}\|u(0)\|_H+R_*,\ \ t>0;
\end{equation}
3) Dissipativity of the $Q_N$-component:
\begin{equation}\label{1.disqq}
\|Q_Nu(t)\|_{H^{2-\kappa}}\le e^{-\lambda_{N+1} t}\|Q_Nu(0)\|_{H^{2-\kappa}}+R_*
\end{equation}
for all $N\in\Bbb N$.
\end{proposition}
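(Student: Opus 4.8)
\emph{Proof proposal.} All three assertions are standard parabolic a priori bounds, and the plan is to read them off from the variation of constants formula \eqref{1.vc} together with three elementary facts about the self-adjoint positive operator $A$: the spectral lower bound $(Au,u)\ge\lambda_1\|u\|_H^2$; the smoothing/decay estimates $\|A^\gamma e^{-A\sigma}\|_{\mathcal L(H,H)}\le C_\gamma\sigma^{-\gamma}e^{-\lambda_1\sigma/2}$ for $\sigma>0$, $\gamma\ge0$, obtained from $\sup_{n}\lambda_n^\gamma e^{-\lambda_n\sigma}$, together with their analogues on $H_-:=Q_NH$ where the supremum runs only over $n>N$ and $\lambda_1$ is replaced by $\lambda_{N+1}$; and the global bound $\|F(u)\|_H\le C$. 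I would prove (1) first and then deduce (2) and (3) by the same Duhamel bookkeeping.

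\emph{Dissipativity in $H^s$.} For $0\le s\le 1$ the cleanest route is a direct energy estimate: multiplying \eqref{1.eqmain} by $A^su$ gives $\frac12\frac d{dt}\|u\|_{H^s}^2+\|u\|_{H^{s+1}}^2=(F(u),A^su)\le\|F(u)\|_{H^{s-1}}\|u\|_{H^{s+1}}$, and since $s-1\le 0$ we have $\|F(u)\|_{H^{s-1}}\le\lambda_1^{(s-1)/2}\|F(u)\|_H$, which is bounded; Young's inequality absorbs $\frac12\|u\|_{H^{s+1}}^2$ into the left-hand side, after which $\|u\|_{H^{s+1}}^2\ge\lambda_1\|u\|_{H^s}^2$ and Gronwall's lemma yield \eqref{1.diskappa}. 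For $1<s\le 2$ the nonlinearity no longer lands in $H^{s-1}$, so instead I would estimate \eqref{1.vc} directly: $\|u(t)\|_{H^s}\le e^{-\lambda_1t}\|u_0\|_{H^s}+\int_0^t\|A^{s/2}e^{-A(t-\tau)}\|_{\mathcal L(H,H)}\,\|F(u(\tau))\|_H\,d\tau\le e^{-\lambda_1t}\|u_0\|_{H^s}+C\int_0^\infty\sigma^{-s/2}e^{-\lambda_1\sigma/2}\,d\sigma$, the last integral being finite precisely because $s/2<1$ for $s<2$.

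\emph{Smoothing and the $Q_N$-component.} For (2) I would again feed \eqref{1.vc} into the analytic-semigroup bounds: $\|u(t)\|_{H^2}\le\|Ae^{-At}u_0\|_H+\int_0^t\|Ae^{-A(t-\tau)}\|_{\mathcal L(H,H)}\,\|F(u(\tau))\|_H\,d\tau$, with $\|Ae^{-At}\|_{\mathcal L(H,H)}\le Ct^{-1}$ producing the $Ct^{-1}\|u_0\|_H$ term and $\|Ae^{-A\sigma}\|_{\mathcal L(H,H)}\le C\sigma^{-1}e^{-\lambda_1\sigma/2}$ together with the global bound on $F$ producing the constant $R_*$. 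For (3), which is the most transparent, I would isolate the equation for $u_-:=Q_Nu$, namely $\partial_t u_-+Au_-=Q_NF(u)$; on $H_-$ one has $A\ge\lambda_{N+1}$, so $\|e^{-At}\|_{\mathcal L(H_-,H_-)}\le e^{-\lambda_{N+1}t}$ and $\|A^{1-\kappa/2}e^{-A\sigma}\|_{\mathcal L(H_-,H_-)}\le C\sigma^{-(1-\kappa/2)}e^{-\lambda_{N+1}\sigma/2}$. Applying \eqref{1.vc} to $u_-$ and using $\|Q_NF(u)\|_H\le C$ gives $\|u_-(t)\|_{H^{2-\kappa}}\le e^{-\lambda_{N+1}t}\|u_-(0)\|_{H^{2-\kappa}}+C\int_0^\infty\sigma^{-(1-\kappa/2)}e^{-\lambda_{N+1}\sigma/2}\,d\sigma$; the exponent $1-\kappa/2$ is strictly below $1$, so this integral converges, and it equals a constant times $\lambda_{N+1}^{-\kappa/2}$, hence is bounded (indeed tends to zero) uniformly in $N$, which gives \eqref{1.disqq} with an $N$-independent $R_*$.

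\emph{Main obstacle.} None of the steps is hard, and the one place that needs genuine care is the borderline integrability at the top regularity level — the endpoint $s=2$ in (1) and the $t^{-1}$ smoothing into $H^2$ in (2) — where the naive operator-norm bound produces $\int_0^t(t-\tau)^{-1}\,d\tau$, which diverges logarithmically. This is exactly the role played by the strict inequality $s<2$, respectively by the parameter $\kappa>0$ in (3): a positive power of $\sigma$ to spare restores integrability. The genuine endpoint cases, if one insists on them, must be handled via the precise smoothing theory of analytic semigroups (see \cite{hen}), or one simply records the slightly weaker $H^{2-\kappa}$ statements, which are all that is used in the spatial-averaging argument relying on this Proposition.
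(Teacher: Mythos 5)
Your handling of assertion (3) and of \eqref{1.diskappa} for $s<2$ is correct and essentially the intended argument: Duhamel on $Q_NH$ with the $N$-uniform constant of order $\lambda_{N+1}^{-\kappa/2}$, and energy estimates (or semigroup bounds) for the subcritical range of $s$. The genuine gap is at the $H^{2}$ endpoint, which the statement does require: \eqref{1.diskappa} is asserted for $s$ up to and including $2$, and \eqref{1.smosmo} is an $H^2$ estimate. Your own Duhamel computation for (2) produces $\int_0^t(t-\tau)^{-1}e^{-\lambda_1(t-\tau)/2}\,d\tau$, which diverges, and the repair you point to --- ``the precise smoothing theory of analytic semigroups'' --- cannot work if one only uses $\|F(u)\|_H\le C$: the maximal-regularity-type estimate \eqref{1.good-good} is exactly of this form and the paper states explicitly that it fails for $\kappa=0$, so no semigroup bound applied to a merely bounded forcing yields a uniform $H^2$ estimate. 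The fallback claim that only the $H^{2-\kappa}$ statements are needed later is also not accurate: the spatial-averaging construction cuts the equation off outside the $H^2$ absorbing ball $\|u\|_{H^2}\le 2R_*$, whose existence is precisely the $s=2$ case of \eqref{1.diskappa} together with \eqref{1.smosmo}.

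The missing ingredient, and the one point the paper's proof actually emphasizes, is to exploit the time regularity of $t\mapsto F(u(t))$ rather than only its boundedness. In the setting of this paragraph $F$ is also globally Lipschitz and differentiable (see Definition \ref{Def1.sp-av}), a fact your proof never uses. Differentiating \eqref{1.eqmain} in time, $v=\Dt u$ solves $\Dt v+Av=F'(u)v$ with $\|F'(u)\|_{\mathcal L(H,H)}\le L$; an energy/Gronwall estimate for $v$ (multiplied by $t^2$ for the smoothing part) controls $\|\Dt u(t)\|_H$, and then \eqref{1.dth2}, i.e. $\|u\|_{H^2}\le\|\Dt u\|_H+C$ coming from $Au=F(u)-\Dt u$ and the boundedness of $F$, converts this into \eqref{1.diskappa} with $s=2$ and into \eqref{1.smosmo}; the detailed computation is the one carried out in Lemma \ref{Lem2.smo}, cf. \eqref{2.h2sm}--\eqref{2.h2dis}. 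With this ingredient added your proposal is complete; without it the two $H^2$ assertions remain unproven.
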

\begin{proof}
Indeed, all of these properties are standard and follow from the well-known analogous properties of the linear equation ($F=0$) and the fact that
the linear non-homogeneous problem
$$
\Dt v+Av=h(t),\ \ v(0)=0
$$
satisfies
\begin{equation}\label{1.good-good}
\|v\|_{C_b(\R_+,H^{2-\kappa})}\le C_\kappa\|h\|_{C(\R_+,H)}.
\end{equation}
(exactly here we utilize the assumption that $F$ is globally bounded), see \cite{hen} for details. We only mention here that estimate \eqref{1.good-good} does not hold for $\kappa=0$ and, in order to verify the dissipative estimate for the $H^2$-norm, one should use the obvious fact that
\begin{equation}\label{1.dth2}
\|\Dt u\|_H-C\le\|u\|_{H^2}\le\|\Dt u\|_H+C
\end{equation}
and estimate the norm of $v=\Dt u$ using the differentiated equation
$$
\Dt v+Av=F'(u)v.
$$
\end{proof}
\par
Thus,  being mainly interested  in the long-time behaviour of solutions of \eqref{1.eqmain}, we can freely modify the equation outside of the
absorbing ball $\|u\|_{H^{2}}\le 2R_*$. A bit less trivial, but crucial observation is that, due to \eqref{1.disqq}, we need the cone and squeezing properties to be satisfied {\it only} for the solutions $u_1$ and $u_2$ satisfying $\|Q_Nu_i(t)\|_{H^{2-\kappa}}\le 2R_*$. Indeed, to construct the manifold, we only use the solutions of the boundary value problem \eqref{1.boundary} and all such solutions have uniformly bounded
$Q_N$-component due to \eqref{1.disqq} and zero initial data for $Q_Nu(-T)$. The exponential tracking also can be checked for the trajectories belonging to the absorbing ball only (since any trajectory spends only finite time outside of this ball).
Thus, we need not to modify the $Q_N$-component of \eqref{1.eqmain} and may just assume without loss of generality that
\begin{equation}\label{1.no-restriction}
\|Q_Nu\|_{H^{2-\kappa}}\le 2R_*
\end{equation}
(remind that $R_*$ is independent of $N$) in all forthcoming estimates. In contrast to that, the $P_N$-component of $u$ is {\it unavoidably} unbounded on the trajectories on inertial manifold as $T\to-\infty$, so the proper cut-off should be applied to it.
\par
Following \cite{sell}, we introduce the cut-off function $\varphi(\eta)\in C^\infty(\R)$ such that
$$
1. \ \varphi(\eta)\equiv 1,\,\ \eta \le(2R_*)^{1/2};\ \ 2.\ \ \varphi(\eta)\equiv\frac12,\ \ \eta\ge R_1^{1/2}
$$
for some $R_1>R_*$ and
\begin{equation}\label{1.phigood}
\varphi'(\eta)\le0,\ \ \frac12\varphi(\eta)+\eta\varphi'(\eta)>0,\ \ \eta\in\R.
\end{equation}
Obviously, such cut-off function exists. However, \eqref{1.phigood} gives the restriction
$$
\varphi(\eta)\ge \frac{(2R_*)^{1/4}}{\sqrt{\eta}},\ \ \eta\ge(2R_*)^{1/2},
$$
so $R_1$ should actually  satisfy $R_1\ge32R_*$.  Finally, for every $N\in\Bbb N$, introduce the following cut-off versions of equations \eqref{1.eqmain}:
\begin{equation}\label{1.cutted}
\Dt u+Au=F(u)+AP_Nu-\varphi(\|AP_N u\|^2_H)AP_N u.
\end{equation}
Then, by construction of the cut-off function, we see that equations \eqref{1.cutted} and \eqref{1.eqmain} coinside in the absorbing ball $\|u\|_{H^2}\le 2R_*$. Since $H_+=P_NH$ is finite-dimensional, we also conclude that the modified nonlinearity of \eqref{1.cutted} remain globally  Lipschitz, however, its Lipschitz constant is now growing as $N\to\infty$. Nevertheless, as the next theorem shows, for some large $N$s these modified equations will possess the inertial manifolds if the spatial averaging condition is satisfied. Roughly speaking the new nonlinearity is constructed in such way that on the one hand, it does not decrease the spectral gap at any point $u\in H^{2-\kappa}$, but on the other hand, it drastically increases this gap if the $H^2$-norm of $u_+$ is large enough, so for large $u$, we can just use the Lipschitz continuity of $F$ and  the spatial averaging estimate is required only for $u$ belonging to the bounded set in $H^{2-\kappa}$.

\begin{theorem}\label{Th1.spa-final} Let the operator $A$ and the non-linearity $F$ satisfy the spatial averaging assumption. Then, there exist infinitely many $N$s such that the differential inequality \eqref{1.bestcone} is satisfied for the modified equation \eqref{1.cutted} and, therefore, it possesses an $N$-dimensional inertial manifold with exponential tracking.
\end{theorem}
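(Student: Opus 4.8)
The plan is to verify the differential inequality \eqref{1.bestcone} for the modified equation \eqref{1.cutted} on the part of phase space that matters and then to invoke Theorem \ref{Th1.maincon}. Since the $Q_N$--component of \eqref{1.cutted} coincides with that of \eqref{1.eqmain} and $F$ is globally bounded, estimate \eqref{1.disqq} remains valid for \eqref{1.cutted}; hence, exactly as explained before the statement, the inertial manifold and the exponential tracking will follow once the cone and squeezing properties are established for those solutions $u_1,u_2$ of \eqref{1.cutted} obeying \eqref{1.no-restriction}, and by Remark \ref{Rem1. nonuniform} it is enough to check \eqref{1.bestcone} for such solutions. (Note that the proof of Theorem \ref{Th1.maincon} uses only the cone property, the squeezing property, and the global boundedness of the $Q_N$--part of the nonlinearity, all available here, so it still applies even though the right--hand side of \eqref{1.cutted} is not globally bounded.)

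Before the estimate I fix the parameters of the spatial averaging assumption. Since \eqref{1.smallgap} forces $\theta:=\frac{\lambda_{N+1}-\lambda_N}2\ge\frac\rho2$, put $\delta:=\min\{\frac L2,\frac\rho8\}$ (so $\delta<L$) and choose $k>4L$ with $\frac{2L^2}{k-4L}<\frac\rho8$, so that $\frac{2L^2}{k-4L}+\delta<\frac\theta2$ for every $N$ obeying \eqref{1.smallgap}. The radius $R$ is taken large: it must majorize the $H^{2-\kappa}$--norm of the solutions on the relevant part of the manifold of \eqref{1.cutted}, which is admissible because Definition \ref{Def1.sp-av} permits $R$ to be prescribed first and $N$ afterwards. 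Definition \ref{Def1.sp-av} then supplies infinitely many $N$ with \eqref{1.smallgap} and \eqref{1.sp-avcon}; after discarding finitely many we may also assume $\alpha-2L>0$, $\lambda_{N+1}>2L$ and $k<\lambda_N$, and we fix one such $N$.

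Let $u_1,u_2$ solve \eqref{1.cutted} and satisfy \eqref{1.no-restriction}; set $v:=u_1-u_2$, $w_i:=AP_Nu_i$, $\Xi(w):=(1-\varphi(\|w\|_H^2))w$ and $\ell(t):=\int_0^1F'(su_1(t)+(1-s)u_2(t))\,ds$. Using the $A$--invariance of $H_+$, the $P_N$--part of the modification equals $\Xi(w_1)-\Xi(w_2)$, and computing as in \eqref{1.conproof} but isolating this term one obtains
\begin{multline*}
\frac12\frac d{dt}V(v)+\alpha V(v)=-((A_--\alpha)v_-,v_-)-((\alpha-A_+)v_+,v_+)\\+(\ell(t)v,v_--v_+)-(\Xi(w_1)-\Xi(w_2),v_+).
\end{multline*}
On the region where $\varphi\equiv1$ at $w_1$ and $w_2$ the modification term vanishes, \eqref{1.cutted} coincides with \eqref{1.eqmain}, and $\|AP_Nu_i\|_H^2\le(2R_*)^{1/2}$ together with \eqref{1.no-restriction} gives $\|su_1+(1-s)u_2\|_{H^{2-\kappa}}\le R$ for all $s\in[0,1]$; hence $\ell(t)$ inherits \eqref{1.sp-avcon}, the computation of Theorem \ref{Th1.sp} applies word for word, and \eqref{1.bestcone} follows with $\alpha(t)=\alpha-a(t)\in(\alpha-2L,\alpha)$ and $\mu=\frac\theta2-\frac{2L^2}{k-4L}-\delta>0$.

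The remaining, and main, work is the region where the cut--off is active. When $\|su_1+(1-s)u_2\|_{H^{2-\kappa}}\le R$ still holds there, the computation of Theorem \ref{Th1.sp} again goes through, the monotonicity built into \eqref{1.phigood} only making $-(\Xi(w_1)-\Xi(w_2),v_+)$ favourable: $\varphi'\le0$ with $\frac12\varphi+\eta\varphi'>0$ (which forces $|\eta\varphi'(\eta)|\le\frac12\varphi(\eta)$ and makes $t\mapsto t\varphi(t^2)$ increasing) yields that this term is non-positive up to a correction of the form $c(\|\bar P_kv\|_H^2+\|\bar Q_kv\|_H^2)$ absorbed by the reserves of order $k$ already sitting in $-((\alpha-A_+)v_+,v_+)$ and $-((A_--\alpha)v_-,v_-)$; moreover where $\varphi$ has dropped towards $\frac12$ the effective operator on $H_+$ is close to $\frac12A_+$, so the saddle exponent on $H_+$ is boosted from $\theta$ to $\sim\theta+\frac12\lambda_N$, which gives additional room on the unstable modes. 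The hard part is the case $\|u_i\|_{H^{2-\kappa}}>R$, where the averaging estimate \eqref{1.sp-avcon} is no longer available: reconciling the requirement $\|u\|_{H^{2-\kappa}}\le R$ with the region where the cut--off is active is the crux of the argument, and it is here that the precise shape of $\varphi$ (in particular the restriction $R_1\ge32R_*$) and the choice of $R$ large, relative to the $N$--dependence produced by the dissipativity estimates of Proposition \ref{Prop1.obvious} for \eqref{1.cutted}, must be used so that \eqref{1.sp-avcon} does apply wherever it is needed on the relevant part of phase space. Once this is arranged, \eqref{1.bestcone} holds for all the relevant solutions with a positive $\mu$ and $\alpha(t)$ trapped between two positive constants, and Theorem \ref{Th1.maincon} completes the proof.
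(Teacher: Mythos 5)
Your reduction to checking \eqref{1.bestcone} for solutions obeying \eqref{1.no-restriction}, and your treatment of the region where the cut-off is inactive (so that \eqref{1.cutted} coincides with \eqref{1.eqmain} and $\|u_i\|_{H^{2-\kappa}}$ is bounded uniformly in $N$, making \eqref{1.sp-avcon} applicable), match the paper. But the part you yourself call "the crux" is left unproved, and the strategy you sketch for it cannot work. On the inertial manifold of \eqref{1.cutted} the $P_N$-component is unavoidably unbounded (the manifold is a graph over all of $H_+$), so no choice of $R$, however large, and no fine-tuning of $\varphi$ can make the averaging estimate \eqref{1.sp-avcon} "apply wherever it is needed": when $\|P_Nu_i\|_{H^2}$ is large the hypothesis $\|u\|_{H^{2-\kappa}}\le R$ simply fails, and the spatial averaging assumption gives nothing there. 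The whole point of the specific modification $AP_Nu-\varphi(\|AP_Nu\|_H^2)AP_Nu$ is that in this regime it is \emph{not needed}: when $\|P_Nu_i\|_{H^2}\ge 2R_1$ for at least one $i$, the cut-off term equals (up to an error of size $\frac14\lambda_N\|v\|_H^2$ coming from the intermediate annulus, see \eqref{1.betterlip}) $\frac12(AP_Nv,v)$, so the effective saddle on $H_+$ acquires a gap of order $\lambda_N$, and the crude global Lipschitz bound $L$ together with $\frac18\lambda_N>L$ already yields \eqref{1.bestcone}. This dichotomy -- averaging estimate when both $\|P_Nu_i(t)\|_{H^2}\le 2R_1$ (which forces $\|u_i\|_{H^{2-\kappa}}\le C$ with $C$ independent of $N$), crude Lipschitz bound plus the artificially enlarged gap otherwise -- is the missing idea.

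A second, related gap is your handling of the cut-off term in the intermediate region. You assert that \eqref{1.phigood} makes $-(\Xi(w_1)-\Xi(w_2),v_+)$ non-positive up to a correction of the form $c(\|\bar P_kv\|_H^2+\|\bar Q_kv\|_H^2)$, but $v_+$ lives mainly in the intermediate modes captured by $\bar P_{N,k}$, so such a correction cannot absorb it and no justification is offered. The correct statement is the one-sided quadratic bound of Lemma \ref{Lem1.T}, namely $(T'(u)v,v)\le\frac12\lambda_N\|P_Nv\|_H^2+\frac12(AP_Nv,P_Nv)$, proved via the reflection form of the Cauchy--Schwarz inequality \eqref{1.CS} together with \eqref{1.phigood}; after the mean value theorem this gives \eqref{1.t1mt2}, whose right-hand side is absorbed not by $\bar P_k,\bar Q_k$ reserves but by retaining only half of the linear quadratic form, i.e.\ rewriting $((\lambda_{N+1}-A_+)v,v)=((\alpha-A_+)v,v)+\theta\|v_+\|_H^2$ as in \eqref{1.bred-br}, after which the computation of Theorem \ref{Th1.sp} applies verbatim. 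Without Lemma \ref{Lem1.T} (or an equivalent), and without the large-$P_N$ dichotomy above, the proof is incomplete.
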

\begin{proof} To prove the theorem, analogously to \cite{sell}, we need to control the impact of the new nonlinearity $T(u):=\varphi(\|AP_Nu\|^2_H)AP_Nu$ and for that we need the following lemma.
\begin{lemma}\label{Lem1.T} Under the above assumptions, the following estimate holds:
$$
(T'(u)v,v)\le
\frac12\lambda_N\|P_N v\|^2_H+\frac12(AP_Nv,P_Nv).
$$
\end{lemma}
\begin{proof} We use the  version of the Cauchy-Schwartz inequality for any 3 vectors $v,w,y\in H$:
\begin{equation}\label{1.CS}
2(v,y)(w,y)\ge\|y\|^2_H((v,w)-\|v\|_H\|w\|_H).
\end{equation}
Indeed, this inequality is equivalent to
$$
(v-2\frac{(v,y)}{\|y\|^2_H}y,w)\le \|v\|_H\|w\|_H
$$
and the last inequality follows from the Cauchy-Schwartz inequality and the fact that  the map $v\to v-\frac{2(v,y)}{\|y\|^2_H}y$ is a reflection with respect to the plane orthogonal to $y$ and, therefore, it is an isometry.
\par
Taking,  $\eta:=\|Au_+\|^2_H$ and $v=v_+\in H_+:=P_NH$ and using inequalities \eqref{1.phigood} and \eqref{1.CS}, we get
\begin{multline*}
(T'(u)v,v)=2\varphi'(\eta)(Au_+,Av)(Au_+,v)+\varphi(\eta)(Av,v)\le -\varphi'(\eta)\eta(\|Av\|_H\|v\|_H-(Av,v))+\\+\varphi(\eta)(Av,v)\le \frac12\varphi(\eta)\lambda_N\|v\|^2_H+\frac12\varphi(\eta)(Av,v)\le\frac12(\lambda_N\|v\|^2_H+(Av,v))
\end{multline*}
and the lemma is proved.
\end{proof}
Thus, by the mean value theorem, we have
\begin{equation}\label{1.t1mt2}
(T(u_1)-T(u_2),v)\le \frac12(\lambda_N\|v\|^2_H+(AP_Nv,v)),
\end{equation}
where $v:=u_1-u_2$.
Moreover, if both $\|P_Nu_i\|_{H^2}\ge R_1$, $i=1,2$, we have $T(u_i)=1/2AP_Nu_i$ and, therefore,
$$
(T(u_1)-T(u_2),v)=\frac12(AP_Nv,v).
$$
Combining these two estimates and using again the integral mean value theorem, we conclude that the better estimate
\begin{equation}\label{1.betterlip}
(T(u_1)-T(u_2),v)\le \frac14\lambda_N\|v\|^2_H+\frac12(AP_Nv,v)
\end{equation}
holds if $\|P_Nu_i\|_{H^2}\ge2R_1$ for $i=1$ or $i=2$.
\par
We are now ready to verify inequality \eqref{1.bestcone} for equation \eqref{1.cutted}. Indeed, analogously to \eqref{1.breddd}, we have
\begin{multline}\label{1.bred-bred}
\frac12\frac d{dt}V(v(t))+\alpha V(v(t)=-[((\alpha-A_+)v,v)+((A_--\alpha)v,v)]+\\+(T(u_1)-T(u_2),v)-(A_+v,v)+(F(u_1)-F(u_2),v_--v_+).
\end{multline}
where $u_1(t)$ and $u_2(t)$ are two solutions and $v(t)=u_1(t)-u_2(t)$. Fix an arbitrary point $t\ge0$.
Assume first that $\|P_Nu_1(t)\|_{H^2}\le 2R_1$ and $\|P_Nu_2(t)\|_{H^2}\le 2R_1$. Then, using estimate \eqref{1.t1mt2} for the new nonlinearity, we arrive at
\begin{equation}\label{1.bred-br}
\frac12\frac d{dt}V+\alpha V\le -\frac12[(\lambda_{N+1}-A_+)v,v)+((A_--\alpha)v,v)]+(l(t),v_--v_+).
\end{equation}
Since $(\lambda_{N+1}-A_+)v,v)=((\alpha-A_+)v,v)+\theta\|v_+\|^2$, inequality \eqref{1.bred-br} has exactly the same structure as \eqref{1.breddd}. Moreover, using also \eqref{1.no-restriction}, we conclude that, in that case,
$$
\|u_i\|_{H^{2-\kappa}}\le \lambda_1^{-\kappa}\|P_Nu_i\|_{H^2}+\|Q_Nu_i\|_{H^2}\le\lambda_1^{-\kappa}2R_1+R_*\le C,
$$
where $C$ is independent of $N$. Thus, using the spatial averaging assumption and arguing exactly as in the proof of Theorem \ref{Th1.sp}, we may find a sequence of $N$s such that
$$
\frac12\frac d{dt}V(v(t))+\alpha(t)V(t)\le -\mu\|v(t)\|^2_H,\ \mu>0,\ \ \alpha(t)>\alpha_+>0.
$$
Let us consider now the case where $\|P_Nu_i(t)\|_{H^2}\ge2R_1$  for  $i=1$ or $i=2$. Then, using better estimate \eqref{1.betterlip} and the fact that $F$ is globally Lipschitz, we have
$$
\frac12\frac d{dt}V+\alpha V\le -\frac14\lambda_N\|v_+\|^2_H+L\|v\|^2_H= -\frac18\lambda_N V-\frac18\lambda_N\|v\|^2_H+L\|v\|^2_H
$$
and if $\frac18\lambda_N>L$, the inequality \eqref{1.bestcone} is verified and the theorem is proved.
\end{proof}

\begin{remark}\label{Rem1.krav0} In the Definition \ref{Def1.sp-av}, we introduce slightly unusual condition $\kappa>0$ (instead of the traditional $\kappa=0$). This is related with the fact that only global boundedness of $F$ does not allow to obtain the key estimate \eqref{1.disqq} for $\kappa=0$, so some more restrictions and extra cut-off functions are necessary in order to achieve this estimate for $\kappa=0$ as well, see \cite{sell}.
\end{remark}

\subsection{Applications}\label{s.app} The aim of that paragraph is to give a number of examples of parabolic PDEs arising in mathematical physics where we can apply the above theory and establish the existence of inertial manifolds. We start with the simplest case of one spatial variable.

\begin{example}\label{Ex1.1DRDS} Consider the 1D reaction-diffusion equation:
\begin{equation}\label{1.1DRDS}
\Dt u-\partial_x(a(x)\partial_x u)+\alpha u=-f(x,u),\ \ x\in[-L,L],
\end{equation}
where $a(x)>a_0$ is smooth, $\alpha>0$ and $f$ is $C^1$ and satisfies the dissipativity assumption:
\begin{equation}\label{1.disas}
f(x,u).u\ge-C,\ \ u\in\R
\end{equation}
for some $C>0$. Problem \eqref{1.1DRDS} is endowed by the standard Dirichlet, Newmann or periodic boundary conditions.
Then, arguing in a standard way, see e.g., \cite{tem,BV}, one can show that \eqref{1.1DRDS} is well-posed in $H:=L^2(-L,L)$ and satisfies the dissipative estimate \eqref{1.dis}. Moreover, using the maximum principle, one can show the existence of the absorbing set in $C[-L,L]$. Thus, we may cut-off the non-linearity for large $u$ and assume without loss of generality that $f$ and $f'_u$ are globally bounded. We set
$$
Au:=-\partial_x(a(x)\partial_x u)+\alpha u,\ \ F(u):=f(\cdot, u).
$$
Then $A$ is positive self-adjoint and $F$ is globally Lipschitz as the map from $H$ to $H$. Moreover, due to the Weyl asymptotic for the eigenvalues, $\lambda_N\sim CN^2$ and, therefore, there are infinitely many $N$s satisfying
\begin{equation}\label{1.1Dgap}
\lambda_{N+1}-\lambda_N\ge \eb N\le \bar \eb\lambda_N^{1/2}
\end{equation}
for some positive $\eb$ and $\bar\eb$. Indeed, if \eqref{1.1Dgap} is violated for all large $N$s ($N>N_\eb$) for all $\eb$, then taking a sum, we will have
$$
\lambda_N\le \lambda_{N_\eb}+\eb\frac{N(N+1)}2\le C_\eb+\frac\eb2N^2
$$
which contradicts the Weyl asymptotic. Thus, the spectral gap condition \eqref{1.gap} is satisfied for all sufficiently large $N$ satisfying \eqref{1.1Dgap} and, therefore, the inertial manifold exists.
\end{example}
\begin{example}\label{Ex1.KS} Consider the so-called Kuramoto-Sivashinksy equation
\begin{equation}\label{1.KS}
\Dt u+\partial_x^4u+2a\partial_x^2 u=\partial_x(u^2),\ \ a\in\R
\end{equation}
say, on the interval $\Omega:=[0,\pi]$ with Dirichlet boundary conditions $u\big|_{\partial\Omega}=\partial_x^2u|_{\partial\Omega}=0$.
Then, as known, \eqref{1.KS} is well-posed and dissipative in $H:=L^2(0,\pi)$. Moreover, the solutions are smooth for $t>0$ and, in particular,
there is an absorbing ball in $H^2(-L,L)\subset C[-L,L]$, see \cite{tem} and references therein. Thus, after the proper cut-off, we may write the equation in the form
\begin{equation}\label{1.KScut}
\Dt u +Au=F(u):=f_1(u)+\partial_x f_2(u)
\end{equation}
with $A:=\partial_x^4+2a\partial_x^2+a^2+1$ and $f_i(u)$ satisfying $|f_i(u)|+|f'_i(u)|\le C$. In that case, due to the presence of spatial derivatives, $F$ does not map $H$ to $H$ and we can only guarantee that $F$ is globally Lipschitz as a map from $H=L^2(0,\pi)$ to $H^{-1}(0,\pi)$, so we need to use Theorem \ref{Th1.dmain} instead. We note that, $A$ is positive and self-adjoint with
$$
H^2:=D(A)=\{u\in H^4(-L,L),\ u\big|_{\partial\Omega}=\partial_x^2u|_{\partial\Omega}=0\}.
$$
Thus, $H^{-1}(-L,L)=D(A^{-1/4})=H^{-1/2}$ and we need to take $\beta=-\frac12$ in \eqref{1.g-gap}. The spectrum of $A$ can be found explicitly:
$\lambda_N=(N^2+a)^2+1$, so
$$
\frac{\lambda_{N+1}-\lambda_N}{\lambda_{N+1}^{1/4}+\lambda_N^{1/4}}\sim 2N^2
$$
and the spectral gap condition \eqref{1.g-gap} holds for large $N$. Thus, due to Theorem \ref{Th1.dmain}, there is an inertial manifold for the 1D Kuramoto-Sivashinsky equation.
\end{example}
We now turn to the 2D examples. In that case, say, for the second order elliptic operators, the Weyl asymptotic reads $\lambda_N\sim CN$ which gives only that
\begin{equation}\label{1.sma}
\lambda_{N+1}-\lambda_N\ge \eb
\end{equation}
for infinitely many $N$s and some {\it fixed} (small) $\eb$ and that is {\it not enough} to guarantee the validity of the spectral gap assumptions for general Lipschitz non-linearities. However, as we will see below, the Weyl asymptotic $\lambda_N\sim N$ does not forbid the arbitrarily large spectral gaps to exist (it will be so, e.g., if we there are sufficiently many multiple eigenvalues like for the Laplace-Beltrami operator on a unit sphere). To the best of our knowledge, the problem of existence of arbitrarily large spectral gaps is open for general 2D domains even in the simples case of the Laplacian with Dirichlet boundary conditions. So, we discuss below only the case of a unit 2D sphere $\Omega=\Bbb S^2$ (with Laplace-Beltrami operator) and the 2D torus with the usual Laplacian, see also \cite{kwean} for some other domains.

\begin{example}\label{Ex1.2DRDS} Consider the 2D analogue of \eqref{1.1DRDS}
\begin{equation}\label{1.2DRDS}
\Dt u -\Dx u+\alpha u=f(x,u),\  \ x\in\Omega,
\end{equation}
where $\alpha>0$ and $f$ satisfies the dissipativity assumption \eqref{1.disas} and $\Omega=\Bbb S^2$ or $\Bbb T^2:=[-\pi,\pi]^2$ (with periodic boundary conditions). As in 1D case, it is known that the problem is well-posed in $H:=L^2(\Omega)$ and possesses the absorbing ball in $C(\Omega)$, see \cite{tem,BV}, so we may assume that $f$ and $f'_u$ are globally bounded and, therefore, the associated operator $F$ is globally Lipschitz in $H$.
So, we only need to look at the gaps in the spectrum of the Laplacian.
\par
The ideal case is $\Omega=\Bbb S^2$. Then, the spectrum of the Laplace-Beltrami operator is well-known: the eigenvalues are $\Lambda_n:=n(n+1)$ and each of them has multiplicity $2n+1$. Thus, the "one-dimensional" inequality \eqref{1.1Dgap} holds for infinitely many $N$s (with $\bar\eb<2$). So, the spectral gap condition is satisfied and the inertial manifold exists.
\par
Let now $\Omega=\Bbb T^2$ with periodic boundary conditions. Then the spectrum of the 2D Laplacian consists of all $N\in\Bbb N$ which can be presented as a sum of two squares:
$$
\sigma(A)=\{N\in\Bbb N,\ N=n^2+k^2,\ \ k,n\in\Bbb Z\}
$$
with the eigenvectors $e^{inx+iky}$. Thus, the spectral gap problem is related with the number theoretic question on the  subsequent integers which are not sums of two squares. In particular, from the Gauss theorem, we know that $N\in\Bbb N$ belongs to $\sigma(A)$ if and only if
every prime factor $p$ of $N$ which equals to $3\mod 4$ has an even degree. Moreover, as shown, e.g., in \cite{sell}, there are arbitrary many subsequent integers which are not sums of two squares and, therefore, the spectral gap condition is satisfied. In fact, it is possible to show that
there are infinitely many solutions of
$$
\lambda_{N+1}-\lambda_N\ge c\log\lambda_N
$$
for some $c>0$ although, to the best of our knowledge,  there are no larger than logarithmical gaps, see \cite{gaps}.
\end{example}
\begin{example}\label{Ex1.2DCH} Consider the 2D Cahn-Hilliard problem:
\begin{equation}\label{1.2DCH}
\Dt u+\Dx(-\Dx u+f(x,u))=0,\ \ x\in\Omega,
\end{equation}
where $\Omega=\Bbb S^2$ or $\Omega=\Bbb T^2$. It is known that this equation is well-posed in $H=L^2(\Omega)$ if the non-linearity $f$ satisfies the dissipativity assumption \eqref{1.disas} together, say, with the assumption $f'_u(x,u)\ge-C$, see \cite{tem}. Moreover, one also has the absorbing ball in $C(\Omega)$ (of course, by modulus of the mass conservation law $\int_\Omega u(t,x)\,dx=const$), so we can again assume without los of generality that $f$ and $f'_u$ are globally bounded. In that case, the map $F(u):=-\Dx f(x,u)$ is globally Lipschitz from $H$ to $H^{-2}(\Omega)$.
The operator $A$ now is a bi-Laplacian $A=\Dx^2$, so $D(A)=H^4(\Omega)$, $H^{-2}(\Omega)=D(A^{-1/2})$, and we should use $\beta=-1$ in the spectral gap condition \eqref{1.g-gap}. Since $\lambda_N=\mu_N^2$, where $\mu_N$ are the eigenvalues of the Laplacian, we have
$$
\frac{\lambda_{N+1}-\lambda_N}{\lambda_{N+1}^{1/2}+\lambda_N^{1/2}}=\mu_{N+1}-\mu_N
$$
and the spectral gap condition is {\it exactly} the same as for the reaction-diffusion problem considered above. Thus, for both choices $\Omega=S^2$ and $\Omega=\Bbb T^2$, the inertial manifold exists.
\end{example}
We now turn to the 3D case. In that case, the Weyl asymptotic gives $\lambda_N\sim CN^{2/3}$ for the case of second order operators which is clearly insufficient to have the spectral gap (although, in some very exceptional cases like the Laplace-Beltrami operator on the $n$-dimensional sphere, we have the "one-dimensional" spectral gap \eqref{1.1Dgap} in any space dimension), however, for the 4th order operators, we still have $\lambda_N\sim N^{4/3}$ and the spectral gap exists.

\begin{example}\label{Ex1.3DSH} Consider the 3D Swift-Hohenberg equation:
\begin{equation}\label{1.3DSH}
\Dt u=-(\Dx+1)^2u-\alpha u+f(x,u), \ x\in\Omega,\ \ u\big|_{\partial\Omega}=\Dx u\big|_{\partial\Omega}=0,
\end{equation}
where $\alpha>0$, $\Omega$ is a bounded domain of $\R^3$ and $f(x,u)$ satisfies the dissipativity assumption \eqref{1.disas}. Then the problem is well-posed and dissipative in $H=L^2(\Omega)$, so as before we may assume without loss of generality that $f$ and $f'_u$ are globally bounded. Then $F$ is globally Lipschitz in $H$.
\par
We set $A:=(\Dx +1)^2+\alpha$ (with the Dirichlet boundary conditions). Then, by the Weyl asymptotic $\lambda_N\sim CN^{4/3}$ and the spectral gap assumption
$$
\lambda_{N+1}-\lambda_N\ge \eb\lambda_N^{1/4}
$$
is satisfied for infinitely many $N$s. Thus, the inertial manifold exists.
\end{example}
 We conclude by the most interesting example where the spatial averaging technique is used.

 \begin{example}\label{Ex1.3DRDE} Consider equation \eqref{1.2DRDS} on a 3D torus $\Bbb T^3=[-\pi,\pi]^3$ (endowed by the periodic boundary conditions). As before, we assume that $f$ and $f'_u$ are globally bounded, so the map $F$ is bounded and globally Lipschitz in $H=L^2(\Bbb T^3)$.
 The spectrum of the Laplacian on $\Bbb T^3$ obviously consists of integers which can be presented as a sum of 3 squares ($\lambda=n^2+m^2+k^2$ and the associated eigenfunction is $e^{inx+imy+ikz}$). By the Gauss theorem, all natural numbers which are not in the form of $4^l(8L+7)$ can be presented as a sum of 3 squares, so we see that, in contrast to the 2D case, there are no more spectral gaps of length larger than 3 and Theorem \ref{Th1.main} does not work. Nevertheless, using the  number theoretic results on the distribution of integer points in 3D spherical layers, one can check the spatial averaging conditions and prove the existence of an inertial manifolds. Indeed, let $N\in\Bbb N$, $\kappa,\rho>0$ and
 $$
 \Cal C_N^\kappa:=\{(n,m,l)\in\Bbb Z^3,\, N-k \le n^2+m^2+l^2\le N+k\},\ \Cal B_\rho:=\{(n,m,l)\in\Bbb Z^3, \, n^2+m^2+l^2\le \rho^2\}.
 $$
 Then, the following result holds.
 \begin{lemma}\label{Lem1.NT} For any $k>0$ and $\rho>0$ there are infinitely many $N\in\Bbb N$ such that
 \begin{equation}\label{1.empty}
 (\Cal C_{N+\frac12}^k-\Cal C_{N+\frac12}^k)\cap\Cal B_\rho=\{(0,0,0)\}
 \end{equation}
 \end{lemma}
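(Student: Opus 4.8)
The plan is to prove the quantitatively stronger assertion that the set of ``bad'' $N\in\Bbb N$ --- those for which $(\Cal C^k_{N+\frac12}-\Cal C^k_{N+\frac12})\cap\Cal B_\rho\ne\{(0,0,0)\}$ --- has density zero in $\Bbb N$, so that in fact a set of $N$ of density one satisfies \eqref{1.empty}. First I would unwind the definition: if $N$ is bad there are distinct $a,b\in\Cal C^k_{N+\frac12}$, and setting $c:=a-b$ one has $1\le|c|^2\le\rho^2$; since $|a|^2$ and $|b|^2$ both lie in $[N+\frac12-k,N+\frac12+k]$ we get $\bigl||a|^2-|b|^2\bigr|\le 2k$, and then the identity $2(c\cdot b)=|a|^2-|b|^2-|c|^2$ forces $|c\cdot b|\le k+\tfrac12\rho^2=:K$. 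Thus $b$ belongs to the fixed set
$$
\Cal D:=\bigl\{\,n\in\Bbb Z^3:\ \exists\,c\in\Bbb Z^3,\ 1\le|c|\le\rho,\ |c\cdot n|\le K\,\bigr\},
$$
which depends only on $k$ and $\rho$, and moreover $|b|^2\in[N+\frac12-k,N+\frac12+k]$. Hence every bad $N$ lies within distance $k+1$ of the set $\Cal S:=\{\,|n|^2:\ n\in\Cal D\,\}\subset\Bbb Z$; since enlarging a set of integers by a bounded amount preserves the property of having density zero, it suffices to prove that $\Cal S$ itself has density zero.

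Geometrically $\Cal D$ consists of the lattice points lying in one of finitely many fixed slabs $\{|c\cdot x|\le K\}$ around the planes $c^\perp$. To exploit this, I would decompose $\Cal D$ as the \emph{finite} union, over the finitely many $c\in\Bbb Z^3$ with $1\le|c|\le\rho$ and over the integers $m$ with $|m|\le K$, of the sets $\Cal P_{c,m}:=\{n\in\Bbb Z^3:\ c\cdot n=m\}$; each nonempty $\Cal P_{c,m}$ is a coset of the rank-two lattice $c^\perp\cap\Bbb Z^3$, say $\Cal P_{c,m}=n_0+\Bbb Z v+\Bbb Z w$. On such a coset the function
$$
|n_0+sv+tw|^2=|sv+tw|^2+2s\,(n_0\cdot v)+2t\,(n_0\cdot w)+|n_0|^2
$$
is an inhomogeneous, positive-definite, integral quadratic polynomial in $(s,t)\in\Bbb Z^2$ (positive-definiteness of the leading part because $v$ and $w$ are linearly independent). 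Consequently $\Cal S$ is a finite union of value sets of such polynomials, and the one remaining ingredient is the classical fact that a positive-definite binary quadratic form represents only a density-zero set of integers --- an inhomogeneous form being reduced to the homogeneous case by completing the square and clearing denominators. This goes back to Landau for $x^2+y^2$ and to Bernays in general, and it is precisely the number-theoretic input exploited in \cite{sell}. Therefore $\Cal S$, and with it the set of bad $N$, has density zero, which proves the lemma --- indeed in the sharper form that \eqref{1.empty} holds for a set of $N$ of density one.

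I expect the Landau--Bernays density statement to be the only genuinely non-elementary ingredient; everything else is bookkeeping. The point that requires a little care is simply to check that the unions defining $\Cal D$ and $\Cal S$ are finite and, crucially, independent of $N$ --- $c$ ranges over the fixed finite set $\{c:1\le|c|\le\rho\}$ and $m$ over $\{|m|\le K\}$ with $K$ fixed --- so that only the final window $[N+\frac12-k,N+\frac12+k]$ varies with $N$. (It is also worth recording that $\Cal C^k_{N+\frac12}\ne\emptyset$ for all large $N$ once $k\ge\tfrac32$, which is the range relevant to the spatial averaging application.)
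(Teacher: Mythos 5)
Your argument is correct, and it is complete modulo the classical Landau--Bernays theorem (density zero of the integers represented by a positive-definite integral binary quadratic form), which you rightly single out as the only non-elementary ingredient; the reduction of the inhomogeneous polynomial $|n_0+sv+tw|^2$ to a homogeneous form by completing the square and clearing denominators does work as sketched. Note that the paper itself gives no proof of this lemma and simply cites \cite{sell}; your route --- the bound $|c\cdot b|\le k+\tfrac12\rho^2$, the decomposition of the resulting slabs into the finitely many lattice-plane cosets $\Cal P_{c,m}$, the restriction of the squared norm to each coset as a positive-definite binary quadratic polynomial, and the Landau--Bernays density statement --- is essentially the argument from that source, with the density-one refinement as a pleasant bonus. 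The only point to keep explicit is the one you already flagged: your density argument rules out nonzero differences, while the literal equality in \eqref{1.empty} also requires $\Cal C^k_{N+\frac12}\ne\emptyset$; this is automatic for $k\ge\tfrac32$ (among any three consecutive integers at least one is a sum of three squares, by Gauss' theorem), and in the spatial averaging application $k>4L$ is large, so nothing is lost.
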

The proof of this lemma is given in \cite{sell}.
\par
Let us check now that the spatial averaging assumption is satisfied. Indeed, the derivative $F'(u)v=f'(x,u(x))v(x)$ is a multiplication operator on the function $w(x):=f'_u(x,u(x))$. In Fourier modes this operator is a convolution
$$
[F'(u)v]_{n,m,l}=\sum_{(n',m',l')\in\Bbb Z^3}w_{n-n',m-m',l-l'}v_{n',m',l'},\ u(x):=\sum_{(n,m,l)\in\Bbb Z^3}u_{n,m,k}e^{inx+imy+ilz}.
$$
Denote by $\bar P_{N,k}$ the orthoprojector to the Fourier modes belonging to $\Cal C_{N+\frac12}^k$ and by $w_{>\rho}(x)$ the projection of function $w$ to all Fourier modes which are outside of $\Cal B_\rho$. Then, due to condition \eqref{1.empty},
$$
\bar P_{N,k}((w-w_{0,0,0})\bar P_{N,k} v)=\bar P_{N,k}(w_{>\rho}\bar P_{N,k}v)
$$
and, therefore,
$$
\|\bar P_{N,k}((w-w_{0,0,0})\bar P_{N,k} v)\|_{H}\le \|w_{>\rho}\|_{L^\infty}\|v\|_H.
$$
Furthermore, due to the interpolation, for $\kappa<\frac14$,
$$
\|w_{>\rho}\|_{L^\infty}\le C\|w_{>\rho}\|^{1-\theta}_H\|w_{>\rho}\|^\theta_{H^{2-\kappa}}\le C_1\rho^{-(1-\theta)(2-\kappa)}\|w\|_{H^{2-\kappa}},
$$
where $\theta=\frac3{4(2-\kappa)}$. Finally, since $H^{2-\kappa}$ is an algebra and $f$ is smooth,
$$
\|\bar P_{N,k}((w-w_{0,0,0})\bar P_{N,k} v)\|_{H}\le C\rho^{-(1-\theta)(2-\kappa)}Q(\|u\|_{H^{2-\kappa}})\|v\|_{H}
$$
for some monotone increasing function $Q$, and the spatial averaging estimate \eqref{1.sp-avcon} holds with
$$
a(u):=w_{0,0,0}=\<f'_u(x,u(x))\>=\frac1{(2\pi)^3}\int_{\Bbb T^3}f'_u(x,u(x))\,dx.
$$
Thus, the existence of an inertial manifold is verified.
 \end{example}

\section{Beyond the spectral gap}\label{s.bsg}
In this section, we present an alternative approach to the finite-dimensional reduction which is based on the so-called Man\'e projection theorem and does not require the restrictive spectral gap assumption to be satisfied.
\par
In Paragraph \ref{s2.1}, we recall some basic facts from the theory of attractors, see e.g., \cite{tem,BV,CV} for more details. The basic properties of Hausdorff and fractal dimensions are recalled in Paragraph \ref{s2.2}. Moreover, we show here why the fractal dimension of the
attractor is finite, indicate the proof of the Man\'e projection theorem and build up the finite-dimensional reduction based on that theorem.
\par
The so-called Romanov theory which gives necessary and sufficient conditions for the existence of bi-Lipschitz Man\'e projections is given in Paragraph \ref{s2.3}. Finally, the recent attempts to solve the uniqueness problem for the reduced equations by verifying the log-Lipschitz continuity of the inverse Man\'e projections are discussed in Paragraph \ref{s2.4}.

\subsection{Global attractors}\label{s2.1} The aim of this paragraph is to recall briefly the basic facts from the attractor theory which will be used in the sequel. Since we are not pretending  on a more or less complete survey of the theory (see \cite{tem,BV,CV,MirZe} for the more detailed exposition), we pose here the simplest and most convenient assumptions on the nonlinearity $F$, namely, we assume that $F$ is continuously Frechet differentiable as the map from $H$ to $H$ and
\begin{equation}\label{2.Fass}
1. \ \ \|F(u)\|_H\le C,\ \ 2.\ \ \|F'(u)\|_{\Cal L(H,H)}\le L,\ \ 3.\ \ \|F'(u)\|_{\Cal L(H^1,H^1)}\le Q(\|u\|_{H^2})
\end{equation}
for some positive constants $C$ and $L$ and monotone increasing function $Q$. Note that we do not assume that $F$ is Frechet differentiable as the map from $H^1$ to $H^1$, we even do not need that $F$ maps $H^1$ to $H^1$, only the derivative $F'(u)$ should be bounded as a map from $H^1$ to $H^1$.
\par
 We start with three standard lemmas describing some analytic properties of equation \eqref{1.eqmain} which are crucial for the theory. The first lemma gives the dissipativity in $H$.

\begin{lemma}\label{Lem2.dis} Let the nonlinearity $F$ satisfies \eqref{2.Fass} and $A$ is the same as in the previous chapter. Then, any solution $u(t)$ of problem \eqref{1.eqmain} satisfies the following estimate:
\begin{equation}\label{2.dis}
\|u(t)\|^2_H+\int_t^{t+1}\|u(s)\|_{H^1}^2\,ds\le Ce^{-\alpha t}\|u(0)\|^2_H+C_*,
\end{equation}
for some positive constants $C$, $\alpha$ and $C_*$.
\end{lemma}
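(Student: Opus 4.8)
The plan is to prove \eqref{2.dis} by a routine energy estimate, exploiting the coercivity of $A$ and the global bound $\|F(u)\|_H\le C$ from \eqref{2.Fass}, and then closing with Gronwall's inequality. First I would take the $H$-scalar product of equation \eqref{1.eqmain} with $u(t)$; since $(Au,u)=\|A^{1/2}u\|_H^2=\|u\|_{H^1}^2$ (recall $H^1=D(A^{1/2})$), this yields
\begin{equation*}
\frac12\frac d{dt}\|u(t)\|_H^2+\|u(t)\|_{H^1}^2=(F(u(t)),u(t)).
\end{equation*}
Strictly speaking this identity is only formal for the weak solutions defined by \eqref{1.vc} — which lie in $C(0,T;H)\cap L^2(0,T;D(A^{1/2}))$ with $\Dt u\in L^2(0,T;D(A^{-1/2}))$ — so I would justify it in the standard way, e.g.\ by Galerkin truncation and passage to the limit, or via the Lions--Magenes lemma identifying $\frac d{dt}\|u\|_H^2$ with $2\langle\Dt u,u\rangle$; this is the only point that needs a little care.

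Next I would bound the right-hand side by Cauchy--Schwarz and Young, together with the Poincar\'e-type inequality $\|u\|_{H^1}^2\ge\lambda_1\|u\|_H^2$ coming from \eqref{1.ei}:
\begin{equation*}
(F(u),u)\le C\|u\|_H\le\tfrac12\lambda_1\|u\|_H^2+\frac{C^2}{2\lambda_1}\le\tfrac12\|u\|_{H^1}^2+\frac{C^2}{2\lambda_1}.
\end{equation*}
Substituting this back and multiplying by $2$ gives the differential inequality
\begin{equation*}
\frac d{dt}\|u(t)\|_H^2+\|u(t)\|_{H^1}^2\le\frac{C^2}{\lambda_1},
\end{equation*}
and, estimating $\|u\|_{H^1}^2\ge\lambda_1\|u\|_H^2$ once more on the left, also
\begin{equation*}
\frac d{dt}\|u(t)\|_H^2+\lambda_1\|u(t)\|_H^2\le\frac{C^2}{\lambda_1}.
\end{equation*}

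Finally I would apply Gronwall's lemma to the last inequality to obtain the pointwise bound $\|u(t)\|_H^2\le e^{-\lambda_1 t}\|u(0)\|_H^2+C^2/\lambda_1^2$, which gives the first summand of \eqref{2.dis} with $\alpha:=\lambda_1$. For the integral term I would integrate the preceding differential inequality over $[t,t+1]$, getting
\begin{equation*}
\|u(t+1)\|_H^2+\int_t^{t+1}\|u(s)\|_{H^1}^2\,ds\le\|u(t)\|_H^2+\frac{C^2}{\lambda_1},
\end{equation*}
and then insert the pointwise estimate for $\|u(t)\|_H^2$; adding the two bounds and adjusting the constants yields \eqref{2.dis} with $C$ and $C_*$ depending only on the constant $C$ of \eqref{2.Fass} and on $\lambda_1$. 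I anticipate no real obstacle: apart from the rigorous justification of the energy identity for weak solutions (a classical matter), the argument is entirely elementary. An alternative route, should one prefer to avoid the formal differentiation altogether, is to read off the pointwise estimate directly from \eqref{1.vc} using $\|e^{-At}\|_{\mathcal L(H,H)}\le e^{-\lambda_1 t}$ and $\|F(u)\|_H\le C$, and to supplement it with the integrated energy inequality above for the $L^2(t,t+1;H^1)$-term.
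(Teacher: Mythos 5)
Your proposal is correct and follows essentially the same route as the paper: take the scalar product of \eqref{1.eqmain} with $u$, use the global bound on $F$, the Poincar\'e inequality $\|u\|_{H^1}^2\ge\lambda_1\|u\|_H^2$, and Gronwall, then integrate over $[t,t+1]$ to capture the $H^1$-term. The extra remarks on justifying the energy identity for weak solutions (Galerkin or the variation-of-constants alternative) are a harmless refinement of the same argument.
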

\begin{proof} Indeed, taking a scalar product of \eqref{1.eqmain} with $u(t)$ and using that $F$ is bounded, we have
$$
\frac12\frac d{dt}\|u(t)\|^2_H+\|u(t)\|^2_{H^1}\le C\|u(t)\|_H.
$$
Using the Poincare inequality $\|u\|_H\le\lambda_1^{-1}\|u\|_{H^1}$ together with the Gronwall inequality, we arrive at \eqref{2.dis} and finish the proof of the lemma.
\end{proof}
Next straightforward lemma gives the smoothing property  and dissipativity in $H^2$ for the solutions of \eqref{1.eqmain}.
\begin{lemma}\label{Lem2.smo} Let the assumptions of Lemma \ref{Lem2.dis} hold. Then, any solution $u(t)$ of equation \eqref{1.eqmain} satisfies
\begin{equation}\label{2.h2sm}
\|u(t)\|_{H^2}\le Ct^{-1}\(\|u(0)\|_H+1\),\ t\in(0,1].
\end{equation}
Moreover, if $u(0)\in H^2$ then
\begin{equation}\label{2.h2dis}
\|u(t)\|_{H^2}^2\le C\|u(0)\|_{H^2}^2e^{-\alpha t}+C_*,
\end{equation}
for some positive $C$, $\alpha$ and $C_*$.
\end{lemma}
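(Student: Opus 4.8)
The plan is to prove both bounds by time-weighted energy estimates, using the standard device of controlling the $H^2$-norm through the time derivative: since $\partial_t u=-Au+F(u)$ and $F$ is bounded in $H$, one has $\bigl|\,\|u\|_{H^2}-\|\partial_t u\|_H\,\bigr|\le C$ (recall $\|u\|_{H^2}=\|Au\|_H$), so it suffices to estimate $v:=\partial_t u$. For $t>0$ the solution is smooth thanks to the parabolic smoothing of the analytic semigroup $e^{-At}$ (see \cite{hen}), hence $v$ is well defined and solves the variational equation $\partial_t v+Av=F'(u)v$; all the computations below can be carried out on this equation (or, if one prefers, on finite-dimensional Galerkin approximations, passing to the limit afterwards).

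For the smoothing estimate \eqref{2.h2sm} I would first take the scalar product of \eqref{1.eqmain} with $Au$ and use $2(Au,F(u))\le\|Au\|_H^2+C^2$ to get $\tfrac{d}{dt}\|u\|_{H^1}^2+\|Au\|_H^2\le C^2$; multiplying by $t$, integrating over $(0,t]$ with $t\le1$, and feeding in the bound $\int_0^1\|u(s)\|_{H^1}^2\,ds\le C\|u(0)\|_H^2+C_*$ supplied by Lemma \ref{Lem2.dis}, this yields $t\|u(t)\|_{H^1}^2+\int_0^t s\|Au(s)\|_H^2\,ds\le C\|u(0)\|_H^2+C_*$. Then I would multiply the variational equation by $t^2v$ and use $\|F'(u)\|_{\mathcal L(H,H)}\le L$ to obtain $\tfrac{d}{dt}\bigl(t^2\|v\|_H^2\bigr)\le(2+2L)\,t\|v\|_H^2$ for $t\le1$; since $\|v\|_H^2\le2\|Au\|_H^2+2C^2$, integrating and inserting the previous line gives $t^2\|v(t)\|_H^2\le C\|u(0)\|_H^2+C_*$, and \eqref{2.h2sm} follows from $\|u(t)\|_{H^2}\le\|v(t)\|_H+C$.

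For the dissipative estimate \eqref{2.h2dis} with $u(0)\in H^2$, the same differential inequality $\tfrac{d}{dt}\|u\|_{H^1}^2+\|Au\|_H^2\le C^2$ together with $\|Au\|_H^2\ge\lambda_1\|u\|_{H^1}^2$ and the Gronwall lemma gives the $H^1$-dissipative estimate $\|u(t)\|_{H^1}^2\le C\|u(0)\|_{H^2}^2e^{-\lambda_1 t}+C_*$, and integrating the inequality over $[t,t+1]$ then bounds $\int_t^{t+1}\|u(s)\|_{H^2}^2\,ds$ by an exponentially decaying term plus $C_*$. Testing the variational equation with $v$ gives $\tfrac{d}{dt}\|v\|_H^2\le2L\|v\|_H^2$, and since $\|v\|_H^2\le2\|Au\|_H^2+2C^2$ the last integral controls $\int_t^{t+1}\|v(s)\|_H^2\,ds$; the uniform Gronwall lemma then yields $\|v(t)\|_H^2\le C\|u(0)\|_{H^2}^2e^{-\lambda_1 t}+C_*$ for $t\ge1$. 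For $t\in[0,1]$ I would use that $u(0)\in H^2$ makes $v(0)=-Au(0)+F(u(0))\in H$ with $\|v(0)\|_H\le\|u(0)\|_{H^2}+C$, so $\tfrac{d}{dt}\|v\|_H^2\le2L\|v\|_H^2$ gives $\|v(t)\|_H^2\le e^{2L}(\|u(0)\|_{H^2}+C)^2$ there. Combining the two ranges and taking any $\alpha\in(0,\lambda_1)$ gives $\|v(t)\|_H^2\le C\|u(0)\|_{H^2}^2e^{-\alpha t}+C_*$ for all $t\ge0$, whence \eqref{2.h2dis} via $\|u(t)\|_{H^2}^2\le2\|v(t)\|_H^2+2C^2$.

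I expect the only genuinely delicate point to be the justification of these formal manipulations — differentiating the equation in time and testing against $Au$ — which requires knowing that $u$ is smooth enough for $t>0$; this is precisely the parabolic smoothing of the analytic semigroup, and can be made fully rigorous by carrying out all the estimates on Galerkin truncations and passing to the limit. The algebraic content of the estimates themselves is entirely routine.
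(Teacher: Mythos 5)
Your proposal is correct and follows essentially the same route as the paper: control $\|u\|_{H^2}$ through $v=\partial_t u$ using the boundedness of $F$, differentiate the equation in time, run time-weighted energy estimates for the smoothing bound \eqref{2.h2sm}, and then bootstrap to the dissipative bound \eqref{2.h2dis}. The only differences are harmless technical choices: for the weighted integral of $\|v\|_H^2$ the paper uses the interpolation $\|v\|_H^2\le\|v\|_{H^1}\|v\|_{H^{-1}}$ together with $\int_0^1\|v(t)\|^2_{H^{-1}}\,dt\le C(\|u(0)\|^2_H+1)$, where you instead test the equation with $Au$ to control $\int_0^t s\|Au(s)\|_H^2\,ds$; and for \eqref{2.h2dis} the paper simply concatenates the Gronwall bound on $[0,1]$, the smoothing estimate and \eqref{2.dis}, whereas you rerun a uniform-Gronwall argument on $v$ — both are valid.
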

\begin{proof} Indeed, differentiating equation \eqref{1.eqmain} in time and denoting $v=\Dt u$, we have
\begin{equation}\label{2.dt}
\Dt v+Av=F'(u)v.
\end{equation}
Taking the scalar product of this equation with $v(t)$ and using the second assumption of \eqref{2.Fass}, we end up with
\begin{equation}\label{2.dtgr}
\frac12\frac d{dt}\|v(t)\|^2_H+\|v(t)\|_{H^1}^2\le L\|v(t)\|_H^2.
\end{equation}
Applying the Gronwall inequality and using \eqref{1.dth2}, we get
\begin{equation}\label{2.dtgrr}
\|u(t)\|^2_{H^2}\le Ce^{2L t}(\|u(0)\|_{H^2}^2+1).
\end{equation}
This estimate is similar to the desired \eqref{2.h2dis}, but still growing in time. To remove this growth, we prove the smoothing estimate \eqref{2.h2sm}. To this end, we multiply equation \eqref{2.dtgr} by $t^2$, integrate in time and use that, due to \eqref{2.dis} and \eqref{2.Fass},
$$
\int_0^1\|v(t)\|^2_{H^{-1}}\,dt\le C(\|u(0)\|^2_H+1).
$$
Then, for $t\le1$ we have
\begin{multline}
t^2\|v(t)\|^2_H+\int_0^tt^2\|v(t)\|^2_{H^1}\,dt\le C\int_0^t t\|v(t)\|^2_H\,dt\le \int_0^tt^2\|v(t)\|^2_{H^1}\,dt+\\+C^2\int_0^t \|v(t)\|^2_{H^{-1}}\,dt\le \int_0^t\|v(t)\|^2_{H^1}\,dt +C(\|u(0)\|^2_H+1),
\end{multline}
where we have used the interpolation $\|v\|_H^2\le\|v\|_{H^{1}}\|v\|_{H^{-1}}$. This estimate
 together with \eqref{1.dth2} gives \eqref{2.h2sm}. It only remains to note that the dissipative estimate \eqref{2.h2dis} follows from \eqref{2.dtgrr} (which is used for $t\le1$ only), the smoothing estimate \eqref{2.h2sm} (which is used for estimating $\|u(t+1)\|_{H^2}$ in terms of $\|u(t)\|_H$) and the dissipative estimate \eqref{2.dis}. Thus, the lemma is proved.
\end{proof}
Thus, the solution semigroup $S(t)$ associated with equation \eqref{1.eqmain} possesses a bounded  absorbing set in $H^2$. Indeed, if we take
$$
B_{H^2}:=\{u\in H^2, \ \|u\|^2\le 2C_*\},
$$
then, due to \eqref{2.h2dis} and \eqref{2.h2sm}, for every bounded set $B$ is $H$, there exists time $T=T(B)$ such that
$$
S(t)B\subset B_{H^2}
$$
for all $t\ge T$. Moreover, taking
\begin{equation}\label{2.abs}
\Cal B=\cup_{t\ge0}S(t)B_{H^2},
\end{equation}
we obtain the absorbing set which remains bounded (and closed) in $H^2$ (and therefore is compact in $H$) and, in addition, will be semi-invariant
\begin{equation}\label{2.inv}
S(t)\Cal B\subset\Cal B.
\end{equation}
The next lemma establishes the smoothing property for the differences of two solutions which will be crucial for establishing the finite-dimensionality of the associated global attractor.

\begin{lemma}\label{Lem2.smdif} Let the above assumptions hold. Then, for any two solutions $u_1(t)$ and $u_2(t)$, the following estimate is valid:
\begin{equation}\label{2.lip}
\|u_1(t)-u_2(t)\|_H^2+\int_0^t\|u_1(t)-u_2(t)\|_{H^1}^2\,dt\le Ce^{2Lt}\|u_1(0)-u_2(0)\|^2_H
\end{equation}
for some positive constant $L$. Moreover, if $u_1(0),u_2(0)\in\Cal B$, we have also the following estimate:
\begin{equation}\label{2.smdif}
\|u_1(t)-u_2(t)\|_{H^2}\le C t^{-1}\|u_1(0)-u_2(0)\|_H,\ \ t\in(0,1].
\end{equation}
\end{lemma}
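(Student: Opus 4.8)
The plan is to exploit that $w(t):=u_1(t)-u_2(t)$ solves a \emph{linear} non-autonomous equation. Indeed, since $F\in C^1(H,H)$,
\begin{equation*}
\Dt w+Aw=\ell(t)w,\qquad \ell(t):=\int_0^1 F'(su_1(t)+(1-s)u_2(t))\,ds ,
\end{equation*}
and \eqref{2.Fass}(2) gives $\|\ell(t)\|_{\Cal L(H,H)}\le L$. For \eqref{2.lip} I would just run the energy identity: take the scalar product with $w$, use $(\ell(t)w,w)\le L\|w\|_H^2$ and $(Aw,w)=\|w\|_{H^1}^2$ to get $\frac d{dt}\|w\|_H^2+2\|w\|_{H^1}^2\le 2L\|w\|_H^2$; then Gronwall gives $\|w(t)\|_H^2\le e^{2Lt}\|w(0)\|_H^2$, and integrating the differential inequality over $[0,t]$ (after dropping the $H^1$-term on the right) yields the claimed control of $\int_0^t\|w(s)\|_{H^1}^2\,ds$. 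This step uses only the global Lipschitz continuity of $F$ and needs no restriction on the data.

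For \eqref{2.smdif} the key extra input is \eqref{2.Fass}(3). First I would note that when $u_1(0),u_2(0)\in\Cal B$, the semi-invariance \eqref{2.inv} together with the $H^2$-boundedness of $\Cal B$ gives $\|u_i(t)\|_{H^2}\le C$ for all $t\ge0$, hence $\|su_1(t)+(1-s)u_2(t)\|_{H^2}\le C$ as well, so \eqref{2.Fass}(3) yields a \emph{uniform} bound $\|\ell(t)\|_{\Cal L(H^1,H^1)}\le L_1$. Then I would run a two-step smoothing bootstrap on the Duhamel formula $w(t)=e^{-At}w(0)+\int_0^t e^{-A(t-s)}\ell(s)w(s)\,ds$, using the analytic-semigroup bounds $\|A^{1/2}e^{-A\tau}\|_{\Cal L(H)}\le C\tau^{-1/2}$ and $\|Ae^{-A\tau}\|_{\Cal L(H)}\le C\tau^{-1}$ (see \cite{hen}). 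Applying $A^{1/2}$ and using $\|\ell(s)w(s)\|_H\le L\|w(s)\|_H\le C\|w(0)\|_H$ (from \eqref{2.lip}, $s\le1$) and $\int_0^t(t-s)^{-1/2}\,ds\le2$ gives the $H^1$-smoothing estimate $\|w(t)\|_{H^1}\le Ct^{-1/2}\|w(0)\|_H$ on $(0,1]$. Applying $A$ and factoring $Ae^{-A(t-s)}=A^{1/2}e^{-A(t-s)}\circ A^{1/2}$, the multiplier bound gives $\|A^{1/2}(\ell(s)w(s))\|_H=\|\ell(s)w(s)\|_{H^1}\le L_1\|w(s)\|_{H^1}\le CL_1 s^{-1/2}\|w(0)\|_H$, so the convolution integral is at most $CL_1\|w(0)\|_H\int_0^t(t-s)^{-1/2}s^{-1/2}\,ds=\pi CL_1\|w(0)\|_H$; combined with the $Ce^{-At}w(0)$ term this gives $\|w(t)\|_{H^2}=\|Aw(t)\|_H\le Ct^{-1}\|w(0)\|_H$ on $(0,1]$, i.e.\ \eqref{2.smdif}. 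One could instead argue purely by energy estimates — pair with $Aw$, then, once $\ell w\in H^1$ is available, pair with $A^2w$, inserting time weights $t$ and $t^2$ as in Lemma~\ref{Lem2.smo} — but the $H^3$-term in the last step is only formal (since $F'$ is not assumed bounded on $H^2$), so the Duhamel route is cleaner.

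The part requiring care, and the one place where the proof of Lemma~\ref{Lem2.smo} cannot simply be copied, is the bootstrap mechanism. Differentiating the $w$-equation in time does \emph{not} work here: $\frac d{dt}(F(u_1)-F(u_2))=F'(u_1)\Dt u_1-F'(u_2)\Dt u_2$ is not of the form $F'(\,\cdot\,)\,\Dt w$, and the remainder $(F'(u_1)-F'(u_2))\Dt u_2$ would require Lipschitz continuity of $F'$, which is not assumed. This is precisely why \eqref{2.Fass}(3) is stated as a bound on $F'$ in $\Cal L(H^1,H^1)$: it is used not through a time derivative but through the $H^1\to H^1$ boundedness of the multiplier $\ell(t)$, which is exactly what lets the second bootstrap step close.
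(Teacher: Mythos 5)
Your proof of \eqref{2.lip} is exactly the paper's: test the difference equation $\Dt v+Av=l(t)v$ with $v$, use $\|l(t)\|_{\Cal L(H,H)}\le L$ and Gronwall. For the smoothing estimate \eqref{2.smdif}, however, you take a genuinely different route. The paper stays with weighted energy estimates in the spirit of Lemma \ref{Lem2.smo}: it pairs the difference equation with $A^2v$, uses the uniform $H^2$-bound of trajectories from $\Cal B$ together with \eqref{2.Fass}(3) to get $\|l(t)v\|_{H^1}\le C\|v\|_{H^1}$, multiplies by $t^2$, and closes via the interpolation $\|v\|_{H^2}^2\le\|v\|_{H^1}\|v\|_{H^3}$ and the $L^2(0,t;H^1)$ control already contained in \eqref{2.lip}; notably, it does \emph{not} differentiate in time, consistent with your (correct) observation that the trick of Lemma \ref{Lem2.smo} is unavailable because $F'$ is not assumed Lipschitz. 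Your alternative is a two-step Duhamel bootstrap with the analytic-semigroup bounds $\|A^{1/2}e^{-A\tau}\|\le C\tau^{-1/2}$, $\|Ae^{-A\tau}\|\le C\tau^{-1}$: first $\|v(t)\|_{H^1}\le Ct^{-1/2}\|v(0)\|_H$, then, using the same uniform bound $\|l(t)\|_{\Cal L(H^1,H^1)}\le L_1$ on $\Cal B$, the Beta-function convolution gives $\|v(t)\|_{H^2}\le Ct^{-1}\|v(0)\|_H$. Both arguments use \eqref{2.Fass}(3) in the identical way (as an $H^1\to H^1$ multiplier bound for $l(t)$ along trajectories in the absorbing set), so the essential input is the same; what your version buys is that it never touches the $H^3$-norm of $v$, whose appearance in the paper's computation is indeed only a formal a priori estimate (to be justified by Galerkin or regularization), whereas the mild formulation you use is immediately rigorous since $u_1,u_2$ are mild solutions. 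What it gives up is the by-product $\int_0^t s^2\|v(s)\|^2_{H^3}\,ds\le C\|v(0)\|^2_H$ of the energy method, which is not needed for the lemma. Your argument is correct as written.
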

\begin{proof} Indeed, let $v(t)=u_1(t)-u_2(t)$. Then, this function solves
\begin{equation}\label{2.dif}
\Dt v+Av=l(t)v,\ \ l(t):=\int_0^1F'(su_1(t)+(1-s)u_2(t))\,dt.
\end{equation}
Taking the scalar product of this equation with $v(t)$ and using the sscond assumption of \eqref{2.Fass}, we have
$$
\frac12\frac d{dt}\|v(t)\|^2_H+\|v(t)\|^2_{H^1}\le L\|v(t)\|^2_H
$$
and the Gronwall inequality gives the desired estimate \eqref{2.lip}. To prove the smoothing property \eqref{2.smdif}, we take the scalar product of equation \eqref{2.dif} with $A^2v(t)$ and use that the trajectories $u_1(t)$ and $u_2(t)$ are uniformly bounded in $H^2$ together with the 3rd assumption of \eqref{2.Fass}. This gives
$$
\frac12\frac d{dt}\|v(t)\|_{H^2}^2+\|v(t)\|_{H^3}^2\le\|l(t)v\|_{H^1}\|v(t)\|_{H^3}\le C\|v(t)\|_{H^1}^2+\frac12\|v(t)\|_{H^3}^2.
$$
Multiplying this inequality by $t^2$, integrating in time and using \eqref{2.lip} together with the interpolation $\|v\|_{H^2}^2\le \|v\|_{H^1}\|v\|_{H^3}$, we arrive at
\begin{multline}
t^2\|v(t)\|^2_{H^2}+\int_0^tt^2\|v(t)\|^2_{H^3}\,dt\le C\int_0^t t\|v(t)\|^2_{H^2}\le\\\le \int_0^tt^2\|v(t)\|_{H^3}^2\,dt+C^2\int_0^t\|v(t)\|^2_{H^1}\,dt\le \int_0^tt^2\|v(t)\|_{H^3}^2\,dt+C\|v(0)\|^2_H.
\end{multline}
Thus, \eqref{2.smdif} is also verified and the lemma is proved.
\end{proof}
We are now ready to define the global attractor and verify its existence.

\begin{definition}\label{Def2.attr} A set $\Cal A$ is a global attractor of a semigroup $S(t)$ acting in $H$ if the following conditions are satisfied:
\par
1. $\Cal A$ is a compact set of $H$;
\par
2. $\Cal A$ is strictly invariant: $S(t)\Cal A=\Cal A$ for $t\ge0$;
\par
3. $\Cal A$ attracts the images of all bounded sets in $H$, i.e., for any bounded $B\subset H$ and any neighbourhood $\Cal O(\Cal A)$ of the attractor $\Cal A$, there exists time $T=T(B,\Cal O)$ such that
$$
S(t)B\subset\Cal O(\Cal A)
$$
for all $t\ge T$.
\end{definition}
In order to verify the existence of such an attractor, we use the following abstract result.
\begin{proposition}\label{Prop2.attr} Let the semigroup $S(t):H\to H$ satisfy the following two conditions:
\par
1. There exists a compact absorbing set $\Cal B\subset H$;
\par
2. The operators $S(t)$ are continuous on $\Cal B$ for every fixed $t\ge0$.
\par
Then, the semigroup $S(t)$ possesses a global attractor $\Cal A\subset \Cal B$ and this attractor consists of all complete bounded trajectories of $S(t)$:
\begin{equation}\label{2.k}
\Cal A=\Cal K\big|_{t=0},
\end{equation}
where $\Cal K:=\{u\in L^\infty(\R,H),\ u(t+s)=S(t)u(s),\ s\in\R,\ t\ge0\}$.
\end{proposition}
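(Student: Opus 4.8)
The plan is to realise the attractor as the $\omega$-limit set of the compact absorbing set $\Cal B$. First I would observe that, replacing $\Cal B$ by the closure of $\cup_{t\ge0}S(t)\Cal B$ (which is contained in the original compact absorbing set for large times, hence is itself compact), we may assume without loss of generality that $\Cal B$ is compact, absorbing, \emph{and} semi-invariant: $S(t)\Cal B\subset\Cal B$ for all $t\ge0$. Then set
$$
\Cal A:=\omega(\Cal B)=\bigcap_{t\ge0}\overline{\bigcup_{\tau\ge t}S(\tau)\Cal B}.
$$
By semi-invariance each set $\overline{\bigcup_{\tau\ge t}S(\tau)\Cal B}$ is a nonempty closed subset of the compact set $\Cal B$, hence compact, and the family is nested, so $\Cal A$ is nonempty and compact. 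This gives condition 1 of Definition \ref{Def2.attr}, and also $\Cal A\subset\overline{\Cal B}=\Cal B$.

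Next I would verify the attraction property (condition 3). It suffices to show that $\dist(S(t)\Cal B,\Cal A)\to0$ as $t\to\infty$, since any bounded $B\subset H$ satisfies $S(T(B))B\subset\Cal B$, whence $S(t)B\subset S(t-T(B))\Cal B$ for $t\ge T(B)$. The convergence $\dist(S(t)\Cal B,\Cal A)\to0$ is the standard contradiction argument: if it failed there would exist $\eps>0$, $t_n\to\infty$ and $b_n\in\Cal B$ with $\dist(S(t_n)b_n,\Cal A)\ge\eps$; compactness of $\Cal B$ yields a subsequential limit $y$ of $S(t_n)b_n$, and directly from the definition of $\omega(\Cal B)$ one checks $y\in\Cal A$, a contradiction. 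Only compactness of $\Cal B$ is used here, not the continuity of $S(t)$.

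The invariance (condition 2) is where the continuity hypothesis enters, and I expect the reverse inclusion to be the main obstacle. The inclusion $S(t)\Cal A\subset\Cal A$ is easy: for $a\in\Cal A$ write $a=\lim_n S(\tau_n)b_n$ with $b_n\in\Cal B$, $\tau_n\to\infty$; then continuity of $S(t)$ on $\Cal B$ gives $S(t+\tau_n)b_n=S(t)S(\tau_n)b_n\to S(t)a$, and the left-hand side clusters in $\omega(\Cal B)=\Cal A$. For $\Cal A\subset S(t)\Cal A$ one must, given $a\in\Cal A$, construct a complete bounded trajectory through $a$: starting again from $a=\lim_n S(\tau_n)b_n$, the points $S(\tau_n-k)b_n$ lie in the compact set $\Cal B$ for $n$ large, so a diagonal extraction produces $u(-k)\in\Cal B$ for every $k\in\Bbb N$ with $u(0)=a$ and, using the continuity of $S(t)$ on $\Cal B$ to pass to the limit in the semigroup identity, $S(j)u(-k)=u(j-k)$ for all $k$ and $0\le j\le k$; interpolating and extending by $S(t)$ gives a full trajectory $u\in\Cal K$ with $u(0)=a$ and $u(-t)\in\Cal A$ for all $t\ge0$, so $a=S(t)u(-t)\in S(t)\Cal A$. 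This construction simultaneously yields $\Cal A=\Cal K|_{t=0}$: any such $u$ stays in $\Cal B$, hence is bounded, so $a=u(0)\in\Cal K|_{t=0}$; conversely if $u\in\Cal K$ then $u(t)$ is bounded, so $u(-n)$ lies in the absorbing set for large $n$, and $u(0)=S(n)u(-n)$ is attracted to $\Cal A$ while remaining on the bounded invariant set $u(\R)$, forcing $u(0)\in\Cal A$. The key point throughout is that compactness of $\Cal B$ supplies the limits and continuity of $S(t)$ on $\Cal B$ lets the semigroup relations survive the limiting process.
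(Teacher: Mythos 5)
Your proof is correct and follows exactly the route the paper indicates (it only cites \cite{BV} and remarks that $\Cal A=\omega(\Cal B)=\cap_{T\ge0}\overline{\cup_{t\ge T}S(t)\Cal B}$), namely the $\omega$-limit set of the compact absorbing set together with the standard diagonal construction of complete bounded trajectories. One small caveat: your opening reduction to a semi-invariant $\Cal B$ is not fully justified under the bare hypotheses (the closure of $\cup_{0\le t\le T}S(t)\Cal B$ need not be compact when only each fixed $S(t)$ is assumed continuous on $\Cal B$), but it is also unnecessary, since the tails $\cup_{\tau\ge t}S(\tau)\Cal B$ already lie in $\Cal B$ once $t$ exceeds the absorption time of $\Cal B$ into itself, and your argument goes through verbatim with that observation.
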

The proof of this proposition can be found, e.g., in \cite{BV}. Mention here only that $\Cal A$ is constructed as a standard  $\omega$-limit set for the absorbing set $\Cal B$:
$$
\Cal A=\omega(\Cal B):=\cap_{T\ge0}\overline{\cup_{t\ge T}S(t)\Cal B},
$$
where $\overline{C}$ is a closure of $C$ in $H$.

\begin{corollary}\label{Cor2.attr} Let the assumptions of Lemma \ref{Lem2.dis} hold. Then, the solution semigroup associated with equation $S(t)$ possesses a global attractor $\Cal A$ in $H$ which is a bounded subset of $H^2$ and consists of all solutions of \eqref{1.eqmain} which are defined for all $t\in\R$ and bounded, so \eqref{2.k} holds.
\end{corollary}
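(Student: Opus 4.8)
The plan is to deduce the statement from the abstract Proposition \ref{Prop2.attr}, so it suffices to exhibit a compact absorbing set in $H$ and to check that each $S(t)$ is continuous on it; the two extra assertions (boundedness in $H^2$ and the description via complete bounded trajectories) will then come for free from the structure \eqref{2.k} of the attractor.

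First I would construct the absorbing set. By the smoothing and dissipativity estimates \eqref{2.h2sm}--\eqref{2.h2dis} of Lemma \ref{Lem2.smo}, the $H^2$-ball $B_{H^2}=\{u\in H^2:\ \|u\|_{H^2}^2\le 2C_*\}$ absorbs every bounded subset of $H$, i.e. for bounded $B\subset H$ there is $T(B)$ with $S(t)B\subset B_{H^2}$ for $t\ge T(B)$. I would then set $\Cal B:=\overline{\cup_{t\ge0}S(t)B_{H^2}}$ as in \eqref{2.abs}, the closure being taken in $H$. The dissipative estimate \eqref{2.h2dis} applied to $u_0\in B_{H^2}$ shows that $\cup_{t\ge0}S(t)B_{H^2}$ is bounded in $H^2$ by a constant independent of $u_0$; since $A^{-1}$ is compact the embedding $H^2=D(A)\hookrightarrow H$ is compact, and weak $H^2$-convergence implies $H$-convergence, so the $H$-closure $\Cal B$ is still bounded in $H^2$ and hence compact in $H$. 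The semi-invariance \eqref{2.inv} follows from the semigroup property, and together with the absorption into $B_{H^2}$ it makes $\Cal B$ an absorbing set for $S(t)$ on $H$.

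Next I would verify the continuity of $S(t)$ on $\Cal B$. This is immediate from estimate \eqref{2.lip} of Lemma \ref{Lem2.smdif}, which gives $\|S(t)u_1-S(t)u_2\|_H\le Ce^{Lt}\|u_1-u_2\|_H$; thus $S(t)$ is globally Lipschitz on $H$, in particular continuous on $\Cal B$. With both hypotheses of Proposition \ref{Prop2.attr} in hand, we obtain a global attractor $\Cal A\subset\Cal B$ and the representation \eqref{2.k}. Since $\Cal B$ is bounded in $H^2$, so is $\Cal A$; and any complete trajectory which is bounded in $H$ enters $B_{H^2}$ after finite time, hence by strict invariance $S(t)\Cal A=\Cal A$ and \eqref{2.h2sm} lies in $\Cal B$ and is bounded in $H^2$, which yields the stated description. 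I expect no genuine obstacle here: the only step demanding a little care is checking that the $H$-closure of the $H^2$-bounded orbit of $B_{H^2}$ stays bounded in $H^2$, which rests on the compactness of the embedding $H^2\hookrightarrow H$ and the weak lower semicontinuity of the $H^2$-norm under $H$-convergence.
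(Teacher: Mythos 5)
Your proposal is correct and follows essentially the same route as the paper: apply Proposition \ref{Prop2.attr} with the absorbing set \eqref{2.abs} (compact in $H$ because it is bounded in $H^2$, by Lemmas \ref{Lem2.dis}--\ref{Lem2.smo}) and the Lipschitz continuity of $S(t)$ from \eqref{2.lip}, whence the attractor, its $H^2$-boundedness, and the representation \eqref{2.k} follow. Your extra care with taking the $H$-closure and invoking weak lower semicontinuity of the $H^2$-norm is a harmless refinement of what the paper leaves implicit.
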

Indeed, the existence of a compact absorbing set as well as the continuity (even global Lipschitz continuity) has been verified in previous lemmas.

\begin{remark}\label{Rem2.good} On the one hand, the global attractor contains all non-trivial dynamics of the considered system and, on the other hand, it is essentially smaller than the initial phase space. Indeed, due to the compactness, the attractor is nowhere dense in the phase space (in the infinite-dimensional case) and is "almost finite-dimensional". Moreover, as will be shown in the next paragraph, it usually has finite Hausdorff and fractal dimension and, therefore, realizes (in a sense) the reduction of the considered infinite-dimensional dynamical system to the finite-dimensional one. Although, as will be discussed below, this reduction is essentially weaker than provided by the inertial manifolds theory, but as an advantage, the existence of a global attractor does not require any restrictive assumptions and is usually much easier to establish. So, verifying the existence of a global attractor is a natural "first step" in the study of the dissipative dynamics and its finite-dimensional reduction.
\end{remark}
\begin{remark}\label{Rem2.ass} We emphasize once more that our global boundedness assumptions \eqref{2.Fass} are not very realistic and posed only for simplicity. However, the analogue of the above three lemmas are verified for many classes of equations of mathematical physics, see \cite{tem,BV,CV,MirZe} and references therein. Mention also that once these lemmas and the existence of the absorbing ball in $H^2$ are established, assumptions \eqref{2.Fass} are no more restrictive since we may cut-off the nonlinearity outside of the absorbing ball and achieve the global boundedness.
\end{remark}
\subsection{Fractal/Hausdorff dimension and Man\'e projections}\label{s2.2} The aim of this paragraph is to establish and discuss the finite-dimensionality of a global attractor. To this end, we first recall the definitions of the Hausdorff and fractal dimension.
\begin{definition}\label{Def2.fractal} Let $K$ be a pre-compact set in a metric space $X$. Then, by the Hausdorff criterion, for any $\eb>0$, $K$ can be covered by finitely many $\eb$-balls in $X$. Denote by $N(K,X)$ the minimal number of such balls. By definition, the Kolmogorov's $\eb$-entropy of $K$ in $X$ is defined via
\begin{equation}\label{2.kol}
\Bbb H_\eb(K,X):=\log N_\eb(X,K)
\end{equation}
and the fractal (box-counting) dimension of $K$ in $X$ is the following number:
\begin{equation}
\dim_F(K,X):=\limsup_{\eb\to0}\frac{\Bbb H_\eb(K,X)}{\log\frac1\eb}.
\end{equation}
\end{definition}
\begin{definition}\label{Def2.H} Let $K$ be a separable subset of a metric space $X$. For any $\eb>0$ and $d\ge0$, define
$$
\mu_d(K,X,\eb):=\inf\biggl\{\sum_{n=1}^\infty \eb_i^d\,:\ \ K\subset \cup_{n=1}^\infty B(\eb_i,x_i),\ x_i\in X,\ \eb_i\le\eb\biggr\},
$$
where $B(\eb,x)=B(\eb,x,X)$ is an $\eb$-ball of $X$ centered at $x$, so the infinum is taken over all countable coverings of $K$ by balls with radiuses less than $\eb$. Obviously, this function is monotone increasing as $\eb\to0$, so the following limit exists (finite or infinite) and is called Hausdorff $d$-measure of the set $K$ in $X$:
$$
\mu_d(K,X):=\lim_{\eb\to0}\mu_d(K,X,\eb).
$$
Finally, the Hausdorff dimension of $K$ in $X$ is defined as follows:
\begin{equation}\label{2.dH}
\dim_H(K,X):=\sup\{d\,:\ \ \mu_d(K,X)=\infty\}.
\end{equation}
Roughly speaking, the difference with the fractal dimension is that the coverings with the balls of different radii are now allowed.
\end{definition}
More details about these dimensions can be found, e.g., in \cite{28}, we only state below some basic facts about them which are important for what follows and which can be deduced from the definitions as an easy exercise:
\par
1. If $K$ is compact $n$-dimensional Lipschitz submanifold of $X$, then
$$
\dim_H(K,X)=\dim_F(K,X)=n,
$$
so both of them generalize the concept of a "usual" dimension. However, they may be different and both non-integer if $K$ is not a manifold. Nevertheless, the equality
$$
\dim_H(K,X)=\dim_F(K,X)
$$
holds for many interesting examples of fractal sets. For instance, both of them equal to $\frac{\log2}{\log3}<1$ for the ternary Cantor set in $[0,1]$.
\par
2. The inequality
$$
\dim_H(K,X)\le\dim_F(K,X)
$$
always holds.
\par
3. If $\Phi:X\to Y$ is a Lipschitz map, then
\begin{equation}\label{2.dim-lip}
\dim_H(\Phi(K),Y)\le\dim_H(K,X),\ \ \dim_F(\Phi(K),Y)\le\dim_F(K,X)
\end{equation}
and, particularly, both dimensions preserve under bi-Lipschitz  homeomorphisms.
\par
4. If $K\subset\cup_{i=1}^\infty K_n$, then
\begin{equation}\label{2.countable}
\dim_H(K,X)\le\sup_{i\in\Bbb N}\dim_H(K_n,X)
\end{equation}
and, particularly, the Hausdorff dimension of any countable set is zero. This is clearly not true for the fractal dimension.
\par
5. One always has $\dim_F(\overline{K},X)=\dim_F(K,X)$. Moreover,
\begin{equation}\label{2.cart}
\dim_F(K\times K,X\times X)\le 2\dim_F(K,X)
\end{equation}
and none of these properties hold in general for the Hausdorff dimension.
\par
The next theorem is one of the main results of the attractors theory.
\begin{theorem}\label{Th2.fdim} Let the assumptions of Lemma \ref{Lem2.dis} hold. Then, the fractal dimension of the global attractor $\Cal A$ of equation \eqref{1.eqmain} is finite:
\begin{equation}
\dim_F(\Cal A,H)=\dim_F(\Cal A,H^2)<\infty.
\end{equation}
\end{theorem}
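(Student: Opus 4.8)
The plan is to use the smoothing estimate \eqref{2.smdif} for differences of solutions together with the strict invariance $S(t)\Cal A=\Cal A$, via the squeezing (volume–compression) argument of Ladyzhenskaya. First I would fix the time-one map $S:=S(1)$. Since $\Cal A\subset\Cal B$ and $\Cal A$ is strictly invariant, Corollary \ref{Cor2.attr} gives $S(\Cal A)=\Cal A$, while Lemma \ref{Lem2.smdif} (estimate \eqref{2.smdif} with $t=1$) yields the key inequality
\begin{equation*}
\|Su-Sv\|_{H^2}\le C\|u-v\|_H,\qquad u,v\in\Cal A .
\end{equation*}
Thus $S$ maps $\Cal A$ onto itself and is Lipschitz as a map from $(\Cal A,\|\cdot\|_H)$ into the strictly stronger space $H^2$, which embeds compactly into $H$.

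The heart of the proof is then an abstract covering lemma: if $V$ is compactly embedded in $H$ and $K\subset H$ admits a surjective map $S:K\to K$ with $\|Su-Sv\|_V\le C\|u-v\|_H$, then $\dim_F(K,H)<\infty$. To prove it I would denote by $N_r(K)$ the minimal number of $H$-balls of radius $r$, centred at points of $K$, needed to cover $K$ (passing to centres in $K$ changes $N_r$ by at most a bounded factor and does not affect the dimension). Given a cover of $K$ by $n=N_r(K)$ balls $B_H(r,x_i)$ with $x_i\in K$, the bound on $S$ shows that each set $S(K\cap B_H(r,x_i))$ has $V$-diameter at most $2Cr$, hence lies in a single $V$-ball of radius $2Cr$ centred at $Sx_i\in K$. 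By compactness of $V\hookrightarrow H$, the unit ball of $V$ is covered by some finite number $m_0=m_0(C)$ of $H$-balls of radius $1/(4C)$; rescaling, each $V$-ball of radius $2Cr$ is covered by $m_0$ balls of $H$-radius $r/2$. Since $S(K)=K$, these $n\,m_0$ balls cover $K$, so $N_{r/2}(K)\le m_0\,N_r(K)$. Iterating from a fixed $r_0$ gives $N_{r_02^{-k}}(K)\le m_0^k N_{r_0}(K)$, hence $N_\eps(K,H)\le C_0\eps^{-\log_2 m_0}$ and $\dim_F(K,H)\le\log_2 m_0<\infty$. Applying this with $V=H^2$ and $K=\Cal A$ gives $\dim_F(\Cal A,H)<\infty$.

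It then remains to identify the dimensions in $H$ and in $H^2$. The inequality $\dim_F(\Cal A,H)\le\dim_F(\Cal A,H^2)$ is immediate from the continuous embedding $H^2\hookrightarrow H$ and property \eqref{2.dim-lip}. For the converse, I would apply $S$ once more: from $\|Su-Sv\|_{H^2}\le C\|u-v\|_H$ one gets $S(\Cal A\cap B_H(\eps/C,x_i))\subset B_{H^2}(\eps,Sx_i)$, and since $S(\Cal A)=\Cal A$ this shows $N_\eps(\Cal A,H^2)\le N_{\eps/C}(\Cal A,H)$, whence $\dim_F(\Cal A,H^2)\le\dim_F(\Cal A,H)$. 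Combining the two bounds proves the claimed equality and finiteness.

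The main obstacle is the abstract covering step: one has to verify carefully that the compactness of $H^2\hookrightarrow H$ genuinely converts the "contraction in the stronger norm after one step" into a strict reduction in the number of covering balls of a fixed ratio, and that the resulting geometric recursion indeed yields a polynomial-in-$\eps^{-1}$ bound on $N_\eps$. Everything else — the smoothing estimate itself and the behaviour of $\dim_F$ under Lipschitz maps — is already available from Lemma \ref{Lem2.smdif} and the elementary properties of $\dim_F$ listed after Definition \ref{Def2.H}.
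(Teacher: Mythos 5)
Your argument is correct and is essentially the paper's own proof: the covering lemma you formulate is exactly Proposition \ref{Prop2.dim} (same inductive refinement of $H$-coverings via the smoothing bound \eqref{2.smdif} for $S(1)$ and compactness of $H^2\hookrightarrow H$, with the same recentring-in-$\Cal A$ adjustment), and the identification $\dim_F(\Cal A,H)=\dim_F(\Cal A,H^2)$ is obtained the same way, from \eqref{2.dim-lip} applied to $S(1):H\to H^2$ and to the embedding $H^2\hookrightarrow H$. No gaps to report.
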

\begin{proof} Indeed, the equality of the dimensions in $H$ and $H^2$ follows from the invariance of the attractor with respect to the map $S(1)$ which, due to the smoothing property \eqref{2.smdif}, can be considered as a Lipschitz map from $H$ to $H^2$ and property \eqref{2.dim-lip}. Thus, the dimension in $H^2$ does not exceed the dimension in $H$. The inverse inequality is obvious since the identity map is Lipschitz as the map from $H^2$ to $H$. The finiteness of the dimension also follows from the parabolic smoothing property \eqref{2.smdif} and the following standard proposition.
\begin{proposition}\label{Prop2.dim} Let $H$ and $H^2$ be two Banach spaces such that $H^2$ is compactly embedded to $H$ and let $\Cal A$ be a bounded set in $H$. Assume also that $\Cal A$ is invariant with respect to some map $S$ ($S(\Cal A)=\Cal A$) which satisfies the following property:
\begin{equation}\label{2.smabs}
\|Sx_1-Sx_2\|_{H^2}\le L\|x_1-x_2\|_H,\ \ \forall x_1,x_2\in H.
\end{equation}
Then, the fractal dimension of $\Cal A$ in $H$ is finite.
\end{proposition}
\begin{proof} As usual, in order to prove the finiteness of fractal dimension, we need to construct the proper $\eb$-coverings of $\Cal A$ with sufficiently small number of balls. We do that by the inductive procedure. Since $\Cal A$ is bounded, there is $R>0$ and $x_0\in \Cal A$ that one $R$-ball $B(R,x_0,H)$ covers $\Cal A$. Assume that the covering of $\Cal A$ by $R2^{-n}$-balls in H is already constructed and let $N_n$ be the number of balls in that covering:
$$
\Cal A\subset \cup_{i=1}^{N_n}B(R2^{-n},x_i,H),\ \ x_i\in\Cal A.
$$
Then, due to \eqref{2.smabs}, the $H^2$-balls of radii $LR2^{-n}$ centered at points $S(x_i)$ cover the set $S\Cal A=\Cal A$:
$$
\Cal A\subset\cup_{i=1}^{N_n}B(LR2^{-n},Sx_i,H^2).
$$
We now utilize the compactness of the embedding $H^2\subset H$. Since every of that $H^2$-balls is pre-compact in $H$, we may cover every of them by
\begin{multline*}
N_{R2^{-n-2}}(B(LR2^{-n},Sx_i,H^2),H)=\\=N_{R2^{-n-2}}(B(LR2^{-n},0,H^2),H)=N_{1/4L}(B(1,0,H^2),H):=N
\end{multline*}
balls of radii $R2^{n-2}$. Moreover, increasing the radii by the factor of 2, we may assume that the centers of these balls belong to $\Cal A$ again. Thus, we have constructed the $R2^{-n-1}$-net for $\Cal A$ such that the number of balls $N_{n+1}\le N N_n$. Thus, by induction, we have constructed
the $\eb_n:=R2^{-n}$-coverings $\Cal A$ with the number of balls $N_n:=N^n$. Let now $\eb>0$ be small and arbitrary. Fix $n\in\Bbb N$ such that
$$
\eb_n\le\eb\le\eb_{n+1}.
$$
Then, $n\sim(\log 2)^{-1}\log\frac1\eb$ and
$$
\Bbb H_{\eb}(\Cal A,H)\le\Bbb H_{\eb_{n+1}}(\Cal A,H)\le (n+1)\log N\sim \frac{\log N}{\log2}\log\frac1\eb.
$$
We see that $\dim_F(\Cal A,H)\le \frac{\log N}{\log2}$ and the proposition is proved.
\end{proof}
To deduce the finite-dimensionality of the attractor $\Cal A$ from this proposition, it is sufficient to apply it with $S=S(1)$ and use \eqref{2.smdif}. Thus, the theorem is also proved.
\end{proof}
\begin{remark}\label{Rem2.opt} Although the given proof of the finite-dimensionality of the attractor is probably the simplest and most transparent, very often the alternative scheme based on the so-called volume contraction method is used since as believed it gives better estimates for the fractal dimension in terms of physical parameters of the system considered, see \cite{tem} for the details. The above given scheme is however typical for the so-called {\it exponential} attractors theory and has an advantage in comparison with the volume contraction method since the differentiability of the semigroup with respect to the initial data is not required, so it can be successfully applied, e.g.,  in the case of sigular/degenerate equations where such differentiability does not take place, see \cite{MirZe} and references therein for details.
\end{remark}
We conclude the paragraph by discussing the finite-dimensional reduction induced by the above finite-dimensionality of the attractor and the following classical theorem of Man\'e, see \cite{man}, which is the analogue of the so-called Whitney embedding theorem of compact manifolds to $\R^N$, see \cite{dub} for the details.

\begin{theorem}\label{Th2.man} Let a compact set $\Cal A\subset H$ has finite fractal dimension $\dim_F(\Cal A)$ in a Hilbert space $H$. Let also
$N\in\Bbb N$ be such that
\begin{equation}\label{2.big}
N>2\dim_F(\Cal A).
\end{equation}
Then, there exists an $N$-dimensional plane $H_N\subset H$ such that the orthoprojector $P:H\to H_N$ on that plane is one-to-one on $\Cal A$ and, therefore, is a homeomorphism of $\Cal A$ to the finite-dimensional set $P\Cal A\subset H_N=\R^N$. Moreover, the set of all $N$-dimensional projectors with this property is generic in the proper sense.
\end{theorem}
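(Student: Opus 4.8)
The plan is to reduce one-to-one-ness of the orthoprojector to a statement about the difference set $\Cal A-\Cal A$, and then to obtain the genericity by a Baire category argument in the complete metric space of rank-$N$ orthoprojectors, the decisive ingredient being a finite-dimensional transversality estimate. First I would record the elementary observation that the orthoprojector $P$ onto an $N$-plane $H_N$ is injective on $\Cal A$ exactly when $\ker P=H_N^{\perp}$ meets $\Cal A-\Cal A$ only at the origin. Since $(x,y)\mapsto x-y$ is Lipschitz from $H\times H$ to $H$, properties \eqref{2.cart} and \eqref{2.dim-lip} give $\dim_F(\Cal A-\Cal A,H)\le 2\dim_F(\Cal A,H)<N$ by the hypothesis \eqref{2.big}. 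As $\Cal A-\Cal A$ is compact but contains $0$, I would stratify $(\Cal A-\Cal A)\setminus\{0\}=\bigcup_{k\ge1}D_k$, where $D_k:=\{z\in\Cal A-\Cal A:\ \|z\|_H\ge 1/k\}$ is compact, $0\notin D_k$, and $\dim_F(D_k)\le 2\dim_F(\Cal A)<N$.

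Next I would equip the set $\Cal P_N$ of rank-$N$ orthoprojectors on $H$ with the operator-norm metric. The set of all orthoprojectors is closed in $\Cal L(H,H)$ and the rank is locally constant there (two orthoprojectors at distance $<1$ have equal rank), so $\Cal P_N$ is closed, hence a complete metric space, and it is in natural bijection with the set of $N$-planes. Put $U_k:=\{P\in\Cal P_N:\ Px\ne Py\ \text{for all}\ x,y\in\Cal A\ \text{with}\ \|x-y\|_H\ge 1/k\}$, so that $\bigcap_{k\ge1}U_k$ is precisely the set of $P$ injective on $\Cal A$. Openness of $U_k$ is immediate: for $P\in U_k$ the continuous function $\|Px-Py\|_H$ is bounded below by some $\varepsilon>0$ on the compact set $\{(x,y)\in\Cal A\times\Cal A:\ \|x-y\|_H\ge1/k\}$, and then any $Q$ sufficiently close to $P$ satisfies $\|Qx-Qy\|_H\ge\varepsilon/2>0$ on the same set. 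By the Baire theorem it therefore suffices to show that each $U_k$ is dense; then $\bigcap_kU_k$ is residual, which yields both the existence of $H_N$ and the genericity claim.

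To prove density of $U_k$, fix $P_0\in\Cal P_N$ with image $H_0$ and fix $\eta>0$. Using compactness of $D_k$, choose a finite-dimensional subspace $H_M\supseteq H_0$ with $\operatorname{dist}(z,H_M)<1/(2k)$ for all $z\in D_k$; writing $P_M$ for the orthoprojection onto $H_M$, we then have $0\notin P_MD_k$ and $\dim_F(P_MD_k)\le\dim_F(D_k)<N$ by \eqref{2.dim-lip}. Parametrize the $N$-planes contained in $H_M$ as the orthogonal complements inside $H_M$ of the codimension-$N$ subspaces $W\subset H_M$; since $\ker P_{H_N}=H_N^{\perp}=W\oplus H_M^{\perp}$ for such a plane $H_N$, one has $z\in\ker P_{H_N}$ iff $P_Mz\in W$. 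Hence it is enough to find $W$ with $W\cap P_MD_k=\emptyset$ and with its complement inside $H_M$ close to $H_0$, for then $P_{H_N}\in U_k$ and $\|P_{H_N}-P_0\|$ is as small as we wish. This is now a finite-dimensional problem: normalizing $P_MD_k$ radially onto the unit sphere of $H_M$ (a Lipschitz operation, since the set is bounded and bounded away from $0$) produces a compact $\widehat E$ with $\dim_F(\widehat E)<N$ and with $W\cap P_MD_k\ne\emptyset\iff W\cap\widehat E\ne\emptyset$, and the set of codimension-$N$ subspaces $W$ meeting $\widehat E$ is closed and, as explained below, has measure zero in the Grassmannian; hence its complement is open and dense and contains subspaces arbitrarily close to the complement of $H_0$.

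The step I expect to be the main obstacle is this last finite-dimensional transversality lemma: the assertion that a codimension-$N$ subspace of $\R^M$ generically misses a compact set of dimension strictly less than $N$. I would prove it by considering the incidence set $I=\{(v,W):\ v\in\widehat E,\ v\in W\}$, which fibers over $\widehat E$ with fibres of codimension $N$ in the Grassmannian $G(M-N,M)$ (the fibre over $v$ is the set of $(M-N)$-planes through $\R v$, a copy of $G(M-N-1,M-1)$); a covering estimate then gives $\dim_F(I)\le\dim_F(\widehat E)+\dim G(M-N,M)-N<\dim G(M-N,M)$, so the Lipschitz image of $I$ under projection to the Grassmannian has strictly smaller dimension than the Grassmannian and hence Lebesgue measure zero. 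Alternatively one may run this argument with Hausdorff dimension, where \eqref{2.countable} together with \eqref{2.dim-lip} delivers the measure-zero conclusion directly from $\dim_H\widehat E\le\dim_F\widehat E<N$. A minor point to keep in mind throughout is the passage from the infinite-dimensional space $\Cal P_N$ down to $H_M$ while remaining close to the prescribed $P_0$; this is exactly why $H_0$ is included in $H_M$ from the outset.
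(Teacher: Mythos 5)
Your proposal is correct, but it reaches Theorem \ref{Th2.man} by a genuinely different route than the paper. The paper's sketch first assumes $\Cal A\subset\R^N$ and removes one dimension at a time: the chord-direction map $\operatorname{Line}(x,y)=(x-y)/\|x-y\|$ into $\Bbb R\Bbb P^{N-1}$ is Lipschitz on each stratum $\{\|x-y\|\ge 1/n\}$, so by \eqref{2.dim-lip}, \eqref{2.cart} and the countable stability \eqref{2.countable} of the Hausdorff dimension the exceptional directions form a set of Hausdorff dimension at most $2\dim_F\Cal A<N-1$, whence almost every line can be projected out; the genuinely infinite-dimensional case is only indicated there (a limiting argument ``using the Zorn lemma''). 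You instead work directly in infinite dimensions: Baire category in the complete metric space of rank-$N$ orthoprojectors, with the open sets $U_k$ of projectors separating pairs at distance $\ge 1/k$, and density of each $U_k$ reduced—after the finite-dimensional approximation $H_M\supseteq H_0$ of the truncated difference set $D_k$, which correctly exploits that injectivity is a statement about $\ker P\cap(\Cal A-\Cal A)$—to the finite-dimensional transversality fact that a generic codimension-$N$ subspace misses a compact set of fractal dimension $<N$, proved by the incidence-variety dimension count on the Grassmannian (your codimension-$N$ fibre count is right). Your route buys a treatment of the infinite-dimensional case without the iterated reduction or the vague limiting step, and it delivers genericity in the concrete sense of a residual (dense $G_\delta$) set of orthoprojectors in operator norm, your stratification by $D_k$ playing the role that \eqref{2.countable} plays in the paper; the paper's route buys a more elementary dimension count, since its exceptional set lives in the small projective space of chord directions, and it gives an ``almost every'' (measure-type) statement directly. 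The only points where you stay at the same level of sketchiness as the paper itself are the covering estimate $\dim_F(I)\le\dim_F(\widehat E)+\dim G(M-N-1,M-1)$, which requires the local bi-Lipschitz triviality of the incidence fibration together with the product inequality for box dimension (routine, but worth a line), and the passing remark that \eqref{2.countable} drives the Hausdorff-dimension variant, which is not really where that property enters; neither affects the correctness of the argument.
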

\begin{proof}[Sketch of the proof] Assume for simplicity that $\Cal A$ is already embedded into $H=\R^N$ for some big $N$ satisfying
\begin{equation}\label{2.bigg}
N>2\dim_F(\Cal A)+1
\end{equation}
and show how the dimension $N$ can be reduced. Namely, we want to show that for almost all $(N-1)$-dimensional planes in $H=\R^N$, the orthoprojector
$P$ to it will be one-to-one on $\Cal A$. Such projector $P=P_l$ is completely determined by the line $l\in \Bbb R\Bbb P^{N-1}$ ($(N-1)$-dimensional projective space),  orthogonal to the $(N-1)$-dimensional plane. For any $x,y\in\Cal A$, $x\ne y$, define the map
$$
\operatorname{Line}: \Cal A^2_0:=\Cal A\times\Cal A\backslash\{(x,x), \ x\in\Cal A\}\to  \Bbb R\Bbb P^{N-1}
$$
by formula $\operatorname{Line}(x,y)=$ "the direction of line passing through $x$ and $y$". Then, the "exceptional" projectors which are not one-to-one on $\Cal A$ are determined by the points of $\Bbb R\Bbb P^{N-1}$ belonging to $\operatorname{Line}(\Cal A_0^2)$. We claim that
\begin{equation}\label{2.Hdim}
\dim_H(\operatorname{Line}(\Cal A_0^2),\Bbb R\Bbb P^{N-1})\le 2\dim_F(\Cal A)
\end{equation}
and, since $\dim(\Bbb R\Bbb P^{N-1})=N-1$ and \eqref{2.bigg} holds, almost all directions of $\Bbb R\Bbb P^{N-1}$ are not exceptional and the dimension reduction works. Indeed, the line from $\operatorname{Line}(\Cal A_0^2)$ is uniquely determined by the pair $(x,y)\in\Cal A^2$ but the map is not Lipschitz continuous since the line passing through $x$ and $y$ depends very sensitively on $x$ and $y$ when $\|x-y\|$ is small. To overcome this difficulty, we introduce the sets
$$
\Cal A_n^2:=\{(x,y)\in\Cal A_0^2\,:\ \|x-y\|\ge1/n\}.
$$
Then, as not difficult to verify, $\operatorname{Line}$ is globally Lipschitz on $\Cal A_n$ and, therefore,
$$
\dim_H(\operatorname{Line}(\Cal A_n^2),\Bbb R\Bbb P^{N-1})\le \dim_H(\Cal A_n^2,H\times H)\le \dim_F(\Cal A\times\Cal A,H\times H)\le 2\dim_F(\Cal A).
$$
On the other hand,
$$
\operatorname{Line}(\Cal A_0^2)=\cup_{n=1}^\infty\operatorname{Line}(\Cal A_n^2)
$$
and \eqref{2.Hdim} follows from the property \eqref{2.countable}. Thus, the reduction of the dimension works and we have proved the theorem in the
finite-dimensional case. The infinite-dimensional case can be treated analogously and the same dimension reduction works, but one should be a bit more careful and use, for instance, the Zorn lemma in order to guarantee the "convergence".
\end{proof}
\begin{remark}\label{Rem2.holder} In order to give the flavor of the theory, we state and prove the Man\'e projection theorem in its simplest form although much stronger  versions are available nowadays. In particular, the underlying space may be only Banach, "generic" set of projectors is usually understood in terms of measure theory and the so-called prevalence and the Borel-Cantelli lemma is used instead of the Zorn lemma, see \cite{hunt,28} and references therein.
\par
 Recall also that $\Bbb R\Bbb P^{N-1}=\Bbb S^{N-1}/\Bbb Z_2$, where $\Bbb S^{N-1}$ is an $(N-1)$-dimensional sphere and $\Bbb Z_2$ is a symmetry $x\to-x$, so the map $\operatorname{Line}$ can be written in the form
$$
\operatorname{Line}(x,y)=\frac{x-y}{\|x-y\|},\ \ x,y\in\Cal A.
$$
Thus, the map  is actually defined on the set $\Cal A-\Cal A\subset H$ rather than on $\Cal A\times \Cal A$. That explains why various dimensions of the set $\Cal A-\Cal A$ are often involved in Man\'e type theorems, see \cite{28} for more details.
\par
Moreover, as also known  the projector in Theorem \eqref{Th2.man} can be chosen (again generically) in such way that the inverse will be not only continuous, but {\it H\"older continuous} on $P\Cal A$ with some H\"older exponent $\kappa>0$. The value of the exponent $\kappa$ depends on how good the set $\Cal A$ can be approximated by finite-dimensional planes (Lipschitz manifolds) and is determined by the so-called thickness exponent or Lipschitz deviation, see \cite{3,foo,hunt,23,26} and references therein. In fact, the attractor $\Cal A$ of equation \eqref{1.eqmain} can be  nicely approximated by finite-dimensional Lipschitz manifolds (at least when $\lambda_N$ grow faster than $\log N$), see the so-called approximative inertial manifolds of exponential order in e.g., \cite{tem}, so the Lipschitz deviation of that attractor is zero and, by this reason, the H\"older exponent $\kappa$ in the H\"older Man\'e projection theorem can be made arbitrary close to one if $N$ is large enough, see  \cite{28}.
\end{remark}
Thus, Theorems \ref{Th2.fdim} and \ref{Th2.man} allow us to project one-to-one the attractor $\Cal A$ to a finite dimensional set $\overline{\Cal A}\subset \R^N$ and the inverse projector $P^{-1}$ can be made H\"older continuous. Projecting the solution semigroup
\begin{equation}\label{2.sem-finite}
\bar S(t):=P\circ S(t)\circ P^{-1},\ \ \bar S(t):\overline{\Cal A}\to\overline{\Cal A},
\end{equation}
to $\overline{\Cal A}$, we see that the dynamics of \eqref{1.eqmain} on the attractor $\Cal A$ is conjugated by the H\"older continuous homeomorphism to the dynamics $\bar S(t)$ on the subset $\overline{\Cal A}$ of $\R^N$. That explains why, at least on the level of topological dynamics, there are no principal difference between the dynamics generated by \eqref{1.eqmain} and the classical finite-dimensional dynamics, see \cite{kap,katok} for more details.
\par
Moreover, the reduced dynamics on $\overline{\Cal A}$ can be at least formally determined by a system of ODEs analogous to the inertial form \eqref{1.inertial}. Indeed, since the attractor $\Cal A$ has finite dimension in $H$,  we may fix the Man\'e projector $P$ such that  $P^{-1}$ is H\"older continuous as a map from $\overline{\Cal A}$ to $H$  and, therefore, for any trajectory $u(t)\in \Cal A$ of equation \eqref{1.eqmain}, the function $v(t):=Pu(t)$ solves
\begin{equation}\label{2.inertial}
\frac d{dt}v(t)=-P AP^{-1}v(t)+PF(P^{-1}v(t)):=W(v(t))
\end{equation}
which is a system of ODEs on $\overline{\Cal A}\subset\R^N$. Moreover, as not difficult to see using that the identity map on the attractor $\Cal A$ is continuous as the map of $H$ to $H^2$ (due to compactness of $\Cal A$ in $H^2$), that the vector field $W$ is continuous on $\overline{\Cal A}$. Slightly more delicate arguments based, e.g., on the "almost equivalence" of the $H$ and $H^2$ norms on the attractor, see Proposition \ref{Prop2.Alip} below, show that $W$ is also H\"older continuous and the H\"older exponent can be chosen arbitrarily close to one, see \cite{EFNT,28} and references therein.
\par
Finally, the inertial form \eqref{2.inertial} initially defined on $\overline{\Cal A}\subset\R^N$ can be extended on the whole $\R^N$ using, e.g., the following extension result.

\begin{proposition}\label{Prop2.ext} Let $H_1$ and $H_2$ be two Hilbert spaces, $S\subset H_1$ be an arbitrary set and a map $W: S\to H_2$ satisfy
\begin{equation}\label{2.hol}
\|W(x_1)-W(x_2)\|_{H_2}\le L\|x_1-x_2\|^\alpha_{H_1},\ \ x_1,x_2\in S
\end{equation}
for some $\alpha\in(0,1]$ and $L>0$. Then, the map $W$ can be extended to the map from $H_1$ to $H_2$ in such way that inequality \eqref{2.hol} (with the same constants) will hold for all $x_1,x_2\in H_1$.
\end{proposition}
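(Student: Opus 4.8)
The plan is to reduce to the Lipschitz case and then invoke the classical extension theorems of Schoenberg and Kirszbraun. First I would pass to the \emph{snowflaked} metric $d_\alpha(x,y):=\|x-y\|_{H_1}^{\alpha}$ on $H_1$. Since $t\mapsto t^{\alpha}$ is concave, hence subadditive, on $[0,\infty)$ for $\alpha\in(0,1]$, the triangle inequality holds and $(H_1,d_\alpha)$ is a genuine metric space; with respect to it, hypothesis \eqref{2.hol} says exactly that $W\colon(S,d_\alpha)\to H_2$ is Lipschitz with constant $L$. So it suffices to show that every $L$-Lipschitz map from a subset of $(H_1,d_\alpha)$ into the Hilbert space $H_2$ extends, with the same constant, to all of $(H_1,d_\alpha)$.

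Next I would realize $(H_1,d_\alpha)$ isometrically inside a Hilbert space. For $\alpha=1$ there is nothing to do (take $\mathcal H=H_1$, $\iota=\mathrm{id}$); for $\alpha\in(0,1)$ I would use Schoenberg's theorem: the kernel $(x,y)\mapsto\|x-y\|_{H_1}^{2\alpha}$ is conditionally negative definite, which follows from the representation $r^{2\alpha}=c_\alpha\int_0^{\infty}(1-e^{-sr^2})\,s^{-1-\alpha}\,ds$ together with the fact that each Gaussian kernel $1-e^{-s\|x-y\|_{H_1}^2}$ is conditionally negative definite and positive integrals of such kernels remain so. Consequently there is a Hilbert space $\mathcal H$ and a map $\iota\colon H_1\to\mathcal H$ with $\|\iota(x)-\iota(y)\|_{\mathcal H}=\|x-y\|_{H_1}^{\alpha}$ for all $x,y$, and then $g:=W\circ(\iota|_S)^{-1}\colon\iota(S)\to H_2$ is honestly Lipschitz with constant $L$ between subsets of \emph{Hilbert} spaces.

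At this point Kirszbraun's extension theorem gives an $L$-Lipschitz extension $\tilde g\colon\mathcal H\to H_2$, and I would take $\widetilde W:=\tilde g\circ\iota$. Then for all $x_1,x_2\in H_1$,
\[
\|\widetilde W(x_1)-\widetilde W(x_2)\|_{H_2}\le L\,\|\iota(x_1)-\iota(x_2)\|_{\mathcal H}=L\,\|x_1-x_2\|_{H_1}^{\alpha},
\]
while $\widetilde W(x)=g(\iota(x))=W(x)$ for $x\in S$, which is the assertion.

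The main obstacle is the Hilbert-space input hidden in Kirszbraun's theorem. If one wants a self-contained argument --- which is natural here, since in the application to \eqref{2.inertial} the target is finite-dimensional --- one reduces by Zorn's lemma to adjoining a single point $x_0$, and must then show that the closed balls $\bar B\bigl(g(\iota(s)),\,L\|x_0-s\|_{H_1}^{\alpha}\bigr)$, $s\in S$, have a common point; weak compactness of balls in $H_2$ reduces this to finite subfamilies, and there the crux is Kirszbraun's geometric lemma on intersections of balls, whose proof uses the parallelogram identity in \emph{both} the target $H_2$ and the snowflaked domain $\mathcal H$ --- precisely what the Schoenberg embedding is there to provide. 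Everything else (the real-valued case being handled directly by the McShane formula $\widetilde W(x)=\inf_{s\in S}\{W(s)+L\|x-s\|_{H_1}^{\alpha}\}$) is routine.
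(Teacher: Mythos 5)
Your argument is correct, but it follows a genuinely different route from the paper. The paper does not actually prove the proposition: it only writes down the McShane-type formula $\tilde W(x)=\sup_{y\in S}\{W(y)-L\|x-y\|\}$ for the special case $\alpha=1$, $H_2=\R$, and refers to Minty's paper for the general vector-valued H\"older case, where the proof is a direct one (in the spirit of Kirszbraun--Valentine, via Minty's technique for systems of inequalities). You instead reduce to the Lipschitz case by snowflaking: the subordination formula $r^{2\alpha}=c_\alpha\int_0^\infty(1-e^{-sr^2})s^{-1-\alpha}\,ds$ (needed only for $\alpha<1$) shows $\|x-y\|^{2\alpha}$ is of negative type, Schoenberg's embedding theorem then realizes $(H_1,\|x-y\|^\alpha)$ isometrically in a Hilbert space $\mathcal H$ (note $\iota$ is injective since $d_\alpha$ is a metric, so $g:=W\circ(\iota|_S)^{-1}$ is well defined), and Kirszbraun's theorem extends $g$ with the same constant; pulling back by $\iota$ gives the extension. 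This is a clean and complete argument modulo two classical theorems, and it has the conceptual advantage of making visible exactly where the hypotheses enter: $\alpha\le 1$ is what makes the snowflake of negative type, and the Hilbert structure of both $H_1$ and $H_2$ is what Kirszbraun's ball-intersection lemma requires (indeed the constant-preserving extension fails for general Banach targets). What Minty's direct route buys, by contrast, is a self-contained proof that does not pass through an auxiliary embedding space. Your closing sketch of how to prove Kirszbraun itself (Zorn reduction to a one-point extension plus weak compactness of balls) is accurate but is of course only a sketch; since you invoke Kirszbraun as a citation anyway, that is harmless.
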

Indeed, in the simplest case $\alpha=1$ and $H_2=\R$, one of possible  extensions is given by the explicit formula
$$
\tilde W(x):=\sup_{y\in S}\,\biggl\{W(y)-L\|x-y\|\biggr\}.
$$
In a general case one should be a bit more accurate, see, e.g.,  \cite{min} for the detailed proof.

\begin{corollary}\label{Cor2.emb} Under the assumptions of Theorem \eqref{Th2.fdim} the attractor $\Cal A$ can be embedded to the finite-dimensional manifold which is a graph of a H\"older continuous function
$$
\Phi: H_+=\R^N\to H_-=H_+^\bot,\ \Cal A\subset \Cal M:=\{u_++\Phi(u_+),\ u_+\in H_+\}
$$
and
$$
\|\Phi(x_1)-\Phi(x_2)\|_H\le k\|x_1-x_2\|_H^\alpha
$$
for some $k>0$ and $\alpha>0$ which can be made arbitrarily close to one.
\end{corollary}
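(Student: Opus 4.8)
The plan is to deduce the corollary by combining the finite-dimensionality of $\Cal A$ (Theorem \ref{Th2.fdim}), the H\"older version of the Man\'e projection theorem (Theorem \ref{Th2.man} together with Remark \ref{Rem2.holder}), and the H\"older extension result (Proposition \ref{Prop2.ext}).

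First I would set $d:=\dim_F(\Cal A,H)$, which is finite by Theorem \ref{Th2.fdim}, and fix $N\in\Bbb N$ large; besides the condition $N>2d$ I would take $N$ large enough so that the sharp H\"older Man\'e theorem applies with H\"older exponent as close to one as desired. Invoking that theorem — which relies on the fact, recalled in Remark \ref{Rem2.holder}, that $\Cal A$ is well approximated by finite-dimensional Lipschitz manifolds (the approximative inertial manifolds of exponential order, valid here since $\lambda_N$ grows faster than $\log N$), so that its Lipschitz deviation vanishes — I obtain an $N$-dimensional plane $H_+\subset H$ such that the orthoprojector $P:=P_+:H\to H_+$ is injective on $\Cal A$ and the inverse $P^{-1}:P\Cal A\to\Cal A$ is H\"older continuous with some exponent $\alpha>0$ that can be taken arbitrarily close to one.

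Next I would put $H_-:=H_+^\bot$, $Q:=\mathrm{Id}-P$, and define $\Phi_0:P\Cal A\to H_-$ by $\Phi_0(v):=Q(P^{-1}v)$. Since $P^{-1}v=v+\Phi_0(v)$ for every $v\in P\Cal A$, this already gives $\Cal A\subset\{v+\Phi_0(v):\ v\in P\Cal A\}$; and because $\|Q\|_{\Cal L(H,H)}\le 1$, the map $\Phi_0$ inherits the H\"older estimate
$$
\|\Phi_0(v_1)-\Phi_0(v_2)\|_H\le\|P^{-1}v_1-P^{-1}v_2\|_H\le k\|v_1-v_2\|_H^\alpha
$$
on $P\Cal A$ with the same exponent $\alpha$ and some constant $k>0$. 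Finally I would apply Proposition \ref{Prop2.ext} with $H_1=H_+\cong\R^N$, $H_2=H_-$ and $S=P\Cal A$ to extend $\Phi_0$ to a map $\Phi:H_+\to H_-$ satisfying the same H\"older estimate, with the same constant and exponent, for all $v_1,v_2\in H_+$; then $\Cal M:=\{u_++\Phi(u_+):\ u_+\in H_+\}$ is the graph of a H\"older function, and $\Cal A\subset\Cal M$ because $\Phi$ agrees with $\Phi_0$ on $P\Cal A$.

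All the steps above are routine manipulations with the cited results; the one genuinely substantial input is the H\"older refinement of the Man\'e theorem with exponent near one, and the main obstacle — which is quoted in Remark \ref{Rem2.holder} rather than proved here — is establishing that the attractor $\Cal A$ has vanishing Lipschitz deviation (equivalently, admits sufficiently good finite-dimensional Lipschitz approximations) so that this refinement is actually available.
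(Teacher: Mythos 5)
Your argument is correct and is essentially the paper's own proof, just written out in more detail: take a Man\'e projector $P$ whose inverse is H\"older on $\overline{\Cal A}=P\Cal A$ with exponent close to one (quoted from Remark \ref{Rem2.holder}), set $\Phi_0(v)=(1-P)P^{-1}v$ on $\overline{\Cal A}$, and extend it to all of $H_+$ by Proposition \ref{Prop2.ext}. Your additional observations (that $\|1-P\|\le 1$ preserves the H\"older constant, and that the genuinely substantial input is the vanishing Lipschitz deviation behind the H\"older Man\'e theorem) are accurate but do not change the route.
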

Indeed, let $P:\Cal A\to\overline{\Cal A}\subset H_+=\R^N$ be the Man\'e projection. Then applying Proposition \ref{Prop2.ext} to the map
$\Phi(u_+):=(1-P)P^{-1}u_+$, $S=\overline{\Cal A}$, $H_1:=H_+$ and $H_2:=(1-P)H$, we construct the desired manifold and prove the corollary.

\begin{remark}\label{Rem2.differ} Although the finite-dimensional manifold constructed in Corollary \ref{Cor2.emb} looks similar to the inertial manifold \eqref{1.graph} and can be indeed interpreted as a "non-smooth" version of the inertial manifold, there is a principal difference between these two cases. Namely, in the case of inertial manifolds, we can guarantee that the manifold $\Cal M$ is invariant also in the neighbourhood of the attractor and that the attractor of the inertial form \eqref{1.inertial} is {\it exactly} the same as for the initial problem \eqref{1.eqmain}. In the case of the inertial form constructed from the Man\'e projections, the manifold is in general not invariant and the attractor of the extended inertial form \eqref{2.inertial} may be larger than the initial attractor. Constructing of the inertial forms with continuous vector field $W$ (or reduced semigroups) with exactly the same attractor is a non-trivial open problem even on the level of discrete time, see \cite{rob-top} and references therein for some partial results in that direction.
\par
One more essential drawback of the inertial form \eqref{2.inertial} is that we a priori know only that the vector field $W$ is H\"older continuous which is not enough even for the uniqueness. So we are unable restore the dynamics just by solving the reduced ODEs and need to use the initial system at least for the selection of the proper solution, see Example \ref{Ex2.non-unique} showing that extra pathological solutions which are not related with the initial system indeed may appear under the non-smooth projection. These drawbacks stimulate the further study of the reduction problem and, in particular, various attempts to improve the regularity of the inverse Man\'e projector which will be considered in next sections.
\end{remark}
\subsection{Bi-Lipschitz projections and Romanov theory}\label{s2.3} The aim of this paragraph is to study the cases when the attractor $\Cal A$ of problem \eqref{1.eqmain} possesses Man\'e projections with {\it Lipschitz} continuous inverse. The main result is the following theorem which is slight reformulation of the results obtained by Romanov, see \cite{rom-th, rom-th1}.

\begin{theorem}\label{Th2.rom} Let the assumptions of Theorem \ref{Th2.fdim} hold. Then the following assertions are equivalent:
\par
1. The attractor $\Cal A$ of problem \eqref{1.eqmain} possesses at least one Man\'e projection in $H$ such that the inverse $P^{-1}:\overline{\Cal A}\to\Cal A$ is  Lipschitz continuous.
\par
2. The solution semigroup $S(t):\Cal A\to\Cal A$ can be extended for negative times to the Lipschitz continuous group acting on the attractor. In particular,
\begin{equation}\label{2.mlip}
\|S(-1)u_1-S(-1)u_2\|_H\le L\|u_1-u_2\|_H,
\end{equation}
for all $u_1,u_2\in \Cal A$ and some constant $L$.
\par
3. The $H^2$ and $H$ norms are equivalent on the attractor:
\begin{equation}\label{2.equiv}
l\|u_1-u_2\|_H\le\|u_1-u_2\|_{H^2}\le L\|u_1-u_2\|_H
\end{equation}
for all $u_1,u_2\in \Cal A$ and some positive $l$ and $L$.
\par
4. There exists $N\in \Bbb N$ such that the spectral projector $P_N$ on first $N$ eigenvalues of $N$ satisfies
\begin{equation}\label{2.plip}
\|Q_N(u_1-u_2)\|_H\le L\|P_N(u_1-u_2)\|_H,\ \ Q_N:=1-P_N
\end{equation}
for all $u_1,u_2\in\Cal A$ and, therefore, $\Cal A$ can be embedded into the Lipschitz manifold which is a graph of a Lipschitz continuous function over $H_+:=P_NH$.
\end{theorem}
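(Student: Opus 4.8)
The plan is to prove the four assertions equivalent through the cycle $2\Rightarrow3\Rightarrow4\Rightarrow1\Rightarrow2$; the first three links are soft consequences of the parabolic smoothing estimates established in Paragraph~\ref{s2.1}, and the last link $1\Rightarrow2$ is where the real work lies.

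\emph{The soft links.} For $2\Rightarrow3$ I would write $u_i=S(1)\bigl(S(-1)u_i\bigr)$, with $S(-1)u_i\in\Cal A\subset\Cal B$, and apply the smoothing estimate \eqref{2.smdif} at $t=1$ followed by \eqref{2.mlip}; this yields $\|u_1-u_2\|_{H^2}\le L'\|u_1-u_2\|_H$, while the reverse bound $\|u_1-u_2\|_H\le\lambda_1^{-1}\|u_1-u_2\|_{H^2}$ is immediate from \eqref{1.hs}. For $3\Rightarrow4$, setting $v:=u_1-u_2$ one has $\|Q_Nv\|_H\le\lambda_{N+1}^{-1}\|v\|_{H^2}\le L\lambda_{N+1}^{-1}\|v\|_H$; choosing $N$ with $\lambda_{N+1}>L$ and absorbing $\|Q_Nv\|_H$ via $\|v\|_H^2=\|P_Nv\|_H^2+\|Q_Nv\|_H^2$ gives \eqref{2.plip}. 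For $4\Rightarrow1$ the spectral projector $P_N$ itself is the required Man\'e projection: \eqref{2.plip} shows it is one-to-one on $\Cal A$ with $\|u_1-u_2\|_H\le(1+L^2)^{1/2}\|P_N(u_1-u_2)\|_H$; since \eqref{2.plip} persists for all larger $N$ one may in addition take $N>2\dim_F(\Cal A)$, and the graph representation follows by extending the Lipschitz map $P_Nu\mapsto Q_Nu$ through Proposition~\ref{Prop2.ext}.

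\emph{The main link $1\Rightarrow2$.} Note first that by backward uniqueness for \eqref{1.eqmain} the map $S(t):\Cal A\to\Cal A$ is a bijection for every $t$ (see \eqref{2.k}), so it suffices to prove the Lipschitz bound \eqref{2.mlip}: once $S(-1)$ is Lipschitz on $\Cal A$, so are $S(-n)=S(-1)^n$ and $S(-t)=S(-\lceil t\rceil)\circ S(\lceil t\rceil-t)$. Let $P$ be the given finite-rank orthoprojector with $\|u_1-u_2\|_H\le C_0\|P(u_1-u_2)\|_H$ on $\Cal A$, and argue by contradiction: if \eqref{2.mlip} fails there are pairs $u_1^k,u_2^k\in\Cal A$ with complete trajectories $u_i^k(t)\subset\Cal A$, $u_i^k(1)=u_i^k$, such that $v^k:=u_1^k-u_2^k$ satisfies $\|v^k(0)\|_H/\|v^k(1)\|_H\to\infty$. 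Since $S(1)$ is injective on the compact set $\Cal A$ and $\|v^k(1)\|_H\to0$, a subsequence of $(u_1^k(0),u_2^k(0))$ would otherwise converge to a pair with distinct coordinates but equal images under $S(1)$; hence $\|v^k(0)\|_H\to0$ as well. Rescaling $z^k:=v^k/\|v^k(0)\|_H$ we get $\|z^k(0)\|_H=1$, $\|z^k(1)\|_H\to0$, and $z^k$ solves $\Dt z^k+Az^k=l^k(t)z^k$ with $\|l^k(t)\|_{\Cal L(H,H)}\le L$ (here $l^k$ is the usual integral average of $F'$ along the pair), so $\|z^k(t)\|_H\le e^L$ on $[0,1]$ by Gronwall.

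\emph{Passing to the limit.} On $[0,1]$ the trajectories $u_i^k$ are equi-Lipschitz into $H$ (their $H^2$-norms being bounded on $\Cal A$), so by Arzel\`a--Ascoli and compactness of $\Cal A$ in $H$ a subsequence converges in $C([0,1];H)$, and $\|v^k(t)\|_H\le e^L\|v^k(0)\|_H\to0$ forces a common limit $u^*$, whence $l^k(t)\to F'(u^*(t))$. The standard energy bounds make $z^k$ bounded in $C([0,1];H)\cap L^2(0,1;H^1)$ with $\Dt z^k$ bounded in $L^2(0,1;H^{-1})$, so (Aubin--Lions) a subsequence converges to a solution $z$ of the equation of variations $\Dt z+Az=F'(u^*(t))z$ on $[0,1]$ with $z^k(0)\rightharpoonup z(0)$ and $z(1)=0$; backward uniqueness for this linear equation then gives $z(0)=0$. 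But $P$ is compact, so $Pz^k(0)\to Pz(0)$ strongly, and passing to the limit in $1=\|z^k(0)\|_H\le C_0\|Pz^k(0)\|_H$ yields $\|Pz(0)\|_H\ge C_0^{-1}>0$, a contradiction. The crux of the whole theorem is precisely this step: upgrading the existence of a single finite-rank projection with Lipschitz inverse into the uniform backward Lipschitz estimate \eqref{2.mlip}, which is achieved by combining the compactness furnished by $P$ with backward uniqueness for the variational equation.
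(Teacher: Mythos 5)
Your three soft links ($2\Rightarrow3$ via the smoothing estimate \eqref{2.smdif}, $3\Rightarrow4$ by absorbing the $Q_N$-part for $\lambda_{N+1}>L$, and $4\Rightarrow1$ with $P=P_N$ plus the extension of Proposition \ref{Prop2.ext}) coincide with the paper's, so the only genuine divergence is the key implication $1\Rightarrow2$, and there your argument is different but correct. The paper proceeds quantitatively: it first observes that the set of Man\'e projectors with Lipschitz inverse is open, so by density of $H^2$ in $H$ one may assume the range vectors $\xi_i$ of $P$ are smooth; then $P$ is bounded on $H^{-1}$, which upgrades the hypothesis $\|u_1-u_2\|_H\le C_0\|P(u_1-u_2)\|_H$ to the equivalence of the $H$ and $H^{-1}$ norms on $\Cal A$, and a single differential inequality, obtained by multiplying the difference equation \eqref{2.dif} by $A^{-1}v$, yields \eqref{2.mlip} directly with an explicit constant. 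You instead keep $P$ as given and run a rescaling/compactness contradiction: normalize the differences, pass to the limit via Aubin--Lions to a solution $z$ of the variational equation $\Dt z+Az=F'(u^*(t))z$ with $z(1)=0$, invoke backward uniqueness to get $z(0)=0$, and use the finite rank (hence compactness) of $P$ to convert weak convergence of $z^k(0)$ into the strong lower bound $\|Pz(0)\|_H\ge C_0^{-1}$, a contradiction. This is sound, but note that it leans on two facts you only invoke: injectivity of $S(1)$ on $\Cal A$ and backward uniqueness for the non-autonomous linear equation solved by the limit $z$; both follow from the Agmon--Nirenberg log-convexity estimate \eqref{2.logconstan} established in the proof of Proposition \ref{Prop2.Alip} (applied to $z$, which is smooth for $t>0$ by parabolic smoothing), so the argument closes within the paper's toolkit, but those citations should be made explicit. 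As for what each route buys: the paper's proof is elementary and self-contained, gives an explicit backward Lipschitz constant, and produces as a by-product the equivalence of the $H$ and $H^{-1}$ norms on the attractor, whereas yours is soft (no constant) but avoids the perturbation-of-the-projector step and works verbatim for any finite-rank projector with Lipschitz inverse.
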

\begin{proof} 1) $\Rightarrow$ 2). To verify this, we first note that the set of Man\'e projectors with Lipschitz inverse is open. Indeed, let $P$ be such a projector and let $Q:H\to H$ be an arbitrary linear continuous map. Since $P^{-1}$ is Lipschitz on the attractor, we have
$$
\|P(u_1-u_2)\|_H\ge\kappa\|u_1-u_2\|,\ u_1,u_2\in \Cal A,
$$
for some positive $\kappa$. Therefore,
$$
\|(P+\eb Q)(u_1-u_2)\|_H\ge\|P(u_1-u_2)\|_H-\eb\|Q(u_1-u_2)\|_H\ge(\kappa-\eb\|Q\|)\|u_1-u_2\|_H,\ \ u_1,u_2\in \Cal A
$$
and we see that $P+\eb Q$ is one-to-one and bi-Lipschitz on the attractor if $\eb$ is small enough. Recall that any finite-dimensional orthoprojector $P$ in $H$ has a form
\begin{equation}\label{2.proj}
Pu:=\sum_{i=1}^N(u,\xi_i)\xi_i
\end{equation}
for some orthonormal system $\{\xi_i\}_{i=1}^N$ in $H$. Since the set of Man\'e projectors with Lipschitz inverse is open and $H^2$ is dense in $H$, we can assume without loss of generality that the projector $P$ has the form \eqref{2.proj} where all vectors $\xi_i\in H^2$. Then, obviously,
$$
\|Pu\|_{H^{-1}}\le C\|u\|_{H^{-1}}\sum_{i=1}^N\|\xi_i\|_{H^1}\|\xi_i\|_{H^{-1}}\le C_N\|u\|_{H^{-1}}
$$
and $P\in\Cal L(H^{-1},H^{-1})$. Using this together with the fact that all norms are equivalent on the finite-dimensional space $H_+:=PH$, for every $u_1,u_2\in H$, we have
\begin{equation}\label{2.m10eq}
\|u_1-u_2\|_H\le \kappa^{-1}\|P(u_1-u_2)\|_H\le C\kappa^{-1}\|P(u_1-u_2)\|_{H^{-1}}\le C_1\|u_1-u_2\|_{H^{-1}}.
\end{equation}
Thus, the $H^{-1}$ and $H$ norms on the attractor are equivalent. We are now ready to check \eqref{2.mlip}. Indeed, let $u_1(t)$ and $u_2(t)$ be two trajectories on the attractor and $v(t):=u_1(t)-u_2(t)$. Then $v(t)$ solves \eqref{2.dif}. Multiplying this equation by $A^{-1}v(t)$ and using that $F$ is globally Lipschitz in $H$ together with \eqref{2.m10eq}, we have
$$
\frac12\frac d{dt}\|v(t)\|_{H^{-1}}^2\ge -C\|v(t)\|_H^2\ge-C_1\|v(t)\|^2_{H^{-1}}.
$$
Applying the Gronwall inequality and using the equivalence of the $H^{-1}$ and $H$ norms again, we get
$$
\|u_1(-t)-u_2(-t)\|_H\le Ce^{-K t}\|u_1(0)-u_2(0)\|_H
$$
and the implication 1) $\Rightarrow$ 2) is verified.
\par
Implication 2) $\Rightarrow$ 3) is an immediate corollary of smoothing property \eqref{2.smdif}. Indeed,
$$
\|u_1(0)-u_2(0)\|_{H^2}\le C\|u_1(-1)-u_2(-1)\|_H\le C_1\|u_1(0)-u_2(0)\|_H,\ \ u_1(0),u_2(0)\in\Cal A.
$$
and the left inequality of \eqref{2.equiv} is obvious.
\par
Implication 3) $\Rightarrow$ 4) follows from the estimate
\begin{multline*}
\|Q_N(u_1-u_2)\|_H\le\lambda_{N+1}^{-1}\|Q_N(u_1-u_2)\|_{H^2}\le\lambda_{N+1}^{-1}\|u_1-u_2\|_{H^2}\le\\\le L\lambda_{N+1}^{-1}\|u_1-u_2\|_H\le L\lambda_{N+1}\|P_N(u_1-u_2)\|_H+L\lambda_{N+1}^{-1}\|Q_N(u_1-u_2)\|_H
\end{multline*}
if $L\lambda_{N+1}^{-1}<1$ which is always true for sufficiently large $N$.
\par
Finally, the implication 3) $\Rightarrow$ 4) is obvious since $P_N$ is a Man\'e projector with Lipschitz inverse if \eqref{2.plip} is satisfied. Thus, the theorem is proved.
\end{proof}
\begin{remark}\label{Rem2. surprice} As verified in the theorem, the existence of {\it any} finite-dimensional Man\'e projector with Lipschitz inverse implies that {\it all spectral} projectors $P_N$ will possess this property if $N$ is large enough and, therefore, the existence of any inertial form with Lipschitz continuous non-linearity implies that more simple inertial form with the spectral projectors also can be used. Moreover, as shown in \cite{rom-th}, the existence of a compact $C^1$-submanifold containing the global attractor also implies the assertions of Theorem \ref{Th2.rom} and, in particular, the existence of a manifold containing the attractor which is a graph of a Lipschitz function over the spectral space $P_NH$ (see also \cite{rom-th,rom-th1,MLW} and references therein for more conditions which guarantee the existence of finite-dimensional  bi-Lipschitz embeddings of the attractor). We however do not know whether or not something similar is true for the general case of H\"older continuous Man\'e projections.
\par
Mention also that the 4th assertion of the proved theorem looks close to the situation when the inertial manifold exists. Indeed, using the extension result of Proposition \ref{Prop2.ext}, we construct a Lipschitz map $\Phi: H_+:=P_NH\to Q_N H$ such that its graph contains the attractor $\Cal A$ and the dynamics on it is described by the inertial form
\begin{equation}\label{2.inn}
\frac d {dt}u_+=-Au_++P_NF(u_++\Phi(u_+)),\  \ u_+\in H_+:=P_NH.
\end{equation}
The difference is however that the manifold thus constructed is {\it not invariant} with respect to the solution semigroup $S(t)$ outside of the attractor and, in particular, the attractor of the inertial form \eqref{2.inn} maybe {\it larger} than the initial attractor. On the other hand, up to the moment, no examples where such finite-dimensional reduction is verified and an inertial manifold does not exist are known (see below the example where the inertial manifold existence is not known so far, but bi-Lipschitz embeddings work), so it is unclear whether or not bi-Lipschitz embeddings are indeed more general than inertial manifolds.
\par
Note also that the assumption of global Lipschitz continuity of $F$ as a map from $H$ to $H$ is posed only for simplicity and all the above results remain true (with the proofs repeated almost word by word) if we assume its Lipschitz continuity, say, from $H$ to $H^{-\beta}$ with $\beta<2$. We will use this "generalization" in the example below.
\end{remark}

\begin{example}\label{Ex2.grad} Consider the 1D reaction-diffusion equation with the nonlinearity containing spatial derivatives:
\begin{equation}\label{2.gradRDE}
u_t=u_{xx}-f(x,u,u_x)
\end{equation}
on the interval $\Omega=[0,\pi]$ endowed, say, by the Dirichlet boundary conditions. The classical example here is a Burgers equation ($f=uu_x+g(x)$) although other type of nonlinearities are also interesting. Then, under the natural dissipativity assumptions on the nonlinearity $f$, for instance due to the maximum principle, one can assume that $f$ is smooth and
$$
f(x,u,0)\cdot \sgn(u)\ge -C,\ \ |f(x,u,p)|\le Q(u)(1+|p|^2)
$$
for some monotone increasing $Q$ and constant $C$ independent of $x$ and $u$, equation \eqref{2.gradRDE} is well-posed and dissipative, say, in $C(\Omega)$ and possesses an absorbing set which is bounded in $C^1(\Omega)$, see, e.g., \cite{LSU}. By this reason, cutting off the nonlinearity, we may assume that $f$, $f'_u$ and $f'_{u_x}$ are globally bounded. Then, problem \eqref{2.gradRDE} is well-posed in $H=L^2(\Omega)$ and the nonlinearity $F(u):=f(x,u,u_x)$ is globally Lipschitz as a map from $H^1:=H^1_0(\Omega)$ to $H$. The operator $A:=-\frac {d^2}{dx^2}$  is self-adjoint and positive (with $D(A):=H^2(\Omega)\cap H^1_0(\Omega)$) and its eigenvalues are $\lambda_N:=N^2$. According to Theorem \ref{Th1.dmain}, for the straightforward existence of an inertial manifold, we need to verify the spectral gap condition with $\beta=-1$:
$$
\frac{(N+1)^2-N^2}{N+N+1}=1>L
$$
and we see that it is violated if $L\ge1$ although we are, in a sense, in the borderline case (it would be satisfied for all $L$  if $\beta$ would be greater than $-1$). Thus, Theorem \ref{Th1.dmain} does not guarantee the existence of an inertial manifold.
\par
Let us show that the Romanov theory works for that equation. Indeed, although formally the assumptions of Theorem \ref{Th2.rom} are not satisfied
(since $F$ now decreases the smoothness), it is easy to verify that all the estimates of Lemmas \ref{Lem2.dis}, \ref{Lem2.smo} and \ref{Lem2.smdif} hold for that case as well and, repeating word by word the proof of Theorem \ref{Th2.rom}, we see that it remains valid for that case as well.
\par
We check below that the second assertion of that theorem is satisfied for equation \eqref{2.gradRDE}. Indeed, let $u_1(t)$ and $u_2(t)$, $t\in\R$, be two trajectories belonging to the attractor $\Cal A$ and $v(t):=u_1(t)-u_2(t)$. Then, $u_i(t)$  are smooth and remain bounded as $t\to-\infty$ and
$v(t)$ solves
\begin{multline}\label{2.var}
v_t=v_{xx}+l_1(t)v+l_2(t)v_x, \ l_1(t):=\int_0^1f'_u(x,su_1+(1-s)u_2,s\partial_xu_1+(1-s)\partial_xu_2)\,ds, \\ l_2(t):=\int_0^1f'_{u_x}(x,su_1+(1-s)u_2,s\partial_xu_1+(1-s)\partial_xu_2)\,ds.
\end{multline}
Moreover, since the attractor $\Cal A$ is smooth, we know that functions $l_1(t)$ and $l_2(t)$ are also smooth and bounded, for instance,
\begin{equation}\label{2.un}
\|l_i\|_{C^1(\R\times\Omega)}\le K,\ \ i=1,2
\end{equation}
and this estimate is uniform with respect to $u_1,u_2\in\Cal A$. The key idea here is that the "bad" term in \eqref{2.var} can be removed by the proper linear transform. Namely, let
$$
w(t,x):=e^{\frac12\int_0^x l_2(t,x)\,dx}v(t,x).
$$
Then, on the one hand, due to \eqref{2.un}
\begin{equation}\label{2.equivw}
C^{-1}\|v(t)\|_{H}\le\|w(t)\|_{H}\le C\|v(t)\|_H,
\end{equation}
where $C>0$ is uniform with respect to $u_1,u_2\in\Cal A$, so we may check the backward Lipschitz continuity for $w(t)$ instead of $v(t)$. On the other hand, the new function $w$ solves
\begin{equation}\label{2.goood}
w_t=w_{xx}+\(l_1(t)+\frac14l_2(t)^2-\frac12\partial_x l_2(t)+\int_0^x \partial_t l_2(t)\,dx\)w=w_{xx}+R(t,x)w.
\end{equation}
Moreover, due to \eqref{2.un},
\begin{equation}
\sup_{u_1,u_2\in\Cal A}\sup_{t\in\R}\|R(t)\|_{L^\infty(\Omega)}\le L
\end{equation}
for some finite constant $L$.
\par
Let us now fix $N$ such that $(N+1)^2-N^2=2N+1>2L$. Then, the spectral gap condition of Theorems \ref{Th1.main} and \ref{Th1.manex} is satisfied and, therefore, there is a linear map $M_+(t):H_+:=P_NH\to H_-$ such that $\|M_+(t)\|_{\Cal L(H_+,H_-)}\le C$ and the graph $\Cal M_+(t)$ of this map is an invariant subspace for equation \eqref{2.goood} (see Remark \ref{Rem1.gen} for the extension of the inertial manifold theorem to the non-autonomous case). Moreover, since $w(t)$ remains bounded as $t\to-\infty$, this trajectory belongs to $\Cal M_+(t)$ for all $t\in\R$. The inertial form corresponding to that manifold is a linear finite-dimensional system of ODEs on $H_+$, so the associated flow is Lipschitz continuous backward in time:
\begin{equation}\label{2.last}
\|w(-1)\|_H\le C\|w(0)\|_H
\end{equation}
and the constant $C$ depends only on $L$ and the norm of $M_+$. Since both of them are uniform with respect to $u_1,u_2\in\Cal A$, \eqref{2.last} is also uniform with respect to $u_1,u_2\in\Cal A$ and \eqref{2.mlip} is verified. Thus, by Theorem \ref{Th2.rom}, we have bi-Lipschitz Man\'e projections for equation \eqref{2.gradRDE} and the Romanov theory indeed works. This result has been firstly established in \cite{rom-th}, see also \cite{kuk1}.
\end{example}
\begin{remark}\label{Rem2.manifold} The example of a 1D reaction-diffusion equation considered above is the key example where the Romanov theory works, but the existence of an inertial manifold is not verified yet, so it potentially illustrates the advantages of the theory. However, it is hard to believe that the inertial manifold does not exists here, it looks like one just need a little more advanced theory in order to verify the existence of an inertial manifold. Indeed, as it actually shown in Example \ref{Ex2.grad} the equation of variations which corresponds to \eqref{2.gradRDE}
$$
v_t=v_{xx}-f'_u(x,u,u_x)v+f'_{u_x}(x,u,u_x)v_x
$$
possesses a family of invariant cones $K_u^+$ defined with the help of function $w$:
$$
\|Q_N\(e^{-1/2\int_0^xf'_{u_x}\,dx}v\)\|\le \|P_N\(e^{-1/2\int_0^xf'_{u_x}\,dx}v\)\|.
$$
The only difference with the situation considered in paragraph \ref{s.cone} is that now the cones are not the same for every point of the phase space, but depend explicitly on this point $K^+=K^+_u$. As known, the explicit dependence of cones on the point of the phase space is not a big obstruction to the existence of invariant manifolds, see e.g., \cite{fen,bates} especially if this dependence is regular, so we may expect that the proper modification of Theorem \ref{Th1.maincon} will be enough to verify the existence of an inertial manifold for equation \eqref{2.gradRDE}.
\end{remark}
\subsection{Log-Lipschitz conjecture and the uniqueness problem}\label{s2.4}
As we have seen before, the inertial form of equation \eqref{1.eqmain} built up based on the Man\'e projections of the attractor is a finite-dimensional system of ODEs with at least {\it H\"older continuous} vector field $W$. However, if the vector field is  only H\"older continuous, we cannot guarantee even the uniqueness of solutions of the reduced system of ODEs and that is a serious drawback. Indeed, in that situation, in order to be able to select the "correct" solution of the reduced equations, we need either to use the initial non-reduced system (which makes the reduction senseless) or to endow the reduced inertial form by the proper "selection principle" to restore the uniqueness which is also not an easy task and, to the best of our knowledge, no results in that direction are available so far. Mention also that without solving the uniqueness problem it is difficult to approximate the non-smooth inertial form by smooth ones (e.g., for doing the numerical simulations). The next simple example demonstrates that the uniqueness indeed can be lost under the non-smooth projections

\begin{example}\label{Ex2.non-unique} Let $H=\R^2$ and the attractor $\Cal A$ is a segment on the curve $y=x^3$ which corresponds to $x\in[-,1,1]$. Assume also that the dynamics on the curve near $x=0$ is given by
$$
x'(t)=1,\ y'(t)=3x^2.
$$
It is not difficult to construct a smooth system of ODEs with such attractor, so we leave this to the reader as an elementary exercise.
\par
Consider now the Man\'e projection $P$ which is the orthoprojector to the $y$-axis: $P(x,y)=y$. This projector is one-to-one on the attractor, but  the inverse is not smooth and only H\"older continuous with H\"older exponent $1/3$. The corresponding inertial form near $x=0$ reads
\begin{equation}\label{2.bad-inertial}
y'(t)=3x^2(t)=3\sqrt[3]{y^2(t)}.
\end{equation}
We see that the inertial form \eqref{2.bad-inertial} is indeed only H\"older continuous and the uniqueness is indeed lost despite the uniqueness of the initial non-reduced system. In fact, a new artificial equilibrium at $y=0$ is born and together with the correct solution $y(t)=t^3$, we have a family of wrong solutions including $y(t)\equiv0$.
\end{example}
\par
Thus, the uniqueness of solutions for the reduced equations \eqref{2.inertial} is indeed non-trivial and it is  one of the key problems in the theory of Man\'e projections. Of course, the problem will be solved if a bi-Lipschitz Man\'e projection of the attractor exists. However, according to Theorem \ref{Th2.rom} this requires rather strong conditions to be satisfied and as will be shown in the next section they are not always satisfied. Moreover, Example \ref{Ex2.grad} is one of very few examples where this theory works and there are no more or less general methods to verify assumptions of Theorem \ref{Th2.rom}.
\par
The aim of that paragraph is to discuss the relatively recent attempts to solve the uniqueness problem exploiting the so-called log-Lipschitz Man\'e projections.  We recall that a map $W:X\to Y$ between two metric space $X$ and $Y$ is $\alpha$-log-Lipschitz (=log-Lipschitz with the exponent $\alpha>0$) if
\begin{equation}\label{2.log}
d(W(x),W(y))\le C d(x,y)\(\log \frac {2K}{d(x,y)}\)^\alpha,\ \ x,y\in X,\ d(x,y)\le K
\end{equation}
for some positive $K$ and $C$. The following version of the so-called Osgood theorem explains why the log-Lipschitz functions may be important for solving the uniqueness problem.

\begin{proposition}\label{Prop2.osg} Let the vector field $W:\R^N\to\R^N$ be log-Lipschitz with exponent $\alpha\le1$. Then, the solution of the corresponding system of ODEs
\begin{equation}\label{2.ODE-un}
\frac d{dt}v=W(v),\  \ v(0)=v_0
\end{equation}
is unique and depends continuously on the initial data.
\end{proposition}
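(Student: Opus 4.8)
The plan is to reduce the statement to the classical Osgood uniqueness criterion, the crucial point being that the modulus of continuity $\omega(r):=Cr\bigl(\log\frac{2K}{r}\bigr)^{\alpha}$ supplied by \eqref{2.log} is an Osgood modulus, i.e.\ $\int_{0^+}\frac{dr}{\omega(r)}=+\infty$, precisely when $\alpha\le1$; this divergence is the only place where the hypothesis $\alpha\le1$ enters the argument.

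First I would introduce a scalar comparison function. Given two solutions $v_1(t),v_2(t)$ of \eqref{2.ODE-un} on a common interval $[0,T]$ (with $v_1(0)=v_2(0)$ for the uniqueness part, and with $v_1(0),v_2(0)$ close for continuous dependence), set $\rho(t):=\|v_1(t)-v_2(t)\|$. Since the $v_i$ are $C^1$, the function $\rho^2$ is $C^1$, and on any subinterval where $0<\rho(t)\le K$ one computes, using \eqref{2.log},
\[
\rho(t)\,\rho'(t)=\frac12\frac{d}{dt}\rho^2(t)=\bigl(v_1(t)-v_2(t),\,W(v_1(t))-W(v_2(t))\bigr)\le\rho(t)\,\omega(\rho(t)),
\]
hence $\rho'(t)\le\omega(\rho(t))$ there. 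Since $\rho(0)<K$ in both settings ($\rho(0)=0$ in the uniqueness case), continuity keeps $\rho<K$ on a neighbourhood of $t=0$, so the estimate applies where it is needed, and the conclusion is then propagated in $t$ by a maximality/connectedness argument.

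Next I would set up the Osgood comparison. Put $G(r):=\int_r^{K}\frac{ds}{\omega(s)}$ for $0<r\le K$. The substitution $s=2Ke^{-u}$ turns $\int_0^{K}\frac{ds}{\omega(s)}$ into $\frac1C\int_{\log 2}^{\infty}\!u^{-\alpha}\,du$, which diverges exactly when $\alpha\le1$; consequently $G$ is a strictly decreasing bijection of $(0,K]$ onto $[0,+\infty)$ with $G(r)\to+\infty$ as $r\to0^+$. For uniqueness: assuming $v_1(0)=v_2(0)$ but $\rho(t_0)>0$ for some $t_0$ (which, after replacing $t_0$ by a smaller value if necessary, one may take with $\rho<K$ on the relevant interval), let $t_1:=\sup\{t\le t_0:\rho(t)=0\}$; on $(t_1,t_0]$ one has $\frac{d}{dt}G(\rho(t))=-\rho'(t)/\omega(\rho(t))\ge-1$, so integrating over $[t_1+\eb,t_0]$ and letting $\eb\to0^+$ yields the impossibility $+\infty=\lim_{\eb\to0^+}G(\rho(t_1+\eb))\le G(\rho(t_0))+(t_0-t_1)<\infty$; hence $\rho\equiv0$. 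For continuous dependence the same integration gives $G(\rho(t))\ge G(\rho(0))-t$, i.e.\ $\rho(t)\le G^{-1}\!\bigl(G(\rho(0))-t\bigr)$ as long as the argument stays positive, and since $G(\rho(0))\to+\infty$ as $\rho(0)\to0^+$, the right-hand side tends to $0$ uniformly for $t$ in any fixed bounded interval.

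The step I expect to be the main obstacle is not the computation but the bookkeeping: one must integrate the differential inequality legitimately across instants where $\rho$ may vanish (at which $G(\rho(\cdot))$ is not differentiable) and must rule out that $\rho$ escapes the region $\{\rho\le K\}$ before uniqueness is in hand. Both are handled by restricting to the relatively open set $\{0<\rho<K\}$, using the monotonicity of $\omega$ near $0$, and a continuation argument; once this is arranged, the divergence of the Osgood integral for $\alpha\le1$ does the rest.
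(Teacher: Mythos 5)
Your proposal is correct and follows essentially the same route as the paper: both arguments pair the difference of the two solutions against itself to get the scalar differential inequality $\rho'\le C\rho\bigl(\log\frac{2K}{\rho}\bigr)^{\alpha}$ and then conclude by an Osgood-type comparison. The only difference is in how that inequality is integrated: the paper reduces to the worst case $\alpha=1$ and solves the comparison ODE explicitly, obtaining the quantitative bound $y(t)\le 2K\bigl(y(0)/2K\bigr)^{e^{-Ct}}$ for $y=\|v_1-v_2\|^2$, whereas you invoke the divergence of $\int_{0^+}ds/\omega(s)$ for general $\alpha\le1$ and carry out the comparison more carefully (vanishing set of $\rho$, the region $\rho<K$), which fills in details the paper's sketch leaves implicit but yields a less explicit modulus of continuous dependence.
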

\begin{proof}[Sketch of the proof] It is enough to verify the uniqueness for the worst case $\alpha=1$ only. Indeed, let $v_1(t)$ and $v_2(t)$ be two solutions of \eqref{2.ODE-un} and $\bar v(t):=v_1(t)-v_2(t)$. Then, this function solves
$$
\frac d{dt}\bar v=W(v_1)-W(v_2).
$$
Multiplying this equation by $\bar v$, denoting $y(t):=\|\bar v(t)\|^2$ and using the log-Lipschitz assumption on $W$, we have
\begin{equation}\label{2.logest}
y'(t)=2(\bar v'(t),v(t))=2(W(v_1)-W(v_2),v_1-v_2)\le C\|\bar v\|^2\log\frac {2K}{\|\bar v\|}\le Cy(t)\log\frac {2K}{y(t)}.
\end{equation}
Solving the corresponding ODE, we get
$$
y(t)\le 2K\(\frac{y(0)}{2K}\)^{e^{-Ct}}
$$
which give both uniqueness and the continuous dependence on the initial data.
\end{proof}
\begin{remark} Analogous arguments show the continuous dependence on the function $W$ as well, so under the assumptions of Proposition \ref{Prop2.osg}, there are no problems with approximation the vector field $W$ by smooth ones. Note also that the restriction $\alpha\le1$ is sharp.
\end{remark}
The next result based on the logarithmic convexity  for elliptic/parabolic equations, see \cite{AN,MiZe} and references therein indicates that the log-Lipschitz continuity naturally appears under the study of equations \eqref{1.eqmain}.

\begin{proposition}\label{Prop2.Alip} Let the assumptions of Theorem \ref{Th2.fdim} hold. Then, the $H$ and $H^2$-norms are almost equivalent on the global attractor $\Cal A$ in the following sense:
\begin{equation}\label{2.Alog}
\|u_1-u_2\|_{H^2}\le C\|u_1-u_2\|_H\(\log\frac {2K}{\|u_1-u_2\|_H}\)^{1/2}, \ \ u_1,u_2\in\Cal A
\end{equation}
for some positive constants $C$ and $K$.
\end{proposition}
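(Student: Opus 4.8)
The plan is to reduce the assertion to a quantitative logarithmic convexity estimate for the linearised equation along the attractor and then to combine it with the parabolic smoothing of Lemma \ref{Lem2.smdif}. First I would lift $u_1,u_2\in\Cal A$ to the complete bounded trajectories $u_1(t),u_2(t)$, $t\in\R$, provided by Corollary \ref{Cor2.attr}; these are uniformly bounded in $H^2$. Put $v(t):=u_1(t)-u_2(t)$; then $v$ solves the linear non-autonomous problem $\Dt v+Av=l(t)v$ on the whole line with $l(t):=\int_0^1F'(su_1(t)+(1-s)u_2(t))\,ds$, which satisfies $\|l(t)\|_{\Cal L(H,H)}\le L$ and, since the $u_i(t)$ remain in a fixed ball of $H^2$, also $\|l(t)\|_{\Cal L(H^1,H^1)}\le L_1$ by the third assumption in \eqref{2.Fass}. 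Finally $\sup_{t\in\R}\|v(t)\|_H\le 2K$, where we take $2K$ to be, say, twice the $H$-diameter of $\Cal A$, so that $a:=\log(2K/\|v(0)\|_H)\ge\log 2>0$.

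The heart of the matter is the following logarithmic convexity: there is a constant $c=c(L,L_1)>0$ such that $\varphi(t):=\log\|v(t)\|_H^2+\tfrac{c}{2}\,t^2$ is convex on $\R$. The classical route, going back to Agmon--Nirenberg (see \cite{AN,MiZe}), is to differentiate $\log\|v(t)\|_H^2$ twice: the leading, heat-like contribution is $\ge\tfrac{4}{\|v\|_H^2}\bigl(\|Av\|_H^2\|v\|_H^2-(Av,v)_H^2\bigr)\ge0$ by the Cauchy--Schwarz inequality, while the terms generated by $l$ are absorbed using the $\Cal L(H,H)$- and $\Cal L(H^1,H^1)$-bounds on $l$; a convenient intermediate object is the frequency function $\Lambda(t):=(Av(t),v(t))_H/\|v(t)\|_H^2$, for which the same manipulations give the clean inequality $\Lambda'(t)\le 2(L+L_1)\Lambda(t)$. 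I expect this step to be the principal obstacle. The delicate point is twofold: first, since $F$ is only $C^1$, the coefficient $l(t)$ need not be differentiable in $t$, so the pointwise second derivative of $\log\|v\|_H^2$ is not literally available and one must either regularise $l$ in time (and pass to the limit) or phrase everything through difference quotients of $\Lambda$; second, the crude absorption of the $l$-terms is lossy in the quasi-eigenvector regime, and one must keep the genuine cancellations that make $c$ independent of the eigenvalue in play.

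Granting the convexity of $\varphi$, for every $T\ge1$ and every $\sigma\in(0,1]$ with $\sigma\le T$, the bound $\|v(-T)\|_H\le2K$ together with convexity of $\varphi$ on $[-T,0]$ (weighting the endpoints $-T$ and $0$ by $\sigma/T$ and $1-\sigma/T$) gives
$$
\log\frac{\|v(-\sigma)\|_H^2}{\|v(0)\|_H^2}\ \le\ \frac{2\sigma}{T}\,a+\frac{c}{2}\,\sigma T .
$$
Applying Lemma \ref{Lem2.smdif} on the interval $[-\sigma,0]$, i.e. $\|v(0)\|_{H^2}\le C\sigma^{-1}\|v(-\sigma)\|_H$, I would then optimise the two free parameters. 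The choice $T=2(a/c)^{1/2}$ makes the exponent $\tfrac{2\sigma}{T}a+\tfrac{c}{2}\sigma T$ equal to $2\sigma(ac)^{1/2}$, and then $\sigma=(ac)^{-1/2}$ minimises $\sigma^{-2}e^{2\sigma(ac)^{1/2}}$, yielding
$$
\|v(0)\|_{H^2}^2\ \le\ C^2\sigma^{-2}\|v(0)\|_H^2\,e^{2\sigma(ac)^{1/2}}\ \le\ C_1\,a\,\|v(0)\|_H^2,
$$
which is exactly \eqref{2.Alog} with $u_1-u_2=v(0)$. Both choices are admissible precisely when $a$ is large enough (so that $(ac)^{-1/2}\le1\le2(a/c)^{1/2}$); in the complementary range $\|u_1-u_2\|_H$ is bounded below by a positive constant depending only on $c$ and $K$, and then \eqref{2.Alog} is immediate from the boundedness of $\Cal A$ in $H^2$. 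This completes the plan.
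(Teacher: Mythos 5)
You follow the same strategy as the paper: lift $u_1,u_2$ to complete trajectories on $\Cal A$, write $v(t)=u_1(t)-u_2(t)$ as a solution of $\Dt v+Av=l(t)v$, prove an Agmon--Nirenberg type logarithmic convexity estimate, and combine it with the smoothing of Lemma \ref{Lem2.smdif}; your final two-parameter optimisation is correct and is equivalent to the paper's choice $t=1/T$, $T=\bigl(\log\frac{2K}{\|v(0)\|_H}\bigr)^{1/2}$ after \eqref{2.logconstan}. The genuine gap is the convexity statement itself, which you declare to be the heart of the matter but do not prove, and the repairs you sketch do not close it. Differentiating $\log\|v(t)\|_H^2$ twice produces the derivative of $\Phi(t):=(l(t)v,v)/\|v\|_H^2$, hence $\partial_t l(t)$, which is not controlled when $F$ is only $C^1$; mollifying $l$ in time does not help, since no bound on the time derivative of the mollified coefficient is available uniformly in the regularisation parameter, so the constant $c$ in your $\varphi(t)=\log\|v(t)\|_H^2+\frac{c}{2}t^2$ cannot be fixed. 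The frequency-function inequality you offer instead, $\Lambda'\le 2(L+L_1)\Lambda$, is the multiplicative bound: it does not imply convexity of $\varphi$ with $c=c(L,L_1)$, because $\Lambda$ is not a priori bounded on $\Cal A$ (a bound on $\Lambda$ in terms of $\log(1/\|v\|_H)$ is essentially what is being proved), and if you feed it into your own interpolation scheme (convexity of $\log\|v\|_H$ in the reparametrised time $\tau$ with $d\tau/dt=e^{c_1t}$), the weight of the far endpoint $-T$ is bounded below by $1-e^{-c_1\sigma}\sim c_1\sigma$ uniformly in $T$, and the optimisation then yields at best a bound with the full logarithm, $\|u_1-u_2\|_{H^2}\le C\|u_1-u_2\|_H\log\frac{2K}{\|u_1-u_2\|_H}$, rather than the claimed exponent $1/2$.

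The way to close the gap --- and it is exactly what the paper does --- is to correct the functional before differentiating: set $\alpha(t):=\log\|v(t)\|_H-\int_0^t\Phi(s)\,ds$. Then $\alpha'(t)=-\Lambda(t)$ and, using only $\Dt v=-Av+l(t)v$, the self-adjointness of $A$ and Cauchy--Schwarz, one gets $\alpha''(t)=\frac{2}{\|v\|_H^2}\bigl(\|\eta\|_H^2-(\eta,l(t)v)\bigr)\ge-\frac{1}{2}L^2$ with $\eta:=Av-\Lambda v$, i.e. the additive bound $\Lambda'\le L^2/2$; no time derivative of $l$ ever appears and only $\|l(t)\|_{\Cal L(H,H)}\le L$ is used, so the $\Cal L(H^1,H^1)$ bound you invoke is not needed at all. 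The discarded correction is controlled pointwise by $|\Phi(t)|\le L$, so the three-point convexity inequality for $\alpha(t)+L^2t^2/4$ turns into \eqref{2.logconstan} with the harmless factor $e^{2Lt+L^2t(T-t)/4}$, and with this substitute for your convexity claim the rest of your plan (smoothing on $[-\sigma,0]$ with $\sigma\le1$, the choices $T\sim a^{1/2}$, $\sigma\sim a^{-1/2}$, and the trivial regime of small $a$) goes through verbatim.
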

\begin{proof} Let $u_1(t)$ and $u_2(t)$, $t\in\R$, be two trajectories on the attractor $\Cal A$ and let $v(t):=u_1(t)-u_2(t)$. Then, this function solves
\begin{equation}\label{2.again}
\Dt v+Av=l(t)v,\ \ l(t):=\int_0^1F'(su_1(t)+(1-s)u_2(t))\,ds
\end{equation}
and since $F$ is globally Lipschitz, we know that $\|l(t)\|_{\Cal L(H,H)}\le L$. Following \cite{AN}, introduce a function
$$
\alpha(t):=\log\|v(t)\|_H-\int_0^t\Phi(s)\,ds,\ \ \Phi(t):=(l(t)v(t),v(t))/\|v(t)\|^2_H.
$$
Then, $|\Phi(t)|\le L$ and elementary calculations show that
$$
\alpha'(t)=\frac{(-Av,v)}{\|v\|^2_H},\ \ \alpha''(t)=\frac{2}{\|v\|^2_H}\(\|\eta\|^2_H-(\eta, l(t)v)\)\ge-\frac12\frac{\|l(t)v\|^2_H}{\|v\|^2_H}\ge-\frac12L^2,
$$
where $\eta:=Av-\frac{(Av,v)}{\|v\|^2_H}v$ (the calculations have sense since $v\in H^2$). Thus, the function $t\to\alpha(t)+L^2t^2/4$ is {\it convex}
and, for any $T>0$ and any $-t\in[-T,0]$, we may write
\begin{multline}\label{1.conv}
\alpha(-t)\le \frac{t}{T}\alpha(-T)+\frac{(T-t)}{T}\alpha(0)+\frac{t}T L^2T^2/4-L^2t^2/4\le\\\le\frac{t}{T}\log\|v(-T)\|_H+\frac{(T-t)}T\log\|v(0)\|_H+\frac{t}TLT+\frac{t(T-t)L^2}4,
\end{multline}
where we have implicitly used that $|\Phi(t)|\le L$. Taking the exponent from this inequality, we end up with the following estimate:
\begin{equation}\label{2.logconstan}
\|v(-t)\|_H\le e^{2Lt+L^2t(T-t)/4}\|v(-T)\|_H^{\frac tT}\|v(0)\|_H^{\frac{T-t}T}.
\end{equation}
We are now ready to finish the proof of estimate \eqref{2.Alog}. To this end, we remind that $v(-T)$ remains bounded as $T\to-\infty$ (since $v_1$ and $v_2$ are on the attractor), so taking $t=1/T$, we get
$$
\|v(-1/T)\|_H\le C\|v(0)\|_H^{1-1/T^2},
$$
where $C$ is independent of $T$ and $u_1,u_2\in\Cal A$. Combining this estimate with the smoothing property \eqref{2.smdif}, we end up with
$$
\|v(0)\|_{H^2}\le CT\|v(-1/T)\|_H\le CT\|v(0)\|^{1-1/T^2}_H.
$$
Note that $T$ is still arbitrary in that estimate. Optimising the left-hand side with respect to $T$ (=taking $T=\(\log\frac{2K}{\|v(0)\|_H}\)^{1/2}$
with $K:=2\|\Cal A\|_H$), we obtain the desired estimate \eqref{2.Alog} and finish the proof of the proposition.
\end{proof}
\begin{remark}\label{Rem2.stan} Although the presented proof looks a bit special and even a bit artificial, the log-convexity estimates similar to \eqref{2.logconstan} are standard (and very powerful) technical tools for the qualitative  theory of semilinear elliptic and parabolic equations, see \cite{lan} and references therein. The log-Lipschitz exponent $\alpha=1/2$ in estimate \eqref{2.Alog} is sharp, the corresponding example will be considered in the next section, see also \cite{kuch}.
\par
Note also that, in contrast to the most results of this section, the exponent $\alpha=1/2$ in \eqref{2.Alog} is strongly related with the fact that $F$ is globally Lipschitz as a map from $H$ to $H$, in more general situation when only \eqref{1.dlip} holds, say, with $\gamma=0$, this constant becomes larger. In fact, despite many results in that direction, see \cite{Gi,kuk,btiti, rom-sur} and the references therein, it is still not clear what is the best log-Lipschitz constant in \eqref{2.Alog} for the case when $F$ decreases the regularity.
\end{remark}
According to Theorem \ref{Th2.rom}, the Lipschitz estimate \eqref{2.equiv} is enough for the existence of the bi-Lipschitz Man\'e projections and according to Proposition \ref{Prop2.Alip}, its slightly weaker analogue \eqref{2.Alog} holds under the reasonable assumptions. Thus, it looks natural   to pose the question whether or not the log-Lipschitz Man\'e projections exist under reasonable generality. This, leads to the following conjecture, see also \cite{24,25,26,27,28}.

\begin{conjecture}\label{Con2.loglip} Under the reasonable assumptions on the non-linearity $F$, the attractor $\Cal A$ possesses at least one Man\'e  projector $P\in\Cal L(H,H)$ such that the inverse $P^{-1}:\overline{\Cal A}\to\Cal A\subset H^2$ is log-Lipschitz with exponent $\alpha\le1$:
\begin{equation}\label{2.loggood}
\|P^{-1}v_1-P^{-1}v_2\|_{H^2}\le C\|v_1-v_2\|_H\(\log\frac{2K}{\|v_1-v_2\|_H}\)^\alpha,\ \ v_1,v_2\in\Cal A
\end{equation}
for some positive constants $C$ and $K$.
\end{conjecture}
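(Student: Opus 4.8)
The plan is to deduce \eqref{2.loggood} by combining the almost-equivalence of the $H$ and $H^2$ norms on the attractor, already established in Proposition \ref{Prop2.Alip}, with the genericity machinery behind the Man\'e embedding theorem \ref{Th2.man} (in the prevalence form indicated in Remark \ref{Rem2.holder}), the goal being to upgrade the H\"older inverse produced there to a log-Lipschitz one. First I would follow the scheme of Romanov's Theorem \ref{Th2.rom} and reduce the statement to a quantitative lower bound for a spectral projector on $\Cal A$, namely an estimate of the form
\begin{equation}\label{plan.lower}
\|u_1-u_2\|_H\le C\|P_N(u_1-u_2)\|_H\(\log\frac{2K}{\|u_1-u_2\|_H}\)^{\beta},\qquad u_1,u_2\in\Cal A,
\end{equation}
for a suitable $N$ and exponent $\beta$. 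Once \eqref{plan.lower} is available, writing $\|Q_N(u_1-u_2)\|_H\le\lambda_{N+1}^{-1}\|u_1-u_2\|_{H^2}$, inserting Proposition \ref{Prop2.Alip}, and absorbing the tail would give \eqref{2.loggood} with $P=P_N$ and $\alpha=\max\{\beta,\tfrac12\}$.

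The natural route to \eqref{plan.lower} is the backward log-convexity estimate \eqref{2.logconstan} proved inside Proposition \ref{Prop2.Alip}: if for two trajectories on the attractor the spectral component $\|P_N v(0)\|_H$ were much smaller than $\delta:=\|v(0)\|_H$, then, since the $Q_N$-tail is controlled only through the $H^2$-norm and the Dirichlet quotient of $v$ cannot stay large backward in time, one forces $\|v(-T)\|_H$ to exceed the diameter of $\Cal A$ for moderate $T$, a contradiction. Quantifying this, the admissible $N$ must grow with the resolution, roughly $\lambda_{N+1}\sim(\log(2K/\delta))^{1/2}$, so that the tail $\lambda_{N+1}^{-1}C\delta(\log(2K/\delta))^{1/2}$ becomes a definite fraction of $\delta$. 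This already yields a multi-scale, quantitatively injective family of projections, but with $N=N(\delta)$ depending on the scale, whereas the statement demands a single fixed finite-rank $P$.

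To pass to one fixed projector I would use the fact that the attractor of \eqref{1.eqmain} is approximable by finite-dimensional Lipschitz manifolds to exponential order whenever $\lambda_N$ grows faster than $\log N$ (the approximate inertial manifolds of exponential order), so that its thickness exponent and Lipschitz deviation vanish; feeding this into the prevalence/Borel--Cantelli version of Theorem \ref{Th2.man} gives, for large fixed $N$ and a prevalent $N$-plane, an inverse that is H\"older with exponent arbitrarily close to one. The \emph{hard part} --- and the reason \eqref{2.loggood} is stated only as a conjecture --- is exactly the gap between ``H\"older with exponent close to one'' and ``log-Lipschitz'': the log-convexity input is intrinsically a two-scale statement linking $H$ and $H^2$, and it is unclear how to convert the exponentially small finite-dimensional errors into the single logarithmic correction of \eqref{2.loggood} without either a sharper quantitative embedding theorem or additional structural information on $\Cal A$. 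Moreover, the Floquet-type counterexamples discussed in Section \ref{s3} show that without further hypotheses one may have solutions tending to zero super-exponentially, which destroys any backward lower bound of the form \eqref{plan.lower}; hence I expect \eqref{2.loggood} to be provable only for restricted classes of equations --- for instance those carrying a weak invariant cone or spatial-averaging structure as in Paragraphs \ref{s.cone}--\ref{s1.8} --- and not in the full generality suggested by the wording of the conjecture.
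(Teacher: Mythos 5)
There is no proof of this statement to compare against, because the statement is a \emph{conjecture} which the paper immediately declares to be \emph{false}: right after stating it, the text explains that by \cite{EKZ} the conjecture is wrong, and Section \ref{s3} (Theorem \ref{Th5.1} together with Proposition \ref{Prop2.logfin}) constructs a smooth, globally Lipschitz nonlinearity $F$ whose attractor has infinite logarithmic doubling factor and therefore admits \emph{no} Man\'e projection with log-Lipschitz inverse, for any exponent. So any attempt to prove \eqref{2.loggood} in the stated generality must contain a gap, and yours does, at precisely the step you flag as the key one: the quantitative injectivity bound \eqref{plan.lower} for a fixed spectral projector. The backward log-convexity estimate \eqref{2.logconstan} only yields the lower bound $\|u_1(t)-u_2(t)\|_H\ge C_1e^{-\beta_1 t^2}$ of Remark \ref{Rem2.sharp}; it is perfectly compatible with pairs of attractor trajectories approaching each other like $e^{-\beta t^2}$, and the counterexample of Paragraph \ref{s.Floquet} realizes exactly this. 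For such pairs no fixed finite-rank projector can satisfy \eqref{plan.lower} (this is the content of Theorem \ref{Th2.rom} and its log-refinement), and the finer construction of Paragraph \ref{s4.3} embeds near-orthogonal cube-vertex configurations of rapidly shrinking size into $\Cal A$, which kills not only \eqref{plan.lower} but every bi-log-Lipschitz embedding whatsoever, by Proposition \ref{Prop2.logfin}. So the obstruction is not a missing sharper embedding theorem; the conclusion itself fails under the paper's standing hypotheses (smooth, globally bounded, globally Lipschitz $F$), which are exactly the ``reasonable assumptions'' the conjecture had in mind.

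Two further remarks. First, your closing paragraph shows you sensed the right phenomenon — you note that the Floquet-type examples destroy \eqref{plan.lower} and you expect validity only for restricted classes — but a blind attempt here should have been turned into a disproof (or an explicit statement that the claim is false as worded), not a conditional proof sketch; the paper's ``resolution'' of this statement is the counterexample, not an argument in your direction. Second, even the mechanical step of your reduction is off: combining \eqref{plan.lower} (exponent $\beta$) with Proposition \ref{Prop2.Alip} (exponent $\tfrac12$) gives a composite log-Lipschitz exponent $\beta+\tfrac12$, not $\max\{\beta,\tfrac12\}$ — the paper stresses right after Theorem \ref{Th2.log} that log-Lipschitz exponents add under composition, and this additivity (forcing the total above $1$) is one of the structural reasons the uniqueness problem cannot be settled along these lines.
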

Indeed, if this conjecture were true, the uniqueness problem would be immediately solved since \eqref{2.loggood} guarantees that the nonlinearity $W$ in the inertial form \eqref{2.inertial} is log-Lipschitz with the exponent $\alpha\le1$. But, unfortunately, as recently shown in \cite{EKZ}, the conjecture is actually {\it wrong} (the corresponding counterexample is discussed in the next section). Nevertheless, we discuss below some results obtained under the attempts to prove this conjecture and introduce a number of interesting quantities which occurred to be helpful, in particular,  in order to disprove the conjecture.
We start with the so-called doubling factor and Bouligand dimension (which is also often referred as Assouad dimension).

\begin{definition} Let $K$ be a compact set in a metric space $H$. Then, the doubling factor $D(K,H)$ is defined as follows:
$$
D(K,H):=\sup_{\eb\ge0}\sup_{x\in K}N_{\eb/2}(B(\eb,x,H)\cap K,H).
$$
Roughly speaking $D(K,H)$ gives the number of $\eb/2$-balls which are necessary to cover any $\eb$-ball in $K$.
A set $K$ is called $s$-homogeneous, for some $s\ge0$ if there exists a constant $M$ such that
$$
N_{\eb_1}(B(\eb_2,x,H)\cap K,H)\le M\(\frac{\eb_2}{\eb_1}\)^s
$$
for all $x\in K$ and for all $\eb_2\ge\eb_1>0$. A Bouligand (Assouad) dimension of $K$ is defined as follows:
$$
\dim_B(K,H)=\inf\{s\,: \ \text{$K$ is $s$-homogeneous}\}.
$$
\end{definition}
The following properties of the introduced dimension are straightforward, see \cite{28} for the detailed proofs.
\par
1) The Bouligand dimension is finite iff the doubling factor is finite:
$$
\dim_B(K,H)<\infty\  \ \Leftrightarrow\ \ D(K,H)<\infty.
$$
\par
2) The Bouligand dimension preserves under the bi-Lipschitz homeomorphisms, in particular, the Bouligand dimension of an $n$-dimensional Lipschitz sub-manifold is exactly $n$.
\par
3) If $K$ is a subset of $\R^n$ ($n$-dimensional Lipschitz sub-manifold) then its Bouligand dimension is less or equal to $n$.
\par
4) If $K_1\subset K_2$ then $\dim_B(K_1)\le\dim_B(K_2)$ and the fractal dimension is always bounded by the Bouligand dimension:
$$
\dim_f(K,H)\le\dim_B(K,H)
$$
Moreover, $\dim_B(K\times K,H\times H)\le 2\dim_B(K,H)$.

\begin{remark} We see that the finiteness of the Bouligand dimension is a {\it necessary} condition for the existence of bi-Lipschitz Man\'e projections although not sufficient, see counterexamples in \cite{28}. The properties of the Bouligand dimension stated above looks similar to the analogous ones for the fractal/Hausdorff dimension discussed before. However, there is a principal difference which makes this dimension essentially
"less friendly" than the fractal or Hausdorff dimension. Namely, it may {\it increase} greatly and even become infinite if the set is just orthogonally projected to a linear subspace. In a fact, the counterexample to the log-Lipschitz conjecture which will discussed in the next section utilizes this property in an essential way. Mention also that, in contrast to the case of fractal dimension, we do not know how the Bouligand dimensions of the attractor $\Cal A$ in $H$ and $H^2$ are related (if related at all) or how the dimension of the set $\Cal A-\Cal A$ is related with the dimension of $\Cal A$ (the examples of sets, not attractors, where $\Cal A-\Cal A$ is infinite-dimensional for $\Cal A$ being finite-dimensional in the sense of Bouligand dimension are  given in \cite{28}). That is why the log-Lipschitz analogue of the Man\'e projection theorem stated below utilizes the dimension of $\Cal A-\Cal A$.
\end{remark}
The next theorem shows that the finiteness of the Bouligand dimension guarantees the existence of log-Lipschitz Man\'e projections.

\begin{theorem}\label{Th2.log} Let $K$ be a compact set in a Hilbert space $H$ such that
\begin{equation}\label{2.mbd}
\dim_B(K-K,H)=s<\infty.
\end{equation}
Let also $\alpha>0$ and $N>s$ satisfy
\begin{equation}\label{2.expos}
\alpha>\frac12+\frac{2+s}{2(N-s)}.
\end{equation}
Then, for a generic set of orthogonal projectors $P$ to $N$-dimensional planes in $H$, $P:K\to PK$ is one-to-one and the inverse map is $\alpha$-log-Lipschitz on $PK$.
\end{theorem}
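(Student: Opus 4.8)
The plan is to turn the soft argument behind the Man\'e projection theorem (Theorem \ref{Th2.man}) into a quantitative, scale-by-scale estimate. First, one reduces the assertion to a single uniform lower bound: it suffices to show that, for a generic orthoprojector $P$ onto an $N$-dimensional plane,
\begin{equation*}
\|Pz\|_H\ \ge\ c\,\|z\|_H\Big(\log\frac{2K_0}{\|z\|_H}\Big)^{-\alpha},\qquad z\in K-K,\ \ 0<\|z\|_H\le K_0,
\end{equation*}
where $K_0:=2\,\operatorname{diam}K$ and $c>0$. Indeed, inverting this estimate — the extra $\log\log$ terms being harmless and absorbed into the constants — gives precisely that $P^{-1}:PK\to K$ satisfies \eqref{2.log} with exponent $\alpha$; and one-to-oneness of $P$ on $K$, together with compactness of $K-K$, already takes care of the differences $z$ with $\|z\|_H$ bounded away from zero, so the whole point is the behaviour as $\|z\|_H\to0$. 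Injectivity itself is delivered by Theorem \ref{Th2.man}, so in the end one intersects two generic sets of projectors.

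Next I would fix the probabilistic framework, in the spirit of \cite{hunt,28}: in infinite dimensions there is no Lebesgue measure on the Grassmannian (which is why the Zorn-lemma device used in Theorem \ref{Th2.man} cannot simply be quantified), so one equips a sufficiently rich finite-dimensional family of rank-$N$ orthoprojectors with a Gaussian ``probe'' measure and reads ``generic'' in the sense of prevalence. The basic quantitative input is then the small-ball estimate for a single fixed unit vector $e\in H$,
\begin{equation*}
\operatorname{meas}\{P:\ \|Pe\|_H<\eps\}\ \le\ (C\eps)^{m},
\end{equation*}
whose exponent $m$ — of order $N$, up to a correction reflecting how faithfully the probe space sees $e$ — is what ultimately fixes the numerical threshold in the theorem.

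The core of the proof is a dyadic decomposition in $\|z\|_H$ coupled with the hypothesis $\dim_B(K-K,H)=s$. For $k\in\Bbb N$ put $A_k:=(K-K)\cap\{2^{-k-1}\le\|z\|_H\le2^{-k}\}$; by the $s$-homogeneity of $K-K$ one covers $A_k$ by at most $M\theta_k^{-s}$ balls of radius $2^{-k}\theta_k$ centred at points $z_j\in K-K$, for a parameter $\theta_k\in(0,1)$ still to be chosen. If the projector $P$ keeps each of these $\lesssim\theta_k^{-s}$ centres at distance $\gtrsim2^{-k}\beta_k$ from the origin after projection and $\theta_k\lesssim\beta_k$, then automatically $\|Pz\|_H\gtrsim2^{-k}\beta_k$ for every $z\in A_k$; hence the ``bad'' set $E_k$ of projectors violating this modulus somewhere on $A_k$ is contained in the union of $\lesssim\theta_k^{-s}$ single-vector small-ball sets at relative threshold $\sim\beta_k$, which gives $\operatorname{meas}(E_k)\lesssim\theta_k^{-s}\,\beta_k^{m}$. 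One then sets $\theta_k\sim\beta_k$ and lets $\beta_k$ decay like a negative power of the logarithmic scale, $\beta_k\sim k^{-\alpha}$, so that the surviving lower bound is exactly the $\alpha$-log-Lipschitz one; working out the resulting elementary constraints shows that $\sum_k\operatorname{meas}(E_k)<\infty$ holds precisely when $\alpha>\frac12+\frac{2+s}{2(N-s)}$. The Borel--Cantelli lemma then guarantees that almost every $P$ (in the prevalence sense) lies outside all but finitely many $E_k$, and readjusting $c$ to swallow the finitely many exceptional scales — together with the compactness argument for the large-$\|z\|_H$ regime — yields the global bound, and hence the theorem.

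The step I expect to be the real obstacle is this last optimisation. One has to identify the sharp form of the small-ball exponent $m$ in the chosen probe space, keep track of how it competes with the number $\theta_k^{-s}$ of balls that the Bouligand bound forces on the covering of $A_k$, and then balance the two parameter sequences $\theta_k$ and $\beta_k$ (and possibly the dyadic ratio itself) so that the summability condition collapses to the stated arithmetic inequality rather than to the cruder $\alpha>1/(N-s)$ that a naive choice produces; both the factor $\frac12$ and the term $\frac{2+s}{2(N-s)}$ are artefacts of this balancing. A secondary but genuine technicality is making ``generic'' rigorous via prevalence in infinite dimensions and checking that the small-ball estimate is uniform over unit vectors of all of $H$ — this is exactly the place where finiteness of $\dim_B$ for the \emph{difference} set $K-K$, and not merely of $\dim_F(K)$, is essential, since it is differences that are being squeezed.
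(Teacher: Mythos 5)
First, a point of reference: the paper does not actually prove Theorem \ref{Th2.log} --- it states it and refers to \cite{28} for the proof --- so your proposal can only be measured against the strategy of that cited argument. Your skeleton does match it: reduce the claim to a uniform lower bound $\|Pz\|_H\ge c\|z\|_H(\log(2K_0/\|z\|_H))^{-\alpha}$ over $z\in K-K$, interpret ``generic'' via prevalence with a probe measure on a family of rank-$N$ maps, decompose $K-K$ into dyadic annuli $A_k$, use the $s$-homogeneity of $K-K$ to bound the number of covering balls at each scale, and finish with a union bound plus Borel--Cantelli, absorbing finitely many bad scales into the constant. That architecture is correct (and note that the uniform lower bound by itself already gives injectivity, so the separate appeal to Theorem \ref{Th2.man} is redundant rather than wrong).

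The genuine gap sits exactly where you flag ``the real obstacle'', and it is not a routine optimisation left to the reader: with the small-ball input in the form you state it, $\operatorname{meas}\{P:\ \|Pe\|_H<\eb\}\le(C\eb)^m$ with a scale-independent constant and $m$ of order $N$, your own scheme gives $\operatorname{meas}(E_k)\lesssim\theta_k^{-s}\beta_k^{m}\sim\beta_k^{\,m-s}$, and with $\beta_k\sim k^{-\alpha}$ this is summable only under the cruder condition $\alpha>1/(N-s)$; no re-balancing of $\theta_k,\beta_k$ or of the dyadic ratio can then manufacture the threshold \eqref{2.expos}. The entire content of the theorem --- the $\tfrac12$ and the $\tfrac{2+s}{2(N-s)}$, i.e.\ $\alpha>\tfrac{N+2}{2(N-s)}$ --- comes from the precise, scale-dependent form of the single-vector estimate for a probe measure that genuinely exists on maps of an infinite-dimensional $H$: the admissible probe maps must be assembled from finite-dimensional data adapted to the scale $2^{-k}$, and the price is a degradation of the small-ball bound at the $k$-th annulus, essentially $(C\sqrt{k}\,\beta_k)^{N}$ in place of $(C\beta_k)^{N}$. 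Only with such a factor does the summability condition become $\alpha(N-s)-N/2>1$, which is exactly \eqref{2.expos}. Constructing that measure and proving that estimate is the key lemma of the proof in \cite{28}; your proposal neither constructs it nor proves it, and instead asserts the outcome of the balancing, so the specific threshold --- the very statement to be proved --- is not established.
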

For the proof of this theorem see \cite{28} in a more general Banach setting. The examples which show that the condition \eqref{2.expos} on the exponent $\alpha$ is sharp are also given there.
\par
Note that even if we know that the  attractor $\Cal A$ of equation \eqref{1.eqmain} satisfies
\begin{equation}\label{2.unknown}
\dim_B(\Cal A-\Cal A,H)<\infty,
\end{equation}
it is still not enough to verify \eqref{2.loggood} since we need the $H^2$-norm in the left-hand side of \eqref{2.loggood} and Theorem \ref{Th2.log} gives only the weaker estimate with the $H$-norm, but we really need the $H^2$-norm in order to handle the term $P A(P^{-1}v)$ in the definition of the vector field $W$ in \eqref{2.inertial}. The attempt to obtain the required $H^2$-norm estimate combining Theorem \ref{Th2.log} with Proposition \ref{Prop2.Alip}  fails since as a result, we will have the log-Lipschitz exponent $\frac12+\alpha>1$ (obviously, $\alpha>1/2$ in Theorem \ref{Th2.log} and taking a composition of two log-Lipschitz maps we need to sum the log-Lipschitz exponents).
Alternatively, we may assume that
\begin{equation}\label{2.unknown1}
\dim_B(\Cal A-\Cal A,H^2)<\infty.
\end{equation}
Then \eqref{2.loggood} is an immediate corollary of Theorem \ref{Th2.log} (we recall that all norms in a finite-dimensional space are equivalent, so there is no difference between $H^2$ and $H$ norms in the right-hand side of \eqref{2.loggood}). However, this also does not solve the uniqueness problem since in that case the projector $P$ is a priori well-defined in $H^2$ and not in $H$ (which is necessary to treat the term $PAP^{-1}v$ in the inertial form \eqref{2.inertial}) and, to the best of our knowledge the attempt to add the extra assumption that the Man\'e projector $P$ is bounded in a weaker space increases the exponent $\alpha$ in \eqref{2.expos} and as a result is not helpful as well.

\begin{remark}\label{Rem2.badd} We see that although the idea with log-Lipschitz projections and Bouligand dimension was a priori attractive, after the proper investigation it does not look promising any more. Indeed, even if the finiteness of the Bouligand dimension in the form \eqref{2.unknown} or \eqref{2.unknown1} is verified it is still {\it not sufficient} for solving the uniqueness problem and, since both Theorem \ref{Th2.log} and Proposition \ref{Prop2.Alip} are sharp, there is no room here to improve the log-Lipschitz exponents to gain the uniqueness. In addition, there are examples with very regular non-linearities $F$ when the Bouligand dimension of the attractor is infinite (see \cite{EKZ} and next section) and, finally,  no reasons and no mechanisms which can guarantee the finiteness of this dimension without the inertial manifold existence are known. Thus, it seems natural to make a conclusion that the concept of Bouligand dimension and related technical tools  are {\it not sufficient} to solve the uniqueness problem.
\end{remark}
To conclude this paragraph, we introduce following \cite{EKZ} one more quantity which will be used in order to disprove the log-Lipschitz conjecture.
\begin{definition}\label{Def2.logD} Let $K$ be a compact set in a metric space $H$. Then the logarithmic doubling factor is defined as follows
\begin{equation}
D_{log}(K,H):=\sup_{\eb\ge0}\frac{\log D_\eb(K,H)}{\log\log\frac1\eb},\ \ D_\eb(K,H):=\sup_{x\in K}N_{\eb/2}(B(\eb,x,H)\cap K),H).
\end{equation}
\end{definition}
Recall that in the case of homogeneous spaces (where the usual doubling factor and Bouligand dimension are finite) the number of $D_\eb(K,H)$ of $\eb/2$-balls which are necessary to cover any $\eb$-ball of $K$ does not grow as $\eb\to0$ and this fits to the case when $K$ is a subset of $\R^N$ or the finite-dimensional {\it Lipschitz} submanifold. However, if the manifold is less regular (e.g., only log-Lipschitz as in the most interesting for us case of log-Lipschitz Man\'e projections) this number may grow as $\eb\to0$. However, as not difficult to see, the bi-log-Lipschitz embedding
does  not allow this quantity to grow too fast as $\eb\to0$ and as the following proposition shows, the estimate $D_\eb\le C\log\frac1\eb$ must hold in that case
\begin{proposition}\label{Prop2.logfin} Let $K$ be a compact set in a metric space $H$. Assume there is a bi-log-Lipschitz homeomorphism of $K$ on  $\overline{K}\subset\R^N$ for  $N\in\Bbb N$ and some log-Lipschitz exponents (which are not necessarily less or equal to one). Then the log-doubling factor of $K$ is finite:
\begin{equation}
D_{log}(K,H)<\infty.
\end{equation}
\end{proposition}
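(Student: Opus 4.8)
The plan is to transport a small ball of $K$ into $\R^N$ by the given homeomorphism $\Phi:K\to\overline K$, to cover its image there by Euclidean balls small enough that their $\Phi^{-1}$-preimages have controlled diameter in $H$, and then simply to count. The bi-log-Lipschitz hypothesis provides positive constants $C_1,C_2,K_1,K_2$ and exponents $a,b>0$ (not necessarily $\le1$) with $|\Phi(x)-\Phi(y)|\le C_1 d(x,y)\bigl(\log\frac{2K_1}{d(x,y)}\bigr)^a$ and $d(\Phi^{-1}(p),\Phi^{-1}(q))\le C_2|p-q|\bigl(\log\frac{2K_2}{|p-q|}\bigr)^b$ for sufficiently close points, where $d$ is the metric of $H$ and $|\cdot|$ the Euclidean norm. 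It is precisely because the exponents may exceed $1$ that one can only hope for finiteness of the \emph{logarithmic} doubling factor here, not of the ordinary one.

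First I would fix a small $\eb>0$ and a point $x\in K$; the range of $\eb$ bounded away from zero is harmless, since there $\frac{\log D_\eb(K,H)}{\log\log\frac1\eb}$ is trivially bounded, so it suffices to control $D_\eb(K,H)$ as $\eb\to0$. The forward estimate shows that $\Phi\bigl(B(\eb,x,H)\cap K\bigr)$ is contained in the Euclidean ball about $\Phi(x)$ of radius $r:=C_1\eb\bigl(\log\frac{2K_1}{\eb}\bigr)^a$. Next I would set $\rho:=\eb/\bigl(c(\log\frac1\eb)^b\bigr)$ and pick the constant $c=c(C_2,K_2,b)$ large enough that $2C_2\rho\bigl(\log\frac{2K_2}{2\rho}\bigr)^b\le\eb/2$ for all small $\eb$; this is possible because $\log\frac{2K_2}{2\rho}\le 2\log\frac1\eb$ for small $\eb$. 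Using the inverse estimate together with the monotonicity of $t\mapsto t(\log\frac{2K_2}{t})^b$ near zero, one then obtains that the $\Phi^{-1}$-image of any subset of $\overline K$ of Euclidean diameter at most $2\rho$ has $H$-diameter at most $\eb/2$.

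Then I would cover the ball of radius $r$ in $\R^N$ by at most $(3r/\rho)^N$ balls of radius $\rho$ (this is legitimate since $r/\rho\to\infty$, so $r\ge\rho$ for small $\eb$), discard those not meeting $\overline K$, replace the survivors by balls of radius $2\rho$ with centres on $\overline K$, and pull everything back by $\Phi^{-1}$. This yields a covering of $B(\eb,x,H)\cap K$ by at most $(3r/\rho)^N$ sets of $H$-diameter $\le\eb/2$, each contained in an $\eb/2$-ball of $H$ centred at a point of $K$, whence $N_{\eb/2}(B(\eb,x,H)\cap K,H)\le(3r/\rho)^N$. Since $r/\rho=c\,C_1(\log\frac{2K_1}{\eb})^a(\log\frac1\eb)^b\le C_3(\log\frac1\eb)^{a+b}$ for small $\eb$, this gives $D_\eb(K,H)\le(3C_3)^N(\log\frac1\eb)^{N(a+b)}$, hence $\log D_\eb(K,H)\le N(a+b)\log\log\frac1\eb+O(1)$ and finally $D_{log}(K,H)\le N(a+b)<\infty$.

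The only genuinely delicate point is the bookkeeping of the two logarithmic factors. The radius $r$ inflates the transported ball by a polylogarithmic factor $(\log\frac1\eb)^a$, while the covering radius $\rho$ must deflate the small balls by a polylogarithmic factor $(\log\frac1\eb)^b$; as a result the covering number grows only like a fixed power of $\log\frac1\eb$, so its logarithm is $O(\log\log\frac1\eb)$, which is exactly the growth rate the log-doubling factor is designed to tolerate. Calibrating the constant $c$ in the definition of $\rho$ so that the inverse log-Lipschitz estimate closes is the one spot where a short explicit computation is required; everything else is routine.
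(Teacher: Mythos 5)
Your argument is correct, and since the paper itself omits the proof of this proposition (deferring to \cite{EKZ} as an elementary exercise), there is no in-paper argument to compare with: your transport-and-cover scheme --- push the $\varepsilon$-ball forward by the log-Lipschitz bound, cover its image in $\R^N$ by balls of radius $\rho=\varepsilon/(c(\log\frac1\varepsilon)^b)$, recentre on $\overline{K}$, and pull back by the inverse log-Lipschitz bound --- is exactly the standard proof, and it yields the quantitative bound $D_{log}(K,H)\le N(a+b)$. The only slight imprecision is the radius/diameter bookkeeping at the recentring step (a ball of radius $2\rho$ has diameter $4\rho$, not $2\rho$), but what you actually need there is only that every point of a surviving ball lies within $2\rho$ of its new centre $p_i\in\overline{K}$, so the estimate you calibrated with the constant $c$ applies verbatim and the conclusion is unaffected.
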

The proof of this proposition is an elementary exercise and is given in \cite{EKZ}, so we leave it to the reader.

\begin{remark}The introduced log-doubling factor is one of the key technical tools for disproving the log-Lipschitz conjecture. As will be shown in the next section, if the spectral gap condition is violated there are nonlinearities $F$ such that the corresponding global attractor $\Cal A$ has infinite doubling factor and by this reason does not possess any log-Lipschitz Man\'e projection.
\end{remark}

\section{Counterexamples}\label{s3}
The aim of this section is to present the recent counterexamples of \cite{EKZ}. In Paragraph \ref{s4.1}, we show that the $C^1$-smooth inertial manifold may not exist if the spectral gap condition is violated. In Paragraph \ref{s.Floquet}, we discuss the Floquet theory for linear parabolic equations with time-periodic coefficients and, using the proper modification of the known counterexample to that Floquet theory, we show that even the Lipschitz inertial manifold may not exist if the spectral gap condition is violated. Finally, in Paragraph \ref{s4.3}, we  show that the spectral gap condition is in a sense responsible also for the existence of bi-log-Lipschitz Man\'e projections.

\subsection{Absence of $C^1$-smooth inertial manifolds}\label{s4.1}
In this section, following \cite{EKZ}, we show that the spectral gap condition \eqref{1.gap} is sharp in a sense that we can always find the globally Lipschitz and smooth non-linearity $F$ such that equation \eqref{1.eqmain} does not possess a finite-dimensional inertial manifold if the spectral gap condition is violated.
In fact, the counterexample given in a theorem below exploits the ideas from  \cite{rom-counter} and can be considered as a slight modification of the result given there.

\begin{theorem}\label{Th3.1} Let the operator $A$ be such that
\begin{equation}\label{3.3}
L_0:=\sup_{n\in\Bbb N}(\lambda_{n+1}-\lambda_n)<\infty.
\end{equation}
Then, for every $L>\max\{\frac12 L_0,\lambda_1\}$, there exists  a  nonlinear smooth operator $F\in C^\infty(H,H)$ such that
\par
1) $F$ is globally Lipschitz on $H$ with Lipschitz constant $L$:
\begin{equation}\label{3.4}
\|F(u)-F(v)\|_{H}\le L\|u-v\|_H, \ \ \forall u,v\in H,
\end{equation}
\par
2) The problem \eqref{1.eqmain} possesses a compact global attractor $\Cal A$ in $H$,
\par
3) There are no finite-dimensional invariant $C^1$-submanifolds in $H$ containing the global attractor $\Cal A$.
\end{theorem}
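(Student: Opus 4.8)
The plan is to follow Romanov's counterexample \cite{rom-counter}. The key structural consequence of \eqref{3.3} is that the spectral gap is violated \emph{everywhere}: since $L>\tfrac12 L_0$ we have $\lambda_{n+1}-\lambda_n\le L_0<2L$ for every $n$, so \eqref{1.gap} holds for no $N$. This is precisely the room needed to build a globally Lipschitz $F$ whose inter-mode coupling overpowers the spectral separation of consecutive eigenspaces. I would use a few low modes --- where the hypothesis $L>\lambda_1$ makes linear instability available --- to carry a nontrivial bounded recurrent motion, and then have the coupling relay a trace of this motion up the \emph{whole} spectrum, so that the resulting attractor contains, near one of its points, directions pointing into arbitrarily high modes.

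Concretely, I would assemble $F$ from blocks. A low-dimensional block produces an unstable equilibrium (WLOG at $0$, with $F(0)=0$) whose unstable manifold $W^u(0)$ limits onto a bounded invariant set $\Gamma$ (an equilibrium, or a periodic orbit). For each mode $e_n$ above the active block, a further term slaves $e_n$ to the motion along $W^u(0)$, switched on only during a prescribed ``window'' of its approach to $\Gamma$, and chosen so that: the induced $e_n$-response has amplitude $\delta_n$ with $\sum_n\delta_n^2<\infty$; it vanishes at $0$; and it leaves in the attractor, in every neighbourhood of $0$ (equivalently, near a suitable point of $\Gamma$), points $u$ for which $u/\|u\|_H$ has a non-vanishing $e_n$-component with $n$ arbitrarily large, these directions not clustering in any finite-dimensional subspace. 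Each block has bounded Frechet derivative, and --- crucially --- each coupling can be taken with operator norm below $L$ precisely because the gaps $\lambda_{n+1}-\lambda_n$ lie below $2L$. Summing the blocks and then modifying the result outside a large ball so as to make it globally bounded without enlarging its Lipschitz constant (routine), I would obtain $F\in C^\infty(H,H)$ that is globally bounded, globally Lipschitz with constant $L$, and coincides with the block nonlinearity where the pathological structure lives. Global boundedness then yields, via the standard dissipative and smoothing estimates (Lemmas \ref{Lem2.dis}--\ref{Lem2.smdif}), a compact global attractor $\Cal A$, which is part 2).

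For part 3), suppose $\Cal M\supseteq\Cal A$ were a finite-dimensional invariant $C^1$-submanifold and put $p=0\in\Cal A$. Near $p$, $\Cal M$ is the graph $\{v+\psi(v):v\in T_p\Cal M\}$ of a $C^1$ map with $\psi(0)=0$ and $D\psi(0)=0$, so every $u\in\Cal M$ close to $p$ satisfies $u=v+o(\|v\|_H)$ with $v\in T_p\Cal M$, hence $u/\|u\|_H$ can accumulate only on the finite-dimensional space $T_p\Cal M$. This contradicts the property built into $\Cal A$ that arbitrarily close to $p$ there are attractor points whose direction from $p$ has a non-vanishing $e_n$-component with $n$ unbounded. (Equivalently, via the monodromy operator of the equation of variations around $\Gamma$: the coupling renders its transverse action quasi-nilpotent, so that the approach of the attractor to $\Gamma$ is captured by no finite-dimensional invariant subspace.) Hence no such $\Cal M$ exists, and Theorem \ref{Th3.1} is proved.

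The main obstacle is to satisfy all the requirements on $F$ at once: genuine $C^\infty$-smoothness and global Lipschitz constant exactly $L$ --- not larger --- together with couplings strong enough to propagate the low-mode recurrence into \emph{all} sufficiently high modes. This is where $2L>L_0$ enters sharply: for a gap $\lambda_{n+1}-\lambda_n\ge 2L$ no Lipschitz-$L$ coupling can sustain any nontrivial transfer between the corresponding eigenspaces --- this is exactly the positive content of Theorem \ref{Th1.main}. The second delicate point is to verify that the global attractor of the assembled equation genuinely realises the intended geometry near $p$, i.e.\ that the slaved high-mode responses actually persist on complete bounded trajectories rather than being washed out, so that the set of directions of $\Cal A$ at $p$ really is infinite-dimensional.
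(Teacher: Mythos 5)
Your proposal does not follow the paper's route, and the alternative it sketches has a genuine gap. The paper's proof of Theorem \ref{Th3.1} is a short spectral parity argument: using Lemma \ref{Lem3.2} (a $2\times2$ block coupling consecutive modes with strength $L$ has no real eigenvalues precisely when $2L>\lambda_{n+1}-\lambda_n$, which \eqref{3.3} guarantees for every $n$), one builds two equilibria $u_\pm=\pm Ne_1$ at which $F$ is locally linear, so that $\sigma(-A+F'(u_-))$ contains no real eigenvalue while $\sigma(-A+F'(u_+))$ contains exactly one real eigenvalue $L-\lambda_1>0$ (here $L>\lambda_1$ enters) and all others in complex-conjugate pairs. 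If a finite-dimensional invariant $C^1$-manifold $\Cal M\supset\Cal A$ existed, invariance of the tangent planes at $u_\pm$ would force $\dim\Cal M$ to be even (from $u_-$) and odd (from $u_+$, since the unstable direction lies in the attractor), a contradiction. No analysis of how the attractor approaches its limit sets is needed.

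Your argument for part 3) instead requires that, near a point $p\in\Cal A$, the normalized directions $(u-p)/\|u-p\|_H$ of attractor points stay uniformly away from \emph{every} finite-dimensional subspace; merely having non-vanishing $e_n$-components with $n$ unbounded is insufficient, since those components may be of lower order than $\|u-p\|_H$ and then flatten onto $T_p\Cal M$ in the limit. More seriously, the slaving construction you describe does not deliver this property: the high modes satisfy $\Dt u_n+\lambda_n u_n=(\text{bounded coupling to low modes})$, so on complete trajectories their response near the limit set is dominated by the slow low-mode component. The paper itself exhibits exactly this failure mode: in Example \ref{Ex5.not} and Remark \ref{Rem7.3}, a coupling of precisely your ``kick the $n$-th mode during a window of the low-mode motion'' type produces an attractor that \emph{does} lie in a $C^k$ inertial manifold; the pathological orthogonal-segment structure survives only in a badly chosen projection $Q_2\Cal A$, not in $\Cal A$ itself, so the claimed contradiction would simply not arise. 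Making transverse directions genuinely dominate in the parabolic setting requires modes that decay slower than the reference motion, which the paper achieves only through the time-periodic Floquet-type construction of Paragraph \ref{s.Floquet} (and even there only to rule out Lipschitz, not merely $C^1$, manifolds); your parenthetical appeal to a ``quasi-nilpotent monodromy'' points at that machinery but is not supplied by your block construction, and it is precisely where keeping the coupling norm at most $L$ while defeating the gaps must be verified quantitatively (cf.\ the choice \eqref{4.16} and the large period $T$). As written, part 3) is therefore not established; the two-equilibria parity argument is the intended and far simpler proof.
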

\begin{proof} To construct the counterexample, we need the following simple Lemma.
\begin{lemma}\label{Lem3.2} Consider the following two dimensional linear system of ODEs:
\begin{equation}\label{3.5}
\begin{cases}
\frac d{dt}u_n+\lambda_n u_n=Lu_{n+1},\\
\frac{d}{dt}u_{n+1}+\lambda_{n+1}u_{n+1}=-Lu_n.
\end{cases}
\end{equation}
Then, if $2L>\lambda_{n+1}-\lambda_n$, the associated characteristic equation does not have any real roots.
\end{lemma}
\begin{proof}
Indeed, the characteristic equation reads
\begin{equation}\label{3.6}
\det\(\begin{matrix} -\lambda_n-\lambda &L\\-L&-\lambda_{n+1}-\lambda\end{matrix}\)=0, \ \ \lambda^2+(\lambda_n+\lambda_{n+1})\lambda+\lambda_n\lambda_{n+1}+L^2=0
\end{equation}
and the roots are $\lambda=\alpha_n\pm i\omega_n$ with
 \begin{equation}\label{3.7}
\alpha_n:=-\frac{\lambda_n+\lambda_{n+1}}2,\ \ \omega_n=\frac12\sqrt{4L^2-(\lambda_{n+1}-\lambda_n)^2}>0
\end{equation}
and the lemma is proved.
\end{proof}
We are now ready to construct the desired equation \eqref{1.eqmain}. According to \cite{rom-counter}, it is enough to find the  equilibria $u_+$ and $u_-$ of the
problem \eqref{1.eqmain} such that there are no real eigenvalues in the spectrum $\sigma(-A+F'(u_-))$, but there exists {\it exactly}
one real eigenvalue in the  spectrum $\sigma(-A+F'(u_+))$ which is {\it unstable} (and all other eigenvalues are complex-conjugate).
\par
Indeed, assume that the $C^1$-invariant manifold $\Cal M$ exists and $\dim \Cal M=n$. Then, from the invariance, we conclude that
\begin{equation}\label{3.8}
\sigma_{\Cal M}(u_\pm):=\sigma\((-A+F'(u_\pm))\big|_{\Cal T\Cal M(u_\pm)}\)\subset \sigma(-A+F'(u_\pm)),
\end{equation}
where $\Cal T\Cal M(u_\pm)$ are  tangent planes to $\Cal M$ at $u=u_\pm$ (which belong to the invariant manifold  since these equilibria belong to the attractor). In particular, since the equation is real-valued, analyzing the equilibrium $u_-$, we see that $\dim \Cal M$ must be {\it even}
(if $\lambda\in \sigma_{\Cal M}(u_-)$ then $\bar\lambda\in\sigma_{\Cal M}(u_-)$ and $\lambda\ne\bar\lambda$ since there are no real eigenvalues).
\par
Let us now consider the equilibrium $u_+$. We have exactly one real eigenvalue which should belong to $\sigma_{\Cal M}(u_+)$ since the unstable manifold of $u_+$ belongs to the attractor. Since all other eigenvalues must be complex-conjugate, we conclude that the dimension of $\Cal M$ must be odd. This contradiction proves that the $C^1$-invariant manifold does not exist.
\par
Now we fix two equilibria $u_\pm=\pm N e_1$ where $N$ is a sufficiently large number and define the maps $F^\pm(u)$ via the coordinates
$F^\pm_n(u):=(F(u),e_n)$, $u=\sum_{n=1}^\infty u_ne_n$,
in the orthonormal basis $\{e_n\}$ as follows:
\begin{multline}\label{3.9}
F_1^-(u):=-\lambda_1N+Lu_2,\ F^-_2(u):=-L(u_1+N),\\ F_{2n+1}^-(u):=Lu_{2n+2},\ F_{2n+2}^-(u):=-Lu_{2n+1},\ n\ge1
\end{multline}
and
\begin{equation}\label{3.10}
F_1^+(u):=\lambda_1N+L(u_1-N),\ \ F^+_{2n}(u):=-Lu_{2n+1},\ F_{2n+1}^+(u):=Lu_{2n},\ n\ge1.
\end{equation}
Both $F^-$ and $F^+$ are smooth and Lipschitz continuous (in fact, linear) with Lipschitz constant $L$ and we may construct the smooth nonlinear map $F(u)$ such that
\begin{equation}\label{3.11}
F(u)\equiv F^\pm(u) \ \text{if $u$ is close to $u_\pm$}
\end{equation}
and the global Lipschitz constant of $F$ is less than $L+\eb$ (where $\eb=\eb(N)$ can be made arbitrarily small by increasing $N$, see the next paragraph for the more detailed construction of this map). Finally, we may cut-off the nonlinearity $F$ outside of a large ball making it dissipative without expanding the Lipschitz norm (see \cite{tem} for the details). This guarantees the existence of a global attractor $\Cal A$.
\par
Let find now the spectrum $\sigma(-A+F'(u_\pm))$ at equilibria $u_\pm$. Indeed, according to the construction of $F$, the linearization of \eqref{1.eqmain}
at $u=u_-$ looks like
 \begin{equation}\label{3.12}
\frac d{dt}v_{2n-1}=-\lambda_{2n-1} v_{2n-1}+Lv_{2n},\ \ \frac d{dt}v_{2n}=-\lambda_{2n}v_{2n}-Lv_{2n-1}, \ \ n=1,2,\cdots
\end{equation}
and, due to condition $L>\frac12 L_0$, there are no real eigenvalues in $\sigma(-A+F'(u_\pm))$, see Lemma \ref{Lem3.2}.
In contrast to that, the linearization near $u=u_+$ reads
\begin{multline}\label{3.13}
\frac d{dt} v_1=(L-\lambda_1)v_1,\\
\frac d{dt}v_{2n}=-\lambda_{2n} v_{2n}+Lv_{2n+1},\ \ \frac d{dt}v_{2n+1}=-\lambda_{2n+1}v_{2n+1}-Lv_{2n}, \ \ n=1,2,\cdots.
\end{multline}
Thus, since $L>\lambda_1$, we have exactly one positive eigenvalue and all other eigenvalues are complex conjugate by Lemma \ref{Lem3.2}. This shows the absence of any finite-dimensional $C^1$-invariant manifold for this problem and finishes the proof of the theorem.
\end{proof}
\begin{remark}\label{Rem1.1} Recall that the extra assumption  $L>\lambda_0$ is necessary in order to have the instability in the equation \eqref{1.eqmain}, otherwise the nonlinearity will be not strong enough to compensate the dissipativity of $A$, and the attractor will consist of a single exponentially stable point (which can be naturally treated as a zero dimensional inertial manifold for the problem).
\par
Mention also that the proof of Theorem \ref{Th3.1} is a typical proof ad absurdum which gives the non-existence result, but does not clarify much what happens with the dynamics on the attractor and how the obstructions to the inertial manifold look like, see also \cite{sell-counter} for similar counterexamples in the gradient case of a reaction-diffusion equation in 4D. In the next paragrphs, we make this counterexample more constructive and show that similar structure forbid the existence not only $C^1$-smooth manifolds, but also Lipschitz and even log-Lipschitz ones.
\end{remark}

\subsection{Floquet theory and absence of Lipschitz inertial manifolds}\label{s.Floquet}

In this section, following \cite{EKZ}, we refine the counterexample from the previous
section to show that without the spectral gap condition, not only
$C^1$, but also Lipschitz invariant manifolds containing the
attractor may not exist. Actually, we will find two trajectories $u$
and $v$ on the attractor such that
\begin{equation}\label{4.1}
\|u(t)-v(t)\|_H\le Ce^{-\kappa t^2}, \ \ t\ge0
\end{equation}
for some positive $C$ and $\kappa$. This will imply by Theorem \ref{Th2.rom} that the attractor cannot be  bi-Lipschitz projected to any finite-dimensional plane, so neither finite-dimensional Lipschitz inertial manifold, nor bi-Lipschitz Man\'e projections will exist in that example.
In addition, according to \cite{rom-th,rom-th1}, in this situation the attractor also cannot be embedded to any (not-necessarily invariant) $C^1$-submanifold.
\par
Thus, the main result of this section is the following theorem.

\begin{theorem}\label{Th4.1} Let $A$ be a self-adjoint positive operator with compact inverse acting in a Hilbert space $H$ and let the spectral gap exponent
$L_0<\infty$, see \eqref{3.3}. Then, for every $L>\max\{\frac12 L_0,\lambda_2\}$, there exists   a smooth nonlinearity $F(u)$ satisfying \eqref{3.4}
 such that problem \eqref{1.eqmain} possesses a global attractor $\Cal A$  which contains at least two trajectories $u(t)$ and $v(t)$
 satisfying~\eqref{4.1}.
\end{theorem}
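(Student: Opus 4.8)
The plan is to realise, inside an autonomous equation of the form \eqref{1.eqmain}, the classical infinite-dimensional counterexample to Floquet theory, in the spirit of \cite{rom-counter,EKZ}. The linear ingredient I would isolate first is this: set $H_\perp:=\overline{\sppan}\{e_3,e_4,\dots\}$ and $A_\perp:=A|_{H_\perp}$; by \eqref{3.3} the number $L_0=\sup_n(\lambda_{n+1}-\lambda_n)$ is finite, and by hypothesis $L_0<2L$, so a rotation of any speed $\omega\in(\frac12 L_0,L)$ mixes two consecutive eigenmodes with genuinely complex characteristic exponents --- this is exactly the computation of Lemma \ref{Lem3.2} with $\omega$ in place of $L$. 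Organising such mixings into an infinite periodic cascade (using a long period, so that $\sup_\theta\|B(\theta)\|$ can be kept arbitrarily close to $\frac12 L_0$, hence $<L$) one produces a smooth, $T$-periodic family $\theta\mapsto B(\theta)\in\Cal L(H_\perp,H_\perp)$ for which the period map $\Pi$ of $\Dt w+A_\perp w=B(\theta(t))w$ is a Volterra-type (quasi-nilpotent) operator on $H_\perp$; by the triangular structure of the cascade $\|\Pi^n\|$ decays faster than any geometric sequence --- like $q^{n^2}$ in the construction at hand --- so that every solution obeys $\|w(t)\|_H\le Ce^{-\kappa t^2}\|w(0)\|_H$ for $t\ge0$, cf.\ \cite{kuch,EKZ}.

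Next I would assemble $F$ so that this periodic linear equation appears as the transverse dynamics of \eqref{1.eqmain}. Using the two lowest modes (here the hypothesis $L>\lambda_2$ enters: $\lambda_2<L$ leaves enough ``Lipschitz budget'' in $P_2H$) I prescribe $F$ on a thin tube around the circle $C:=\{r_0(\cos\theta\,e_1+\sin\theta\,e_2):\theta\in[0,2\pi)\}$ so that $C$ is an attracting limit cycle on which the phase advances at unit speed, and so that the full normal bundle of $C$ --- the in-plane normal direction together with $H_\perp$ --- is governed by $\Dt w+A_\perp w=B(\theta)w$ (the in-plane normal direction being absorbed into the mixing cascade). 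Away from the tube I add one more invariant set, say a hyperbolic equilibrium $p$ whose unstable manifold is two-dimensional and built from the directions $e_1,e_2$ with all higher modes strongly stable at $p$ (admissible with $\|F'(p)\|\le L$ since $L>\lambda_2$), and I interpolate smoothly between the two local prescriptions so that $W^u(p)$ is funnelled into the basin of $C$. Finally I cut $F$ off far from the origin, making it globally bounded and dissipative without enlarging its Lipschitz constant; then $F\in C^\infty(H,H)$ satisfies \eqref{3.4}, and by Corollary \ref{Cor2.attr} (the dissipative estimate \eqref{1.dis} holding, see also \cite{tem}) the semigroup has a compact global attractor $\Cal A$.

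The two trajectories are then $u(t)$, the periodic solution running along $C$, and $v(t)$, the orbit on $W^u(p)$ converging to $C$; both are complete and bounded, hence lie on $\Cal A$, and $u\ne v$. Once $v$ has entered the tube at a time $T_0$, the difference $v(t)-u(t)$ --- with $u$ chosen to carry the asymptotic phase of $v$ --- satisfies to leading order exactly the quasi-nilpotent transverse equation, and a routine weighted-space/Gronwall estimate promotes the linear bound to $\|v(t)-u(t)\|_H\le C\,e^{-\kappa(t-T_0)^2}$ for $t\ge T_0$; replacing $v$ by $v(\cdot+T_0)$ yields \eqref{4.1}. (By Theorem \ref{Th2.rom} this already excludes any bi-Lipschitz finite-dimensional projection of $\Cal A$, and hence any Lipschitz inertial manifold, but the statement asks only for the pair.)

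The main obstacle is the linear ingredient itself: producing a uniformly bounded, periodic $B(\theta)$ whose parabolic period operator is genuinely quasi-nilpotent. This is an intrinsically infinite-dimensional effect --- every $2\times2$ periodic block has an invertible period map by Liouville's formula, so the Volterra structure must be built out of an infinite chain of mode mixings --- and keeping $\sup_\theta\|B(\theta)\|<L$ throughout is exactly what requires $L_0<2L$. The secondary, technical difficulty is the autonomisation: embedding the periodic equation as the transverse dynamics of a genuine dissipative problem of the form \eqref{1.eqmain} so that \emph{no} normal direction of $C$ escapes the quasi-nilpotent period map (a leftover, merely exponentially contracting normal direction would degrade \eqref{4.1} to an exponential bound) while the global Lipschitz constant stays equal to $L$; reconciling these two demands is precisely what forces $L>\max\{\frac12 L_0,\lambda_2\}$.
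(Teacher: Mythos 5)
Your linear ingredient is essentially the paper's Proposition \ref{Prop4.1}: a slow, long-period cascade of mixings of consecutive modes, with the coupling made small by taking the period large so that the norm of the time-periodic operator stays below $L$, producing a Volterra-type (quasi-nilpotent) period map and super-exponential decay of all solutions. The genuine gap is in the second half, namely in your choice of the pair of trajectories. In your construction the transverse dynamics is linear in $w$, so the plane $\{w=0\}$ spanned by $e_1,e_2$ is invariant, and the two-dimensional unstable manifold $W^u(p)$, being tangent to that plane at $p$, is contained in it; hence your $v$ carries no transverse component at all, and $v(t)-u(t)$ is a purely in-plane quantity governed by the smooth autonomous two-dimensional base system. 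For such a system the radial Floquet multiplier of the limit cycle is nonzero (Liouville), so a trajectory not lying on $C$ satisfies $\|v(t)-u(t)\|_H\ge\dist(v(t),C)\ge c\,e^{-\mu t}$ for some $\mu>0$ and $c>0$, which contradicts \eqref{4.1}. Tilting $W^u(p)$ out of the plane does not help: the difference then has a nonzero component along the in-plane radial Floquet direction of the monodromy of the linearization around $u$, that component decays only exponentially, and no weighted-space/Gronwall argument can upgrade it; likewise the parenthetical idea of "absorbing the in-plane normal direction into the mixing cascade" is incompatible with the fact that the $(e_1,e_2)$-subsystem generating the cycle is a finite-dimensional autonomous ODE, for which super-exponential attraction to a periodic orbit is impossible by classical Floquet theory -- this impossibility is the very point of the whole construction.

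What the paper does instead, and what your proposal is missing, is to force the two trajectories to have \emph{identical} base components for $t\ge0$, so that their difference is exactly (not "to leading order") a nonzero solution of the linear time-periodic problem of Proposition \ref{Prop4.1}. Concretely, the base ODE is coupled to $w$ only through a perturbation $R(x,y,w_1)$ vanishing for $w_1\le2$ (see \eqref{4.23}), and an auxiliary three-dimensional system \eqref{4.24} is arranged to have a saddle whose unstable manifold passes through the point $(x(0),y(0),e_1)$, with the sign condition $\bar x(t)<0$ for $t<0$ keeping the line $\R e_1$ invariant backward in time. This yields a complete bounded trajectory $v=(x,y,w)$ with $w\not\equiv0$, hence $v\in\Cal A$, while for $t\ge0$ its $(x,y)$-part coincides with the periodic orbit $u=(x,y,0)$; then $\|u(t)-v(t)\|_H=\|w(t)\|_H\le Ce^{-\beta t^2}$ with no error terms. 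Placing a trajectory with nonvanishing transverse component on the attractor, and making the comparison exact, is the real difficulty of the proof; your proposal leaves it unresolved, and the specific pair you name fails to satisfy \eqref{4.1}.
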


The construction of the nonlinearity $F(u)$ with such properties is strongly based on  the known counterexample to the Floquet theory, see \cite{kuch,DFKM}. By this reason, we first briefly remind what is this theory about starting from the finite-dimensional case. Consider the following linear equation with time-periodic coefficients:
\begin{equation}\label{4.2}
\Dt w+Aw=\Phi(t)w,
\end{equation}
where, for a moment, $w\in H=\R^N$ and $\Phi(t)$ is a bounded time-periodic operator (matrix) with period $T$: $\Phi(t)\in\Cal L(H,H)$, $\Phi(t+T)=\Phi(t)$. Denote by $U(t,s)\in\Cal L(H,H)$, $s\in\R$, $t\ge s$, the solution operator generated by this equation:
$$
U(t,s)w(s):=w(t).
$$
Then, the following classical result (which is often referred as a Floquet-Lyapunov theorem, see e.g., \cite{hart}) holds.

\begin{proposition}\label{Prop3.fl} Let $H$ be finite-dimensional. Then, there exists a $T$-periodic complex valued  linear change of variables $z=C(t)w$
which transforms \eqref{4.2} to the autonomous equation:
\begin{equation}\label{3.auto}
\frac d{dt} z=\Bbb A z,
\end{equation}
where
\begin{equation}\label{3.log}
\Bbb A:=\frac1T\log U(T)
\end{equation}
and $U(T):=U(t+T,t)$ is a period map (=Poincare map or monodromy matrix) associated with equation \eqref{4.2}.
\end{proposition}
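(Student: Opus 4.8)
The plan is to reduce \eqref{4.2} to a first-order linear ODE with $T$-periodic generator, exploit the cocycle (multiplicative) property of its fundamental solution, and then straighten the dynamics using a complex logarithm of the monodromy operator. First I would rewrite \eqref{4.2} as $\frac d{dt}w=B(t)w$ with $B(t):=-A+\Phi(t)\in\Cal L(H,H)$, which is $T$-periodic since $A$ is constant, and introduce the principal fundamental solution $U(t):=U(t,0)$, i.e. the $\Cal L(H,H)$-valued solution of $\frac d{dt}U(t)=B(t)U(t)$, $U(0)=Id$; it is invertible for every $t$ (for instance because $U(t,s)U(s,t)=Id$). The algebraic heart of the matter is the identity $U(t+T)=U(t)U(T)$ for all $t\in\R$: both sides, viewed as functions of $t$, solve the matrix equation $\frac d{dt}X=B(t)X$ — for the left-hand side one uses $B(t+T)=B(t)$ — and they coincide at $t=0$, so they agree by uniqueness of solutions of linear ODEs. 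In particular $U(t+T,t)=U(T)$ for every $t$, so $U(T)$ is the monodromy operator appearing in \eqref{3.log}.

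Next, since $H$ is finite-dimensional and we work over $\mathbb C$, the invertible matrix $U(T)$ has a (non-unique) logarithm; I would fix one branch and set $\Bbb A:=\frac1T\log U(T)$, so that $e^{T\Bbb A}=U(T)$, and then define $P(t):=U(t)e^{-t\Bbb A}$. Using the multiplicative identity above together with the fact that $e^{-t\Bbb A}$ commutes with $e^{-T\Bbb A}=U(T)^{-1}$, one checks directly that $P(t+T)=U(t)U(T)e^{-T\Bbb A}e^{-t\Bbb A}=U(t)e^{-t\Bbb A}=P(t)$, so $P$ is $T$-periodic, smooth, and invertible for every $t$. The desired change of variables is then $z=C(t)w$ with $C(t):=P(t)^{-1}=e^{t\Bbb A}U(t)^{-1}$, which is $T$-periodic since $P$ is: if $w(t)$ solves \eqref{4.2}, i.e. $w(t)=U(t)w(0)$, then $z(t)=e^{t\Bbb A}w(0)$ and hence $\frac d{dt}z=\Bbb A z$, which is precisely \eqref{3.auto}.

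The only genuinely non-routine point is the passage through the matrix logarithm. It is exactly here that finite-dimensionality is used — $U(T)$ is then an invertible operator whose spectrum is a finite set bounded away from the origin, so a holomorphic logarithm exists — and it is exactly here that complex coefficients become essential, since a real matrix with negative real eigenvalues need not have a real logarithm and over $\mathbb R$ one would be forced to double the period. I would also remark that $\Bbb A$ is determined only up to the usual ambiguity in the choice of branch, but any choice yields a valid conjugacy; and that the whole argument breaks down in infinite dimensions precisely because a bounded invertible operator need not admit a logarithm — indeed, as the counterexamples recalled in the next paragraphs show, the period map $U(T)$ may even be a Volterra-type (quasinilpotent) operator, so no analogue of $\Bbb A$, and hence no Floquet representation, can exist.
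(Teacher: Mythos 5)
Your proposal is correct and follows essentially the same route as the paper: fix a complex logarithm $\Bbb A=\frac1T\log U(T)$ (available because $U(T)$ is an invertible finite-dimensional matrix), set $C(t)=e^{t\Bbb A}U(t,0)^{-1}$, verify its $T$-periodicity from the cocycle identity $U(t+T,s+T)=U(t,s)$ (equivalently $U(t+T)=U(t)U(T)$), and observe $z(t)=C(t)w(t)=e^{t\Bbb A}w(0)$ solves \eqref{3.auto}. Your added remarks on the branch ambiguity, the real/negative-eigenvalue issue, and the infinite-dimensional failure match the paper's surrounding discussion.
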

\begin{proof} Indeed, since \eqref{4.2} is uniquely solvable, $\det U(T)\ne0$ and we may find a complex valued logarithm of this matrix using the Jordan normal form (of course, it is not unique, but only the existence is important). Define
$$
C(t):=e^{\Bbb At}[U(t,0)]^{-1}.
$$
Then, on the one hand, this matrix is $T$-time periodic:
$$
C(t+T)=e^{\Bbb A(t+T)}[U(T+t,0)]^{-1}=e^{\Bbb At}U(T)[U(t+T,T)U(T,0)]^{-1}=e^{\Bbb At}[U(t,0)]^{-1}=C(t),
$$
where we have used that $U(t+T,s+T)=U(t,s)$ due to the time-periodicity. On the other hand,
$$
z(t):=C(t)w(t)=e^{\Bbb A(t)}[U(t,0)]^{-1}U(t,0)w(0)=e^{\Bbb At}w(0)
$$
and, therefore, $z(t)$ solves the autonomous ODE \eqref{3.auto} and the desired $T$-time periodic complex valued change of variables is found and the proposition is proved.
\end{proof}
\begin{remark}\label{Rem3.real}
If we are interested in the {\it real}-valued change of variables,  then there is a  problem that one (or more generally, odd number) of the eigenvalues of the matrix $U(T)$ may be real and {\it negative} which leads to $\det U(T)<0$. In that case, the real-valued matrix logarithm does not exist and we are unable to use the above arguments. Moreover, since $\det e^{\Bbb At}>0$ if $\Bbb A$ is real, we see that $T$-periodic real-valued change of variables does not exist in that case.
\par
However, if we double the period and consider the matrix $U(2T)=[U(T)]^2$ this problem disappears since $\det U(2T)=[\det U(T)]^2>0$ and the $2T$-periodic real-valued change of variables always exists, see \cite{hart}.
\end{remark}
Thus, in the generic case where all eigenvalues of $\Bbb A$ are different, using the transform $z=C(t)w$, we may write any solution of \eqref{4.2} in the form of
$$
w(t)=C(t)\(\sum_{i=1}^N C_ie^{\mu_i t}e_i\)=\sum_{i=1}^NC_i e^{\mu_i t}e_i(t),
$$
where $C_i\in\Bbb C$, $\mu_i\in\sigma(\Bbb A)$, $e_i$ - are the corresponding eigenvalues and $e_i(t):=C(t)e_i$ are $T$-periodic functions. The exponents $\mu_i$ are usually referred as {\it Floquet} exponents and the corresponding solutions $e^{\mu_it}e_i(t)$ are the Floquet (or Floquet-Bloch) solutions.
Thus, the main result of the Floquet theory in finite-dimensions is that any solution $w(t)$ of a linear equation \eqref{4.2} is a sum of the Floquet solutions which has the form $e^{\mu_it}e_i(t)$ with time-periodic functions $e_i(t)$ (in the case of multiple Floquet exponents, there are also solutions of the form $t^ke^{\mu_it}\tilde e_i(t)$). Note also that the Floquet exponents can be found without computing the matrix logarithm via
$$
\mu_i=\frac1T\log\nu_i,
$$
where $\nu_i\in\sigma(U(T))$ and, therefore, they are determined by the spectrum of the Poincare map $U(T)$. One more consequence of the Floquet theory is there are no solutions of linear ODEs with time-periodic coefficients which  decay/grow faster than exponential.
\par
We now return to the infinite-dimensional case where the operator $A$ satisfies the assumptions of Section \ref{s1} and $\Phi(t)$ is globally Lipschitz in $H$ uniformly in time. The immediate difficulty which we meet with is that the Poincare map $U(T)$ is a {\it compact} operator (due to the parabolic smoothing property, $U(T):H\to H^2$ and $H^2$ is compactly embedded in $H$), so the spectrum $\sigma(U(T))$ {\it always} contains zero
as a point of essential spectrum (of course, we do not have zero eigenvalues due to the backward uniqueness which ia actually established in the proof of Proposition \ref{Prop2.Alip}, see estimate \eqref{2.logconstan}), so it is unclear how to define the logarithm of such operator in \eqref{3.log}. One more difficulty is the absence of a  Jordan normal form for infinite dimensional operators, so the logarithm may not exist even in the case where $0$ does not belong to the spectrum, but $0$ and $\infty$ are in different connected components of the resolvent set, see \cite{shef} for nice examples.
\par
Nevertheless, as not difficult to see, any non-zero $\nu\in\sigma(U(T))$ still generates a Floquet exponent $\mu_i=\frac1T\log\nu_i$ and the corresponding Floquet solution $e^{\mu_it}e_i(t)$ with periodic $e_i(t)$ (to verify this, it is enough to use the fact that $\nu_i\in\sigma(U(T))$ is an eigenvalue of finite multiplicity since $\nu\ne0$ and $U(T)$ is compact). Thus, instead of trying to compute the operator logarithm \eqref{3.log}, it looks more natural to pose and study the questions related with the completeness of the eigenvalues of $U(T)$ (= the completeness of the finite sums of Floquet solutions in the space of all solutions). A lot of results in that direction are given in \cite{kuch}.
\par
 However, in general we basically know only that $U(T)$ is compact and that is not enough to have a reasonable spectral theory. For instance, it is well-known that the spectrum of a compact operator may coincide with zero and the eigenvectors may not exist at all! Moreover,  that even does not look like as a pathology and happens, e.g,  for the natural class of the  so-called Volterra-type integral operators, see \cite{dan}. The simplest example of such operators is the following integration operator:
 \begin{equation}\label{3.int}
 Uf:=\int_0^xf(s)\,ds,\ \ H=L^2(0,1).
 \end{equation}
Indeed, as easy to see \eqref{3.int} does not have any eigenvectors and looks like an infinite dimensional Jordan sell in the basis $e_k:=x^k$:
\begin{equation}\label{3.jordan}
Ue_k=\frac1{k+1}e_{k+1},\ \ k=0,1,\cdots.
\end{equation}
We remind this simple example since in the counterexample to Floquet theory given in \cite{kuch} as well as in the related example of \cite{EKZ} discussed below, the structure of the Poincare map $U(T)$ is similar to \eqref{3.jordan} and, in particular, $\sigma(U(T))=\{0\}$ and all solutions decay faster than exponentially  as $t\to\infty$.
\par
 Roughly speaking, in our case, equation \eqref{4.2} will coincide with system \eqref{3.12} on a half period, say, for $t\in[0,T/2]$ and with system \eqref{3.13} on the other half period $t\in[T/2,T]$ and the parameters $L$ and $T$ are chosen in such way that following equations \eqref{3.12}, $U(T/2,0)e_{2n-1}=K_n^+ e_{2n}$, $n=1,2,\cdots$, and, following equations \eqref{3.13}, $U(T,T/2)e_{2n}=K_n^-e_{2n+1}$, for some contraction factors $K_n^\pm$ so finally we will have $U(T)e_{2n-1}=K_n^+K_n^-e_{2n+1}$ and we indeed see something similar to \eqref{3.jordan}. However, we need to be a bit more accurate since also the smoothness of the map with respect to $t$ is required.
\par
To make the above arguments precise, we fix a scalar periodic function $x(t)$ with a  period $2T$ satisfying the following assumptions:

 1) $x(-t)=-x(t)$ and $x(T-t)=x(t)$ for all $t$;

 2) $x(T/2):=N\ge1$ is the maximal values of $x(t)$;

 3) $x''(t)<0$ for $0<t<T$ and $x'(t)>0$ for $0<t<T/2$.

 For instance, one may take $x(t)=\sin(\pi t/T)$. To simplify the notations, we switch here to the $2T$-periodic case instead of $T$-periodic considered before.

Also, without loss of generality,
  we may assume that
 \begin{equation}\label{4.3}
 c_2k\le \lambda_k\le c_1 k
 \end{equation}
 for some positive $c_1$ and $c_2$. Indeed, the absence of the spectral gap ($L_0<\infty$) gives the upper bound for $\lambda_k$ and the lower bound can be achieved just by dropping out the unnecessary modes.
 \par
 The following proposition is crucial for what follows.
 \begin{proposition}\label{Prop4.1} Let the assumptions of Theorem \ref{Th3.1} hold. Then, for every $L>\frac12L_0$ and every periodic function $x(t)$ of sufficiently large period $2T$, satisfying the above assumptions, there exists a smooth map $\Cal R:\, \R\to \Cal L(H,H)$ such that
 \begin{equation}\label{4.4}
 \|\Cal R(x)\|_{\Cal L(H,H)}\le L.
 \end{equation}
 Moreover, if $\Phi(t):=\Cal R(x(t))$ is the $2T$-periodic map, then the Poincare map $P:=U(2T,0)$ associated with equation \eqref{4.2} satisfies the following properties:
 \begin{equation}\label{4.5}
 Pe_{2n-1}=\mu_{2n-1} e_{2n+1},\ n\in\Bbb N,\ \ Pe_{2n}=\mu_{2n-2} e_{2n-2},\ n>1,\ \ Pe_2=\mu_0e_1,
 \end{equation}
 where the positive multipliers $\mu_0$, $\mu_n$ defined via
 \begin{equation}\label{4.6}
 \mu_n:=e^{-T(\lambda_{n}+2\lambda_{n+1}+\lambda_{n+2})/2},\ \  \mu_0:=e^{-T(2\lambda_{1}+\lambda_{2})/2}.
 \end{equation}
 In particular, all solutions of \eqref{4.2} decay super-exponentially, namely,
 \begin{equation}\label{4.7}
 \|w(t)\|_{H}\le Ce^{-\beta t^2}\|w(0)\|_H,
 \end{equation}
 where positive $C$ and $\beta$ are independent of $w(0)\in H$.
 \end{proposition}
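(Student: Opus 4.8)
The plan is to build $\mathcal R$ so that, over one period, $P=U(2T,0)$ acts on the eigenbasis $\{e_n\}$ as a weighted shift, the shift being obtained by turning the $2\times 2$ flow inside consecutive pairs of modes into a scalar exponential decay composed with a rotation. First I would make the equation block-diagonal: on $[0,T]$ in the pairs $\operatorname{span}\{e_{2n-1},e_{2n}\}$, and on $[T,2T]$ in the pairs $\operatorname{span}\{e_{2n},e_{2n+1}\}$ with $e_1$ decoupled, exploiting the symmetry $x(T+t)=-x(t)$, which follows from $x(-t)=-x(t)$ and $x(T-t)=x(t)$. Inside the pair carried by $\operatorname{span}\{e_m,e_{m+1}\}$ I would set $\mathcal R(x)$ equal to $g(x)\tfrac{\lambda_m-\lambda_{m+1}}{2}\sigma_3+\omega(x)J$, with $\sigma_3=\operatorname{diag}(1,-1)$, $J=\bigl(\begin{smallmatrix}0&1\\-1&0\end{smallmatrix}\bigr)$, and fixed scalar functions $g,\omega$ the same in every pair and every half-period (up to the reflection $x\mapsto-x$ and a blockwise sign of $J$). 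The role of the $\sigma_3$-term is that $-\operatorname{diag}(\lambda_m,\lambda_{m+1})+\tfrac{\lambda_m-\lambda_{m+1}}{2}\sigma_3=-\bar\lambda_m$ times the identity, where $\bar\lambda_m:=\tfrac{\lambda_m+\lambda_{m+1}}{2}$; hence on $\{g=1\}$ the pair-flow is exactly $e^{-\bar\lambda_m t}$ times a rotation by $\Theta_m(t)=\int_0^t\omega(x(s))\,ds$. Since $\|a\sigma_3+bJ\|_{\mathcal L}=\sqrt{a^2+b^2}$ and $\lambda_{m+1}-\lambda_m\le L_0$ by \eqref{3.3}, the norm bound \eqref{4.4} reduces to $|\omega|\le\sqrt{L^2-(L_0/2)^2}=:\omega_{\max}$, a strictly positive rotation budget precisely because $L>\tfrac{1}{2}L_0$.

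Next I would fix $g,\omega$ and (implicitly) a large period $T$: take $g\equiv0,\ \omega\equiv0$ near $x=0$, $g\equiv1$ near the maximal value $N=x(T/2)$, $\operatorname{supp}\omega\subset\{g=1\}$, and $|\omega|\le\omega_{\max}$. Then $\mathcal R(x)\equiv0$ for $x$ near $0$, so $\mathcal R:\R\to\mathcal L(H,H)$ is $C^\infty$ even across $x=0$, where the pairing of modes switches, and near the endpoints of each half-period the equation is just the free flow $\partial_tw+Aw=0$. The last freedom in $\omega$ is fixed by the single normalization $\bigl|\int_0^T\omega(x(t))\,dt\bigr|=\pi/2$, attainable within the budget once $T$ is so large that $\omega_{\max}\cdot\operatorname{meas}\{t\in[0,T]:g(x(t))=1\}\ge\pi/2$; this makes every pair perform exactly one quarter turn over each half-period. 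To see that the monodromy comes out exactly as in \eqref{4.6}, I would write the pair as $A_m(t)(\cos\psi_m(t),\sin\psi_m(t))$ in the $(e_m,e_{m+1})$ plane; then $\dot\psi_m=(1-g)\tfrac{\lambda_m-\lambda_{m+1}}{2}\sin2\psi_m-\omega$ and $\tfrac{d}{dt}\log A_m=-\bar\lambda_m-(1-g)\tfrac{\lambda_m-\lambda_{m+1}}{2}\cos2\psi_m$. Because $g(x(\cdot)),\omega(x(\cdot))$ are symmetric about $t=T/2$ and $\sin2\psi$ is invariant under $\psi\mapsto\tfrac{\pi}{2}-\psi$, uniqueness forces $\psi_m(t)+\psi_m(T-t)=\tfrac{\pi}{2}$; hence $\cos2\psi_m$ is antisymmetric about $T/2$ while $1-g$ is symmetric, so the extra decay accumulated in the two cut-off regions cancels and $U(T,0)e_{2n-1}=\pm e^{-\bar\lambda_{2n-1}T}e_{2n}$, $U(T,0)e_{2n}=\pm e^{-\bar\lambda_{2n-1}T}e_{2n-1}$ exactly. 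Composing the two half-periods, using $U(2T,T)e_{2n}=\pm e^{-\bar\lambda_{2n}T}e_{2n+1}$, $U(2T,T)e_1=e^{-\lambda_1T}e_1$, and $\bar\lambda_m+\bar\lambda_{m+1}=\tfrac{\lambda_m+2\lambda_{m+1}+\lambda_{m+2}}{2}$, yields \eqref{4.5}--\eqref{4.6}, the signs being absorbed into the blockwise orientation of $J$.

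For the super-exponential estimate \eqref{4.7}: \eqref{4.5} says that, after identifying $\{e_n\}$ with $\ell^2(\Bbb Z)$ along the single orbit $\cdots\to e_6\to e_4\to e_2\to e_1\to e_3\to e_5\to\cdots$, $P$ is a bilateral weighted shift whose weight at orbit-position $k$ is one of the $\mu_j$ with $j$ of order $2|k|$, hence at most $e^{-c_2T|k|}$ by \eqref{4.3} (the lower bound $\lambda_n\ge c_2 n$ being arranged by discarding superfluous modes, the upper bound coming from $L_0<\infty$). Therefore $\|P^m\|_{\mathcal L}$, which equals the largest product of $m$ consecutive weights, is maximised by the window straddling the low-index part of the orbit and so is bounded by $e^{-\beta Tm^2}$ for some $\beta>0$. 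Writing $t=2Tm+s$ with $0\le s<2T$ and using the $2T$-periodicity of $\Phi$ gives $U(t,0)=U(s,0)P^m$, whence $\|w(t)\|_H\le\bigl(\sup_{0\le s\le 2T}\|U(s,0)\|_{\mathcal L}\bigr)e^{-\beta Tm^2}\|w(0)\|_H$, and $m\ge\tfrac{t}{2T}-1$ turns this into \eqref{4.7} after adjusting $\beta$ and $C$.

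The hardest part is the construction of $\mathcal R$ itself --- reconciling three competing requirements with a single scalar-controlled operator of norm $\le L$: that each $2\times2$ block be an exact scalar-times-rotation (which forces the $\sigma_3$-correction, affordable only under $L>\tfrac{1}{2}L_0$); that $\mathcal R$ be globally $C^\infty$, in particular smooth at $x=0$ where the pairing of modes switches (which forces $g,\omega$ to vanish near $0$, hence a long period $T$ to still build a full quarter turn); and that the resulting monodromy multipliers be exactly \eqref{4.6} (which is what the profile symmetry $x(T-t)=x(t)$ buys). Once $\mathcal R$ is pinned down, the bound \eqref{4.4}, the identities \eqref{4.5}--\eqref{4.6}, and the $m^2$-decay are routine.
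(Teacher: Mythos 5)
Your construction is essentially the paper's own: your $g$ and $\omega$ play exactly the roles of the paper's cut-offs $\theta_2$ and $\eb\theta_1$ (equalize each $2\times2$ block to scalar decay at rate $\bar\lambda_m$, then perform an exact quarter turn inside the region where the equalizer is fully on, with the profile symmetry $x(T-t)=x(t)$ giving the exact cancellation of the anisotropic decay in the two tails), and your weighted-shift computation of $\|P^m\|$ is the same argument the paper uses to get the $e^{-\beta t^2}$ decay.

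One slip to correct: since $J$ is skew rather than Hermitian, the matrices $\sigma_3$ and $J$ anticommute and $\|a\sigma_3+bJ\|_{\Cal L}=|a|+|b|$, not $\sqrt{a^2+b^2}$ (indeed $(a\sigma_3+bJ)^{*}(a\sigma_3+bJ)$ has eigenvalues $(a\pm b)^2$). With your budget $\omega_{\max}=\sqrt{L^2-(L_0/2)^2}$ the bound \eqref{4.4} can fail (e.g. $L_0=1.6L$ gives block norm up to $1.4L$). The fix is harmless for your argument: take $|\omega|\le L-\tfrac12L_0>0$, which is all the feasibility of the quarter-turn normalization needs once $T$ is large; the paper sidesteps the issue entirely by making the rotation amplitude of order $\pi/T$, hence arbitrarily small.
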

 \begin{proof}
 Introduce a pair of smooth nonnegative cut-off functions $\theta_1(x)$ and $\theta_2(x)$ such that
\par
1) $\theta_1(x)\equiv1$, $x\in[N/2,N]$
 and $\theta_1(x)\equiv0$ for $x\le N/4$;
 \par
2) $\theta_2(x)\equiv 1$ for $x\ge N/4$ and $\theta_2(x)\equiv 0$ for $x\le0$;
\par
and define  the linear operators $F_\pm$ as follows:
\begin{multline}\label{4.8}
F^-_{2n-1}(x)w=\frac12(\lambda_{2n-1}-\lambda_{2n})w_{2n-1}\theta_2(x)+\eb\theta_1(x) w_{2n},\\ F^-_{2n}(x)w=-\frac12(\lambda_{2n-1}-\lambda_{2n})w_{2n}\theta_2(x)-\eb w_{2n-1}\theta_1(x),\ \ n\in\Bbb N
\end{multline}
and
\begin{multline}\label{4.9}
F^+_1(x)w=0,\ F^+_{2n}(x)w=\frac12(\lambda_{2n}-\lambda_{2n+1})w_{2n}\theta_2(-x)+ \eb w_{2n+1}\theta_1(-x),\\
 F^+_{2n+1}(x)w=-\frac12(\lambda_{2n}-\lambda_{2n+1})w_{2n+1}\theta_2(-x)- \eb w_{2n}\theta_1(-x),\ n\in\Bbb N,
\end{multline}
where $\eb>0$ is a positive number which will be specified later. Finally, we introduce
the desired operator $\Phi(t)$ via
\begin{equation}\label{4.10}
\Phi(x(t))w:=F^+(x(t))w+F^-(x(t))w.
\end{equation}
We claim that there exists a small $\eb=\eb(T)>0$ such that the operator defined satisfies all assumptions of the proposition. Indeed, let $P_-$ and $P_+$ be the solution operators which map $w(0)$ to $w(T)$ and $w(T)$ to $w(2T)$ respectively:
\begin{equation*}
P_-:=U(T,0), \ \ P_+:=U(2T,T).
\end{equation*}

 Then, the spaces
\begin{equation}\label{4.11}
V_{n}^-:=\sppan\{e_{2n-1},e_{2n}\} \ \ \text{and}\ \ \ V_{n}^+:=\sppan\{e_{2n},e_{2n+1}\},\ n\in\Bbb N
\end{equation}
are invariant subspaces for the linear maps $P_-$ and $P_+$ respectively.
\par
We need to look at $P_\pm e_n$. To this end, we introduce $T_0$ such that $x(T_0)=N/4$ and note that, by the construction of the cut-off functions,
all $e_n$'s are invariant with respect to the solution maps $U(T_0,0)$, $U(T,T-T_0)$, $U(T+T_0,T)$ and $U(2T,2T-T_0)$, so we only need to study the maps
$U(T-T_0,T_0)$ and $U(2T-T_0,T+T_0)$. On these time intervals the cut-off function $\theta_2\equiv1$ and the equations read:
\begin{multline}\label{4.12}
\frac d{dt}w_{2n-1}=-\frac12(\lambda_{2n-1}+\lambda_{2n})w_{2n-1}+\eb\theta_1(x(t)) w_{2n}, \\ \frac d{dt}w_{2n}=-\frac12(\lambda_{2n-1}+\lambda_{2n})w_{2n}-\eb\theta_1(x(t)) w_{2n-1},
\end{multline}
for $t\in[T_0,T-T_0]$ and
\begin{multline}\label{4.13}
\frac d{dt}w_{2n}=-\frac12(\lambda_{2n}+\lambda_{2n+1})w_{2n}+\eb\theta_1(-x(t)) w_{2n+1}, \\ \frac d{dt}w_{2n+1}=-\frac12(\lambda_{2n}+\lambda_{2n+1})w_{2n+1}-\eb\theta_1(-x(t)) w_{2n},
\end{multline}
for $t\in[T+T_0,2T-T_0]$. To study these equations, we introduce the polar coordinates
\begin{equation}\label{4.14}
 w_{2n-1}+iw_{2n}=R_n^-e^{i\varphi_n^-},\ \ w_{2n}+iw_{2n+1}=R^+_ne^{i\varphi_n^+}
 \end{equation}
 for problems \eqref{4.12} and \eqref{4.13} respectively. Then the phases $\varphi_n^\pm$ solve the equations
 \begin{equation}\label{4.15}
\frac d{dt}\varphi_n^- =-\eb\theta_1(x(t))\ \text{and}\ \ \frac d{dt}\varphi_n^+ =-\eb\theta_1(-x(t))
\end{equation}
for $t\in[T_0,T-T_0]$ and $t\in[T+T_0,2T-T_0]$ respectively.
\par
Finally, if we fix
\begin{equation}\label{4.16}
\eb:=-\frac{\pi}{2\int_{T_0}^{T-T_0}\theta_1(x(t))\,dt}=-\frac{\pi}{2\int_{T+T_0}^{2T-T_0}\theta_1(-x(t))\,dt},
\end{equation}
then
both $U(T-T_0,T_0)$ and $U(2T-T_0,T+T_0)$ restricted to $V_{n}^-$ and $V_{n}^+$ respectively will be compositions of the rotation
 on the angle $\pi/2$ and the proper scaling. Namely,
 \begin{multline*}
 U(T-T_0,T_0)e_{2n-1}=e^{-(T-2T_0)/2(\lambda_{2n-1}+\lambda_{2n})}e_{2n},\\
 U(T-T_0,T_0)e_{2n}=e^{-(T-2T_0)/2(\lambda_{2n-1}+\lambda_{2n})}e_{2n-1},\\
 U(2T-T_0,T+T_0)e_{2n}=e^{-(T-2T_0)/2(\lambda_{2n}+\lambda_{2n+1})}e_{2n+1},\\
  U(2T-T_0,T+T_0)e_{2n+1}=e^{-(T-2T_0)/2(\lambda_{2n}+\lambda_{2n+1})}e_{2n}.
 \end{multline*}
Furthermore, on the time intervals $t\in[0,T_0]$ and $t\in[T-T_0,T_0]$, we have the decoupled equations
\begin{multline*}
\frac{d}{dt}w_{2n-1}+\lambda_{2n-1}w_{2n-1}=1/2(\lambda_{2n-1}-\lambda_{2n})\theta_2(x(t))w_{2n-1},\\
\frac{d}{dt}w_{2n}+\lambda_{2n}w_{2n}=-1/2(\lambda_{2n-1}-\lambda_{2n})\theta_2(x(t))w_{2n},
\end{multline*}
and, therefore,
\begin{multline*}
U(T_0,0)e_{2n-1}=e^{-\lambda_{2n-1}T_0}e^{1/2(\lambda_{2n-1}-\lambda_{2n})\int_0^{T_0}\theta_2(x(t))\,dt}e_{2n-1},\\
U(T_0,0)e_{2n}=e^{-\lambda_{2n}T_0}e^{-1/2(\lambda_{2n-1}-\lambda_{2n})\int_0^{T_0}\theta_2(x(t))\,dt}e_{2n},\\
U(T,T-T_0)e_{2n-1}=e^{-\lambda_{2n-1}T_0}e^{1/2(\lambda_{2n-1}-\lambda_{2n})\int_0^{T_0}\theta_2(x(t))\,dt}e_{2n-1},\\
U(T,T-T_0)e_{2n}=e^{-\lambda_{2n}T_0}e^{-1/2(\lambda_{2n-1}-\lambda_{2n})\int_0^{T_0}\theta_2(x(t))\,dt}e_{2n}.
\end{multline*}
  Thus, for the operator $P_-=U(T,T-T_0)U(T-T_0,T_0)U(T_0,0)$, we have
\begin{multline}\label{4.17}
 P_-e_{2n-1}=e^{-\lambda_{2n-1}T_0}e^{1/2(\lambda_{2n-1}-\lambda_{2n})\int_0^{T_0}\theta_2(x(t))\,dt} e^{-(T-2T_0)/2(\lambda_{2n-1}+\lambda_{2n})}\times\\\times
 e^{-\lambda_{2n}T_0}e^{-1/2(\lambda_{2n-1}-\lambda_{2n})\int_0^{T_0}\theta_2(x(t))\,dt}e_{2n}=e^{-T(\lambda_{2n-1}+\lambda_{2n})/2}e_{2n},\\ P_-e_{2n}=e^{-\lambda_{2n}T_0}e^{-1/2(\lambda_{2n-1}-\lambda_{2n})\int_0^{T_0}\theta_2(x(t))\,dt}
 e^{-(T-2T_0)/2(\lambda_{2n-1}+\lambda_{2n})}\times\\\times
 e^{-\lambda_{2n-1}T_0}e^{1/2(\lambda_{2n-1}-\lambda_{2n})\int_0^{T_0}\theta_2(x(t))\,dt}
 e_{2n-1}=e^{-T(\lambda_{2n-1}+\lambda_{2n})/2}e_{2n-1}
 \end{multline}
 and, analogously, for $P_+=U(2T,2T-T_0)U(2T-T_0,T+T_0)U(T+T_0,T)$,
\begin{multline}\label{4.18}
 P_+e_1=e^{-T\lambda_1/2} e_1,\  P_+e_{2n}=e^{-T(\lambda_{2n}+\lambda_{2n+1})/2}e_{2n+1},\\ P_+e_{2n+1}=e^{-T(\lambda_{2n}+\lambda_{2n+1})/2}e_{2n},\ \ n\in\Bbb N.
\end{multline}
This proves the desired spectral properties of the Poincare map $P:=P_+\circ P_-$ (see \eqref{4.5}). Note that, due to \eqref{4.16}, $\eb\to0$ if $T\to\infty$ and the norm of the operator $\Phi(x(t))$ with $\eb=0$ clearly does not exceed $\frac12L_0$. Moreover, the constant $\eb$ defined by
\eqref{4.16} can be made  arbitrarily small by increasing the period $T$. Indeed, from the convexity assumption on $x(t)$, we know that $x(t)\ge N/2$ for $t\in[T/4,3T/2]$ and, therefore,
$$
|\eb|\le\frac\pi T.
$$
Thus,  \eqref{4.4} will be satisfied if $T$ is large enough.
\par
Let us check \eqref{4.7}. To this end, it suffices to verify that
\begin{equation}\label{4.19}
\|P^Ne_{2n}\|_H\le Ce^{-\beta_1 N^2}
\end{equation}
uniformly with respect to $n\in\Bbb N$. Indeed, according to \eqref{4.6} and \eqref{4.3}, the multipliers $\mu_{n}$ satisfy
\begin{equation}\label{4.20}
 e^{-C_2Tn}\le \mu_n\le e^{-C_1Tn}.
\end{equation}
Then, for $N\ge n$,
\begin{equation*}
\|P^Ne_{2n}\|_H\le e^{-CT(\sum_{k=0}^n 2k+\sum_{k=0}^{N-n} 2k)}=e^{-CT(n(n+1)+(N-n)(N-n+1))}\le e^{-CTN^2/2}.
\end{equation*}
Since for $N\le n$, \eqref{4.19} is obvious, \eqref{4.7} is verified and Proposition \ref{Prop4.1} is proved.
\end{proof}
\begin{proof}[Proof of the theorem]
It is now not difficult to construct the desired counterexample. To this end, we generate the $2T$-periodic trajectory $(x(t),y(t))$,
such that the first component $x(t)$ satisfies all the above assumptions, as a solution
of the 2D system of ODEs:
\begin{equation}\label{4.21}
\frac d{dt}x=f(x,y),\ \ \frac d{dt} y=g(x,y)
\end{equation}
with smooth functions $g$ and $f$ (cut off for large $x$ and $y$ in order to have the dissipativity). By scaling time (and increasing the period $T$), we can also make the Lipschitz norms of $f$, $g$ and $x$ arbitrarily small. Then, we consider the coupled system
for $u=(x,y,w)$:
 \begin{equation}\label{4.22}
\frac d{dt}x=f(x,y),\ \ \frac d{dt} y=g(x,y),\ \ \Dt w+Aw=\Cal R(x)w.
\end{equation}
Obviously, the system \eqref{4.22} is of the form \eqref{1.eqmain} (we only need to reserve the first two modes $e_1$ and $e_2$ for $x$ and $y$ and
re-denote $Q_3A$ by $A$). It is also not difficult to see that the Lipschitz norm of the nonlinearity in \eqref{4.22} can be made arbitrarily close to $L>\frac12L_0$ (since the Lipschitz constants of $f$, $g$ and $x$ are small), but in order to produce the periodic trajectory $x(t)$, $y(t)$ we should be able to destabilize the first two modes (compensate the extra terms $\lambda_1 x$ and $\lambda_2y$ which come from the linear part of the equation \eqref{1.eqmain}) and this leads to the extra condition $L>\lambda_2$.
\par
Finally, in order to finish the construction, we need to guarantee that at least one of the trajectories of the form $v(t):=(x(t),y(t),w(t))$, $t\ge0$ with non-zero $w(t)$ belongs to the attractor. To this end, we fix the trajectory $v(t)$ of \eqref{4.22} such that $w(0)=e_1$ and
 $w_1(t):=(w(t),e_1)<1$ for all $t\ge0$. After this, we introduce a smooth coupling $R(x,y,w_1)=(R_1,R_2,R_3)$ and the perturbed version of \eqref{4.22}
\begin{multline}\label{4.23}
\frac d{dt} x=f(x,y)+R_1(x,y,w_1),\ \frac d{dt} y(t)=g(x,y)+R_2(x,y,w_1),\\  \frac d{dt} w+Aw=\Cal R(x)w+R_3(x,y,w_1)e_1
\end{multline}
such that $R\equiv0$ if $w_1\le2$ and such that, in addition, the 3D system
\begin{multline}\label{4.24}
\frac d{dt} x=f(x,y)+R_1(x,y,w_1),\ \frac d{dt} y(t)=g(x,y)+R_2(x,y,w_1),\\  \frac d{dt} w_1+\lambda_1w_1=R_3(x,y,w_1)
\end{multline}
possesses a saddle point (somewhere in the region $w_1>2$, $x<0$), such that the point $(x(0),y(0),1)$ belongs to the unstable manifold of this equilibrium and the corresponding trajectory $v(t)=(\bar x(t),\bar y(t),w_1(t))$ satisfies $\bar x(t)<0$ for $t<0$.
Such smooth coupling obviously exists and, using the trick with scaling the time, one can guarantee that the global Lipschitz constant for the nonlinearity in \eqref{4.23} does not exceed $L$. Moreover, the assumption that $\bar x(t)<0$ for $t<0$  guarantees that $F_-(\bar x(t))\equiv0$, for $t\le0$, see definition \eqref{4.8} and, therefore, the line $\R e_1$ remains invariant for the operator $\Cal R(\bar x(t))$ if $t<0$. By this reason, the backward solution $(\bar x(t),\bar y(t),w_1(t))$, $t\le0$, generates also a backward solution $(\bar x(t),\bar y(t),w(t))$, $t\le0$, for
the infinite-dimensional system \eqref{4.24} (just by setting $w_i(t)=0$ for $i>1$). Furthermore, since $x(0)=\bar x(0)$ and $y(0)=\bar y(0)$ and the nonlinearity $R$ vanishes on that trajectory for $t\ge0$, we have $\bar x(t)=x(t)$ and $\bar y(t)=y(t)$ for $t\ge0$.
\par
Finally,
since any unstable manifold as well as any periodic orbit belong to the attractor, both of the trajectories $v(t)$ and  $u(t):=(x(t),y(t),0)$  belong to the attractor. But, according to Proposition \ref{Prop4.1},
\begin{equation}\label{4.25}
\|u(t)-v(t)\|_H=\|w(t)\|_H\le Ce^{-\beta t^2}
\end{equation}
and, according to Theorem \ref{Th2.rom}, the Lipschitz inertial manifold containing the global attractor cannot exist.
Thus, Theorem \ref{Th4.1} is proved.
\end{proof}
\begin{remark}\label{Rem2.sharp} Estimate \eqref{4.25} shows also that Proposition \ref{Prop2.Alip} is sharp. Indeed, if $u(t)$ and $v(t)$ are two solutions on the attractor $\Cal A$, then $w(t)=u(t)-v(t)$ solves \eqref{2.again}. Multiplying this equation on $w(t)$ and using \eqref{2.Alog} together with the Lipschitz continuity of $F$, we get
$$
\frac12\frac d{dt}\|w(t)\|^2_H\ge-\|w(t)\|_H\|Aw(t)\|_H-L\|w(t)\|^2_H\ge-K\|w(t)\|^2_H\log^{1/2}\frac{2C}{\|w(t)\|_H}
$$
for some $K>0$. Therefore, the function $y(t):=\|w(t)\|_H$ satisfies the following differential inequality
$$
y'(t)\ge -K y(t)\log^{1/2}\frac{2C}{y(t)}
$$
and, solving this inequality, we get the estimate, opposite to \eqref{4.25}
\begin{equation}\label{4.ssarp}
\|u(t)-v(t)\|_H\ge C_1e^{-\beta_1 t^2}
\end{equation}
for some positive $C_1$ and $\beta$. This shows, in particular, that the exponent $1/2$ in \eqref{2.Alog} is sharp and cannot be improved.
\end{remark}

\subsection{An attractor with infinite log-doubling factor}\label{s4.3}

In this section, we explain, following to \cite{EKZ} how to construct (under the assumptions of  Theorem \ref{Th4.1}) the smooth nonlinearity $F(u)$ in such way that the corresponding attractor $\Cal A$ will be not embedded into any finite-dimensional log-Lipschitz manifold. Namely
the following result holds.

\begin{theorem}\label{Th5.1} Let the assumptions of Theorem \ref{Th4.1} holds. Then, for every $L>\max\{\frac12L_0,\lambda_2\}$ there exists a smooth nonlinearity $F(u)$ satisfying \eqref{3.4} such that the attractor $\Cal A$ of problem \eqref{1.eqmain} has an infinite log-doubling factor:
\begin{equation}\label{2.inf}
D_{log}(\Cal A,H)=\infty,
\end{equation}
see Definition \ref{Def2.logD}, and, due to Proposition \ref{Prop2.logfin}, the attractor $\Cal A$ does not possess any Man\'e projections with log-Lipschitz inverse.
\end{theorem}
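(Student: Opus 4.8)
The plan is to refine the construction behind Proposition \ref{Prop4.1} and Theorem \ref{Th4.1}, arranging the nonlinearity so that the attractor $\Cal A$ contains, inside an arbitrarily small neighbourhood of a conveniently chosen point $p_*\in\Cal A$, an infinite-dimensional ``rectangle'' whose $k$th side has super-exponentially small length $\delta_k\sim e^{-\beta k^2}$ along an almost orthogonal eigendirection. An elementary covering count then forces $D_{log}(\Cal A,H)=\infty$, and Proposition \ref{Prop2.logfin} yields the asserted absence of Man\'e projections with log-Lipschitz inverse. More precisely, by Definition \ref{Def2.logD} it is enough to produce, for every prescribed $m\in\Bbb N$, arbitrarily small $\eb>0$ with $D_\eb(\Cal A,H)\ge(\log\frac1\eb)^m$.

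The dynamical engine will again be the super-exponential decay $\|P^N\|_{\Cal L(H,H)}\le Ce^{-\beta N^2}$ of the Poincar\'e map $P$ of Proposition \ref{Prop4.1}, together with its ``infinite Jordan cell'' structure \eqref{4.5}, which contracts the $k$th generation of modes by a factor $\sim e^{-\beta k^2}$ after $k$ periods. First I would enrich the coupling $R$ of \eqref{4.23} --- reserving, if necessary, countably many additional modes and additional scalar order-parameter variables playing the role of $(x,y)$ --- by inserting a sequence of hyperbolic equilibria $p_k\in\Cal A$ whose unstable sets are steered into the region where the $\Cal R(x(t))$-cocycle acts. Each $p_k$ is to contribute exactly one extra free direction and a complete bounded trajectory which, at the distinguished time $t=0$, has amplitude $\delta_k\sim e^{-\beta k^2}$ in a prescribed mode $e_{i_k}$; the backward continuation exists because, read backward in time, the $\Cal R$-dynamics collapses these coordinates super-exponentially onto the ``core'' $\gamma\times\{0\}$. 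Superposing the relevant unstable sets --- which is legitimate since, by the choice of $\Cal R$, the pertinent modes decouple along the arcs of orbit that matter --- should place inside $\Cal A$, based at $p_*$, the rectangle
\begin{equation*}
\Pi_\infty:=\Bigl\{\,p_*+\sum_{k\ge1}t_k\,\delta_k\,e_{i_k}\ :\ t_k\in[0,1]\,\Bigr\},\qquad \delta_k\sim e^{-\beta k^2},
\end{equation*}
where $\{e_{i_k}\}$ is an orthonormal subfamily of the eigenbasis. Throughout, the time-rescaling device from the proof of Theorem \ref{Th4.1} keeps the nonlinearity smooth, dissipative, and globally Lipschitz with constant as close as desired to $L>\max\{\tfrac12 L_0,\lambda_2\}$.

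The last step is the covering estimate. Fixing $m$ and a small $\eb>0$, put $M(\eb):=\#\{k\ :\ \delta_k\ge 2\eb\}$, so that $M(\eb)\sim\beta^{-1/2}(\log\frac1\eb)^{1/2}$ as $\eb\to0$. Since $\Pi_\infty\cap B(\eb,p_*,H)$ contains a sub-box over the first $M(\eb)$ directions, which in turn contains $\gtrsim 2^{M(\eb)}$ points that are pairwise at distance $>\eb$ in $H$, any covering of $\Cal A\cap B(\eb,p_*,H)$ by $\eb/2$-balls needs at least $2^{M(\eb)}$ balls; hence $D_\eb(\Cal A,H)\ge 2^{M(\eb)}$ and
\begin{equation*}
\frac{\log D_\eb(\Cal A,H)}{\log\log\frac1\eb}\ \ge\ \frac{(\log 2)\,M(\eb)}{\log\log\frac1\eb}\ \sim\ \frac{(\log 2)\beta^{-1/2}(\log\frac1\eb)^{1/2}}{\log\log\frac1\eb}\ \longrightarrow\ \infty\quad(\eb\to0),
\end{equation*}
which eventually exceeds $(\log\frac1\eb)^m$; thus $D_{log}(\Cal A,H)=\infty$, and Proposition \ref{Prop2.logfin} excludes any Man\'e projection of $\Cal A$ with log-Lipschitz inverse.

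I expect the genuinely delicate step to be the dynamical construction realizing the full rectangle $\Pi_\infty$ inside $\Cal A$ while respecting the Lipschitz bound \eqref{3.4}: the countable family of saddles (or, equivalently, a single mechanism with infinitely many unstable directions) has to be designed so that all the associated unstable sets are simultaneously bounded, are attached correctly to the $\Cal R$-cocycle, keep the added directions almost orthogonal, and are thinned at exactly the rate $e^{-\beta k^2}$ furnished by \eqref{4.5} --- all without the coupling $R$ either destroying the super-exponential contraction or inflating the global Lipschitz constant beyond $L$. By contrast, once $\Pi_\infty\subset\Cal A$ is secured, the covering bound and the appeal to Proposition \ref{Prop2.logfin} are routine.
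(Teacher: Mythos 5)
Your plan hinges on placing the full infinite-dimensional brick $\Pi_\infty=\{p_*+\sum_k t_k\delta_k e_{i_k}:\ t_k\in[0,1]\}$ with $\delta_k\sim e^{-\beta k^2}$ inside $\Cal A$, and this is where the approach fails: such a brick has \emph{infinite fractal dimension} (at scale $\eb=e^{-\beta M^2}$ its covering entropy is of order $\sum_{k\le M}\beta(M^2-k^2)\sim\beta M^3$, so $\Bbb H_\eb/\log\frac1\eb\sim M\to\infty$), whereas for the class of equations in the theorem — smooth, globally Lipschitz $F$, cut off to be bounded and dissipative — Theorem \ref{Th2.fdim}/Proposition \ref{Prop2.dim} force $\dim_F(\Cal A,H)<\infty$. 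So no choice of coupling can realize $\Pi_\infty\subset\Cal A$; indeed the whole point of the paper's counterexample is an attractor of finite fractal dimension with infinite log-doubling factor. The paper therefore embeds a much thinner structure: for each $n$, only the $2^{[\sqrt n]}$ vertices of a $[\sqrt n]$-dimensional cube of size $\sim e^{-\beta n^2}$, each vertex realized by its own trajectory $v_{s}$ with $s$ in one of the $2^{[\sqrt n]}$ subintervals $I_{n,p}$ of the segment $[1-\kappa,1]$ of initial data $(x(0),y(0),se_1)$ lying on an unstable manifold. Your mechanism (countably many saddles, each adding one unstable direction, then ``superposing'' unstable sets) also cannot produce a product structure based at a single point: unions of finite-dimensional unstable sets are not products, every equilibrium of \eqref{1.eqmain} has only finitely many unstable directions, and, crucially, the super-exponential decay of Proposition \ref{Prop4.1} is strongly non-uniform across modes — by Lemma \ref{Lem4.3} rates within a window of width $[\sqrt n]$ around mode $n$ agree only up to $e^{\gamma n^{3/2}}$, which is exactly why the paper can keep cube dimension $[\sqrt n]$ (not $n$, let alone $\infty$) and why prescribed relative amplitudes $\delta_k\sim e^{-\beta k^2}$ along a single orbit cannot be maintained.

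There is also a flaw in your covering count, even granting the embedding. The sub-box over the first $M(\eb)$ directions is not contained in $B(\eb,p_*,H)$ (its longest side is $O(1)$), and if you shrink it to fit inside the $\eb$-ball its $2^{M}$ vertices are no longer pairwise $\eb$-separated (adjacent vertices are at distance $\sim\eb/\sqrt{M}$), so the bound $D_\eb\ge 2^{M(\eb)}$ does not follow as stated. The paper handles precisely this point with Proposition \ref{Prop5.double}: the $2^n$ vertices of an $n$-cube of side $\eb_n$ are $\eb_n$-separated but lie in a ball of radius $\eb_n\sqrt n$, and a pigeonhole argument over the dyadic scales between $\eb_n/2$ and $\eb_n\sqrt n$, using $\sup_xN_{\eb/4}(K\cap B(\eb,x,H),H)\le D_\eb(K,H)D_{\eb/2}(K,H)$, yields $D_\eb\ge 2^{n/\log_2 n}$ at some intermediate scale; combined with $\eb_n\ge e^{-\beta n^2}$ and cube dimension $[\sqrt n]$ this gives \eqref{2.inf}. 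So both the geometric counting lemma and, more importantly, the dynamical realization of the separated vertex configurations (which you yourself flag as the delicate step and leave unconstructed) are missing, and the specific object you propose to construct cannot exist in this setting.
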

The detailed proof of this theorem is given in \cite{EKZ}. Since it is rather technical, we will not reproduce it here. Instead, we restrict ourselves by the informal indication of  the main ideas behind this proof.
\par
As the first step, we discuss the what structures inside of a compact set $\Cal A$ may guarantee that the Bouligand dimension or logarithmic doubling factor is infinite. A natural choice here are the so-called orthogonal sequences and their modifications which play an essential role in various counterexamples in the dimension theory, see \cite{hunt,24,28} and references therein. Namely, let $\eb_n>0$ be a non-increasing sequence converging to zero. Consider the compact set in $H$
\begin{equation}\label{5.set}
K:=\cup_{n\in\Bbb N}\eb_n[-1,1]e_n.
\end{equation}
This set consists of a sequence of orthogonal segments $\eb_n[-1,1]e_n$. The Hausdorff dimension of $K$ is always one since $K$ is a countable sum of one-dimensional sets. The fractal dimension of $K$ depends on the rate of convergence of $\eb_n\to0$, but if that rate is sufficiently fast, namely,
\begin{equation}\label{5.fast}
\eb_n\le C n^{-\alpha},\ \  \alpha>1,
\end{equation}
then, as not difficult to see, $\dim_f(K,H)=1$. However, the following result holds.

\begin{proposition}\label{Prop5.inf} The Bouligand dimension of $K$ is always infinite no matter how fast is the decay rate of $\eb_n>0$.
\end{proposition}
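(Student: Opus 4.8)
The plan is to reduce the statement to showing that the \emph{doubling factor} $D(K,H)$ is infinite: by the equivalence recalled just before the proposition (the Bouligand dimension is finite if and only if the doubling factor is finite), $D(K,H)=\infty$ immediately yields $\dim_B(K,H)=\infty$. Thus it suffices to produce, for every $M\in\mathbb N$, a point $x\in K$ and a radius $\eb>0$ for which $N_{\eb/2}(B(\eb,x,H)\cap K,H)\ge M$. The natural choice is $x=0$, which lies in $K$ since $0\in\eb_1[-1,1]e_1\subset K$.

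The construction exploits only orthogonality of the $e_n$. Fix $m\in\mathbb N$ and look at the $m$ points $q_n:=\eb_m e_n$, $n=1,\dots,m$. Since $\eb_n$ is non-increasing, $\eb_m\le\eb_n$ for $n\le m$, so $q_n\in\eb_n[-1,1]e_n\subset K$; moreover $\|q_n\|_H=\eb_m$, so all the $q_n$ lie in $B(\eb,0,H)$ as soon as $\eb>\eb_m$, while $\|q_n-q_k\|_H=\sqrt 2\,\eb_m$ for $n\neq k$. Now pick any $\eb$ with $\eb_m<\eb<\sqrt 2\,\eb_m$ (a nonempty interval because $\eb_m>0$). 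A ball of radius $\eb/2$ has diameter at most $\eb<\sqrt 2\,\eb_m$, hence contains at most one of $q_1,\dots,q_m$, so every covering of $B(\eb,0,H)\cap K$ by $\eb/2$-balls needs at least $m$ of them. This gives $N_{\eb/2}(B(\eb,0,H)\cap K,H)\ge m$ for every $m$, whence $D(K,H)=\infty$ and therefore $\dim_B(K,H)=\infty$.

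The only point to be careful about — and the reason the decay rate of $\eb_n$ plays no role — is that one must argue through the doubling factor (scale ratio $2$) rather than directly through the homogeneity inequality at a general ratio $\eb_2/\eb_1$. If instead one centred the large ball at the origin with radius of the order of some $\eb_m$ and took a tiny $\eb_1$, the best lower bound orthogonality supplies for $N_{\eb_1}$ is the number of "active" segments $\{n:\eb_1\le\eb_n\le\eb_m\}$, and this count can grow extremely slowly in $1/\eb_1$ (like $\log\log(1/\eb_1)$ for doubly exponential $\eb_n$), far too slowly to beat $(\eb_m/\eb_1)^s$. Passing through the doubling factor removes this competition entirely: we are free to tune $\eb$ to the single scale $\eb_m$, and then orthogonality alone forces $m$ balls at that scale, uniformly in the sequence $(\eb_n)$.
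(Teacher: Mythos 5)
Your proof is correct and follows essentially the same route as the paper: take the $m$ mutually orthogonal points $\eb_m e_1,\dots,\eb_m e_m\in K$ at the single scale $\eb_m$, note their pairwise distance $\sqrt2\,\eb_m$, and conclude that covering the ball of radius about $\eb_m$ centered at $0$ by balls of half that radius needs at least $m$ balls, so the doubling factor (and hence the Bouligand dimension) is infinite. Your slightly more careful choice $\eb_m<\eb<\sqrt2\,\eb_m$ is only a cosmetic refinement of the paper's choice $\eb=\eb_m$.
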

\begin{proof}
Indeed, for every $n\in\Bbb N$, the vectors $\eb_ne_1,\eb_n e_2,\cdots,\eb_n e_n$ belong to $K$ (here we have used that $\eb_n$ is non-increasing. Obviously all these vectors are in the $\eb_n$-ball centered at zero. However, $\|\eb_ne_k-\eb_ne_l\|=\sqrt 2\eb_n\delta_{kl}$ and we need at least $n$ balls of radius $\eb_n/2$ are necessary to cover these points. Thus,
\begin{equation}\label{5.notgood}
D_{\eb_n}(K,H):=\sup_{x\in K}N_{\eb_n/2}(K\cap B(\eb_n,x,H),H)\ge n.
\end{equation}
Thus, the doubling factor of $K$ is infinite and, therefore, the Bouligand dimension of $K$ is also infinite.
\end{proof}
Thus, we see that the set $K$ cannot be embedded in a bi-Lipschitz way to any finite-dimensional smooth (Lipschitz) manifold, no matter how fast is the decay rate of $\eb_n$. This demonstrates the principal difference between the Bouligand and fractal dimensions and gives the simplest known obstruction to the existence of bi-Lipschitz embeddings. Let us now consider the log-doubling factor and the log-Lipschitz embeddings. The situation here is slightly different since it will be finite for $K$ if the decay rate of $\eb_n$ is too fast. But if the following condition holds:
\begin{equation}\label{5.toostr}
\limsup_{n\to\infty}\frac{\log n}{\log\log\eb_n^{-1}}=\infty,
\end{equation}
 then estimate \eqref{5.notgood} guarantees that the log-doubling factor is also infinite. This will be true, for instance if $\eb_n\sim e^{-(\log n)^\gamma}$ with $\gamma>1$. Note also that the presence of the whole orthogonal segments  in $K$ is not necessary, it is sufficient that the orthogonal vectors $\eb_n e_{l+1},\cdots,\eb_n e_{l+n}$ belong to $K$ for some $l=l(n)$ and any $n$.
 \par
 However, assumption \eqref{5.toostr} is in fact a great restriction which does not allow $\eb_n$ to decay exponentially fast and that is not appropriate for our purposes. By this reason, we slightly modify the idea with orthogonal sequences by allowing the vortices of the $n$-dimensional cubes of sizes $\eb_n$ to be in $K$. Namely, the following result holds.

\begin{proposition}\label{Prop5.double} Let $g_n$ be the set of $2^n$ vortices of the $n$-dimensional cube $[0,1]^n$ and let the sequence $\eb_n$ satisfy
\begin{equation}\label{5.good}
\limsup_{n\to\infty}\frac n{(\log n)(\log\log\eb_n^{-1})}=\infty.
\end{equation}
Assume that a compact set $K\subset H$ possesses isometric embeddings $\eb_n g_n\hookrightarrow K$ for all $n$.
Then, the logarithmic doubling factor of $K$ is infinite.
\end{proposition}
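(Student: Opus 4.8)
The plan is to reduce the assertion to a purely combinatorial lower bound for the doubling factor of the finite sets $\eb_n g_n$ which sit isometrically inside $K$: I will exhibit, for every large $n$, a single scale $\delta_n$ comparable to $\eb_n$ at which $D_{\delta_n}(\eb_n g_n,H)$ is already exponentially large in $n$, and then feed this into Definition \ref{Def2.logD}. After a harmless rescaling of $H$ we may assume $\operatorname{diam}K<e^{-1}$, so that $\log\log\delta^{-1}>0$ whenever $\delta\le\operatorname{diam}K$; since $K$ is bounded and $\eb_n g_n$ has diameter $\eb_n\sqrt n$, one automatically has $\eb_n\to0$. The first, soft, step is monotonicity of the doubling factor under inclusion: if $x$ denotes (the image of) a vertex of the cube $[0,\eb_n]^n\cong\eb_n g_n\subset K$, then $B(\delta,x,H)\cap K\supseteq B(\delta,x,H)\cap\eb_n g_n$, so a $\delta/2$-covering of the smaller set needs no more balls, and hence
$$
D_\delta(K,H)\ \ge\ \sup_{x\in\eb_n g_n}N_{\delta/2}\big(B(\delta,x,H)\cap\eb_n g_n,H\big).
$$
Thus it suffices to estimate the right-hand side for a well-chosen $\delta=\delta_n$.

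The combinatorial core is as follows. Fix a small absolute constant $\theta\in(0,\tfrac1{32})$, put $r:=\lfloor\theta n\rfloor$ and $\delta_n:=\eb_n\sqrt r$, and take $x=0$ (a vertex). A vertex of Hamming weight $k$ lies at distance $\eb_n\sqrt k$ from $0$, so $B(\delta_n,0,H)\cap\eb_n g_n$ is exactly the set of vertices of weight $\le r$. Two vertices of weight $r$ sharing $i$ coordinates are at distance $\eb_n\sqrt{2(r-i)}$, which exceeds $\delta_n=\eb_n\sqrt r$ precisely when $i<r/2$; a standard greedy counting argument — the number of weight-$r$ subsets of $\{1,\dots,n\}$ meeting a fixed one in at least $r/2$ elements is a vanishingly small fraction of all $\binom nr$ of them once $\theta$ is small enough — produces a family $\mathcal F$ of weight-$r$ vertices with $|\mathcal F|\ge 2^{cn}$ for some absolute $c=c(\theta)>0$ and with pairwise distances $>\delta_n$. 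Since all points of $\mathcal F$ lie in $B(\delta_n,0,H)\cap\eb_n g_n$ and are pairwise more than $\delta_n$ apart, every ball of radius $\delta_n/2$ (diameter $\delta_n$) contains at most one of them, whence $N_{\delta_n/2}(B(\delta_n,0,H)\cap\eb_n g_n,H)\ge|\mathcal F|\ge 2^{cn}$, and therefore $D_{\delta_n}(K,H)\ge 2^{cn}$. (The same kind of conclusion can also be reached, more crudely, by observing that the covering number of $\eb_n g_n$ jumps from $1$, at scales above its diameter $\eb_n\sqrt n$, to $2^n$, at scales below its minimal inter-vertex distance $\eb_n$ — i.e. over only $O(\log n)$ dyadic scales — so that the standard sub-multiplicativity of covering numbers forces $D_r(\eb_n g_n,H)\ge 2^{c_0 n/\log n}$ at some scale $r$ in that range; this weaker bound is precisely the one matched to hypothesis \eqref{5.good}.)

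It remains to conclude. By construction $\delta_n=\eb_n\sqrt r\ge\eb_n$ for $n\ge1/\theta$, so $\delta_n^{-1}\le\eb_n^{-1}$ and hence $\log\log\delta_n^{-1}\le\log\log\eb_n^{-1}$; thus, by Definition \ref{Def2.logD},
$$
D_{log}(K,H)\ \ge\ \frac{\log D_{\delta_n}(K,H)}{\log\log\delta_n^{-1}}\ \ge\ \frac{c\log 2\cdot n}{\log\log\eb_n^{-1}}
$$
for all large $n$. By \eqref{5.good} we have $\limsup_{n\to\infty}\frac{n}{(\log n)(\log\log\eb_n^{-1})}=\infty$, and a fortiori $\limsup_{n\to\infty}\frac{n}{\log\log\eb_n^{-1}}=\infty$, so the right-hand side above is unbounded in $n$. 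Therefore $D_{log}(K,H)=\infty$, and Proposition \ref{Prop2.logfin} then rules out any finite-dimensional bi-log-Lipschitz embedding of $K$ (in particular, any Man\'e projection with log-Lipschitz inverse).

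The only genuinely non-trivial step is the combinatorial core — specifically, the greedy extraction of the exponentially large, pairwise-separated family $\mathcal F$ inside a Hamming ball of linear radius (where one must choose $\theta$ small enough that the "large-overlap" count is a negligible fraction of $\binom nr$), together with the verification that the scale $\delta_n$ at which this happens stays $\ge\eb_n$, which is exactly what keeps $\log\log\delta_n^{-1}$ tied to $\log\log\eb_n^{-1}$ and lets hypothesis \eqref{5.good} be applied. Everything else — the monotonicity of $D_\delta$ under inclusion, the rescaling normalisation, and the final $\limsup$ estimate — is routine.
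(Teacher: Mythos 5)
Your proof is correct, but it takes a genuinely different route from the paper's. The paper argues softly: all $2^n$ vertices lie in a ball of radius $\eb_n\sqrt n$ and are pairwise $\ge\eb_n$ apart, so $\sup_x N_{\eb_n/2}(K\cap B(\eb_n\sqrt n,x,H),H)\ge 2^n$; it then iterates the submultiplicativity $\sup_x N_{\eb/4}(K\cap B(\eb,x,H),H)\le D_\eb(K,H)\,D_{\eb/2}(K,H)$ over the $O(\log n)$ dyadic scales between $\eb_n/2$ and $\eb_n\sqrt n$ and pigeonholes to find \emph{some} scale with $D_\eb\ge 2^{n/\log_2 n}$ — which is exactly why the factor $\log n$ sits in the denominator of \eqref{5.good}. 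You instead work at the single explicit scale $\delta_n=\eb_n\sqrt{\lfloor\theta n\rfloor}$ and extract, by a Gilbert--Varshamov-type greedy/entropy count inside a Hamming ball, $2^{cn}$ weight-$r$ vertices pairwise more than $\delta_n$ apart, so that $D_{\delta_n}(K,H)\ge 2^{cn}$ with no $\log n$ loss. This buys a strictly stronger conclusion — your estimate shows the log-doubling factor is infinite already when $\limsup n/\log\log\eb_n^{-1}=\infty$, i.e.\ \eqref{5.good} could be weakened by dropping the $\log n$ — at the price of the combinatorial core, which you only sketch (the entropy computation showing that weight-$r$ sets meeting a fixed one in $\ge r/2$ elements form an exponentially small fraction of $\binom nr$ for $\theta$ small is standard and does go through, but it is the one step you should write out). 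Your parenthetical ``cruder'' variant, jumping from one covering ball to $2^n$ over $O(\log n)$ dyadic scales, is essentially the paper's proof. Two cosmetic points: the vertices of weight exactly $r$ sit at distance exactly $\delta_n$ from the origin, so either take closed balls or enlarge $\delta_n$ marginally; and the rescaling to $\operatorname{diam}K<e^{-1}$ is harmless only because the infinitude of $D_{log}$ is governed by small scales, where $\log\log(C/\eb)\sim\log\log(1/\eb)$ — worth a sentence rather than the word ``harmless''.
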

\begin{proof} Indeed, since $\eb_ng_n$ is isometrically embedded in $K$ all $2^n$-vortices are in the ball of radius $\eb_n\sqrt n$ in K and all pairwise distances between them are not less than $\eb_n$. Thus,
\begin{equation}\label{5.nice}
\sup_{x\in K}N_{\eb_n/2}(K\cap B(\eb_n\sqrt n,x,H),H)\ge 2^n.
\end{equation}
Using now the obvious formula that
$$
\sup_{x\in K}N_{\eb/4}(K\cap B(\eb,x,H),H)\le D_\eb(K,H)D_{\eb/2}(K,H),
$$
we can conclude from \eqref{5.nice} that there is at least one $\eb\in[\eb_n/2,\eb_n\sqrt n]$ such that
$$
D_\eb(K,H)\ge 2^{\frac n{\log_2 n}}
$$
and this estimate together with assumption \eqref{5.good} imply that the log-doubling factor of $K$ is infinite and finishes the proof of the proposition.
\end{proof}
We see that, in contrast to \eqref{5.toostr}, assumption \eqref{5.good} allows us to take $\eb_n\ge e^{-e^{n^\gamma}}$ for all $\gamma<1$ and the
typical size of $\eb_n$ in our counterexample will be $\eb_n\sim e^{-Cn^2}$, see below, so Proposition \ref{Prop5.double} is more than enough for our purposes.
\par
Now we turn to the more interesting and more challenging question: how to embed the sets like \eqref{5.set} or/and the vortices $\eb_n g_n$ of the $n$-dimensional cubes into the global attractor $\Cal A$ of equation \eqref{1.eqmain}? We start from the most straightforward construction which does not solve the problem, but nevertheless is an important step in that direction.

\begin{example}\label{Ex5.not}
Let us consider the following 2D system:
\begin{equation}\label{7.1}
\begin{cases}
x'=-x(x^2+y^2-1),\\
y'=-y(x^2+y^2-1).
\end{cases}
\end{equation}
This system can be rewritten in polar coordinates $x+i y=Re^{i\varphi}$ as follows:
\begin{equation}\label{7.2}
R'=-R(R^2-1),\ \ \varphi'=0.
\end{equation}
Thus, the global attractor $\Cal A_0\subset\R^2$ of this system consists of all points $R\in[-1,1]$, $\varphi\in[0,2\pi]$ and is a ball $x^2+y^2\le1$. Moreover, zero is an unstable equilibrium, $\varphi=const$ is the first integral, so the trajectories are the straight lines (radii) passing through zero and every point on a circle $x^2+y^2=1$ is a stable equilibrium.
\par
 We will couple this ODE with an equation in $H$
$$
\Dt w+Aw=F(x,y)
$$
by constructing the coupling  $F$ which excites every mode $e_n$ by  the "proper kick"
in order to embed the orthogonal segments in the directions of $e_n$ into the attractor.
To this end,
we now split the segment $\varphi\in[0,2\pi]$ into  infinite number of intervals $I_n$ with $|I_n|=E_n$, $n=1,2\cdots$ satisfying
 \begin{equation}\label{7.3}
\sum_{n=1}^\infty E_n<2\pi.
\end{equation}
Roughly speaking, the solutions of \eqref{7.1} with $\varphi\in I_n$ will be coupled with the $n$-th mode $e_n$ for producing the $n$th orthogonal segment. To be more precise, we fix $\varphi_n\in I_n$ and a family of the cut-off functions $\psi_n\in C_0^\infty(\R)$ such that $\psi_n(\varphi_k)=\delta_{nk}$ and
\begin{equation}\label{7.4}
\|\psi_n\|_{C^k(\R)}\le C_k E_n^{-k}, \ n =1,2,...,
\end{equation}
where the constant $C_k$ is independent of $n$, and the cut-off function $\theta \in C_0^\infty(\R)$  such that
\begin{equation}\label{7.5}
\theta(R)\equiv0,\ \ R\le1/4,\ \ \theta(R)\equiv1,\ \ R\in[1/2,1].
\end{equation}
Finally, we introduce the operator $F: \R^2\to H$ via
\begin{equation}\label{7.6}
F(x,y):=\sum_{n=1}^\infty B_n\theta(x,y)\psi_n(x,y)e_n,
\end{equation}
where the monotone decreasing to zero sequence $B_n$ will be specified below, and consider the coupled
system
 \begin{equation}\label{7.7}
\begin{cases}
x'=-x(x^2+y^2-1),\ \ y'=-y(x^2+y^2-1),\\
\Dt w+A w=F(x,y).
\end{cases}
\end{equation}
The elementary properties of this system are collected in the following lemma:

\begin{lemma}\label{Lem7.1} Let $H^s:=D(A^{s/2})$ be the scale of H-spaces generated by the operator $A$. Then,
\par
1) The map $F$ defined by \eqref{4.6} belongs to
$C^k(\R^2,H^s)$ iff
\begin{equation}\label{7.8}
\sup_{n\in\Bbb N}\{B_n\lambda_n^s E_n^{-k}\}<\infty.
\end{equation}
\par
2) The initial value problem for the equation \eqref{7.7} possesses a global attractor $\Cal A$ in $\R^2\times H$ which contains the following sequence of equilibria:
 \begin{equation}\label{7.9}
P_n:=(\cos\varphi_n,\sin\varphi_n,B_n\lambda_n^{-1}e_n)\in\Cal A
\end{equation}
for $n=1,2,\cdots$. In addition, let $Q_2:\R^2\times H\to H$ be the orthoprojector to the $w$-component of \eqref{7.7}. Then, the projection
$Q_2\Cal A$ contains the following sequence of segments:
 \begin{equation}\label{7.10}
S_n:=\{se_n,\ \ s\in[0,B_n\lambda_n^{-1}]\}\subset Q_2\Cal A.
\end{equation}
\end{lemma}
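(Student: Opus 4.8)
The plan is to prove the two assertions separately; each reduces to a direct computation once the structure of the coupling $F$ in \eqref{7.6} is unpacked, the key structural fact being that the cut-off functions $\psi_n$ may be taken with pairwise disjoint supports, $\supp\psi_n\subset I_n$ (which is possible thanks to \eqref{7.3}). For assertion 1), note first that because of this disjointness, for every fixed $(x,y)$ at most one term of the series \eqref{7.6} is non-zero, and the same is true for every partial derivative of $F$ of order $\le k$. Hence, using the Parseval-type identity \eqref{1.hs} (so that the $H^s$-norm of the $n$-th Fourier term equals $\lambda_n^{s/2}$ times its coefficient), one finds that the $C^k(\R^2,H^s)$-norm of $F$ is comparable to $\sup_n B_n\lambda_n^{s/2}\,\|\theta\psi_n\|_{C^k(\R^2)}$. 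Combining this with the bound \eqref{7.4} on $\|\psi_n\|_{C^k}$ and with the fact that $\theta$ is a fixed compactly supported smooth function yields that \eqref{7.8} is sufficient for $F\in C^k(\R^2,H^s)$; conversely, testing $F$ and its derivatives near the points where $\psi_n=1$ (for the usual rescaled-bump choice of $\psi_n$ one has $\|\psi_n\|_{C^k}\asymp E_n^{-k}$) shows that \eqref{7.8} is also necessary.

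For assertion 2), I would first observe that, by assertion 1) with $k=s=0$ and since $B_n$ decreases to $0$, the map $F:\R^2\to H$ is bounded and continuous. The $(x,y)$-equation is dissipative — from $\frac{d}{dt}(x^2+y^2)=-2(x^2+y^2)(x^2+y^2-1)$ the disc $x^2+y^2\le1$ is globally attracting — and, after the indicated cut-off for large $(x,y)$, the whole nonlinearity of \eqref{7.7} is globally Lipschitz, so a solution semigroup is well defined. Testing the $w$-equation against $w$ and using $\|w\|_{H^1}^2\ge\lambda_1\|w\|_H^2$ gives dissipativity in $H$, while testing against $Aw$ (or the parabolic smoothing estimate used in Lemma \ref{Lem2.smo}) gives an absorbing set bounded in $H^1$, hence precompact in $H$ because $A^{-1}$ is compact. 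Thus the hypotheses of Proposition \ref{Prop2.attr} hold and a global attractor $\Cal A\subset\R^2\times H$ exists, which by \eqref{2.k} is the union of all complete bounded trajectories. That $P_n$ is an equilibrium is immediate: $(\cos\varphi_n,\sin\varphi_n)$ lies on $x^2+y^2=1$, so its $(x,y)$-component is stationary, and $F(\cos\varphi_n,\sin\varphi_n)=B_n\theta(1)\psi_n(\varphi_n)e_n=B_ne_n=A(B_n\lambda_n^{-1}e_n)$ by \eqref{7.5} and $\psi_n(\varphi_n)=1$; hence $P_n\in\Cal A$, and similarly the origin is an equilibrium lying in $\Cal A$.

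It remains to show $S_n\subset Q_2\Cal A$. Restricting to the half-line $\varphi\equiv\varphi_n$ (invariant since $\varphi'=0$), the forcing becomes $F=B_n\theta(R)e_n$ because $\psi_m(\varphi_n)=\delta_{mn}$; therefore the two-dimensional subspace $\{(R\cos\varphi_n,R\sin\varphi_n,w_ne_n)\}$ is invariant for \eqref{7.7} and carries the planar system $\dot R=-R(R^2-1)$, $\dot w_n+\lambda_n w_n=B_n\theta(R)$. On it the origin is a hyperbolic equilibrium whose unstable direction is the $R$-axis, and one checks that its one-dimensional unstable manifold is a single heteroclinic orbit running from the origin to $P_n$ along which $w_n$ stays in $[0,B_n\lambda_n^{-1}]$ (the interval is forward-invariant for the $w_n$-equation since $\dot w_n\ge0$ when $w_n=0$ and $\dot w_n\le0$ when $w_n=B_n\lambda_n^{-1}$, using $0\le\theta\le1$). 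This orbit is complete and bounded, hence contained in $\Cal A$; its closure, obtained by adding the two limiting equilibria, is a connected compact subset of $\Cal A$ whose image under $Q_2$ is a connected subset of $\R e_n$ containing $0$ and $B_n\lambda_n^{-1}e_n$, and therefore equals $S_n$. The one step that is not pure bookkeeping — and the natural candidate for the hard part, though it is still elementary — is this identification of the invariant planar subsystems and of the heteroclinic connections joining the origin to $P_n$; everything else follows from the standard dissipative estimates recalled in Paragraph \ref{s2.1}.
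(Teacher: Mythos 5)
Your proof is correct and follows essentially the same route as the paper's: for assertion 1) the observation that at most one term of \eqref{7.6} is non-zero at each point together with the bound \eqref{7.4}, and for assertion 2) the explicit equilibria $P_n$ plus heteroclinic orbits in the invariant planes $\varphi\equiv\varphi_n$ whose $w$-components sweep out the segments $S_n$, with the routine attractor-existence and invariant-interval details simply written out in full. The one point where you deviate from the paper's wording is actually a correction: your heteroclinic runs from the origin $(0,0,0)$ to $P_n$, which is the right statement, since $(\cos\varphi_n,\sin\varphi_n,0)$ is not an equilibrium of \eqref{7.7} and the paper's phrase ``connecting $(\cos\varphi_n,\sin\varphi_n,0)$ and $P_n$'' is loose.
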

\begin{proof} Indeed, the first assertion of the lemma is an immediate corollary of the definition \eqref{7.6}, the estimate \eqref{7.4} and the fact that,
 for every fixed $(x,y)\in\Bbb R^2$, at most one term in \eqref{7.6} is non-zero. To verify \eqref{7.9} it remains to note that, due to the definition of the cut-off functions and the operator $F$, $P_n$ is an equilibrium of \eqref{7.7} for every $n\in\Bbb N$ and \eqref{7.10} follows from the fact that there is a heteroclinic orbit connecting $(\cos\varphi_n,\sin\varphi_n,0)$ and $P_n$, again for every $n\in\Bbb N$.
\end{proof}
Embedding \eqref{7.10} and Proposition \ref{Prop5.inf} give that
\begin{equation}\label{7.11}
D(Q_2\Cal A,H)=\infty
\end{equation}
if the decreasing sequence $B_n$ is strictly positive and, by this reason, $Q_2\Cal A$ cannot be embedded into any finite-dimensional Lipschitz manifold. To show that the log-doubling factor of $Q_2\Cal A$ can be also infinite, one just needs to be a bit more accurate with the choice of $E_n$ and $B_n$.
\par
Let $\lambda_n\sim n^\kappa$, $\kappa>0$ (which corresponds to the case where $A$ is an elliptic  differential operator in a bounded domain), $E_n:=1/n^2$ and
 $$
 B_n\sim e^{-\log^{2} n}=n^{-\log n}.
 $$
  Then, since $B_n$ decays faster than polynomially, according to \eqref{7.8}, the non-linearity $F$ in equations  \eqref{7.7} belongs to $C^\infty(\R^2,H^s)$ for any $s\in\R$. On the other hand, assumption \eqref{5.toostr} is satisfied for for segments \eqref{7.10}, so the log-doubling factor is indeed infinite:
 \begin{equation}\label{7.12}
D_{log}(Q_2\Cal A,H)=\infty.
\end{equation}

\begin{remark}\label{Rem7.3} Note that the equality \eqref{7.12} does not hold for
 the attractor $\Cal A$ itself, but only for its "bad" projection $Q_2\Cal A$. In addition, modifying equation \eqref{7.7} as follows
  \begin{equation}\label{7.14}
\begin{cases}
x'=-\beta x(x^2+y^2-1),\ \ y'=-\beta y(x^2+y^2-1),\\
\Dt w+A w=\beta F(x,y),
\end{cases}
\end{equation}
where $\beta\ll1$ is a small positive parameter, we see that the above arguments still work and the log-doubling factor of $Q_2\Cal A$ is infinite, but now the spectral gap condition is satisfied and there is a $C^1$-smooth inertial manifold diffeomorphic to $\R^2\ni(x,y)$ containing the global attractor $\Cal A$ (by choosing  $\beta>0$ small enough this manifold can be made of the class $C^k$ for any fixed $k>0$). Thus, the possibility to
embed the attractor smoothly into the finite-dimensional manifold indeed can be lost under the "badly chosen" orthogonal projection. The above described construction can be also used in order to give an example of equation \eqref{1.eqmain} with the nonlinearity $F$ of finite smoothness such that
$$
\dim_H(\Cal A,H)=\dim_F(\Cal A, H)=\dim_H(\Cal A,H^3)=2,\ \ \dim_F(\Cal A,H^3)=\infty,
$$
so at least the fractal dimension can indeed depend strongly on the choice of the phase space,
see \cite{EKZ} for the details.
\end{remark}
\end{example}
The previous example gives us the almost desired embedding of the orthogonal segments $S_n$ into the attractor. However, this is still not enough to verify that the log-doubling factor of $\Cal A$ is infinite. The problem is that $\Cal A$ now lives in the extended space $(x,y,w)\in\R^2\times H$ and, although in the $H$-component the orthogonal sequence is built up, the inverse image of every $S_n$ on the attractor has also the $(x,y)$-component and this $(x,y)$-component is dominating (essentially larger than the $H$-component), so the arguments of Proposition \ref{Prop5.inf} do not work and the doubling factor of $\Cal A$ is finite. Thus, the next natural question is how to suppress/remove the non-desirable $(x,y)$-component and to built up the true orthogonal sequences/segments in the attractor?
\par
The next example shows that it is possible to do in the case where the $(x,y)$-modes decay {\it faster} than all other modes $e_n$. Indeed, then even if at the initial time, say, $t=0$, the $(x,y)$-component is dominating, it nevertheless will be suppressed during the time evolution and the true orthogonal sequences will appear. Note that such behavior of the modes, in particular, the existence of infinitely many modes which decay slower than, say, the first two is typical for the case of damped hyperbolic equations, so the next example will be related with that class of equations and the analogous construction for the case of reaction-diffusion problems will be discussed later.

\begin{example}\label{Ex7.hyp} Consider the 2D system
\begin{equation}\label{7.ode}
x'=-x(x^2+y^2-1)^2,\ \ y'=-y(x^2+y^2-1)^2
\end{equation}
which differs form \eqref{7.1} by the presence of squares. As not difficult to see, the attractor $\Cal A_0$ of this system is exactly the same as for equations \eqref{7.1}: it is just a disk $x^2+y^2\le1$ but the time evolution on it is reversed. Namely, the circle $x^2+y^2=1$ is now filled by unstable equilibria, their unstable manifolds are the radii $\phi=const$ and zero is a stable equilibrium.
\par
We couple these equations with the following second order abstract hyperbolic equation:
\begin{equation}\label{7.pde}
\Dt^2 w+2\gamma\Dt w+Aw=F(x,y),
\end{equation}
where $F(x,y)$ is exactly the same as in Example \ref{Ex5.not} and $0<\gamma<1$ is a dissipation parameter.
\par
Then, exactly as in Lemma \ref{Lem7.1}, we have a family of equilibria \eqref{7.9} together with the family of heteroclinic orbits $u_n(t):=(x_n(t),y_n(t),w_n(t))$ connecting $P_n$ with zero. We fix time parametrization on that orbits in such way that
$$
R_n(0):=\sqrt{x_n^2(0)+y_n^2(0)}=1/2.
$$
Then, according to our construction of the cut-off functions,
$$
w_n(0)=B_n\lambda_n^{-1}e_n,\ \  \Dt w_n(0)=0.
$$
Moreover, since near zero equations \eqref{7.ode} and \eqref{7.pde} are decoupled and look like
$$
R'\sim-R,\ \ w_n''(t)+2\gamma w_n'(t)+\lambda_nw_n(t)=0,
$$
we conclude also that
\begin{equation}\label{7.decay}
R(t)\sim Ce^{- t},\ \ \sqrt{\lambda_n[w_n(t)]^2+[w_n'(t)]^2}\sim CB_n\lambda_n^{-1}e^{-\gamma t}
\end{equation}
as $t\to\infty$, where the constant $C$ is independent of $n$ (since the function $w_n(t)$ has only one non-zero component which corresponds to the mode $e_n$, in slight abuse of notations, we will not distinguish between $w_n\in H$ and its $n$th component $w_n\in\R$).
\par
We see that indeed initially at $t=0$ the $R$-component is essentially larger than the other ones, but since $\gamma<1$, the $R$-component becomes essentially smaller than the $w_n$-component if $t\ge T=T_n$ where
$$
e^{-T}\sim B_n\lambda_ne^{-\gamma T},\ \ T\sim \frac1{1-\gamma}\log(\lambda_n B_n^{-1}),\ B_n\lambda_n^{-1}e^{-\gamma T} \sim\(B_n\lambda_n^{-1}\)^{\frac1{1-\gamma}}.
$$
Thus, if we consider a sequence $\{u_1(t^1_n),u_2(t^2_n),\cdots, u_n(t^n_n)\}$ belonging to the attractor, where $t^k_n\ge T_n$ are chosen in such way that
$$
\sqrt{\lambda_k[w_k(t^k_n)]^2+[w_k'(t^k_n)]^2}=\(B_n\lambda_n^{-1}\)^{\frac1{1-\gamma}},
$$
then all $(x,y)$ components will be essentially smaller than the $H$-components, so they can be neglected and we have a sequence close to $\{\eb_n e_1,\cdots \eb_n e_n\}$ with $\eb_n\sim \(B_n\lambda_n^{-1}\)^{\frac1{1-\gamma}}$. Then, fixing  $B_n$, $E_n$ and $\lambda_n$ as in Example \ref{Ex5.not}, we see that assumption \eqref{5.toostr} is satisfied and, therefore, the corresponding attractor $\Cal A$ indeed has an infinite
log-doubling factor. Note also that, as in Example \ref{Ex5.not}, the nonliearity $F$ is $C^\infty$-smooth. Thus, the first example of the attractor with infinite log-doubling factor and without bi-log-Lipschitz Man\'e projections is constructed.
\end{example}
\begin{remark}\label{Rem7.hyp} Although the damped hyperbolic equations are also natural and important from both theoretical and applied points of view, their systematic study is out of scope of these notes. Mention only that their properties are essentially different from the abstract parabolic equations considered before, in particular, they do not possess any smoothing on a finite time interval, moreover, the solution operators are usually invertible and form a group, not only a semigroup, so the Romanov theory (see Theorem \ref{Th2.rom}) clearly does not work here, etc. We refer the reader to \cite{tem,BV,CV} and the references therein for more details on this type of dissipative PDEs.
\par
Thus, our reason to present Example \ref{Ex7.hyp} was not to give a rigorous exposition of the theory of hyperbolic equations and even not to give a rigorous statement of the result, but only to indicate the main idea how the almost orthogonal sequences can naturally appear in the attractor if the so-called normal hyperbolicity is sevearly violated and we have infinitely many modes which decay essentially slower than the fixed one. The damped hyperbolic equations are {\it ideal} from this point of view since the spectrum of \eqref{7.pde} with $F=0$ is given by
$$
\mu_n=-\gamma\pm\sqrt{\gamma^2-\lambda_n}
$$
and mostly belongs to the line $\Ree \lambda=-\gamma$. So, you just need to generate the first mode which decays faster than $e^{-\gamma t}$ which is done by adding the ODEs \eqref{7.ode}. One more advantage of this example is that it can be easily realized not only on the level of {\it abstract} operator-valued equations, but also on the level of hyperbolic PDEs with "usual" nonlinearities. The analogous question in the parabolic setting is much more difficult and is still an open problem.
\end{remark}
We now return to the case of abstract parabolic equations \eqref{1.eqmain}. At first glance, it looks extremely difficult/impossible to generate something similar to Example \ref{Ex7.hyp} on the level of parabolic equations. Indeed,  the structure of the linear part of the equation is completely different and we now have a sequence of real eigenvalues $-\lambda_n\to-\infty$, so how can we generate an infinitely many modes which will decay {\it slower} than a fixed mode?
\par
That is indeed impossible if we try to do that based on the linearization near an {\it equilibrium}, however, if we replace an equilibrium by the properly chosen {\it periodic orbit} it becomes possible and the surprising counterexample to the Floquet theory constructed in the previous paragraph is extremely helpful again. Indeed, as the next lemma shows, although all solutions of equation \eqref{4.2} decay super-exponentially (as $e^{-t^2}$), the decay rate is nevertheless essentially non-uniform (roughly speaking, the trajectories starting from $v(0)=e_{2n}$ decay essentially slower than the ones starting from $v(0)=e_{2n+1}$) and that will be enough to reproduce the main structure of Example \ref{Ex7.hyp} and to obtain the attractor with infinite log-doubling factor.

\begin{lemma}\label{Lem4.3} Let $P$ be the Poincare map associated with the problem \eqref{4.2}. Then, for every
 $n\in\Bbb N$, $n\le s_1,s_2\le n+k$, $k:=[\sqrt{n}]$ and $N=2n+k$, the following estimates hold:
\begin{equation}\label{7.non-uniform}
\frac{\|P^Ne_1\|}{\|P^Ne_{2s_1}\|}\le e^{-\beta n^2},\ \ e^{-\gamma n^{3/2}}\le\frac{\|P^Ne_{2s_1}\|}{\|P^Ne_{2s_2}\|}\le e^{\gamma n^{3/2}}
\end{equation}
for some positive constants $\beta$ and $\gamma$ and all $n$ large enough
instead of its integer part).
\end{lemma}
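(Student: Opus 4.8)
The plan is to reduce the statement to the explicit description of the Poincar\'e map $P$ given in Proposition \ref{Prop4.1} and then to estimate elementary products of the multipliers $\mu_m$. Recall from \eqref{4.5} that $P$ moves odd modes upward, $Pe_{2m-1}=\mu_{2m-1}e_{2m+1}$, and even modes downward, $Pe_{2m}=\mu_{2m-2}e_{2m-2}$ for $m>1$, $Pe_2=\mu_0e_1$; iterating this one gets immediately
\begin{equation*}
P^Ne_1=\Bigl(\prod_{j=1}^{N}\mu_{2j-1}\Bigr)e_{2N+1},\qquad
P^Ne_{2s}=\Bigl(\prod_{i=0}^{s-1}\mu_{2i}\Bigr)\Bigl(\prod_{j=1}^{N-s}\mu_{2j-1}\Bigr)e_{2(N-s)+1}
\end{equation*}
whenever $N\ge s$ (the orbit of $e_{2s}$ first descends to $e_2$ in $s-1$ steps, turns around to $e_1$, and then ascends through the odd modes). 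Since the $e_m$ are unit vectors, the norms occurring in \eqref{7.non-uniform} are exactly these products. Writing $\ell_m:=-T^{-1}\log\mu_m$, so that $\ell_m=(\lambda_m+2\lambda_{m+1}+\lambda_{m+2})/2$ for $m\ge1$ is nondecreasing and satisfies $2\lambda_m\le\ell_m\le2\lambda_{m+2}$, hence is comparable to $m$ by \eqref{4.3}, all the quantities to be estimated become differences of sums of $\ell$'s, and the whole lemma is a calculation with these sums.

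For the first estimate, cancelling the common factor $\prod_{j=1}^{N-s_1}\mu_{2j-1}$ gives
\begin{equation*}
\frac{\|P^Ne_1\|}{\|P^Ne_{2s_1}\|}=\frac{\prod_{j=N-s_1+1}^{N}\mu_{2j-1}}{\prod_{i=0}^{s_1-1}\mu_{2i}}
=\exp\Bigl(-T\Bigl[\sum_{j=N-s_1+1}^{N}\ell_{2j-1}-\sum_{i=0}^{s_1-1}\ell_{2i}\Bigr]\Bigr).
\end{equation*}
Both bracketed sums have $s_1$ terms; the first runs over odd indices $\ge2(N-s_1)+1$ and the second over even indices $\le2(s_1-1)$. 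Using $\sum_{j=N-s_1+1}^{N}(2j-1)=s_1(2N-s_1)$ and $\sum_{i=0}^{s_1-1}(2i+2)=s_1(s_1+1)$ together with $2c_2 m\le\ell_m\le 2c_1(m+2)$ and the specific choices $N=2n+k$, $n\le s_1\le n+k$, $k=[\sqrt n]$ (so $2N-s_1\ge 3n$ and $s_1+1\le n+k+1$), the bracket is bounded below by $2s_1\bigl[c_2(2N-s_1)-c_1(s_1+1)\bigr]\ge cn^2$ for a constant $c>0$, the $k$-dependent corrections being $O(n^{3/2})=o(n^2)$. Hence $\|P^Ne_1\|/\|P^Ne_{2s_1}\|\le e^{-\beta n^2}$ with $\beta>0$.

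The second estimate is softer. Assuming $s_1\le s_2$ and again cancelling common factors,
\begin{equation*}
\frac{\|P^Ne_{2s_1}\|}{\|P^Ne_{2s_2}\|}=\frac{\prod_{j=N-s_2+1}^{N-s_1}\mu_{2j-1}}{\prod_{i=s_1}^{s_2-1}\mu_{2i}},
\end{equation*}
a quotient of two products of exactly $s_2-s_1\le k\le\sqrt n$ factors, and for $N=2n+k$, $n\le s_1,s_2\le n+k$ all the $\mu$-indices appearing here lie in an interval of the form $[2n,2n+2k]$, so each factor is $e^{-T\ell_m}$ with $\ell_m=O(n)$. Therefore $\bigl|\log(\|P^Ne_{2s_1}\|/\|P^Ne_{2s_2}\|)\bigr|\le T(s_2-s_1)\cdot O(n)=O(n^{3/2})$, which is the claim with an appropriate $\gamma$; the case $s_1>s_2$ is symmetric.

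The only genuinely delicate point is the uniform positivity of $\beta$ in the first step: the crude bound on the bracket produces $c=T(3c_2-c_1)$, which is positive only when the spectrum is sufficiently close to linear, i.e. $c_1<3c_2$. This is harmless because, exactly as in the reduction \eqref{4.3}, one is free to discard modes before building the nonlinearity; discarding them so that all gaps $\lambda_{m+1}-\lambda_m$ lie in a fixed interval $[a,2a]$ forces $c_1/c_2\le 2$, and then every estimate above closes with room to spare. Thus the main obstacle is this bookkeeping on the eigenvalue growth rather than any analytic difficulty; once the spectrum is normalised in this way, the lemma follows from the manipulations of the geometric-type products sketched above.
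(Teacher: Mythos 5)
Your reduction of the lemma to the explicit form \eqref{4.5}--\eqref{4.6} of the Poincar\'e map is exactly the intended ``straightforward'' computation (the paper omits the proof, deferring to \cite{EKZ}): the iteration formulas for $P^Ne_1$ and $P^Ne_{2s}$, the cancellation of common factors, and the whole treatment of the second estimate (at most $k\le\sqrt n$ surviving factors, each with $\ell_m=O(n)$) are correct. The problem is the quantitative step in the first estimate. Bounding the entire first sum below by $2c_2\sum(2j-1)$ and the entire second sum above by $2c_1\sum(2i+2)$ yields $2s_1[(3c_2-c_1)n-O(\sqrt n)]$, which is useless unless $c_1<3c_2$; but \eqref{4.3} puts no restriction on $c_1/c_2$, so as written you have not proved the lemma for the map $P$ of Proposition \ref{Prop4.1}. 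Your proposed remedy --- re-thin the spectrum so that all gaps lie in $[a,2a]$ --- proves a modified statement (a lemma about a differently constructed $P$), and the claim that this ``forces $c_1/c_2\le2$'' is not literally correct: after such a selection one only gets $\lambda_j\le\lambda_1+2a(j-1)$, so the ratio of the linear bounds is $2$ only asymptotically, and in any case the lemma is supposed to hold under \eqref{4.3} as it stands. (For the application in Theorem \ref{Th5.1} the extra thinning is admissible, so your argument does salvage the counterexample; but it is not a proof of Lemma \ref{Lem4.3}.)

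The gap is easily closed without touching the spectrum, using the monotonicity of $\ell_m$ that you already recorded. Pair the $i$-th terms of the two sums: the bracket equals $\sum_{i=0}^{s_1-1}\bigl(\ell_{2(N-s_1)+2i+1}-\ell_{2i}\bigr)$, and every summand is nonnegative because $\ell$ is nondecreasing and the index shift $2(N-s_1)+1$ is positive. Now keep only the initial portion $i\le\delta n$ with $\delta:=c_2/(2c_1)$: there
\begin{equation*}
\ell_{2(N-s_1)+2i+1}\ge 2c_2\cdot 2(N-s_1)\ge 4c_2 n,
\qquad
\ell_{2i}\le 2c_1(2i+2)\le 2c_2 n+4c_1,
\end{equation*}
so each retained summand is at least $c_2n$ for $n$ large, and there are at least $\delta n$ of them. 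Hence the bracket is bounded below by $\tfrac{c_2^2}{2c_1}n^2$ for all admissible $s_1$ and all large $n$, which gives the first inequality of \eqref{7.non-uniform} with $\beta=T c_2^2/(4c_1)$, say, for arbitrary constants $c_1\ge c_2>0$ in \eqref{4.3}. With this replacement your proof is complete and requires no renormalisation of the eigenvalues.
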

The proof of this lemma follows in a straightforward way from \eqref{4.5} and \eqref{4.6} and assumption \eqref{4.3} on the eigenvalues and, by this reason is omitted, see \cite{EKZ} for the details.
\par
We conclude this paragraph by a very brief exposition of the main construction involved in the proof of Theorem \ref{Th5.1}

\begin{proof}[Sketch of the proof of Theorem \ref{Th5.1}] We will base on the construction given in the proof of Theorem \ref{Th4.1} as well as on Example \ref{Ex7.hyp}. Indeed, according to Lemma \ref{Lem4.3}, the "first mode" which corresponds to $w(0)=e_1$ in equation \eqref{4.2} decays {\it faster} than infinitely many "other modes" associated with $w(0)=e_{2n}$, so we will use the kick in their directions in order to build up the orthogonal sequences. Namely, we fix the trajectory $v(t)=(x(t),y(t),w(t))\in\Cal A$ of coupled equations \eqref{4.23} which is constructed in the proof of Theorem \ref{Th4.1} and will treat it as a basic non-perturbed solution of Example \ref{Ex7.hyp}. Furthermore, we modify the 3D system \eqref{4.24} in such way that not only the point $v(0):=(x(0),y(0),e_1)$, but also the whole segment $v_s(0):=(x(0),y(0),se_1)$, $s\in[1,1-\kappa]$
belongs to the unstable manifold and, therefore, to the attractor as well. This parameter $s$ will play the role of phase $\varphi$ in Examples \ref{Ex5.not} and \ref{Ex7.hyp} and will parameterise the kicks in the directions of $e_{2n}$. Namely, we split the interval
$$
[1,1-\kappa]=\cup_{n=1}^\infty I_n
$$
into a sum of disjoint intervals $I_n$, after that, every interval $I_n$ will be further split into $2^{[\sqrt n]}$ intervals $I_{n,p}$, $p=1,\cdots,2^{[\sqrt{n}]}$, subintervals and finally, in the small neighbourhood of the trajectory $v_{s_{n,p}}(0)$, we construct a
kick on the modes $e_{2s}$, $s=n,n+1,\cdots, n+[\sqrt{n}]$ which after the properly chosen time evolution will reproduce the $p$th vortex of the $[\sqrt{n}]$-dimensional cube spanned on these eigenvectors. The accurate calculations, see \cite{EKZ} show that the size $\eb_n$ of that cube can be made not smaller than $e^{-\beta n^2}$ for some $\beta$ which is independent of $n$. Thus, assumption \eqref{5.good} of Proposition \ref{Prop5.double} is satisfied and, therefore, the log-doubling factor of the attractor $\Cal A$ is indeed infinite.
\end{proof}

\section{Concluding remarks and open problems}\label{s4}
In this concluding section, we briefly summarize the main achievements of the theory developed above and discuss the related open problems. We start with the most principal and important question on the validity of the finite-dimensional reduction for dissipative PDEs.
\par
{\it Is the dissipative dynamics generated by PDEs effectively finite-dimensional?}
\par
It is more or less clear that the answer on this question is positive in the case when the underlying PDE possesses an inertial manifold. Indeed, in that case, the reduced dynamics on the manifold is described by a system of ODEs (the so-called inertial form) and every trajectory which starts outside of the manifold is exponentially attracted to some trajectory on the manifold. Thus, up to a transient behavior, the dynamics is described by the finite system of ODEs on the inertial manifold. As a drawback, mention that even in that ideal situation, the finite-dimensional reduction drastically decreases the smoothness and starting from the $C^\infty$ or even analytic equations, we may guarantee in general that the reduced equations are only $C^{1+\eb}$-smooth for some small  $\eb>0$, see Paragraph \ref{s.smo}.
\par
However, as we have seen, the existence of an inertial manifold is a big restriction on the system considered, in particular, some kind of spectral gap conditions are necessary for that. In addition, although on the level of {\it parabolic} equations, all known examples where the inertial manifold does not exist look a bit artificial, such counterexamples are natural in the class of damped {\it hyperbolic} equations where, vise versa,
the equations with inertial manifolds are rare and artificial, see \cite{sola1,sola2}. Thus, the inertial manifolds are not sufficient to solve completely the finite-dimensional reduction problem.
\par
The positive answer on the above question on the finite-dimensional reduction is often referred as the main achievement of the attractors theory and the fractal/Hausdorff dimension of the attractors is treated as a quantity related with the effective number of degrees of freedom of the reduced system. This, in particular, motivates and justifies many modern works on improving the existing and finding new upper and lower bounds for the fractal dimension of attractors for various equations of mathematical physics, see \cite{tem,BV,CV} and references therein.
\par
In a sense, this interpretation looks indeed reasonable since, as we have seen, the finiteness of the fractal dimension of the attractor allows us to project it one-to-one to a generic finite-dimensional plane using the Man\'e projection theorem and this, in turn, gives the analogue of the inertial form, see \eqref{2.inertial}, which is a system of ODEs describing the dynamics on the global attractor. And indeed the fractal dimension of $\Cal A$ determines the minimal dimension of the plane when such reduction is possible.
\par
However, the reduction based on the Man\'e projection theorem has a huge drawback, namely, the non-linearity in the reduced ODEs obtained in that way is only {\it H\"older} continuous which is not enough even to establish the uniqueness (and as we have seen in Example \ref{Ex2.non-unique}, the non-smooth Man\'e projections indeed may generate extra pathological solutions). Thus, we actually reduced the initial infinite-dimensional, but "nice", say, $C^\infty$-smooth dynamical system to the finite-dimensional, but "ugly" equations with non-smooth nonlinearities, without uniqueness and with possible presence of extra pathological solutions. Clearly, it can hardly  be accepted as an "effective" reduction.
\par
This motivates the  attempts to improve the regularity of the inverse Man\'e projections which have been considered at Section \ref{s.bsg}. As we have seen there, despite of a number of interesting ideas and approaches developed, the uniqueness problem for the reduced equations is still far from being solved. Moreover, most of the obtained results are sharp and the corresponding counterexamples are given (e.g., the spectral gap condition is sharp and is responsible not only for $C^1$-smooth, but also for the Lipschitz and log-Lipschitz manifolds; the exponents in log-Lipschitz Man\'e projection theorem, see Theorem \ref{Th2.log},  as well as in the log-convexity result of Proposition \ref{Prop2.Alip} are sharp, etc.). So, the limitations of the theory are already achieved and there is no much room for further improvements, see also Remark \ref{Rem2.badd}. Thus, principally new ideas/approaches are required in order to solve the uniqueness problem or/and to find more  {\it effective} ways for the finite-dimensional reduction. Mention also that, as we have seen, the fractal dimension of the attractor {\it is not} responsible for the existence of smoother inertial forms (e.g., more delicate  Bouligand dimension is related with Lipschitz and log-Lipschitz projections, etc.), so its role in justifying the finite-dimensional reduction in dissipative systems looks exaggerated.
\par
On the other hand, the possibility of the {\it negative} answer on the above question about the finite-dimensional reduction deserves a serious attention. Indeed, the periodic orbit with super-exponential attraction constructed in Paragraph \ref{s.Floquet} does not look  as a finite-dimensional phenomenon as well  as the period map nearby which is conjugated to the shift operator in $l_2(\Bbb Z)$. Note that such shift operator models the infinite-dimensional Jordan sell and is a standard example of the {\it infinite-dimensional} behavior in the operator theory, see \cite{dan}. We also do not see there any higher and lower modes, lengthscales, etc. which are expected in the case when the finite-dimensional reduction works. So, the drastic loss of smoothness under any attempts to reduce the dynamics to finite system of ODEs may be       interpreted as a manifestation of the fact that the dynamics of dissipative PDEs is  {\it infinite-dimensional}. The proved theorems on the finite-dimensional reduction via the H\"older Man\'e theorem is naturally treated then as a mathematical "pathology" with restricted practical relevance (similar to, say, the Peano curve). This hypothesis will be confirmed if one construct an example where the dynamics on the attractor is not conjugated even {\it topologically} to any {\it smooth} finite-dimensional dynamics. The constructed counterexamples show that such conjugating homeomorphism cannot be bi-H\"older continuous (due to the super-exponential convergence to the limit cycle), but finding the obstructions to the topological equivalence with a {\it smooth} system of ODEs is an interesting and challenging open problem.
\par
{\it Open problems related with the spectral theory.}
\par
 As we have seen, in order to verify the existence of an inertial manifold (at least in a straightforward way), we need the spectral gap condition which is a condition on a difference of two subsequent eigenvalues of the operator $A$, see Theorems \ref{Th1.main} and \ref{Th1.dmain}. Thus, it becomes crucial to be able to check this condition for the concrete operators arising in applications, for instance, for the Laplacian in a bounded domain of $\R^n$, say, with Dirichlet or Newmann boundary conditions. However, despite the vast amount of various results on the distribution of eigenvalues and improvements of the classical Weyl asymptotics, see \cite{ivri,bolt} and references therein, almost nothing is known about the existence of spectral gaps even in the case of the Laplacian. In particular, for a bounded domain of $\R^2$, we do not know any examples when the spectral gaps of arbitrarily large size do not exist, but, to the best of our knowledge, there are also no results that such gaps exist for all/generic domains of $\R^2$. Clarifying the situation with these gaps would essentially increase the number of equations with inertial manifolds.
\par
Another interesting question is related with the Floquet theory for PDEs and the spectrum of the period map. As we have seen in Paragraph \ref{s.Floquet}, in the case of {\it abstract} parabolic equations, the spectrum may just coincide with zero and do not contain any Floquet exponents. However, to the best of our knowledge, there are still no examples of parabolic {\it PDEs} with such properties. In particular, it is even not clear how to realize two equilibria involved in the proof of Theorem \ref{Th3.1} on the level of PDEs. Finding the explicit examples with such properties or clarifying the nature of obstructions to that is important for understanding how natural are the related infinite-dimensional phenomena on the level of concrete PDEs arising in applications.
\par
{\it Navier-Stokes equations and inertial manifolds.}
\par
As known, the Navier-Stokes problem is one of the most important examples in the theory of dissipative PDEs and many concepts and methods of the modern theory  were inspired exactly by this problem and by the dream to understand the underlying turbulence. In particular, even the strange word  {\it inertial} in the title of {\it inertial manifold} (which replaces/duplicates  more precise and more natural notion of a {\it center manifold}) comes from the Navier-Stokes equations, namely, it is related with the so-called inertial term in the equations as well as the associated inertial scale in the empiric theory of turbulence, see \cite{firsh,FMRT} and reference therein. However, despite many attempts there are still  no progress  in finding the inertial manifolds for that problem or proving their non-existence, so the problem remains completely open. Mention here only the attempt of Kwak, see \cite{kwak,tem-wrong} and explanations  in Remark \ref{Rem1.gen} why this does not work. It also worth to mention that the transformation of the Navier-Stokes to semilinear reaction-diffusion equations given there nevertheless looks interesting and maybe useful. In particular, it would be interesting to investigate whether or not similar transforms with better properties (e.g., which lead to  self-adjoint operators $A$ and which is enough to repair the proof) exist.
\par
{\it Generalizations of invariant cones and spatial averaging.}
\par
To conclude, we discuss several important open problems which, in contrast to the previous ones, do not look insane and likely can be solved without inventing the principally new techniques. One of them is related to extending Theorem \ref{Th1.maincon} to the case where the invariant cones {\it depend} explicitly on the point of the phase space. The model equation here is the 1D semilinear heat equation considered in Example \ref{Ex2.grad}, e.g., 1D Burgers type equations. As we have seen in Remark \ref{Rem2.manifold}, these equations possess such invariant cones, so it would be nice to develop a general theory which gives the existence of an inertial manifold for these equations as a natural application.
\par
Another interesting problem is related to various generalizations of the spatial averaging principle. Some generalizations to the case of domains differ from the tori are given in \cite{kwean}, see also \cite{mar} for the generalizations to the coupled PDE-ODE systems. It would be perfect to establish something similar  for the 2D sphere $\Bbb S^2$. Since the Navier-Stokes equations on $\Bbb S^2$ are in a sense on a borderline, even slight weakening of the spectral gap condition would be extremely helpful for the inertial manifold existence.
\par
It would be also nice to generalize the spatial averaging to the cases where the averaged operator can be not the scalar one. For instance, the proper generalization which a priori looks almost straightforward should give the inertial manifold for the Cahn-Hilliard equation on the 3D torus.
A bit more challenging problem is to extend the technique to the case of complex Ginzburg-Landau equations on the 3D tori.

 \end{document}